\newtheorem{theorem}{Theorem}
\crefname{theorem}{theorem}{Theorems}
\Crefname{Theorem}{Theorem}{Theorems}
\newtheorem{lemma}[theorem]{Lemma}
\crefname{lemma}{lemma}{lemmas}
\Crefname{Lemma}{Lemma}{Lemmas}
\newtheorem{proposition}[theorem]{Proposition}
\crefname{proposition}{proposition}{propositions}
\Crefname{Proposition}{Proposition}{Propositions}
\newtheorem{definition}[theorem]{Definition}
\crefname{definition}{definition}{definitions}
\Crefname{Definition}{Definition}{Definitions}
\newtheorem{assumption}{\textbf{A}\hspace{-3pt}}
\theoremstyle{remark}
\newtheorem{example}[theorem]{Example}
\Crefname{Example}{Example}{Example}
\DeclareMathOperator{\mse}{MSE}
\DeclareMathOperator{\ise}{ISE}
\def\rmH{\mathrm{H}}
\def\bfnu{\bm{\nu}}
\def\bflambda{\bm{\lambda}}
\def\ctt{\mathtt{c}}
\newcommand{\transference}{\mathbf{T}}
\newcommandx{\norm}[2][1=]{\ifthenelse{\equal{#1}{}}{\left\Vert #2 \right\Vert}{\left\Vert #2 \right\Vert^{#1}}}
\newcommandx{\normLigne}[2][1=]{\ifthenelse{\equal{#1}{}}{\Vert #2 \Vert}{\Vert #2\Vert^{#1}}}
\def\bfX{\mathbf{X}}
\def\lambdabf{\bm{\lambda}}
\def\bfM{\mathbf{M}}
\def\bfB{\mathbf{B}}
\def\msa{\mathsf{A}}
\def\msf{\mathsf{F}}
\def\msx{\mathsf{X}}
\newcommand{\mcb}[1]{\mathcal{B}(#1)}
\def\rset{\mathbb{R}}
\def\nset{\mathbb{N}}
\def\nsets{\mathbb{N}^{\star}}
\def\rmd{\mathrm{d}}
\def\rmc{\mathrm{C}}
\def\rmC{\mathrm{C}}
\def\rmA{\mathrm{A}}
\def\rmW{\mathrm{W}}
\def\rmL{\mathrm{L}}
\def\rms{\mathrm{s}}
\def\bmk{\tilde{b}}
\def\bmketa{\bmk_{\eta}}
\def\Ltt{\mathtt{L}}
\newcommand{\argmax}{\operatorname*{arg\,max}}
\newcommand{\argmin}{\operatorname*{arg\,min}}
\newcommand{\1}{\mathbbm{1}}
\newcommand{\floor}[1]{\left\lfloor #1 \right\rfloor}
\newcommand{\PE}{\mathbb{E}}
\newcommand{\abs}[1]{\left\vert #1 \right\vert}
\newcommand{\absLigne}[1]{\vert #1 \vert}
\newcommand{\parenthese}[1]{\left(#1 \right)}
\newcommand{\parentheseLigne}[1]{(#1 )}
\newcommand{\defEns}[1]{\left\lbrace #1 \right\rbrace }
\newcommand{\defEnsLigne}[1]{\lbrace #1 \rbrace }
\newcommand{\expe}[1]{\PE \left[ #1 \right]}
\newcommand{\expesq}[1]{\PE^{1/2} \left[ #1 \right]}
\newcommand{\expesqLigne}[1]{\PE^{1/2} [ #1 ]}
\newcommand{\expeLigne}[1]{\PE [ #1 ]}
\def\ie{\textit{i.e.}}
\def\eqsp{}
\newcommand{\coint}[1]{\left[#1\right)}
\newcommand{\ocint}[1]{\left(#1\right]}
\newcommand{\ooint}[1]{\left(#1\right)}
\newcommand{\ccint}[1]{\left[#1\right]}
\newcommand{\ccintLigne}[1]{[#1]}
\newcommand{\cball}[2]{\bar{\operatorname{B}}(#1,#2)}
\def\eg{e.g.}
\def\Id{\operatorname{Id}}
\newcommand{\ensembleLigne}[2]{\{#1\,:\eqsp #2\}}
\def\rmD{\mathrm{D}}
\newcommand{\complementary}{\mathrm{c}}
\def\vareps{\varepsilon}
\def\Fun{\mathscr{F}}
\def\Gun{\mathscr{G}}
\def\Hun{\mathscr{H}}
\def\Lun{\mathscr{L}}
\def\Mun{\mathscr{M}}
\def\Hent{\mathrm{H}}
\def\Pens{\mathscr{P}}
\newcommandx{\KL}[2]{\textup{KL}\left( #1 | #2 \right)}
\newcommandx{\KLLigne}[2]{\text{KL}( #1 | #2 )}
\newcommandx{\wassersteinD}[1][1=\distance]{\mathbf{W}_{#1}}
\newcommand{\rref}[1]{\tup{\Cref{#1}}}
\def\Kcoupling{\mathrm{K}}
\def\kker{\mathrm{k}}
\def\Kker{\Kcoupling}
\def\mtt{\mathtt{m}}
\def\dtt{\mathtt{d}}
\def\Mtt{\mathtt{M}}
\def\Ktt{\mathtt{K}}
\newcommand{\tup}[1]{\textup{#1}}
\title{Solving A Class of Fredholm Integral Equations\\ of the First Kind via Wasserstein Gradient Flows}
\author[1]{Francesca R. Crucinio\thanks{Corresponding author: francesca\_romana.crucinio@kcl.ac.uk}}
\author[2]{Valentin De Bortoli}
\author[3]{Arnaud Doucet}
\author[4, 5]{Adam M. Johansen}
\affil[1]{Department of Mathematics, King's College London}
\affil[2]{Computer Science Department,
ENS, CNRS, PSL University}
\affil[3]{University of Oxford}
\affil[4]{Department of Statistics, University of Warwick}
\affil[5]{The Alan Turing Institute}
\date{ }
\begin{document}
\maketitle
\graphicspath{{Images/}}

\abstract{Solving Fredholm equations of the first kind is crucial in many areas of the applied sciences. In this work we consider integral equations featuring kernels which may be expressed as scalar multiples of conservative (i.e. Markov) kernels and we adopt a variational point of view by considering a minimization problem in the space of probability measures with an entropic regularization. Contrary to classical approaches which discretize the domain of the solutions, we introduce an algorithm to asymptotically sample from the unique solution of the regularized minimization problem. As a result our estimators do not depend on any underlying grid and have better scalability properties than most existing methods.  Our algorithm is based on a particle approximation of the solution of a McKean--Vlasov stochastic differential equation associated with the Wasserstein gradient flow of our variational formulation. We prove the convergence towards a minimizer and provide practical guidelines for its numerical implementation. Finally, our method is compared with other approaches on several examples including density deconvolution and epidemiology.}

\section{Introduction}
Fredholm integral equations of the first kind are ubiquitous in applied sciences
and engineering, they model density deconvolution
\cite{delaigle2008alternative,ma2011indirect,pensky2017minimax,yang2020density}, image reconstruction
\cite{aster2018parameter,clason2019regularization,jin2019expectation, snyder1992deblurring}, inverse boundary problems for partial differential equations \cite{tanana2016approximate, colton2012inverse} and find applications in epidemiology \cite{goldstein2009reconstructing, gostic2020practical, marschner2020back} and statistics
\cite{hall2005nonparametric,miao2018identifying}.
Fredholm integral equations of the first kind are
typically defined by
\begin{equation}
g(y) = \int_{\rset^d} f(x) \kker(x,y) \rmd x
\end{equation}
where the kernel $\kker:\rset^d \times \rset^p \to \rset$ is known, and $g:\rset^p\to\rset$ is known but indirectly observed, and the objective is to identify the unknown function $f:\rset^d \to\rset$. If we interpret this as a Lebesgue integral with $\rmd x$ denoting the Lebesgue measure then we may identify the functions with measures $\mu(\rmd y) := g(y) \rmd y, \pi(\rmd x) := f(x) \rmd x$ and $\kker$ with an integral operator $\Kker(x,\rmd y) := \kker(x,y) \rmd y$ which leads to the representation that we will consider throughout this paper:
\begin{equation}
  \label{eq:fe}
  \textstyle{
\mu(\msa)= \pi \Kker(\msa) := \int_{\rset^d} \Kker(x, \msa)\rmd \pi(x), }
\end{equation}
for any Borel set $\msa \in \mcb{\rset^p}$, with $\pi$ an unknown measure on
$\mcb{\rset^d}$, $\mu$ an observed measure on $\mcb{\rset^p}$ and
$\Kker: \ \rset^d \times \mcb{\rset^p} \to \rset$ an integral operator. These equations
model the task of reconstructing the signal $\pi$ from a distorted observed
version $\mu$.  Since many applications of Fredholm integral equations are
concerned with the reconstruction of signals that are \textit{a priori} known to
be non-negative \cite{clason2019regularization}, we consider the case in which
both $\pi \in \Pens(\rset^d)$ and $\mu \in \Pens(\rset^p)$ are probability measures on $\rset^d$ and $\rset^p$, respectively, and $\Kker$ is a Markov kernel.  A great many Fredholm integral
equations, particularly those for which the kernel is conservative up to a scalar multiplier in the sense that $\int_{\rset^p} \Kker(x,\rmd y) = \int_{\rset^p} \Kker(x',\rmd y)$ for every $x,x' \in \rset^d$, can be recast in this framework with appropriate translation and
normalization \cite[Section 6]{chae2018algorithm}.

In a probabilistic and variational approach, we seek a solution of~\eqref{eq:fe}
by minimizing the Kullback--Leibler divergence between $\mu$ and $\pi\Kker$
\cite{multhei1987iterative}:
\begin{align}
  \label{eq:kl}
  \textstyle{
\pi^\star = \argmin \ensembleLigne{\KL{\mu}{\pi \Kker} }{\pi \in \Pens(\rset^d)},}
\end{align}
where
$\KL{\mu}{\pi \Kker}=\int_{\rset^p} \log ((\rmd \mu /\rmd \pi\Kker)(y)) \rmd
\mu(y)$ if $\mu$ admits a density w.r.t. $\pi \Kker$ and $+\infty$ otherwise. The probability measure $\pi^\star$ corresponds to the maximum likelihood estimator (MLE) for $\pi$. Using this formulation, it is possible to define approximate
solutions of Fredholm integral equations of the first kind as solutions of a minimization
problem on the space of measures. However, in most interesting cases, the integral equation~\eqref{eq:fe}
is ill-posed, \ \ie\ there might exists more than one solution $\pi$
to~\eqref{eq:fe}, and, even when the solution is unique, it is unstable with
respect to changes in $\mu$. This lack of stability is the primary
concern when attempting to solve Fredholm integral equations of the first kind: in practical applications we often only have access to (noisy) observations from $\mu$ and not to its analytic form, and the instability of~\eqref{eq:fe} means that small errors in $\mu$ do not necessarily correspond to small errors in the recovered solution $\pi$  \cite[Chapter 15]{kress2014linear}. This issue
reflects onto the minimization problem~\eqref{eq:kl}, which often does not admit a unique
minimizer \cite{laird1978nonparametric}.

In order to circumvent this issue, a regularization term is often added to the
variational formulation~\eqref{eq:kl} \cite{byrne2015algorithms, green1990use}. In a similar fashion to the Tikhonov
regularization \cite{groetsch1984theory}, we consider a regularized version of~\eqref{eq:kl}
in which a cross--entropic penalty with respect to a reference measure
$\pi_0 \in \Pens(\rset^d)$ is introduced as in \cite{eggermont1993maximum, resmerita2007joint, iusem1994new}
\begin{equation}
  \pi^\star = \argmin \ensembleLigne{\Fun_{\alpha}(\pi)}{\pi \in \Pens(\rset^d)}, \qquad \Fun_\alpha(\pi) = \KL{\mu}{\pi \Kker} + \alpha \KL{\pi}{\pi_0},\label{eq:minimisation}
\end{equation}
for a given regularization
parameter $\alpha>0$. In practice, \eqref{eq:minimisation} becomes easier to solve for large values of $\alpha$ but the agreement between $\pi^\star \Kker$ and $\mu$ is weak
and in the limiting case $\alpha \to +\infty$, $\pi^\star = \pi_0$. For small
values of $\alpha$, $\pi^\star \Kker \approx \mu$ but the minimization problem
is harder to solve and in the limit $\alpha \to 0$, it collapses onto~\eqref{eq:kl}, becoming ill-posed.
The parameter $\alpha$ should therefore be chosen to achieve a trade-off between minimizing $\KL{\mu}{\pi\Kker}$ (i.e. $\alpha$ should be small enough), ensuring that~\eqref{eq:minimisation} can be solved and that the resulting solution is sufficiently regular (i.e. $\alpha$ should not be zero); this is usually achieved by selecting $\alpha$ through cross--validation, see
\cite{wahba1977practical, amato1991maximum}.

The use of cross--entropy regularization is not new in the literature on Fredholm
integral equations \cite{eggermont1993maximum, iusem1994new, resmerita2007joint}. In particular, \cite{eggermont1993maximum} considers a cross--entropy regularization in a least-square setting, \cite{iusem1994new}
studies~\eqref{eq:minimisation} in the discrete setting, i.e. when $\pi$ and $\mu$
are vectors in the simplex of $\rset^d, \rset^p$ respectively, and \cite{resmerita2007joint} studies~\eqref{eq:minimisation} when $\pi$ and $\mu$ are integrable densities.
In the discrete setting, \cite{iusem1994new} establishes uniqueness of the minimizers of a discrete version of~\eqref{eq:minimisation} and studies the behaviour of the minimizers as $\alpha\to0$; 
in the continuous setting, \cite{resmerita2007joint} shows
existence and uniqueness of a minimizer of~\eqref{eq:minimisation} under the assumption that~\eqref{eq:fe} admits a
maximum entropy solution.

In related works, functionals involving the Kullback--Leibler divergence
$\KL{\mu}{\pi \Kker}$ are minimized iteratively by discretizing the domain of
$\pi$, $\mu$ \cite{green1990use, chae2018algorithm, burger2019entropic,
  yang2020density, snyder1992deblurring}, while maximum entropy estimators are
usually obtained approximating $\pi$ through a set of basis functions
\cite{kopec1993application, islam2020approximating, jin2016solving,
  mead1986approximate}.  However, when $\pi, \mu$ are probability measures it is
natural to consider Monte Carlo strategies, and approximate $\pi$ using particles \cite{crucinio2020particle}. This is particularly convenient when the only information on $\mu$
is given by a sample drawn from it, as it is the case in most applications
\cite{delaigle2008alternative,ma2011indirect,pensky2017minimax,yang2020density,
  goldstein2009reconstructing, gostic2020practical,
  marschner2020back,hall2005nonparametric,miao2018identifying}. 
Recently, \cite{crucinio2020particle} introduced a sequential Monte Carlo method (SMC-EMS) to approximately solve~\eqref{eq:fe}, this algorithm provides an adaptive stochastic discretization of the expectation maximization smoothing algorithm first studied in \cite{silverman1990smoothed}, which aims at obtaining solutions of~\eqref{eq:fe} which achieve a good trade-off between solving~\eqref{eq:kl} and having enough smoothness.

The functional~\eqref{eq:minimisation} can be seen as a probabilistic counterpart of the classic Tikhonov regularization setting (e.g~\cite{groetsch1984theory}); \cite{garbuno2020interacting} propose an interacting particle system based on a Wasserstein gradient flow construction to approximately solve the Tikhonov-regularized problem for a wide class of (nonlinear) inverse problems. While the particle system introduced in \cite{garbuno2020interacting} is similar in spirit to the approach taken here, the former requires an a priori discretization of the support of $\pi$, which makes it impractical when we deal with Fredholm integral equations, since the object of interest (\ie\ $\pi$) is infinite dimensional. By focussing on the case of linear integral equations, we can avoid the discretization step required by the particle system of \cite{garbuno2020interacting}, and work directly with the measures $\pi, \mu$.

In this work, we
adopt a probabilistic approach and present a novel particle algorithm to solve Fredholm integral equations of the first kind. More precisely, we derive a particle approximation to the Stochastic
Differential Equation (SDE) associated with the Wasserstein gradient flow
\cite{ambrosio2008gradient,santambrogio2017euclidean}
minimizing surrogates of~\eqref{eq:minimisation}. 
Wasserstein gradient flows have been widely used to solve a variety of optimization problems in the space of probability measures \cite{arbel2019maximum, jordan1998variational, liu2017stein, durmus2019analysis}, including finding solutions to finite dimensional inverse problems \cite{garbuno2020interacting}, but, to the best of our knowledge, not for infinite dimensional inverse problems like the integral equation~\eqref{eq:fe}.
The SDE associated with this
gradient flow is non-linear in the sense that it depends not only upon the
position of the solution but also on its marginal distribution. Such SDEs are
called McKean--Vlasov SDEs (MKVSDE), and have been actively studied during the past decades
\cite{tanaka1984limit,sznitman1991topics,meleard1996asymptotic,meleard1987propagation,oelschlager1984martingale,mckean1966class,carmona2013probabilistic,gottlieb2000markov}. In
this work, we derive a particular MKVSDE whose solution converges towards a minimizer of~\eqref{eq:minimisation}.
However, this MKVSDE cannot be solved analytically, and we introduce a numerical approximation using an Euler--Maruyama discretization and a particle
system.

We perform an extensive simulation study in which we compare our method with SMC-EMS and problem-specific algorithms and show that it achieves comparable performances. The results obtained with our method and SMC-EMS are particularly similar, since both methods aim at minimizing a regularized Kullback--Leibler divergence. However, while in this work we explicitly have a minimization problem for the regularized functional~\eqref{eq:minimisation}, the SMC-EMS algorithm does not minimize a particular
functional but corresponds to a smoothed version of the EM algorithm minimizing
the Kullback--Leibler divergence~\eqref{eq:kl}.
When $\alpha$ and the smoothness parameter controlling the amount of regularization in SMC-EMS are both small, the two algorithms will therefore give very similar reconstructions. 
Nevertheless, the method proposed in this work comes with a number of additional theoretical guarantees and practical improvements. From the theoretical point of view, we show that~\eqref{eq:minimisation} admits a unique minimizer for any $\alpha$ and that this minimizer can be seen as the probabilistic counterpart of the classical solution to the Tikhonov regularized least-squares problem. On the other hand, uniqueness of the fixed point of the EMS recursion approximated by SMC-EMS has not been established.
From the practical point of view, minimizing~\eqref{eq:minimisation} allows us to introduce additional information about the solution through $\pi_0$, this proves particularly beneficial when the kernel $\Kker$ leads to model misspecification (see Section~\ref{sec:epidem}).
When compared on higher dimensional problems, the proposed method outperforms both standard discretization-based methods and SMC-EMS (Section~\ref{sec:hd}), opening the door to solving Fredholm integral equations for high-dimensional problems.

In addition, the value of the regularization parameter $\alpha$ can be chosen through common
approaches for selecting the regularization parameter in Tikhonov regularization settings (e.g. 
cross--validation \cite{wahba1977practical, amato1991maximum}), while in the case of SMC-EMS the link between the particle system and the functional which is minimized is less clear, and cannot be exploited as straightforwardly.

To summarize, our main
contributions are as follows:
\begin{enumerate}[label=(\alph*)]
\item First, we establish that the functional in~\eqref{eq:minimisation} is continuous and
admits a unique minimizer under mild conditions on $\mu, \Kker$. In addition, we show that a modification of~\eqref{eq:minimisation} is stable with respect to perturbations of the observation $\mu$ and study the limit behaviour of $\pi^\star$ as $\alpha \to 0$.
\item Second, we derive the Wasserstein gradient flow used to minimize a
  surrogate of~\eqref{eq:minimisation} and show that the associated MKVSDE
  converges towards a unique minimizer using recent results from optimal
  transport \cite{hu2019mean}.
\item Third, we present and study a discrete-time particle approximation of
  this MKVSDE. In particular, we establish geometric ergodicity of the particle system under regularity conditions.
\item Finally, we illustrate the efficiency of our method to approximately solve
  Fredholm integral equations of the first kind on a range of different
  numerical examples and show that it achieves superior or comparable performances to those of problem-specific algorithms and SMC-EMS, outperforming the latter for high dimensional problems. In addition, we show on a task from epidemiology that, when the kernel $\Kker$ does not correctly model the problem at hand, the presence of a reference measure $\pi_0$ allows us to outperform even problem-specific estimators.
\end{enumerate}

The rest of the manuscript is organized as follows.  In \Cref{sec:vari-point-view} we
establish that the functional in~\eqref{eq:minimisation} is well-founded and study its key properties. 
The link between gradient flows and MKVSDE in the context of Fredholm integral
equations of the first kind is studied in \Cref{sec:part-appr-mcke}.  In
\Cref{sec:implementation} we describe a numerical scheme which approximates the
continuous time dynamics of the particle system and provide error bounds.
Finally, in \Cref{sec:ex}, we test the algorithm on a number of applications of
Fredholm integral equations from statistics, epidemiology and image processing, and explore the scaling properties of our method through a comparison with the SMC-EMS algorithm recently proposed in \cite{crucinio2020particle}.


\section*{Notation}
\label{sec:notation}
We endow $\rset^d$ with the Borel $\sigma$-field $\mcb{\rset^d}$ with respect to
the Euclidean norm $\norm{\cdot}$. For a matrix $A$ we denote $\norm{A}=(\sum_{ij} a_{ij}^2)^{1/2}$ the Hilbert-Schmidt norm.
We denote by $\rmC(\rset^d)$ the set of continuous functions defined
over $\rset^d$ and by $\rmC^n(\rset^d)$ the set of $n$-times differentiable functions
defined over $\rset^d$ for any $n \in \nsets$, where $\nsets$ denotes the non-zero natural numbers.  For all $f \in \rmC^1(\rset^d)$,
we denote by $\nabla f$ its gradient. Furthermore, if $f \in \rmC^2(\rset^d)$ we
denote by $\nabla^2f$ its Hessian and by $\Delta f$ its Laplacian.  
For all $f \in \rmC^1(\rset^{d_1} \times \dots \times \rset^{d_m})$ with $m\in\nsets$, we denote by $\nabla_i f$ its gradient w.r.t. component $i$. 
We say that a
function $f: \ \msx \to \rset$ (where $\msx$ is a metric space) is coercive if
for any $t \in \rset$, $f^{-1}(\ocint{-\infty, t})$ is relatively compact. This
definition can be extended to the case where $\msx$ is only a topological space,
see \cite[Definition 1.12]{dalmaso1993introduction}. A family of functions
$\ensembleLigne{f_\alpha}{\alpha \in \msa}$ such that for any $\alpha \in \msa$,
$f_\alpha : \ \msx \to \rset$, is said to be equicoercive if there exists
$g: \ \msx \to \rset$ such that for any $\alpha \in \msa$, $f_\alpha \geq g$ and
$g$ is coercive.
For any $p \in \nset$ we denote by
$\Pens_p(\rset^d) = \ensembleLigne{\pi \in \Pens(\rset^d)}{\int_{\rset^d}
  \norm{x}_p^p \rmd \pi(x) < +\infty}$ the set of probability measures over
$\mcb{\rset^d}$ with finite $p$-th moment. For ease of notation, we define
$\Pens(\rset^d) = \Pens_0(\rset^d)$ the set of probability measures over
$\mcb{\rset^d}$ and endow this space with the topology of weak convergence.  For any $\mu,\nu\in\Pens_p(\rset^d)$ we define the
$p$-Wasserstein distance $\wassersteinD[p](\mu, \nu)$ between $\mu$ and $\nu$ by
\begin{equation}
  \label{eq:def_distance_wasser}
  \textstyle{
    \wassersteinD[p](\mu, \nu) =\parenthese{ \inf_{\gamma \in \transference(\mu,\nu)} \int_{\rset^d\times \rset^d}  \norm{x -y}_p^p \rmd \gamma (x,y)}^{1/p}
    }
\end{equation}
where
$\transference(\mu, \nu)=\ensembleLigne{\gamma\in\Pens(\rset^d\times
  \rset^d)}{\gamma(\msa \times \rset^d) = \mu(\msa),\ \gamma(\rset^d \times
  \msa) = \nu(\msa)\ \forall \msa \in \mcb{\rset^d}}$ denotes the set of all transport
plans between $\mu$ and $\nu$. In the following, we metrize $\Pens_p(\rset^d)$ with $\wassersteinD[p]$. Assume that $\mu \ll \nu$ and denote by
$\frac{\rmd \mu}{\rmd \nu}$ its Radon-Nikodym derivative. The
Kullback--Leibler divergence, $\KL{\mu}{\nu}$, between $\mu$ and $\nu$ is defined by
\begin{equation}
  \textstyle{
    \KL{\mu}{\nu} = \int_{\msx} \log\parenthese{\frac{\rmd \mu}{\rmd \nu}(x)} \rmd \mu(x).
    }
\end{equation}
We say that $\Kker : \ \rset^d \times \mcb{\rset^p} \to \coint{0,+\infty}$ is a Markov kernel if for any $x \in \rset^d$, $\Kker(x, \cdot)$ is a probability measure over $\mcb{\rset^p}$ and for any $\msa \in \mcb{\rset^p}$, $\Kker(\cdot, \msa)$ is a Borel-measurable function over $\rset^d$. 

\section{A minimization problem for Fredholm integral equations}

\label{sec:vari-point-view}

In this section, we study the properties of the regularized functional
$\Fun_{\alpha}: \ \Pens(\rset^d) \to \rset \cup {+\infty}$ given for any 
$\alpha \geq 0$ and $\pi \in \Pens(\rset^d)$ by
\begin{equation}
  \Fun_{\alpha}(\pi) = \KL{\mu}{\pi \Kker} + \alpha \KL{\pi}{\pi_0}, 
\end{equation}
where we recall that $\mu \in \Pens(\rset^p)$, $\pi_0 \in \Pens(\rset^d)$ and
the Markov kernel $\Kker: \ \rset^d \times \mcb{\rset^p} \to \ccint{0,1}$ are
given.  For ease of notation we denote $\Fun = \Fun_0$.

In \Cref{sec:regul-stab-prop} we introduce a surrogate functional which enjoys
better stability properties than $\Fun_\alpha$. We prove that under mild
assumption on $\mu$ and $\Kker$, a minimizer of the surrogate exists, is
unique and continuous w.r.t the parameters of the functional. In
\Cref{sec:variants}, we investigate the links between our approach and other
regularization methods for Fredholm integration equations of the first kind.

\subsection{A surrogate functional: stability and regularization}
\label{sec:regul-stab-prop}

First, we introduce a surrogate functional $\Gun_\alpha$ whose minimizers are
the same as $\Fun_\alpha$ if $\mu$ is absolutely continuous w.r.t the Lebesgue
measure but which is also easier to study when $\mu$ is approximated by
an empirical distribution. Assume that there exists
$\kker: \ \rset^d \times \rset^p \to \coint{0, +\infty}$ such that for any
$x \in \rset^d$ and $\msa \in \mcb{\rset^p}$,
$\Kker(x, \msa) = \int_{\msa} \kker(x,y) \rmd y$. Then, using the Fubini--Tonelli
theorem we have that for any $\msa \in \mcb{\rset^p}$ and
$\pi \in \Pens(\rset^d)$,
\begin{equation}
  \textstyle{
    \pi \Kker(\msa) = \int_{\rset^d} \Kker(x, \msa) \rmd \pi(x) = \int_{\msa} \parenthese{\int_{\rset^d} \kker(x,y) \rmd \pi(x)} \rmd y.
    }
\end{equation}
Therefore $\pi \Kker$ admits a density w.r.t. the Lebesgue measure given by
$y \mapsto \pi[\kker(\cdot, y)]$.  In addition, assume that $\mu$ admits a
density w.r.t. the Lebesgue measure $\rmd \mu(y) = \mu(y)\rmd y$ such that
$\int_{\rset^p}\absLigne{\log(\mu(y))} \rmd\mu(y) <
+\infty$. Then for any $\pi \in \Pens(\rset^d)$
\begin{equation}
  \textstyle{
    \KL{\mu}{\pi \Kker} = -\Hent(\mu) -\int_{\rset^p} \log(\pi[\kker(\cdot, y)]) \rmd \mu(y),
    }
  \end{equation}
  where
  $\Hent(\mu) :=-\int_{\rset^p}\log(\mu(y)) \rmd \mu(y)$ is
  the entropy of $\mu$. 
  
  Since
  $\int_{\rset^p} \abs{\log(\pi[\kker(\cdot, y)])}\rmd \mu(y) < +\infty$ if
  $\KL{\mu}{\pi \Kker}<+\infty$, minimizing
  $\Fun_{\alpha}$ on the set
  $\ensembleLigne{\pi \in \Pens(\rset^d)}{\KL{\mu}{\pi \Kker} < +\infty}$ for any $\alpha \geq 0$ is
  equivalent to minimizing $\Gun_{\alpha}$ on the same set, where for any $\pi \in \Pens(\rset^d)$
\begin{equation}
  \label{eq:G}
  \textstyle{
  \Gun_{\alpha}(\pi) = -\int_{\rset^p} \log(\pi[\kker(\cdot, y)]) \rmd \mu(y) + \alpha \KL{\pi}{\pi_0},
}
\end{equation}
with $\Gun = \Gun_0$ for ease of notation.

 The functional~\eqref{eq:G} is not
defined if $\int_{\rset^p} \log(\pi[\kker(\cdot, y)]) \rmd \mu(y) = +\infty$ or
$\KL{\pi}{\pi_0} = +\infty$. We consider the following assumption on the kernel
$\Kker$ so that the functional is well-defined for any $\pi \in \Pens(\rset^d)$
and regular enough, see \Cref{prop:g_prop}.
\begin{assumption}
  \label{assum:general_kker}
  There exists $\kker \in \rmc^{\infty}(\rset^d \times \rset^p, \coint{0, +\infty})$
  such that for any $x \in \rset^d$ and $\msa \in \mcb{\rset^p}$,
  $\Kker(x, \msa) = \int_{\msa} \kker(x,y) \rmd y$ and
  $-\int_{\rset^p} \log(\kker(0,y)) \rmd \mu(y) < +\infty$. In addition, there
  exists $\Mtt \geq 0$ such that for any $(x ,y)\in\rset^d\times\rset^p$ we have
  $\kker(x,y) + \normLigne{\nabla \kker(x,y)} + \normLigne{\nabla^2 \kker(x,y)} \leq \Mtt$. In addition, $\normLigne{\partial^3_{1, ijl} \kker(x,y)} \leq \Mtt$, where $\partial_i$ denotes the derivative w.r.t. the $i$-th variable.
\end{assumption}

In this section, we could restrict ourselves to bounded second derivatives. The condition
$\normLigne{\partial^3_{1, ijl} \kker(x,y)} \leq \Mtt$ is used only in
\Cref{prop:euler}, in which we establish a convergence result for the time discretization of the MKVSDE. Under \tup{\Cref{assum:general_kker}}, the density $\kker$ of the Markov kernel
$\Kker$ is Lipschitz continuous for every $y\in\rset^p$.

To address stability issues we also
consider the following regularized functional: for any $\alpha, \eta \geq 0$ and
$\pi \in \Pens(\rset^d)$
\begin{equation}
\label{eq:G_eta}
\textstyle{
  \Gun_{\alpha}^{\eta}(\pi) = -\int_{\rset^p} \log(\pi[\kker(\cdot, y)] + \eta) \rmd \mu(y) + \alpha \KL{\pi}{\pi_0},
}
\end{equation}
where $\eta \geq 0$ is a hyperparameter. Using $\Gun_\alpha^\eta$ with
$\eta > 0$ will be crucial in order to establish the Lipschitz continuity of the
drift of the MKVSDE in \Cref{sec:part-appr-mcke}, and hence the stability of the
proposed procedure. For clarity, we write $\Gun_{\alpha}=\Gun^{0}_{\alpha}$, $\Gun^{\eta} = \Gun_{0}^{\eta}$ and $\Gun = \Gun_{0}^{0}$.
In the following proposition, we derive regularity properties for the family of
functionals $\ensembleLigne{\Gun^{\eta}}{\eta \geq 0}$. 
\begin{proposition}
  \label{prop:g_prop}
  Assume \tup{\Cref{assum:general_kker}}, then the following hold:
  \begin{enumerate}[wide, labelindent=0pt, label=(\alph*)]
  \item For any $\eta \geq 0$, $\Gun^\eta$ is lower bounded, lower-semi
    continuous, convex and is not coercive.
\item For any $\eta > 0$, $\Gun^\eta$ is proper and $\Gun^\eta \in \rmc(\Pens(\rset^d), \rset)$.
\end{enumerate}
\end{proposition}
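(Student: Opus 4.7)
The plan is to extract each of the six assertions directly from the structure of $\Gun^\eta(\pi) = -\int_{\rset^p} \log(\pi[\kker(\cdot,y)] + \eta)\, \rmd\mu(y)$, leveraging two consequences of \Cref{assum:general_kker}: the uniform bound $\kker \leq \Mtt$, and the continuity in its first argument (plus boundedness) of $\kker(\cdot,y)$ for every fixed $y \in \rset^p$. Lower-boundedness and convexity come essentially for free. Since $\pi[\kker(\cdot,y)] \in \ccint{0,\Mtt}$, we have $\log(\pi[\kker(\cdot,y)] + \eta) \leq \log(\Mtt+\eta)$ and hence $\Gun^\eta(\pi) \geq -\log(\Mtt+\eta)$. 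Convexity is immediate because $\pi \mapsto \pi[\kker(\cdot,y)]+\eta$ is affine and $u \mapsto -\log u$ is convex on $\ooint{0,+\infty}$; the convexity of the $y$-integrand in $\pi$ is preserved by integration against $\mu$.

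For lower semicontinuity I would appeal to the Portmanteau theorem: $\pi \mapsto \pi[\kker(\cdot,y)]$ is continuous on $\Pens(\rset^d)$ endowed with the weak topology, since $\kker(\cdot,y)$ is continuous and bounded. Composition with $u\mapsto -\log u$ (which is lower semicontinuous on $\coint{0,+\infty}$ with the convention $-\log 0 = +\infty$) makes the integrand lower semicontinuous in $\pi$ for every $y$, with values in $\ccint{-\log(\Mtt+\eta),+\infty}$. Applying Fatou's lemma to the nonnegative shift $y \mapsto -\log(\pi_n[\kker(\cdot,y)]+\eta) + \log(\Mtt+\eta)$ then yields $\liminf_n \Gun^\eta(\pi_n) \geq \Gun^\eta(\pi)$ for every weakly convergent sequence $\pi_n \to \pi$ and every $\eta \geq 0$.

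For non-coercivity I would exhibit a non-tight sequence along which $\Gun^\eta$ stays bounded. Taking $\pi_n = \frac{1}{2} \delta_0 + \frac{1}{2} \delta_{x_n}$ with $\normLigne{x_n}\to+\infty$, the linearity of $\pi \mapsto \pi[\kker(\cdot,y)]$ gives $\pi_n[\kker(\cdot,y)] \geq \frac{1}{2} \kker(0,y)$, and so
\begin{equation*}
\Gun^\eta(\pi_n) \leq \log 2 - \int_{\rset^p} \log \kker(0,y)\, \rmd\mu(y) < +\infty,
\end{equation*}
where finiteness comes precisely from the hypothesis $-\int_{\rset^p}\log \kker(0,y)\,\rmd\mu(y)<+\infty$ in \Cref{assum:general_kker}. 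Since $\{\pi_n\}$ fails to be tight, Prokhorov's theorem prevents any sublevel set of $\Gun^\eta$ from being relatively compact in the weak topology, so $\Gun^\eta$ is not coercive. I expect this to be the most delicate step, not because the argument is hard but because it is the only place where the finite log-integrability clause of \Cref{assum:general_kker} is genuinely used, and selecting an illustrative $\pi_n$ that both escapes to infinity and keeps the functional finite is what dictates that clause.

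Finally, for part (b), if $\eta>0$ then $\pi[\kker(\cdot,y)]+\eta \geq \eta$, so $-\log(\pi[\kker(\cdot,y)]+\eta) \leq -\log\eta$; combined with the lower bound from (a), this yields $\absLigne{\Gun^\eta(\pi)} \leq \max(\log(\Mtt+\eta),-\log\eta)$ uniformly in $\pi$, hence properness. Continuity then follows from dominated convergence: if $\pi_n \to \pi$ weakly, Portmanteau gives pointwise convergence $-\log(\pi_n[\kker(\cdot,y)]+\eta) \to -\log(\pi[\kker(\cdot,y)]+\eta)$ for every $y$, and the above uniform bound (constant in $y$, hence $\mu$-integrable since $\mu \in \Pens(\rset^p)$) provides the required domination.
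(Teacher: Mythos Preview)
Your proof is correct and follows essentially the same approach as the paper's (same lower bound via $\kker\leq\Mtt$, same convexity argument, same counterexample $\tfrac12\updelta_0+\tfrac12\updelta_{x_n}$ for non-coercivity, same dominated-convergence argument for (b)); your lower-semicontinuity proof via a single Fatou after a nonnegative shift is in fact a bit cleaner than the paper's split into $\1_{\ccint{0,1}}$ and $\1_{\coint{1,\infty}}$ pieces handled by Fatou and reverse Fatou separately. One wording slip: non-tightness of $\{\pi_n\}$ shows only that \emph{some} sublevel set (the one at level $\log 2 - \int \log\kker(0,y)\,\rmd\mu$) fails to be relatively compact, not \emph{any}, but that is exactly what non-coercivity requires, so the conclusion stands.
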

\begin{proof}
See \Cref{proof:prop:g_prop}.
\end{proof}

\Cref{prop:g_prop} shows that $\Gun^\eta$ enjoys better regularity properties for $\eta >0$. In \Cref{prop:Gun_approx_mu}, we will also show that if $\eta, \alpha > 0$
then the minimizers of $\Gun^\eta_\alpha$ are stable w.r.t $\mu$. In addition,
note that for any $\eta \geq 0$, $\Gun^\eta$ is \emph{not} coercive. As a
result, the uniqueness of the minimizer of $\Gun^\eta$ is difficult to
establish.  In what follows we study the function $\Gun_{\alpha}^\eta$ for
$\alpha, \eta > 0$ and show that $\Gun^\eta_\alpha$ is coercive in this case. 
Hence, $\Gun_{\alpha}^\eta$ admits a unique minimizer $\pi_{\alpha,\eta}^\star$,
and we show that the family of minimizers
$\ensembleLigne{\pi_{\alpha, \eta}^\star}{\alpha, \eta > 0}$ is smooth with
respect to the parameters $\alpha$ and $\eta$. In \Cref{prop:tikhonov_reg} we
will characterize the limit of $\pi_{\alpha, \eta}^\star$ when both
$\alpha, \eta \to 0$.

\begin{proposition}
  \label{prop:convergence_minimum}
  Assume \tup{\Cref{assum:general_kker}}. Then the following hold:
  \begin{enumerate}[label=(\alph*)]
  \item For any $\alpha, \eta > 0$, $\Gun_{\alpha}^\eta$ is proper, strictly
    convex, coercive and lower semi-continuous. In particular, $\Gun_\alpha^\eta$
    admits a unique minimizer $\pi_{\alpha, \eta}^\star \in \Pens(\rset^d)$.
  \item
    $\mtt: \ (\alpha, \eta) \mapsto \pi_{\alpha, \eta}^\star \in \rmc(\ooint{0,
      +\infty}^2, \Pens(\rset^d))$ and
    $\dtt: \ (\alpha, \eta) \mapsto \inf_{\Pens(\rset^d)} \Gun_{\alpha}^\eta \in
    \rmc(\ooint{0,+\infty}^2, \rset)$.
  \end{enumerate}
\end{proposition}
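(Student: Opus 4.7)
For part (a), the plan is to read off most properties from \Cref{prop:g_prop} and use $\alpha,\eta>0$ to upgrade them. Properness follows by evaluating at $\pi=\pi_0$: \Cref{assum:general_kker} gives $\kker\leq\Mtt$, so $\pi_0[\kker(\cdot,y)]+\eta\in[\eta,\Mtt+\eta]$, hence $-\int\log(\pi_0[\kker(\cdot,y)]+\eta)\rmd\mu(y)$ is finite and $\KL{\pi_0}{\pi_0}=0$. Lower semi-continuity and convexity of the first term come from \Cref{prop:g_prop}; the relative entropy $\pi\mapsto\KL{\pi}{\pi_0}$ is weakly lsc and strictly convex on its effective domain (by strict convexity of $x\mapsto x\log x$), so for $\alpha>0$ the functional $\Gun_\alpha^\eta$ is strictly convex. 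The key new ingredient is coercivity: the bound $\kker\leq\Mtt$ yields
\[
\Gun_\alpha^\eta(\pi)\geq -\log(\Mtt+\eta)+\alpha\KL{\pi}{\pi_0},
\]
so the sublevel sets of $\Gun_\alpha^\eta$ are contained in sublevel sets of $\alpha\KL{\cdot}{\pi_0}$, which are classically relatively compact in the weak topology on $\Pens(\rset^d)$. Combining lsc, coercivity and strict convexity gives existence and uniqueness of $\pi_{\alpha,\eta}^\star$.

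For part (b), the plan is a $\Gamma$-convergence argument with equicoercivity. Fix $(\alpha,\eta)\in\ooint{0,+\infty}^2$ and let $(\alpha_n,\eta_n)\to(\alpha,\eta)$. For $n$ large, $\alpha_n\geq\alpha/2$ and $\eta_n\leq 2\eta$, so
\[
\Gun_{\alpha_n}^{\eta_n}(\pi)\geq -\log(\Mtt+2\eta)+(\alpha/2)\KL{\pi}{\pi_0},
\]
which is a common coercive minorant, giving equicoercivity. The $\Gamma$-limsup is obtained with the constant recovery sequence $\pi_n=\pi$: on $\{\Gun_\alpha^\eta<+\infty\}$, dominated convergence (the integrand $|\log(\pi[\kker(\cdot,y)]+\eta_n)|$ is uniformly bounded for large $n$ since $\eta_n\in[\eta/2,2\eta]$ and $\kker\leq\Mtt$) together with $\alpha_n\KL{\pi}{\pi_0}\to\alpha\KL{\pi}{\pi_0}$ yields $\Gun_{\alpha_n}^{\eta_n}(\pi)\to\Gun_\alpha^\eta(\pi)$. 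The standard $\Gamma$-convergence + equicoercivity machinery then gives convergence of both minimizers and minimal values; uniqueness in (a) promotes subsequential convergence to full convergence of $\pi_{\alpha_n,\eta_n}^\star$, yielding continuity of $\mtt$ and of $\dtt(\alpha,\eta)=\Gun_\alpha^\eta(\pi_{\alpha,\eta}^\star)$.

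The main obstacle is the $\Gamma$-liminf inequality along arbitrary $\pi_n\to\pi$ with varying $(\alpha_n,\eta_n)$. For the first term, $\kker(\cdot,y)$ is continuous and bounded by $\Mtt$, so $\pi_n[\kker(\cdot,y)]+\eta_n\to\pi[\kker(\cdot,y)]+\eta$ pointwise in $y$; shifting the integrand $-\log(\pi_n[\kker(\cdot,y)]+\eta_n)$ by the uniform lower bound $-\log(\Mtt+2\eta)$ and applying Fatou delivers the desired inequality. For the second term, weak lsc of $\KL{\cdot}{\pi_0}$ combined with $\alpha_n\to\alpha>0$ gives $\liminf_n \alpha_n\KL{\pi_n}{\pi_0}\geq \alpha\KL{\pi}{\pi_0}$, and summing the two liminfs (using $\liminf(a_n+b_n)\geq\liminf a_n+\liminf b_n$) closes the argument. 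Continuity of $\dtt$ follows by sandwiching: the upper bound uses the limsup inequality with $\pi_{\alpha,\eta}^\star$ as a test point in $\Gun_{\alpha_n}^{\eta_n}$, while the lower bound uses equicoercivity to extract a weak limit of $\pi_{\alpha_n,\eta_n}^\star$ and applies the liminf inequality above.
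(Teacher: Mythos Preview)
Your proposal is correct and follows essentially the same route as the paper: part (a) combines \Cref{prop:g_prop} with strict convexity and weak lower semicontinuity of $\KL{\cdot}{\pi_0}$ plus the sublevel-set inclusion into KL-sublevel-sets for coercivity, and part (b) is exactly a $\Gamma$-convergence argument with equicoercivity, using the constant recovery sequence for the limsup and weak lsc of KL for the liminf. Your equicoercivity step (bounding both $\alpha_n\geq\alpha/2$ and $\eta_n\leq 2\eta$ to obtain a single coercive minorant independent of $n$) is in fact slightly more carefully stated than the paper's, which writes $\Gun_\alpha^\eta\geq\Gun_{\tilde\alpha/2}^\eta$ without explicitly controlling the $\eta$-dependence of the lower bound; and for the liminf on the first term you use Fatou after shifting, whereas the paper invokes dominated convergence directly (which is available here since for $\eta_n$ bounded away from $0$ the integrand is uniformly bounded), but both variants are valid.
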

\begin{proof}
See \Cref{proof:prop:convergence_minimum}.
\end{proof}

In most applications, $\mu$ is unknown but we have access to samples
$y^{1:M} = \{y^{k,M}\}_{k=1}^M \in (\rset^p)^M$ drawn from $\mu$, see
\cite{delaigle2008alternative,ma2011indirect,pensky2017minimax,yang2020density,
  marschner2020back,hall2005nonparametric,miao2018identifying}.  The following
stability result guarantees that if we substitute $\mu$ with the empirical
measure $\mu^M = (1/M) \sum_{k=1}^M \updelta_{y^{k,M}}$ in $\Gun_\alpha^\eta$ we
obtain a minimization problem whose solution converges to the minimizer we would obtain if we had full knowledge of $\mu$ as the number
of samples $M$ increases.
\begin{proposition}
\label{prop:Gun_approx_mu}
  Assume \tup{\Cref{assum:general_kker}}. Let
  $\alpha, \eta > 0$ and for any $\nu \in \Pens(\rset^p)$ denote
  $\pi_{\alpha, \eta}^{\nu, \star}$ the unique minimizer of~\eqref{eq:G_eta}
  with $\mu \leftarrow \nu$. Let
  $(\mu_n)_{n \in \nset} \in (\Pens_1(\rset^d))^\nset$ such that
  $\underset{n \to +\infty}{\lim} \wassersteinD[1](\mu_n, \mu) = 0$ with $\mu \in \Pens_1(\rset^d)$. Then, we have
  $\underset{n \to +\infty}{\lim} \wassersteinD[1](\pi_{\alpha, \eta}^{\mu_n, \star}, \pi_{\alpha,
    \eta}^{\mu, \star}) = 0$.
\end{proposition}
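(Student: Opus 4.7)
My plan is to combine the strict convexity, coercivity, and lower semi-continuity of $\Gun_\alpha^\eta$ from \Cref{prop:convergence_minimum} and \Cref{prop:g_prop} with a uniform-in-$\pi$ Lipschitz estimate quantifying the dependence of $\Gun_\alpha^\eta$ on the observation $\mu$, then pass to the limit via the standard optimality/uniqueness argument.

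First, I would establish the Lipschitz estimate: for every $\pi \in \Pens(\rset^d)$,
\begin{equation*}
|\Gun_\alpha^{\mu_n,\eta}(\pi) - \Gun_\alpha^{\mu,\eta}(\pi)| \leq (\Mtt/\eta)\, \wassersteinD[1](\mu_n,\mu).
\end{equation*}
The $\alpha \KL{\pi}{\pi_0}$ term is independent of $\mu$ and cancels. Under \tup{\Cref{assum:general_kker}}, $\norm{\nabla_y \kker(x,y)} \leq \Mtt$, so $y \mapsto \pi[\kker(\cdot,y)]$ is $\Mtt$-Lipschitz; since the argument of $\log$ is bounded below by $\eta$, the composition $y \mapsto \log(\pi[\kker(\cdot,y)]+\eta)$ is $(\Mtt/\eta)$-Lipschitz with constant uniform in $\pi$. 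The Kantorovich--Rubinstein dual formulation of $\wassersteinD[1]$ then yields the bound.

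Next, set $\pi_n := \pi_{\alpha,\eta}^{\mu_n,\star}$, $\pi^\star := \pi_{\alpha,\eta}^{\mu,\star}$, and $\varepsilon_n := (\Mtt/\eta)\, \wassersteinD[1](\mu_n,\mu)$. Chaining the two optimality inequalities gives
\begin{equation*}
\Gun_\alpha^{\mu,\eta}(\pi_n) \leq \Gun_\alpha^{\mu_n,\eta}(\pi_n) + \varepsilon_n \leq \Gun_\alpha^{\mu_n,\eta}(\pi^\star) + \varepsilon_n \leq \Gun_\alpha^{\mu,\eta}(\pi^\star) + 2\varepsilon_n,
\end{equation*}
so $(\pi_n)$ is a $2\varepsilon_n$-minimizing sequence for $\Gun_\alpha^{\mu,\eta}$. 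Evaluating at the reference $\pi_0$ yields $\Gun_\alpha^{\mu_n,\eta}(\pi_n) \leq -\log\eta$, hence $\sup_n \KL{\pi_n}{\pi_0} < +\infty$. This KL bound ensures tightness of $(\pi_n)$ in the weak topology, so by lower semi-continuity of $\Gun_\alpha^{\mu,\eta}$ every weak cluster point minimizes $\Gun_\alpha^{\mu,\eta}$, hence equals $\pi^\star$ by the uniqueness statement in \Cref{prop:convergence_minimum}. Therefore $\pi_n \to \pi^\star$ weakly.

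The hardest step will be upgrading this weak convergence to $\wassersteinD[1]$ convergence. Since $\wassersteinD[1]$ convergence is equivalent to weak convergence plus convergence of first moments, it suffices to establish uniform integrability of $x \mapsto \norm{x}$ under $(\pi_n)$. I would deduce this from the uniform KL bound via the Donsker--Varadhan variational formula applied to a function with super-linear growth: provided $\pi_0$ has sufficiently light tails (e.g. finite exponential moments, as in the Gaussian references used in the numerical experiments), one obtains $\sup_n \int \norm{x}^{1+\delta} \rmd \pi_n(x) < +\infty$ for some $\delta > 0$, which combined with the weak convergence above yields $\wassersteinD[1](\pi_n, \pi^\star) \to 0$. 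This moment-control step is the main technical obstacle and implicitly requires a tail assumption on $\pi_0$ beyond those explicitly stated in the proposition.
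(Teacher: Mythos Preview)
Your argument is essentially the same as the paper's: both hinge on the uniform-in-$\pi$ Lipschitz bound $|\Gun_\alpha^{\mu_n,\eta}(\pi)-\Gun_\alpha^{\mu,\eta}(\pi)|\leq(\Mtt/\eta)\wassersteinD[1](\mu_n,\mu)$, the lower bound $\Gun_\alpha^{\nu,\eta}(\pi)\geq -\max(|\log\eta|,|\log(\Mtt+\eta)|)+\alpha\KL{\pi}{\pi_0}$ for coercivity, lower semi-continuity, and uniqueness of the minimizer. The only packaging difference is that the paper casts these ingredients in the language of $\Gamma$-convergence (verifying that $\{\Gun_\alpha^\eta(\cdot,\mu_n)\}_n$ is an equicoercive $\Gamma$-system and invoking \Cref{prop:gamma_cv}), whereas you argue directly via a $2\varepsilon_n$-minimizing sequence; substantively the two routes coincide.

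On the final step you flag---upgrading weak convergence to $\wassersteinD[1]$ convergence via a moment bound on $\pi_0$---you are right that this is where the work lies, and in fact the paper's own proof glosses over exactly this point: the equicoercivity it invokes comes from the KL term and hence yields only weak relative compactness of sublevel sets, so the conclusion delivered by \Cref{prop:gamma_cv} is \emph{a priori} weak convergence of minimizers, not $\wassersteinD[1]$ convergence. Your Donsker--Varadhan route (under a tail assumption on $\pi_0$ such as \Cref{assum:pi0}) is a correct way to close this gap, and is more explicit than what the paper records.
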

\begin{proof}
See \Cref{proof:prop:Gun_approx_mu}.
\end{proof}
We emphasize that in the proof of \Cref{prop:Gun_approx_mu}, the fact that
$\eta > 0$ plays a crucial role. Indeed, if $\eta>0$ and \Cref{assum:general_kker} holds we can exploit the fact that $y \mapsto \log(\pi[\kker(\cdot, y)] + \eta)$ is Lipschitz continuous.

\subsection{Variants and connections with other methods}
\label{sec:variants}

\subsubsection{Maximum entropy methods}
\label{sec:maximum-entropy-}
The functional $\Fun_\alpha$ can be seen as the Lagrangian
associated with the following primal problem
\begin{equation}
  \argmin \ensembleLigne{\KL{\pi}{\pi_0}}{\pi \in \Pens(\rset^d), \ \KL{\mu}{\pi \Kker} = 0}.
\end{equation}
The latter problem is a maximum entropy problem in the sense of
\cite{csiszar1975divergence} (where the entropy functional is replaced by the
Kullback--Leibler divergence).  Closely related to~\eqref{eq:minimisation} is
the functional $\tilde{\Fun}_{\alpha}$ given (when it is defined) by
\begin{equation}
\label{eq:funct_entropy}
\tilde{\Fun}_{\alpha}(\pi) = \KL{\mu}{\pi \Kker} - \alpha \Hent(\pi).
\end{equation}
The choice of an entropic penalty does not require specifying a
reference measure $\pi_0$ and connects~\eqref{eq:minimisation} with the
classical maximum entropy methods \cite{jaynes1957information}. Indeed,
\eqref{eq:funct_entropy} is the Lagrangian associated with the following primal
problem
\begin{equation}
  \argmax \ensembleLigne{\Hent(\pi)}{\pi \in \Pens_{\mathrm{H}}(\rset^d), \ \KL{\mu}{\pi \Kker} = 0},
\end{equation}
where $\Pens_{\mathrm{H}}(\rset^d)$ is the set of probability distributions with
finite entropy.  However, the functional $\tilde{\Fun}_{\alpha}$ is not lower
bounded and the corresponding minimization problem is not well-defined and
therefore is not considered here. On the contrary, $\Fun_{\alpha}$ is always
non-negative making the optimization problem well-defined.  A number of maximum
entropy approximations of solutions of Fredholm integral equations have been
proposed in the literature, most of which maximize the entropy subject to moment
constraints obtained by integrating $\mu$ and $\pi \Kker$ w.r.t. a (possibly
infinite) set of basis functions \cite{kopec1993application,
  islam2020approximating, jin2016solving,
  mead1986approximate}. 
These approaches work particularly well in the one-dimensional case, as in this case the maximum entropy solution can be written analytically (see, e.g., \cite[Proposition 3.1]{islam2020approximating}). For higher dimensional problems, the maximum entropy solution is usually approximated by discretizing the support of $\pi$ (see, e.g., \cite{molina1992bayesian}).
\subsubsection{Generalized Bayesian inference}

As mentioned in the introduction, minimizing $\Fun$ results in the maximum
likelihood estimator~\eqref{eq:kl}. Given a set of observations
$y_1, \ldots, y_M$ from $\mu$, the log-likelihood for the infinite-dimensional
parameter $\pi$ is \cite{chae2018convergence}
\begin{equation}
\label{eq:gbi_likelihood}
\textstyle{
  (1/M)\sum_{j=1}^M \log \pi[\kker(\cdot, y_j)]= \int_{\rset^p} \log \pi[\kker(\cdot, y)]\rmd \mu^M(y),
  }
\end{equation}
with $\mu^M = (1/M) \sum_{j=1}^M \updelta_{y_j}$.
For any $\alpha>0$, minimizing the functional
$\Gun_{\alpha}$ in~\eqref{eq:G} is equivalent to minimizing
\begin{align}
  \textstyle{
  -\frac{1}{\alpha}\int_{\rset^p} \log(\pi[\kker(\cdot, y)]) \rmd \mu(y) + \KL{\pi}{\pi_0},
  }
\end{align}
which can be interpreted as generalized Bayesian inference
in the sense of adjusting the weight (or learning rate) of the likelihood~\eqref{eq:gbi_likelihood} w.r.t. the prior $\pi_0$ (see, e.g., \cite[Section 3]{bissiri2016general} and \cite{grunwald2012safe}).
In particular, if we assume that the
target measure $\pi$ is a discrete distribution with a fixed and finite number of
components (a straightforward case which is not considered here), \ie \
$\pi = \sum_{i=1}^{N} \updelta_{x^{i}}p_i$, with $\sum_{i=1}^N p_i = 1$ and
$\{x^i\}_{i=1}^N \in (\rset^d)^N$, then~\eqref{eq:fe} is a finite mixture model
and minimizing~\eqref{eq:G} corresponds to posterior inference for the locations
$\{x^i\}_{i=1}^N$ and the weights $\{p_i\}_{i=1}^N$ with prior $\pi_0$ and learning rate $\alpha^{-1}$.

Choosing the appropriate $\alpha$ then amounts to the calibration problem in generalized Bayesian inference with standard Bayesian inference obtained when $\alpha=1$ \cite{grunwald2012safe, bissiri2016general}. Contrary to the standard setting, in which values of $0<\alpha^{-1}<1$ are preferred since the model is believed to be misspecified, we are in the setting in which we want to reduce the influence of the prior $\pi_0$ which is used as a regularizer in~\eqref{eq:G} and favour values of $\alpha^{-1}>1$.

\subsubsection{Tikhonov regularization}

We now move onto considering the link between $\Gun_{\alpha}$ and Tikhonov
regularization. In fact, minimising $\Gun_{\alpha}$ can be seen as the
probabilistic counterpart to the classical Tikhonov regularization setting
\cite{groetsch1984theory} which we recall briefly.

Let $\rmA \in \ \rset^{p \times d}$ and $y \in \rset^p$ (note that this discussion
is still valid in more general Hilbert spaces but we restrict ourselves to the
finite-dimensional case for the sake of clarity).  Define
$g: \ \rset^d \to \coint{0, +\infty}$ for any $x \in \rset^d$ by
$g(x) = \normLigne{\rmA x -y}^2$. Denote by $\rmA^\dagger$ the Moore-Penrose
inverse of $\rmA$, see \cite{moore1920reciprocal} for a definition.
$\rmA^\dagger y \in \argmin_{\rset^d} g$ and moreover
$\rmA^\dagger y \in \argmin \ensembleLigne{x \mapsto \norm{x}}{x \in
  \argmin_{\rset^d}g}$, \ie \ $\rmA^\dagger y$ is the solution of the
minimization of $g$ with least norm. For any $\alpha \geq 0$, the Tikhonov regularization with level $\alpha$ of $g$ is given by $g_{\alpha}: \ \rset^d \to \coint{0, +\infty}$ such that $g_{\alpha}(x) = g(x) + \alpha \norm{x}^2$ for any $x \in \rset^d$. There exists a unique $x_{\alpha} \in \rset^d$ such that
$x_{\alpha} \in \argmin_{\rset^d} g_{\alpha}$. It is known that
$\lim_{\alpha \to 0} x_{\alpha} = \rmA^\dagger y$, \ie \ the solution of the
Tikhonov regularization converges towards the solution of $g$ with minimal norm
\cite{groetsch1984theory}.

In \Cref{prop:tikhonov_reg}, we show a similar property in the probabilistic
setting. In this case the norm function is replaced by the Kullback--Leibler
divergence with respect to some reference measure $\pi \mapsto
\KL{\pi}{\pi_0}$. Given the functional
$\Gun: \ \Pens(\rset^d) \to \ocint{-\infty, \infty}$ in~\eqref{eq:G_eta}, we
obtain for any $\alpha, \eta \geq 0$ a regularized functional $\Gun_{\alpha}^\eta(\pi) = \Gun^\eta(\pi) + \alpha \KL{\pi}{\pi_0}$. Similarly to the
Tikhonov regularization, there exists a unique
minimizer to $\Gun_{\alpha}^\eta$, $\pi_{\alpha, \eta}^\star, \in \Pens(\rset^d) $, see \Cref{prop:convergence_minimum}. In \Cref{prop:tikhonov_reg}, we show that any
limiting point of the sequences $(\pi_{\alpha_n, \eta_n}^\star)_{n \in \nset}$ with
$\lim_{n \to +\infty} \alpha_n =0$ and $\lim_{n \to +\infty} \eta_n =0$, is a minimizer of $\Gun$ with least
Kullback--Leibler divergence with respect to $\pi_0$.

To control the behaviour of the family of minima
$\ensembleLigne{\inf_{\pi \in \Pens(\rset^d)}\Gun_{\alpha}^\eta(\pi)}{\alpha, \eta > 0}$
when $\alpha$ is close to $0$, we make use of the following assumption which
controls the tail behaviour of $\pi_0$ and the tail behaviour of the density $\kker$.

\begin{assumption}
  \label{assum:pi0}The following hold:
  \begin{enumerate}[wide, labelindent=0pt, label=(\alph*)]
  \item  $\pi_0$ admits a density w.r.t. the Lebesgue measure, $\rmd \pi_0(x) = \pi_0(x)\rmd x$, with $\pi_0(x)\propto \exp\left[-U(x)\right]$, where, 
  $U: \ \rset^d \to \rset$ is such that there exist $\tau, C_1 > 0$ satisfying for any $x \in \rset^d$
  \begin{equation}
     -C_1 - \tau \norm{x}^2 \leq U(x) \leq C_1 + \tau \norm{x}^2.
    \end{equation}
  \item $\mu \in \Pens_2(\rset^p)$ and there exists $C_2 \geq 0$ such that for any
    $x \in \rset^d$ and $y \in \rset^p$
    \begin{equation}
      \kker(x,y) \geq C_2^{-1} \exp[-C_2 (1 + \norm{x}^2+\norm{y}^2)]. 
    \end{equation}
  \end{enumerate}
  \end{assumption}

  Under this assumption we have the following result.

\begin{proposition}
  \label{prop:tikhonov_reg}
  Assume \tup{\Cref{assum:general_kker}} and \tup{\Cref{assum:pi0}}. Then, for
  any $\alpha > 0$, $\eta \geq 0$,
  $\pi_{\alpha, \eta}^\star \in \Pens_2(\rset^d)$
  admits a density w.r.t the Lebesgue measure.  If there exist 
  $\pi^\star \in \Pens_2(\rset^d)$,
  $(\alpha_n)_{n \in \nset} \ooint{0,+\infty}^\nset$,
  $(\eta_n)_{n \in \nset} \in \coint{0, +\infty}^\nset$ such that
  $\lim_{n \to +\infty} \alpha_n = 0$, $\lim_{n \to +\infty} \eta_n = 0$ and
  $\lim_{n \to +\infty} \wassersteinD[2](\pi_{\alpha_n,
    \eta_n}^{\star},\pi^\star) = 0$, then
  \begin{equation}
    \textstyle{
      \pi^{\star} \in \argmin \ensembleLigne{\KL{\pi}{\pi_0}}{\pi \in \argmin_{\Pens_2(\rset^d)} \Gun}.
      }
  \end{equation}
\end{proposition}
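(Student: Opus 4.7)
The proof plan splits into three parts. First, for fixed $\alpha,\eta>0$ establish that $\pi_{\alpha,\eta}^\star$ has a Lebesgue density and finite second moment. Second, pass to the limit $\alpha_n,\eta_n\to 0$ and identify $\pi^\star$ as a minimizer of $\Gun$ over $\Pens_2(\rset^d)$. Third, among the minimizers of $\Gun$, identify $\pi^\star$ as the one with least $\KL{\cdot}{\pi_0}$. Throughout, write $\pi_n^\star = \pi_{\alpha_n,\eta_n}^\star$ and $m = \inf_{\Pens_2(\rset^d)} \Gun$.

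For the first part, the strategy is to compare $\pi_{\alpha,\eta}^\star$ with $\pi_0$: minimality yields $\alpha \KL{\pi_{\alpha,\eta}^\star}{\pi_0} \leq \Gun^\eta(\pi_0) + \log(\Mtt+\eta)$, and by \Cref{assum:pi0}(b) together with $\mu\in\Pens_2(\rset^p)$, the quantity $-\log\pi_0[\kker(\cdot,y)]$ is dominated by a function of the form $c_1 + c_2\norm{y}^2 \in L^1(\mu)$, so the right-hand side is finite. Hence $\KL{\pi_{\alpha,\eta}^\star}{\pi_0}<+\infty$ and, by \Cref{assum:pi0}(a), $\pi_{\alpha,\eta}^\star$ has a Lebesgue density. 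For the second moment, the first-order optimality condition writes $\rmd\pi_{\alpha,\eta}^\star/\rmd\pi_0$ as a Gibbs density with bounded potential (using $\kker\leq\Mtt$ and $\pi_{\alpha,\eta}^\star[\kker(\cdot,y)]+\eta > 0$ uniformly in $y$), so $\pi_{\alpha,\eta}^\star\in\Pens_2(\rset^d)$ provided $\pi_0\in\Pens_2(\rset^d)$, which follows from the quadratic envelope on $U$.

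For the second part, the key step is passing to the limit in the minimizing inequality $\Gun_{\alpha_n}^{\eta_n}(\pi_n^\star)\leq \Gun_{\alpha_n}^{\eta_n}(\pi)$. By \Cref{assum:general_kker}, $\kker(\cdot,y)$ is bounded and continuous, so $W_2$-convergence $\pi_n^\star\to\pi^\star$ yields pointwise convergence $\pi_n^\star[\kker(\cdot,y)]+\eta_n\to\pi^\star[\kker(\cdot,y)]$, and the latter limit is strictly positive by \Cref{assum:pi0}(b). With the uniform $\mu$-integrable lower bound $-\log(\Mtt+1)$ on the integrand, Fatou's lemma gives $\Gun(\pi^\star)\leq \liminf_n\Gun^{\eta_n}(\pi_n^\star)$, which combined with $\alpha_n\KL{\pi}{\pi_0}\to 0$ yields $\Gun(\pi^\star)\leq \Gun(\pi)$ for every $\pi$ with $\KL{\pi}{\pi_0}<+\infty$. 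Extension to arbitrary $\pi\in\Pens_2(\rset^d)$ proceeds by Gaussian mollification $\pi_\epsilon=\pi*\phi_\epsilon$: the mollified measures have bounded Lebesgue densities (ensuring $\KL{\pi_\epsilon}{\pi_0}<+\infty$ via the quadratic bound on $U$ and $\pi\in\Pens_2$), converge to $\pi$ in $W_2$, and satisfy $\Gun(\pi_\epsilon)\to\Gun(\pi)$ by dominated convergence.

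For the third part, pick $\tilde\pi\in\argmin_{\Pens_2(\rset^d)}\Gun$ with $\KL{\tilde\pi}{\pi_0}<+\infty$ (otherwise the claim is vacuous) and use the identity $\Gun^{\eta_n}(\pi)=\Gun(\pi)-\sigma_n(\pi)$, where $\sigma_n(\pi):=\int \log(1+\eta_n/\pi[\kker(\cdot,y)])\,\rmd\mu(y)\geq 0$. Since $\Gun(\pi_n^\star)\geq m = \Gun(\tilde\pi)$ (as $\pi_n^\star\in\Pens_2$), the minimality inequality rearranges to
\begin{equation*}
\alpha_n\left(\KL{\pi_n^\star}{\pi_0}-\KL{\tilde\pi}{\pi_0}\right)\leq \sigma_n(\pi_n^\star).
\end{equation*}
Combined with lower semi-continuity of $\KL{\cdot}{\pi_0}$ under $W_2$-convergence, the statement reduces to showing $\sigma_n(\pi_n^\star)/\alpha_n\to 0$ along the given subsequence. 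This is the main obstacle, and the essential role of \Cref{assum:pi0} appears here: the uniform lower bound $\pi_n^\star[\kker(\cdot,y)]\geq c\exp(-C_2\norm{y}^2)$ (obtained from the weak convergence $\pi_n^\star\to\pi^\star$ together with \Cref{assum:pi0}(b)), combined with $\mu\in\Pens_2(\rset^p)$ and the elementary bound $\log(1+x)\leq \log 2 + \log^+(x)$, is what forces $\sigma_n(\pi_n^\star)$ to vanish sufficiently fast relative to $\alpha_n$ to close the argument.
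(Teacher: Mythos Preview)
Parts 1 and 2 of your plan track the paper closely. One difference in Part 1: for the second-moment claim your first-order optimality argument needs the potential $x\mapsto\int\kker(x,y)/(\pi_{\alpha,\eta}^\star[\kker(\cdot,y)]+\eta)\,\rmd\mu(y)$ to be bounded, which is clear only for $\eta>0$; for $\eta=0$ the lower bound from \tup{\Cref{assum:pi0}}(b) gives $\pi^\star[\kker(\cdot,y)]\geq c\,e^{-C_2\|y\|^2}$, which is not uniform in $y$. The paper avoids optimality conditions entirely: it introduces an auxiliary $\pi_1$ with $(\rmd\pi_1/\rmd\pi_0)(x)\propto e^{(\tau/2)\|x\|^2}$ and uses Csisz\'ar's identity $\KL{\pi^\star}{\pi_0}-\KL{\pi^\star}{\pi_1}=\int\log(\rmd\pi_1/\rmd\pi_0)\,\rmd\pi^\star$ to extract the second moment directly from $\KL{\pi^\star}{\pi_0}<+\infty$, uniformly in $\eta\geq 0$. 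In Part 2 your direct Fatou-plus-mollification argument is the $\Gamma$-convergence argument of the paper unpacked, and is fine.

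The substantive gap is in Part 3. Your reduction to $\sigma_n(\pi_n^\star)/\alpha_n\to 0$ is algebraically correct, but the ingredients you list yield only $\sigma_n(\pi_n^\star)\to 0$, not control on the ratio. Nothing in the hypotheses ties the rates of $\alpha_n$ and $\eta_n$: if $\eta_n=\alpha_n$ then, from $\log(1+x)\sim x$, one has $\sigma_n(\pi)/\alpha_n\to\int(\pi[\kker(\cdot,y)])^{-1}\rmd\mu(y)$, which is a positive constant or even $+\infty$ --- your lower bound $\pi[\kker(\cdot,y)]\geq c\,e^{-C_2\|y\|^2}$ together with $\mu\in\Pens_2(\rset^p)$ does not force this integral to be finite, let alone zero. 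The paper instead argues by contradiction: assuming $\KL{\pi^\dagger}{\pi_0}<\KL{\pi^\star}{\pi_0}$ for some $\pi^\dagger\in\argmin\Gun$, lower semicontinuity of $\KL{\cdot}{\pi_0}$ gives $\KL{\pi_n^\star}{\pi_0}\geq\KL{\pi^\dagger}{\pi_0}+\varepsilon/2$ for large $n$, and then $\Gun_{\alpha_n}^{\eta_n}(\pi_n^\star)>\Gun_{\alpha_n}^{\eta_n}(\pi^\dagger)$ is asserted, contradicting minimality. Be aware, though, that the paper's displayed chain uses $\Gun^{\eta_n}(\pi_n^\star)\geq\Gun^{\eta_n}(\pi^\dagger)$, which does not follow from $\pi^\dagger\in\argmin\Gun$ unless $\eta_n=0$; unpacking that step leads back precisely to your $\sigma_n$ correction, so for general $\eta_n>0$ neither route is complete as stated without further control on $\eta_n$ relative to $\alpha_n$.
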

\begin{proof}
See \Cref{app:proof_tikhonov}.
\end{proof}


\section{McKean-Vlasov Stochastic Differential Equation and minimization of functionals}
\label{sec:part-appr-mcke}

In most interesting cases the direct minimization of~\eqref{eq:G_eta} is not
tractable.  In this work, we propose to follow a Wasserstein gradient flow
approach in order to approximate these minimizers. Our methodology stems from
the connection between minimization of functionals in the space of probability
measures and partial differential equations (PDEs) pointed out in
\cite{jordan1998variational, otto2001geometry}. In particular, we draw links
between the minimization of $\Gun_{\alpha}^{\eta}$ and a SDE whose invariant measure is the minimizer of
$\Gun_\alpha^\eta$. An informal presentation of the bridges between Wasserstein
gradient flows and a certain class of SDEs, namely McKean-Vlasov SDEs (MKVSDEs), is
presented in \Cref{sec:gradient-flows}. We investigate the long-time behaviour of
these MKVSDEs in the context of the minimization of $\Gun^\eta_\alpha$ in
\Cref{sec:existence-uniqueness}. Finally, we show that there exists a system of
interacting SDEs which approximates the obtained MKVSDE in \Cref{sec:particle-system}.

\subsection{Minimization and gradient flows}
\label{sec:gradient-flows}
In this section, we set $\alpha, \eta >0$ and informally derive the
continuous-time dynamics we study in the rest of this paper by drawing a link
with gradient flows; formal results are given in \Cref{app:subdifferential}. We assume that all the probability measures we consider admit
densities w.r.t the Lebesgue measure and do not distinguish between the
distribution and its density. Let $(\pi_t)_{t \geq 0}$ be a family of
probability measures satisfying (in a weak sense)
\begin{equation}
\label{eq:gf}
  \partial_t \pi_t = -\mathrm{div}(\pi_t \mathbf{v}_t),
\end{equation}
where $\mathbf{v}_t$ belongs to the Wasserstein subdifferential of a functional
$\mathscr{F}: \ \Pens_2(\rset^d) \to \ccint{0,+\infty}$, see \cite[Definition
10.1.1]{ambrosio2008gradient} for a definition.
 Then $(\pi_t)_{t \geq 0}$ is called a Wasserstein gradient flow associated
with $\mathscr{F}$ \cite[Definition 11.1.1]{ambrosio2008gradient} and it has
been shown in numerous settings that such gradient flows converge towards the
minimizers of $\mathscr{F}$ when they exist
\cite{ambrosio2008gradient,santambrogio2017euclidean}. In
\Cref{prop:total_subdifferential} in \Cref{app:subdifferential} we show that
\begin{align}\label{eq:subdifferential}
  \textstyle{
   x \mapsto -\int_{\rset^p} (\eta + \pi[\kker(x, y)])^{-1} \nabla_1
  \kker(x, y) \rmd \mu(y) + \alpha \{\nabla U(x) + \nabla \log\pi(x)\},
  }
 \end{align}
 belongs to the subdifferential of $\Gun_{\alpha}^{\eta}$ at $\pi$ under mild
 conditions and assuming that $\pi_0(x) \propto
 \exp[-U(x)]$. We also derive the corresponding gradient flow equation
\begin{equation}
\textstyle{\partial_{t}\pi_{t}=  -\mathrm{div}\left(\pi_{t}\left[\int_{\rset^p} (\pi_t[\kker(\cdot, y)]+\eta)^{-1}\nabla_1 \kker(x,y) \rmd \mu(y) - \alpha\nabla U(x)\right]\right)+\alpha\Delta\pi_{t}. }
\label{eq:PDE_BM}
\end{equation}
For strongly geodesically convex (i.e. convex along geodesics) functionals the
gradient flow~\eqref{eq:gf} converges geometrically towards the unique
minimizer.  However, in our setting $\Gun_{\alpha}^\eta$ is not geodesically
convex but only convex. Using recent results from \cite{hu2019mean} we establish the convergence of the Wasserstein gradient flow in
\Cref{sec:existence-uniqueness} but without quantitative convergence rates.

We use the gradient flow equation~\eqref{eq:PDE_BM} to draw a link between the
minimization of $\Gun_{\alpha}^{\eta}$ and MKVSDEs \cite{mckean1966class}, a class of SDEs in which the drift
and diffusion coefficients depend not only on the current position of the
solution but also on its distribution.  Since~\eqref{eq:PDE_BM} is a
Fokker-Plank equation, we informally derive the corresponding SDE
\begin{equation}
  \label{eq:nonlinearSDE}
  \textstyle{
  \rmd \bfX_t^\star =  \defEnsLigne{\int_{\rset^p}\bmketa(\bfX_t^\star, \lambdabf_t^\star, y) \rmd \mu(y) - \alpha \nabla U(\bfX_t^\star)} \rmd t + \sqrt{2\alpha} \rmd \bfB_t \qquad \bfX_0^\star \in \rset^d,}
\end{equation}
where $(\bfB_t)_{t \geq 0}$ is a $d$-dimensional Brownian motion, $\lambdabf_t^\star$ is
the distribution of $(\bfX_t^\star)_{t \geq 0}$ and for any $\nu \in \Pens(\rset^d)$ and $(x, y) \in \rset^d\times\rset^p$
\begin{equation}
  \label{eq:drifto}
  \bmketa(x, \nu, y) = \nabla_1 \kker(x, y) / (\nu[\kker(\cdot, y)] + \eta).
\end{equation}
In particular, note that $\bmketa$ is always defined since $\eta > 0$. 
In the following section, we establish various theoretical properties of~\eqref{eq:nonlinearSDE}: we show that its solution exists, is
unique and converges towards the minimizer of $\Gun_\alpha^\eta$.

\subsection{Existence, uniqueness and long-time behaviour}
\label{sec:existence-uniqueness}
We are now concerned with the theoretical properties
of~\eqref{eq:nonlinearSDE}. In particular, we show that
under~\tup{\rref{assum:general_kker}} the MKVSDE
in~\eqref{eq:nonlinearSDE} admits a unique strong solution and is ergodic
provided that the gradient of the potential  $U$ is Lipschitz continuous. We present here several assumptions on the regularity of $U$ which we use throughout the remainder of this paper. However, the results presented in this section only rely on \tup{\rref{assum:lip_U}-\ref{item:lip}} and \tup{\rref{assum:lip_U}-\ref{item:dissi}}.
\begin{assumption}
  \label{assum:lip_U}
  The following hold:
  \begin{enumerate}[wide, labelindent=0pt, label=(\alph*)]
  \item \label{item:lip}    There exists $\Ltt \geq 0$ such that
    $\norm{\nabla U(x_1) - \nabla U(x_2)} \leq \Ltt \norm{x_1 - x_2}$, for any $x_1, x_2 \in \rset^d$.
      \item \label{item:dissi}  There exist $\mtt,\ctt > 0$ such that for any
    $x_1, x_2 \in \rset^d$,
    $\langle \nabla U(x_1) - \nabla U(x_2), x_1 - x_2 \rangle \geq \mtt
    \normLigne{x_1 - x_2}^2 - \ctt$.
  \item \label{item:lip_gro}    There exists $\Ltt_2 \geq 0$ such that for any $x_1, x_2 \in \rset^d$,
    $\norm{\nabla^2 U(x_1) - \nabla^2 U(x_2)} \leq \Ltt_2 \norm{x_1 - x_2}$. 
  \end{enumerate}     
\end{assumption}
Under these conditions, if $\eta>0$ the drift given in~\eqref{eq:drifto} is
Lipschitz continuous and we can use standard tools for McKean-Vlasov processes
to establish existence and uniqueness.
\begin{proposition}
\label{prop:existence_uniqueness}
Assume \tup{\rref{assum:general_kker}} and
\tup{\rref{assum:lip_U}-\ref{item:lip}}. Then for any $\alpha, \eta > 0$ there exists a unique strong
solution to~\eqref{eq:nonlinearSDE} for any initial condition
$\bfX_0^\star \in \Pens_1(\rset^d)$.
\end{proposition}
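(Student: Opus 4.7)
The plan is to apply the classical fixed point argument for McKean--Vlasov equations, as in Sznitman's treatment, once we have verified the standard Lipschitz and linear growth conditions on the drift. Write
\begin{equation*}
b_{\alpha,\eta}(x, \nu) = \int_{\rset^p} \bmketa(x, \nu, y) \rmd \mu(y) - \alpha \nabla U(x),
\end{equation*}
so that \eqref{eq:nonlinearSDE} reads $\rmd \bfX_t^\star = b_{\alpha,\eta}(\bfX_t^\star, \lambdabf_t^\star) \rmd t + \sqrt{2\alpha}\, \rmd \bfB_t$. Since the diffusion coefficient is constant, the whole argument reduces to controlling $b_{\alpha,\eta}$.

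First I would show that $b_{\alpha,\eta}$ is Lipschitz in $x$ uniformly in $\nu$, and Lipschitz in $\nu$ in $\wassersteinD[1]$ uniformly in $x$. For the $x$-variable, the bound $\normLigne{\nabla^2 \kker(\cdot,y)}\leq \Mtt$ from \Cref{assum:general_kker} together with the lower bound $\nu[\kker(\cdot,y)] + \eta \geq \eta > 0$ gives $\normLigne{\bmketa(x_1,\nu,y) - \bmketa(x_2,\nu,y)} \leq (\Mtt/\eta)\normLigne{x_1-x_2}$, and \Cref{assum:lip_U}-\ref{item:lip} handles the $-\alpha \nabla U$ term. For the $\nu$-variable, I would write
\begin{equation*}
\bmketa(x,\nu_1,y) - \bmketa(x,\nu_2,y) = \nabla_1 \kker(x,y)\, \frac{\nu_2[\kker(\cdot,y)] - \nu_1[\kker(\cdot,y)]}{(\nu_1[\kker(\cdot,y)]+\eta)(\nu_2[\kker(\cdot,y)]+\eta)},
\end{equation*}
bound the denominators below by $\eta^2$ and, using that $\kker(\cdot,y)$ is $\Mtt$-Lipschitz by \Cref{assum:general_kker}, apply the Kantorovich--Rubinstein duality to get $\absLigne{\nu_1[\kker(\cdot,y)] - \nu_2[\kker(\cdot,y)]} \leq \Mtt\, \wassersteinD[1](\nu_1,\nu_2)$, yielding a global Lipschitz constant $\Mtt^2/\eta^2$. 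Boundedness of the first part (by $\Mtt/\eta$) plus $\normLigne{\nabla U(x)} \leq \normLigne{\nabla U(0)} + \Ltt \normLigne{x}$ then yields an at-most-linear growth estimate in $(x,\nu)$.

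Next I would use these two properties in a Picard iteration on $\rmc(\ccint{0,T}; \Pens_1(\rset^d))$ endowed with the metric $\mathrm{d}_T(\lambda, \tilde{\lambda}) = \sup_{t \in \ccint{0,T}} \wassersteinD[1](\lambda_t, \tilde{\lambda}_t)$. Given a continuous curve $(\lambda_t)_{t \in \ccint{0,T}}$, by the $x$-Lipschitzness of $b_{\alpha,\eta}(\cdot,\lambda_t)$ there is a unique strong solution $(\bfX_t^{\lambda})_{t \in \ccint{0,T}}$ to the decoupled SDE $\rmd \bfX_t^{\lambda} = b_{\alpha,\eta}(\bfX_t^{\lambda}, \lambda_t)\rmd t + \sqrt{2\alpha}\rmd \bfB_t$ with $\bfX_0^\lambda = \bfX_0^\star$; define $\Phi(\lambda)_t := \mathrm{Law}(\bfX_t^{\lambda})$. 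Coupling the noise and the initial conditions, for two curves $\lambda, \tilde{\lambda}$ I would bound $\PE \normLigne{\bfX_t^{\lambda} - \bfX_t^{\tilde{\lambda}}}$ using the two Lipschitz constants, apply Gr\"onwall's lemma, and deduce a contraction $\mathrm{d}_T(\Phi(\lambda), \Phi(\tilde{\lambda})) \leq C_T\, \mathrm{d}_T(\lambda,\tilde{\lambda})$ with $C_T \to 0$ as $T \to 0$. Iterating on small intervals $\ccint{0,T_0}$ and then patching produces the unique fixed point $\lambdabf^\star$, hence the unique strong solution of \eqref{eq:nonlinearSDE}. The moment bound $\sup_{t \in \ccint{0,T}} \PE \normLigne{\bfX_t^\star} < +\infty$ ensuring $\lambdabf_t^\star \in \Pens_1(\rset^d)$ follows from linear growth of the drift and another application of Gr\"onwall, using $\bfX_0^\star \in \Pens_1(\rset^d)$.

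The only genuinely nontrivial point will be the $\wassersteinD[1]$-Lipschitz estimate in the measure argument: this is precisely where the role of $\eta > 0$ becomes essential (dividing by an unregularized $\nu[\kker(\cdot,y)]$ would not admit a uniform lower bound), and where the smoothness of $\kker$ provided by \Cref{assum:general_kker} enters through both the Lipschitz constant of $\kker(\cdot,y)$ and the uniform bound on $\nabla_1 \kker$. Everything else is a standard application of Banach's fixed point theorem for MKVSDEs with Lipschitz coefficients, for which the global linear growth of $\nabla U$ guaranteed by \Cref{assum:lip_U}-\ref{item:lip} is sufficient to rule out explosion.
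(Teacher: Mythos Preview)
Your proposal is correct and follows essentially the same route as the paper's proof: both establish the joint Lipschitz continuity of the drift $b_{\alpha,\eta}$ in $(x,\nu)$ using the lower bound $\eta>0$ on the denominator together with \Cref{assum:general_kker} and \Cref{assum:lip_U}-\ref{item:lip}, derive a linear growth estimate, and then run the Sznitman-style Picard fixed-point argument on $\rmc(\ccint{0,T};\Pens_1(\rset^d))$ with Gr\"onwall and a short-time contraction. The paper's write-up differs only in bookkeeping of constants and in making the patching/uniqueness step explicit.
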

\begin{proof}
See \Cref{app:proof_eu}.
\end{proof}

The previous proposition is limited to the case where $\eta > 0$.
Indeed, if $\eta=0$ the drift is \emph{not} Lipschitz continuous and the
SDE~\eqref{eq:nonlinearSDE} might be unstable, with solutions
existing up to a (possibly small) explosion time.  Having shown
that~\eqref{eq:nonlinearSDE} admits a unique strong solution when $\eta>0$, we verify that the 
distribution $\bflambda_t^\star$ converges to the unique minimizer
$\pi_{\alpha, \eta}^\star$ of $\Gun_{\alpha}^\eta$ when $t \to +\infty$.
\begin{proposition}
  \label{prop:la_convergence_star}
  Assume \tup{\rref{assum:general_kker}},  
  \tup{\rref{assum:lip_U}-\ref{item:lip}} and \tup{\rref{assum:lip_U}-\ref{item:dissi}}. Then for any $\alpha, \eta > 0$, we have
  $\underset{t \to +\infty}{\lim} \wassersteinD[2](\bflambda_t^\star, \pi_{\alpha,
    \eta}^\star) = 0$.
\end{proposition}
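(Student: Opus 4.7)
The plan is to recast~\eqref{eq:nonlinearSDE} as the mean-field Langevin diffusion associated with the free energy $\Gun_{\alpha}^{\eta}$ and then invoke the convergence theorem of \cite{hu2019mean} signalled in the paragraph preceding the statement. The first step is to verify, using the first-variation computation underlying~\eqref{eq:subdifferential}, that the drift of~\eqref{eq:nonlinearSDE} equals $-\nabla_x\{\alpha U(x) + \delta \Hun^\eta/\delta \pi|_{\nu}(x)\}$, where $\Hun^\eta(\pi) = -\int_{\rset^p}\log(\pi[\kker(\cdot,y)]+\eta)\rmd\mu(y)$ is the non-entropic part of $\Gun_{\alpha}^\eta$. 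This identifies~\eqref{eq:nonlinearSDE} as the McKean--Vlasov SDE whose self-consistent invariant Gibbs measures coincide with the first-order stationary points of $\Gun_{\alpha}^\eta$, and by~\Cref{prop:convergence_minimum} there is a unique such point, namely $\pi_{\alpha,\eta}^\star$.

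Next I would check the hypotheses of the Hu--Ren--Siska--Szpruch convergence theorem. The functional $\Gun_{\alpha}^\eta$ is convex, lower semi-continuous and proper (\Cref{prop:g_prop}) and strictly convex and coercive for $\alpha,\eta>0$ (\Cref{prop:convergence_minimum}). The drift $x\mapsto \int_{\rset^p}\bmketa(x,\nu,y)\rmd\mu(y)$ is Lipschitz continuous in $(x,\nu)$ (jointly, with $\nu$ in $\wassersteinD[1]$): the denominator $\nu[\kker(\cdot,y)]+\eta$ is bounded below by $\eta>0$ and above by $\Mtt+\eta$, while $\nabla_1\kker$, $\nabla_1^2\kker$ and $\nu\mapsto\nu[\kker(\cdot,y)]$ are Lipschitz thanks to \tup{\Cref{assum:general_kker}}. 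Combined with \tup{\Cref{assum:lip_U}-\ref{item:lip}} this gives a globally Lipschitz drift, and together with the dissipativity condition \tup{\Cref{assum:lip_U}-\ref{item:dissi}} (the bounded contribution of $\bmketa$ is absorbed into the constant $\ctt$) it yields uniform-in-time bounds $\sup_{t\geq 0}\PE[\|\bfX_t^\star\|^2]<+\infty$, hence $\wassersteinD[2]$-tightness of $(\bflambda_t^\star)_{t\geq 0}$.

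Having precompactness, the argument concludes by a LaSalle-type identification: along the flow $t\mapsto \Gun_{\alpha}^\eta(\bflambda_t^\star)$ is non-increasing (formal energy dissipation identity, made rigorous in \cite{hu2019mean}), and any $\wassersteinD[2]$-cluster point $\bflambda_\infty^\star$ is a stationary measure of~\eqref{eq:nonlinearSDE}, hence a critical point of $\Gun_{\alpha}^\eta$; by strict convexity $\bflambda_\infty^\star = \pi_{\alpha,\eta}^\star$. Since the limit is unique and the family is precompact, $\wassersteinD[2](\bflambda_t^\star,\pi_{\alpha,\eta}^\star)\to 0$.

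The main obstacle is that the results of \cite{hu2019mean} are not stated verbatim for a drift of this form, so I would need to verify carefully that $\Gun_{\alpha}^\eta$ satisfies the convexity and regularity conditions imposed on admissible free energies therein. In particular, one must check that the first-variation $\delta \Hun^\eta/\delta \pi$ is well-defined, continuous on $\Pens_2(\rset^d)$, and that its gradient coincides with $x\mapsto -\int_{\rset^p}\bmketa(x,\pi,y)\rmd\mu(y)$; the boundedness of $\kker$ and its derivatives together with $\eta>0$ make this routine but tedious. The fact that $\Gun_{\alpha}^\eta$ is convex (but \emph{not} geodesically convex) means we cannot extract an explicit exponential rate, only qualitative convergence, which matches the statement of the proposition.
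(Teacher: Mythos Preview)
Your proposal is correct and follows essentially the same route as the paper: both reduce the claim to \cite[Theorem~2.11]{hu2019mean} by verifying that $\Gun_{\alpha}^\eta$ is convex and lower bounded, that the drift is Lipschitz in $(x,\nu)$, and that $U$ satisfies the required dissipativity/growth condition. The paper's proof is slightly more checklist-oriented---it explicitly derives $\langle \nabla U(x), x\rangle \geq \mathtt{a}\|x\|^2 + \mathtt{b}$ from \tup{\Cref{assum:lip_U}-\ref{item:dissi}} and verifies joint continuity of $\nabla_x b(x,\nu)$ in $(x,\nu)$, which is one of the regularity conditions in \cite{hu2019mean} that your sketch does not single out---whereas you spell out more of the internal LaSalle/energy-dissipation mechanism; but the substance is the same.
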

\begin{proof}
See \Cref{app:proof_invariant}.
\end{proof}
There exists a rich literature on the exponential ergodicity of MKVSDE, see
\cite{butkovsky2014ergodic,eberle2016reflection,malrieu2001logarithmic,bogachev2019convergence}
for instance. However, these results require the interaction term $\bmketa$ to
be small when compared to $\alpha \nabla U$. This is not the case in our
applications where we study the behaviour of the McKean-Vlasov process for small
values of $\alpha > 0$. The original approach of \cite{hu2019mean} differs from
these previous works and relies on the fact the MKVSDE under consideration stems
from a Wasserstein gradient flow. However, \cite[Theorem 2.11]{hu2019mean} does
not establish quantitative bounds contrary to
\cite{butkovsky2014ergodic,eberle2016reflection,malrieu2001logarithmic,bogachev2019convergence}
which derive explicit convergence rates.
The result above can be strengthened to show that, if $\alpha_t\to 0$ at the appropriate rate, then $\Gun^\eta(\bflambda_t^\star)$ converges towards the minimum of the unregularized
functional (see \cite[Theorem 4.1]{chizat2022mean} and \cite[Theorem 1]{nitanda2022convex}).

\subsection{Particle system}
\label{sec:particle-system}
The MKVSDE~\eqref{eq:nonlinearSDE} has several shortcomings from a
methodological point of view. First, since the drift of the SDE depends not only
on the current position of the solution but also on its distribution, it is not
possible to sample from~\eqref{eq:nonlinearSDE} even in simplistic
settings. Second, as is the case for classical SDEs, it is usually difficult to sample
from continuous-time processes. We circumvent the first issue by introducing a
particle system \cite{mckean1966class, bossy1997stochastic}
$(\bfX_t^{1:N})_{t \geq 0} = \{(\bfX_t^{k,N})_{t \geq 0}\}_{k=1}^N$ for any
$N \in \nset^\star$ which satisfies a classical SDE. In the limit $N \to +\infty$
we have that $(\bfX_t^{1,N})_{t \geq 0}$ approximates
$(\bfX_t^{\star})_{t \geq 0}$. In \Cref{sec:implementation} we will investigate
a time discretization scheme for this particle system SDE in order to obtain a
particle system Markov chain which can be implemented. 

We introduce the particle system $(\bfX_t^{1:N})_{t \geq 0}$ which satisfies the
following SDE: for any $k \in \{1, \dots, N\}$, $\bfX_0^{k,N} \in \rset^d$ and 
\begin{equation}
  \label{eq:particle}
  \textstyle{
    \bfX_t^{k,N} = \bfX_0^{k,N} + \int_0^t b(\bfX_{s}^{k,N}, \lambdabf_s^{N}(\bfX_0^{1:N})) \rmd s + \sqrt{2 \alpha} \bfB_t^{k},
    }
\end{equation}
where for any $x \in \rset^d$ and $\nu \in \Pens(\rset^d)$ we have 
\begin{equation}
  \label{eq:b}
  \textstyle{
    b(x, \nu) = \int_{\rset^p}\bmketa(x, \nu, y) \rmd \mu(y) - \alpha \nabla U(x),
    }
\end{equation}
$\ensembleLigne{(\bfB_t^k)_{t \geq 0}}{k \in \nset}$ is a family of independent
Brownian motions and $\bflambda_t^{N}(\bfX_0^{1:N})$ is the empirical measure
associated with $\{(\bfX_t^{k,N})_{t \geq 0}\}_{k=1}^N$ with starting point
$\bfX_0^{1:N}$, \ie\ we have for any $N \in \nsets$
\begin{equation}
  \label{eq:particle_empirical}
  \textstyle{
    \bflambda^N_t(\bfX_0^{1:N}) = (1/N) \sum_{k=1}^N \updelta_{(\bfX_t^{k,N})}.
    }
\end{equation}
In the following proposition we show using classical propagation of chaos tools
that there exists a unique strong solution to~\eqref{eq:particle} and that this
solution approximates~\eqref{eq:nonlinearSDE} for any finite time horizon.

\begin{proposition}
  \label{prop:propagation_chaos}
  Assume \tup{\rref{assum:general_kker}} and
  \tup{\rref{assum:lip_U}-\ref{item:lip}}. Then, for any $\alpha, \eta > 0$ and $N \in \nset^\star$
  there exists a unique strong solution to~\eqref{eq:particle} for any initial
  condition $\bfX_0^{1:N}$ such that $\mathcal{L}(\bfX_0^{1:N}) \in \Pens_1((\rset^d)^N)$ and
  $\{\bfX_0^{k,N}\}_{k=1}^N$ is exchangeable. In addition, if
  $\bfX_0^{\ell,N} = \bfX_0^\star$, for any $T \geq 0$ there exists $C_T \geq 0$
  such that for any $N \in \nsets$ and $\ell \in \{1, \dots, N\}$
  \begin{equation}
    \textstyle{
      \expesqLigne{\sup_{t \in \ccint{0,T}} \normLigne{\bfX_t^\star - \bfX_t^{\ell,N}}^2} \leq C_T N^{-1/2}.
      }
  \end{equation}
\end{proposition}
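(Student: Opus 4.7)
The plan is to follow the classical synchronous coupling approach \textit{à la} Sznitman, exploiting the fact that the drift depends on the measure argument only through a bounded smooth observable, which will be crucial for obtaining the $N^{-1/2}$ rate.

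First, I would verify the joint Lipschitz continuity of $b$ in $(x, \nu) \in \rset^d \times \Pens_1(\rset^d)$ where $\Pens_1(\rset^d)$ is endowed with $\wassersteinD[1]$. By \tup{\Cref{assum:general_kker}}, $\kker$, $\nabla_1\kker$ and $\nabla_1^2\kker$ are bounded by $\Mtt$, and the denominator satisfies $\nu[\kker(\cdot,y)] + \eta \geq \eta > 0$ uniformly. A direct algebraic manipulation of~\eqref{eq:drifto}, together with the fact that $\nu \mapsto \nu[\kker(\cdot,y)]$ is $\Mtt$-Lipschitz in $\wassersteinD[1]$ (since $\kker(\cdot,y)$ is $\Mtt$-Lipschitz), yields
\begin{equation*}
\normLigne{\bmketa(x_1, \nu_1, y) - \bmketa(x_2, \nu_2, y)} \leq C_\eta \left( \normLigne{x_1 - x_2} + \wassersteinD[1](\nu_1, \nu_2) \right),
\end{equation*}
for a constant $C_\eta$ depending on $\Mtt$ and $\eta$. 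Integrating against $\mu$ and combining with \tup{\Cref{assum:lip_U}-\ref{item:lip}} shows $b$ is Lipschitz. Since the empirical measure map $x^{1:N} \mapsto N^{-1}\sum_k \updelta_{x^k}$ is $N^{-1/2}$-Lipschitz from $(\rset^d)^N$ to $\Pens_1(\rset^d)$, the drift of the $Nd$-dimensional SDE~\eqref{eq:particle} is Lipschitz in $\bfX^{1:N}$ with additive noise, so existence and uniqueness of a strong solution follow from standard SDE theory.

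Next, for propagation of chaos, I would construct a coupling: let $(\bfX_t^{\star,k})_{k=1}^N$ be $N$ independent copies of the MKVSDE~\eqref{eq:nonlinearSDE}, driven by the \emph{same} Brownian motions $(\bfB_t^k)$ as the particle system and with matched initial conditions. Denote their common law at time $s$ by $\bflambda_s^\star$ and set $\bar{\bflambda}_s^N = N^{-1}\sum_{k=1}^N \updelta_{\bfX_s^{\star,k}}$. The coupling difference $e_t^k = \bfX_t^{k,N} - \bfX_t^{\star,k}$ satisfies
\begin{equation*}
e_t^k = \int_0^t \bigl[ b(\bfX_s^{k,N}, \bflambda_s^{N}(\bfX_0^{1:N})) - b(\bfX_s^{\star,k}, \bflambda_s^\star) \bigr] \rmd s,
\end{equation*}
with no stochastic integral remaining. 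I would split the bracket into three pieces: (i) $b(\bfX_s^{k,N}, \bflambda_s^N) - b(\bfX_s^{\star,k}, \bflambda_s^N)$, bounded via Lipschitz in $x$ by $C\normLigne{e_s^k}$; (ii) $b(\bfX_s^{\star,k}, \bflambda_s^N) - b(\bfX_s^{\star,k}, \bar{\bflambda}_s^N)$, bounded using Lipschitz in $\nu$ by $C \wassersteinD[1](\bflambda_s^N, \bar{\bflambda}_s^N) \leq CN^{-1}\sum_j \normLigne{e_s^j}$; and (iii) the fluctuation term $b(\bfX_s^{\star,k}, \bar{\bflambda}_s^N) - b(\bfX_s^{\star,k}, \bflambda_s^\star)$.

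The crux, and the main obstacle, is handling piece (iii) at the sharp rate $N^{-1/2}$ rather than the generic Wasserstein rate $N^{-1/d}$. The key observation is that $b$ depends on $\nu$ only through the scalar averages $\nu[\kker(\cdot,y)]$ for each fixed $y$. Conditional on $\bfX_s^{\star,k}$, for any $y \in \rset^p$ the deviation $\bar{\bflambda}_s^N[\kker(\cdot,y)] - \bflambda_s^\star[\kker(\cdot,y)]$ is an average of iid bounded variables, yielding variance at most $\Mtt^2/N$ by a straightforward second-moment computation. After controlling the $(\nu[\kker(\cdot,y)]+\eta)^{-1}$ terms using $\eta > 0$ and integrating over $y$ against $\mu$, this produces an $N^{-1/2}$ $L^2$-bound on piece (iii). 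Combining the three pieces, applying Cauchy-Schwarz, exploiting exchangeability so that $\PE[\normLigne{e_t^k}^2]$ is independent of $k$, and using $\sup_{t\leq T} \normLigne{e_t^k}^2 \leq T \int_0^T \normLigne{\dot e_s^k}^2 \rmd s$, Gronwall's lemma yields $\PE[\sup_{t\leq T} \normLigne{e_t^{\ell}}^2] \leq C_T / N$, which gives the claimed bound. The dissipativity \tup{\Cref{assum:lip_U}-\ref{item:dissi}} is not required here since the horizon $T$ is finite and $C_T$ may grow with $T$.
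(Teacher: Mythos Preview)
Your proposal is correct and follows essentially the same approach as the paper: Lipschitz continuity of $b$ in $(x,\nu)$ for existence and uniqueness of the particle SDE, then the classical Sznitman synchronous coupling with $N$ independent copies $(\bfX_t^{\star,k})_k$ of the nonlinear process driven by the same Brownian motions, the same three-piece decomposition of the drift difference, exploitation of the fact that the measure dependence enters only through the bounded observable $\nu[\kker(\cdot,y)]$ so that the fluctuation term is an i.i.d.\ average with variance $O(N^{-1})$, and Gronwall to close. The paper organizes the decomposition slightly differently (it first applies the Lipschitz bound~\eqref{eq:lipo_lip} and then splits the resulting integral $\int|\bflambda_s^N[\kker(\cdot,y)]-\bflambda_s^\star[\kker(\cdot,y)]|\rmd\mu(y)$ via the auxiliary i.i.d.\ system), but this is cosmetic. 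One small point: the paper's written proof actually carries the argument in $L^1$, concluding $\expeLigne{\sup_{t\leq T}\normLigne{\bfX_t^\star-\bfX_t^{1,N}}}\leq C_T N^{-1/2}$, whereas the statement is phrased in $L^2$; your route via $\sup_{t\leq T}\normLigne{e_t^k}^2\leq T\int_0^T\normLigne{\dot e_s^k}^2\rmd s$ and squared-norm Gronwall delivers the $L^2$ bound directly. Also note that once you bound piece~(iii) by $(\Mtt/\eta^2)\int_{\rset^p}|\bar{\bflambda}_s^N[\kker(\cdot,y)]-\bflambda_s^\star[\kker(\cdot,y)]|\rmd\mu(y)$ via $\normLigne{\nabla_1\kker}_\infty\leq\Mtt$, the bound no longer involves $\bfX_s^{\star,k}$, so the conditioning you mention is not actually needed.
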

\begin{proof}
See \Cref{sec:proof-prop-chaos}.
\end{proof}

We now turn to the exponential ergodicity of the particle system. Using recent
advances in the theory of convergence of Markov chains
\cite{debortoli2019convergence} we derive quantitative bounds. However, these bounds are not uniform w.r.t to the number of particles and therefore
we cannot use this result to conclude that the McKean-Vlasov process itself is
exponentially ergodic by letting $N \to +\infty$.

\begin{proposition}
\label{prop:particle_ergodic}
Assume \tup{\rref{assum:general_kker}}, \tup{\rref{assum:lip_U}-\ref{item:lip}} and \tup{\rref{assum:lip_U}-\ref{item:dissi}}. Then, for any $\alpha, \eta > 0$ and $N \in \nsets$ there
exist $C_N \geq 0$ and $\rho_N \in \coint{0,1}$ such that for any
$x_1^{1:N}, x_2^{1:N} \in (\rset^d)^N$ and $t \geq 0$
  \begin{equation}
    \wassersteinD[1](\bflambda_t^N(x_1^{1:N}), \bflambda_t^N(x_2^{1:N})) \leq C_N \rho_N^t \normLigne{x_1^{1:N} - x_2^{1:N}},
  \end{equation}
  where for any $x^{1:N} \in (\rset^d)^N$, $\bflambda_t^N(x^{1:N})$ is the
  distribution of $(\bfX_t^{1:N})_{t \geq 0}$ with initial condition $x^{1:N}$.
  In particular,~\eqref{eq:particle} admits a unique invariant probability
  measure denoted $\pi^N \in \Pens_p((\rset^d)^N)$ and for any
  $x^{1:N} \in (\rset^d)^N$ and $t \geq 0$
  \begin{equation}
    \textstyle{
      \wassersteinD[1](\bflambda_t^N(x^{1:N}), \pi^N) \leq C_N \rho_N^t \parenthese{\normLigne{x^{1:N}} + \int_{\rset^d} \normLigne{\tilde{x}} \rmd \pi^N(\tilde{x})}.
      }
  \end{equation}
\end{proposition}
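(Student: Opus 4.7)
The plan is to regard the particle system $(\bfX_t^{1:N})_{t \geq 0}$ as a standard (non-McKean--Vlasov) Itô SDE on $(\rset^d)^N$ with additive non-degenerate noise of variance $2\alpha I$, and then apply the quantitative Wasserstein contraction result of \cite{debortoli2019convergence}. Writing the joint drift as $b^N(x^{1:N}) = (b(x^k, (1/N)\sum_j \updelta_{x^j}))_{k=1}^N$, the proposition reduces to verifying two regularity properties of $b^N$: (i) global Lipschitz continuity and (ii) a dissipativity estimate at infinity.

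For (i), under \tup{\Cref{assum:general_kker}} with $\eta > 0$, the interaction integrand $\bmketa(x,\nu,y) = \nabla_1 \kker(x,y)/(\nu[\kker(\cdot,y)] + \eta)$ is bounded by $\Mtt/\eta$ and is jointly Lipschitz in $(x,\nu)$ (with $\Pens(\rset^d)$ metrized by $\wassersteinD[1]$) because $\nabla_1^2 \kker$ is bounded by $\Mtt$ and the denominator is uniformly bounded below by $\eta$. Combined with the estimate $\wassersteinD[1]((1/N)\sum_k \updelta_{x^k}, (1/N)\sum_k \updelta_{\tilde x^k}) \leq (1/N)\sum_k \norm{x^k - \tilde x^k}$ and with \tup{\rref{assum:lip_U}-\ref{item:lip}}, this yields a global Lipschitz constant for $b^N$ (depending on $N$). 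For (ii), \tup{\rref{assum:lip_U}-\ref{item:dissi}} gives $\langle -\alpha \nabla U(x^k) + \alpha \nabla U(\tilde x^k), x^k - \tilde x^k\rangle \leq -\alpha \mtt \norm{x^k - \tilde x^k}^2 + \alpha \ctt$, while the interaction contributes an additive term controlled by the global Lipschitz constant of $\bmketa$. Summing over $k$ yields a dissipativity-at-infinity estimate on $(\rset^d)^N$ of the form
\begin{equation*}
\langle b^N(x^{1:N}) - b^N(\tilde x^{1:N}), x^{1:N} - \tilde x^{1:N}\rangle \leq -\kappa_N \norm{x^{1:N} - \tilde x^{1:N}}^2 + R_N,
\end{equation*}
for explicit constants $\kappa_N > 0, R_N \geq 0$.

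With (i)--(ii) in hand, the general contraction result of \cite{debortoli2019convergence}, which combines a reflection coupling (to handle the bounded non-contractive perturbation) with a synchronous coupling (for contraction far from the origin) via a carefully chosen concave semimetric, supplies constants $C_N \geq 0, \rho_N \in \coint{0,1}$ such that
\begin{equation*}
\wassersteinD[1](\bflambda_t^N(x_1^{1:N}), \bflambda_t^N(x_2^{1:N})) \leq C_N \rho_N^t \norm{x_1^{1:N} - x_2^{1:N}}.
\end{equation*}
Dissipativity also provides a Lyapunov function (e.g.\ $x \mapsto \norm{x}^2$) yielding uniformly bounded moments, from which existence of an invariant probability measure $\pi^N$ follows by standard arguments; the contraction forces uniqueness. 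Coupling the deterministic initial condition $x^{1:N}$ with an independent draw from $\pi^N$ and integrating the contraction inequality then yields the stated bound on $\wassersteinD[1](\bflambda_t^N(x^{1:N}), \pi^N)$.

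The main technical obstacle is verifying the precise hypotheses of the contraction theorem of \cite{debortoli2019convergence} on the joint space $(\rset^d)^N$ and tracking how the Lipschitz constant, the far-field dissipativity constant and the perturbation size depend on $N$; this bookkeeping is what prevents $C_N, \rho_N$ from being uniform in $N$, and is precisely what the authors flag as blocking a passage to the McKean--Vlasov limit via this route.
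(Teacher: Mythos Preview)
Your overall strategy matches the paper's: cast the particle system as a classical SDE on $(\rset^d)^N$, verify Lipschitz continuity and a far-field dissipativity condition for the joint drift $B_N$, and invoke \cite[Corollary 2]{debortoli2019convergence}. Step (i) is fine and essentially what the paper does.

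The gap is in step (ii). You write that ``the interaction contributes an additive term controlled by the global Lipschitz constant of $\bmketa$''. If you bound the interaction difference via its Lipschitz constant, the Cauchy--Schwarz estimate gives a contribution of order $L_{\mathrm{int}}\,\normLigne{x^{1:N}-\tilde x^{1:N}}^2$ to the inner product, with $L_{\mathrm{int}}$ of order $(\Mtt/\eta)(1+1/\eta)$. This is \emph{quadratic} in the displacement, so your claimed constant would be $\kappa_N = \alpha\mtt - L_{\mathrm{int}}$, which is \emph{not} positive for the small values of $\alpha$ that are the regime of interest (the paper explicitly flags this issue after \Cref{prop:la_convergence_star}). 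Hence the inequality $-\kappa_N\normLigne{\cdot}^2 + R_N$ with $\kappa_N>0$ does not follow from the Lipschitz property alone.

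The fix, which is what the paper does, is to use the \emph{boundedness} of $\bmketa$ (which you already noted: $\normLigne{\bmketa}\leq \Mtt/\eta$) rather than its Lipschitz constant. Then the interaction difference is bounded by $2\Mtt/\eta$ in each coordinate, and Cauchy--Schwarz yields
\[
\langle B_N(x_1^{1:N}) - B_N(x_2^{1:N}),\, x_1^{1:N}-x_2^{1:N}\rangle
\leq -\alpha\mtt\,\normLigne{x_1^{1:N}-x_2^{1:N}}^2 + \alpha\ctt N + 2(\Mtt/\eta)N^{1/2}\normLigne{x_1^{1:N}-x_2^{1:N}},
\]
a \emph{linear} perturbation of the dissipative term. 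For $\normLigne{x_1^{1:N}-x_2^{1:N}}\geq R$ with $R=\max\bigl(4(\Mtt/\eta)N^{1/2}/(\alpha\mtt),\,(2\ctt N/\mtt)^{1/2}\bigr)$ the right-hand side is at most $-(\alpha\mtt/2)\normLigne{x_1^{1:N}-x_2^{1:N}}^2$, which is exactly the far-field contractivity hypothesis required by \cite{debortoli2019convergence}. Once you make this correction, the rest of your argument (existence, uniqueness, and the bound to $\pi^N$) goes through as you describe.
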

\begin{proof}
  See \Cref{proof:prop:particle_ergodic}
\end{proof}
In the two previous propositions, the exponential ergodicity and the
approximation between the particle system and the McKean-Vlasov process are only
valid for a finite number of particles or a finite time-horizon, respectively. In
particular, we cannot directly conclude that the projection of the invariant
distribution of the particle system onto its first component on $\Pens(\rset^d)$, denoted by $\pi^{1,N}$, is close to
$\pi_{\alpha, \eta}^\star$. This is the topic of the next proposition.

\begin{proposition}
\label{prop:particles_min}
Assume \tup{\rref{assum:general_kker}}, \tup{\rref{assum:lip_U}-\ref{item:lip}} and \tup{\rref{assum:lip_U}-\ref{item:dissi}}. Then, for any $\alpha, \eta > 0$,
$\lim_{N \to +\infty} \wassersteinD[1](\pi^{1,N}, \pi_{\alpha, \eta}^\star) =
0$. In addition, $\pi_{\alpha, \eta}^\star$ is the unique invariant probability
measure of~\eqref{eq:nonlinearSDE}.
\end{proposition}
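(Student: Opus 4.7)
The strategy is a double-limit argument that exploits invariance at the particle level to avoid the standard obstacle of comparing $t\to+\infty$ with $N\to+\infty$.

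\textbf{Step 1 (convergence of the marginals).} Fix $\alpha,\eta>0$ and $\varepsilon>0$. Initialize the particle system~\eqref{eq:particle} from its (exchangeable) invariant measure $\pi^N\in\Pens_1((\rset^d)^N)$, whose existence and $p$-integrability follow from \Cref{prop:particle_ergodic}; then for every $t\geq 0$ the first marginal of $\bflambda_t^N(\bfX_0^{1:N})$ equals $\pi^{1,N}$. Simultaneously, let $(\bfX_t^\star)_{t\geq 0}$ be a solution of the MKVSDE~\eqref{eq:nonlinearSDE} with initial condition $\bfX_0^\star=\bfX_0^{1,N}$, driven by the same Brownian motion $\bfB_t^{1}$; exchangeability of $\pi^N$ together with the fact that $\pi^N\in\Pens_1((\rset^d)^N)$ ensures $\mathcal L(\bfX_0^\star)\in\Pens_1(\rset^d)$, so \Cref{prop:existence_uniqueness} applies. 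Writing $\bflambda_t^\star$ for the law of $\bfX_t^\star$, the triangle inequality gives, for every $T\geq 0$,
\begin{equation*}
\wassersteinD[1](\pi^{1,N},\pi_{\alpha,\eta}^\star)
\leq \wassersteinD[1](\pi^{1,N},\bflambda_T^\star) + \wassersteinD[1](\bflambda_T^\star,\pi_{\alpha,\eta}^\star).
\end{equation*}

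\textbf{Step 2 (controlling the two terms).} By \Cref{prop:propagation_chaos} and the coupling above,
\begin{equation*}
\wassersteinD[1](\pi^{1,N},\bflambda_T^\star)\leq \expeLigne{\normLigne{\bfX_T^\star-\bfX_T^{1,N}}}\leq C_T N^{-1/2},
\end{equation*}
since the first marginal of $\bflambda_T^N$ is $\pi^{1,N}$. By \Cref{prop:la_convergence_star}, $\wassersteinD[2](\bflambda_T^\star,\pi_{\alpha,\eta}^\star)\to 0$ as $T\to+\infty$, hence $\wassersteinD[1](\bflambda_T^\star,\pi_{\alpha,\eta}^\star)\to 0$ as well. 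Therefore, choosing $T=T(\varepsilon)$ large enough that the second term is $<\varepsilon/2$ and then $N=N(\varepsilon,T)$ large enough that $C_T N^{-1/2}<\varepsilon/2$ yields $\limsup_{N\to+\infty}\wassersteinD[1](\pi^{1,N},\pi_{\alpha,\eta}^\star)\leq\varepsilon$, which proves the first claim.

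\textbf{Step 3 (uniqueness of the invariant measure of the MKVSDE).} Let $\nu\in\Pens_1(\rset^d)$ be any probability measure which is invariant for~\eqref{eq:nonlinearSDE}; initializing the MKVSDE from $\nu$ gives $\bflambda_t^\star=\nu$ for all $t\geq 0$, whereas \Cref{prop:la_convergence_star} forces $\wassersteinD[2](\bflambda_t^\star,\pi_{\alpha,\eta}^\star)\to 0$. Hence $\nu=\pi_{\alpha,\eta}^\star$.

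\textbf{Main obstacle.} The delicate point is the order of limits: the constant $C_T$ from \Cref{prop:propagation_chaos} typically grows in $T$, while the rate $\rho_N$ in \Cref{prop:particle_ergodic} degrades in $N$, so the two convergences cannot naively be chained. The trick that unlocks the argument is to start the particle system from its stationary law $\pi^N$, which eliminates the need to also let $T$ depend on $N$; invariance then propagates to the first marginal and only the law of one particle has to be compared to the McKean--Vlasov limit at a fixed finite time $T$.
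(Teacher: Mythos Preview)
Your argument has a genuine circularity in Step~2 that the ``main obstacle'' paragraph correctly diagnoses but does not actually resolve. The law $\bflambda_T^\star$ is the law of the MKVSDE started from $\bfX_0^\star=\bfX_0^{1,N}$, whose distribution is $\pi^{1,N}$; hence $\bflambda_T^\star$ depends on $N$. \Cref{prop:la_convergence_star} gives $\wassersteinD[2](\bflambda_T^\star,\pi_{\alpha,\eta}^\star)\to 0$ only for a \emph{fixed} initial law, with no quantitative rate and no uniformity over initial conditions. Therefore your choice of $T=T(\varepsilon)$ making the second term small implicitly depends on $N$, and you cannot subsequently pick $N=N(\varepsilon,T)$ without already having fixed~$N$. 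Starting the particle system in stationarity kills the $T$-dependence on the particle side, but it does \emph{not} decouple the MKVSDE's long-time behaviour from $N$.

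The paper closes exactly this gap with two additional ingredients you are missing. First, it proves a uniform (in $N$) second-moment bound on $\pi^{1,N}$ via It\^o's formula and \rref{assum:lip_U}-\ref{item:dissi}, which yields relative compactness of $\{\pi^{1,N}\}$ in $\Pens_1(\rset^d)$. Second, along any convergent subsequence $\pi^{1,N_k}\to\pi^\star$ it starts the MKVSDE from the \emph{fixed} limit $\pi^\star$; a finite-horizon stability lemma (\Cref{lemma:stabi_init}) then controls $\wassersteinD[1](\bflambda_t^\star(\pi^{1,N_k}),\bflambda_t^\star(\pi^\star))$ by $C_T\,\wassersteinD[1](\pi^{1,N_k},\pi^\star)$. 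With the MKVSDE initial law no longer varying with $k$, \Cref{prop:la_convergence_star} applies cleanly to fix $t$ first, and the remaining terms are handled by propagation of chaos and the subsequence convergence. Your Step~3 on uniqueness is fine.
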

\begin{proof}
See \Cref{proof:prop:particles_minapp:ps}.
\end{proof}


\section{Numerical Implementation}
\label{sec:implementation}
In the previous section, we introduced a continuous time Markov process $(\bfX_t^{1:N})_{t \geq 0}$ such that the distribution $\bfX_t^{1,N}$
approximates $\pi_{\alpha, \eta}^\star$ for large values of $t \geq 0$ and
$N \in \nset$. In this section, we derive an algorithm approximating this
particle system. 

In order to obtain a numerical approximation of~\eqref{eq:particle} we need two consider two factors: first, as in the case of classical SDEs, the continuous time process~\eqref{eq:particle} cannot be simulated directly, and we need to introduce time discretization, secondly, in practice the integral w.r.t. $\mu$ in~\eqref{eq:b} is intractable. 

First, in \Cref{sec:time-discretization} we consider an Euler--Maruyama discretization of the
particle system. For stability
issues we consider a tamed version of the classical Euler--Maruyama
discretization.
To address the second issue, given that in applications we will often be in the situation in which $\mu$ is only known through a sample, we approximate the integral w.r.t. $\mu$ in~\eqref{eq:b} with a sample average
In \Cref{sec:approximation-mu}, we introduce a second discrete-time particle system, obtained by approximating $\mu$ with its empirical average $(1/M) \sum_{k=1}^M \updelta_{y^{k,M}}$
where $y^{1:M} = \{y^{k,M}\}_{k=1}^M$ is a sample from $\mu^{\otimes M}$. We show that the two particle systems are close for large values of $M$. In particular,
\Cref{prop:approx_mu} ensures the quantitative stability of the particle system.
Finally, in \Cref{sec:choice} we
discuss the choice of parameters for the algorithm.

\subsection{Tamed Euler--Maruyama discretization}
\label{sec:time-discretization}

In order to
obtain a Markov chain which can be implemented, we consider a time discretization of~\eqref{eq:particle}.  More precisely, for any
$N \in \nset$, we consider the following tamed Euler--Maruyama discretization
given by $X_0^{1:N} \in (\rset^d)^N$ and for any $n \in \nset$ and $k \in \{1, \dots, N\}$
\begin{equation}
  \label{eq:tamed}
  X_{n+1}^{k,N}  = X_n^{k,N} + \gamma b( X_n^{k,N}, \lambda^{N}_n)/(1 + \gamma \normLigne{b( X_n^{k,N}, \lambda^{N}_n)}) + \sqrt{2 \alpha} Z_{n+1}^k,
\end{equation}
where $\{Z_n^k\}_{k, n \in \nset}$ is a family of independent Gaussian random
variables, $\gamma > 0$ is a stepsize, $b$ is given by~\eqref{eq:b} and
for any $n \in \nset$, we have that
$\lambda^{N}_n = (1/N) \sum_{k=1}^N \updelta_{X_n^{k,N}}$. 
The choice of a tamed scheme is motivated by the behaviour of the drift~\eqref{eq:b}. As shown in the Appendix, $b$ is Lipschitz continuous for any $\eta>0$, however, for small values of $\eta$ the Lipschitz constant can be large.
In practice, we observe that the use of a tamed Euler--Maruyama discretization scheme prevents
some numerical stability issues appearing for small values of $\eta >0$. 

The
tamed Euler--Maruyama discretization (and its extension to the Milstein scheme) for
classical SDEs has been investigated in
\cite{hutzenthaler2015numerical,hutzenthaler2012strong,wang2013tamed,brosse2019tamed}. More
recently, strong approximation results have been established for McKean--Vlasov
SDEs in \cite{bao2020first}. In the following proposition, we use the results of
\cite[Theorem 2.5]{bao2020first} to derive strong approximation results between
$(X_n^{k,N})_{n \in \nset}$ and $(\bfX_t^{k,N})_{t \geq 0}$ for any $k \in \{1, \dots, N\}$; since the diffusion coefficient of~\eqref{eq:particle} is constant, the tamed scheme~\eqref{eq:tamed} coincides with a Milstein scheme, allowing us to show strong convergence of order 1.

\begin{proposition}
\label{prop:euler}
Assume \tup{\rref{assum:general_kker}} and \rref{assum:lip_U}. Then for any
$\eta, \alpha > 0$ and any $T \geq 0$ there exists $C_T \geq 0$ such that for
any $N \in \nsets$, $\ell \in \{1, \dots, N\}$ and $\gamma >0$
  \begin{equation}
    \textstyle{\expeLigne{\sup_{n \in \{0, \dots, n_T\}} \normLigne{\bfX_{n \gamma}^{\ell,N}- X_{n}^{\ell,N}}} \leq C_T \gamma}
  \end{equation}
  where $n_T = \floor{T/\gamma}$.
\end{proposition}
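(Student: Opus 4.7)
The plan is to reduce the statement to a direct application of \cite[Theorem 2.5]{bao2020first}, which establishes $L^1$-strong convergence of order one for the tamed Milstein discretization of McKean--Vlasov SDEs with a constant (or Lipschitz) diffusion coefficient. Since the diffusion coefficient $\sqrt{2\alpha}\Id$ in~\eqref{eq:particle} is constant, the Milstein correction term vanishes identically, so the tamed Milstein scheme and the tamed Euler--Maruyama scheme~\eqref{eq:tamed} coincide; this is exactly the mechanism allowing us to upgrade the usual $\gamma^{1/2}$ EM rate to $\gamma$. What remains is to verify that the drift $b$ in~\eqref{eq:b} satisfies the regularity hypotheses of that theorem on $\rset^d \times \Pens_1(\rset^d)$.

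First I would check the joint Lipschitz regularity of $b$ in $(x,\nu)$, with $\Pens_1(\rset^d)$ metrized by $\wassersteinD[1]$. Writing
\begin{equation}
b(x,\nu)-b(x',\nu') \;=\; \int_{\rset^p}\!\Bigl\{\bmketa(x,\nu,y)-\bmketa(x',\nu',y)\Bigr\}\rmd\mu(y)\;-\;\alpha\bigl(\nabla U(x)-\nabla U(x')\bigr),
\end{equation}
the second term is Lipschitz in $x$ by \tup{\rref{assum:lip_U}-\ref{item:lip}}. For the integrand, I would decompose the difference $\bmketa(x,\nu,y)-\bmketa(x',\nu',y)$ along $x\mapsto x'$ and $\nu\mapsto\nu'$. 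Under \tup{\rref{assum:general_kker}}, $\nabla_1\kker$ is bounded by $\Mtt$ and Lipschitz in $x$ (from the bound on $\nabla^2\kker$), and the denominator $\nu[\kker(\cdot,y)]+\eta\geq\eta>0$; this gives Lipschitzness in $x$ with constant depending on $\Mtt$ and $\eta^{-1}$. For the $\nu$-variation, Kantorovich--Rubinstein duality together with the Lipschitzness of $\kker(\cdot,y)$ yields $\abs{\nu[\kker(\cdot,y)]-\nu'[\kker(\cdot,y)]}\leq\Mtt\,\wassersteinD[1](\nu,\nu')$, and hence $\nu\mapsto(\nu[\kker(\cdot,y)]+\eta)^{-1}$ is $\eta^{-2}\Mtt$-Lipschitz in $\wassersteinD[1]$. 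The combination of these bounds gives joint Lipschitz continuity of $b$ with a constant depending on $\Mtt$, $\Ltt$, $\alpha$ and $\eta$.

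Second I would check the higher-order smoothness of $b$ in $x$ and in the measure argument that is needed to obtain strong rate one (not merely $\gamma^{1/2}$). The third-order bound $\normLigne{\partial^3_{1,ijl}\kker}\leq\Mtt$ in \tup{\rref{assum:general_kker}}, together with $\eta>0$, implies that $x\mapsto\bmketa(x,\nu,y)$ has bounded first and second derivatives (obtained by the quotient rule, using that the denominator is bounded below by $\eta$). The Lipschitz Hessian assumption \tup{\rref{assum:lip_U}-\ref{item:lip_gro}} provides the matching regularity for $\nabla U$. For the measure variable, the relevant linear functional derivative $\variation{\bmketa(x,\cdot,y)}{\nu}$ can be written explicitly as $-\nabla_1\kker(x,y)\,\kker(\cdot,y)/(\nu[\kker(\cdot,y)]+\eta)^2$, which is bounded and Lipschitz in $x$ and in $\nu$ by the same arguments. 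These are precisely the regularity hypotheses under which \cite[Theorem 2.5]{bao2020first} applies.

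Finally I would invoke that theorem to the particle SDE~\eqref{eq:particle}, noting that the moment bounds on $\bflambda_t^N$ needed as input are easily obtained from \rref{prop:propagation_chaos} and the dissipativity bound \tup{\rref{assum:lip_U}-\ref{item:dissi}}, which also ensures stability of the tamed scheme. This yields the existence of $C_T\geq 0$, independent of $N$, $\ell$ and $\gamma$ (by exchangeability of the particle system), such that
\begin{equation}
\expeLigne{\sup_{n\in\{0,\dots,n_T\}}\normLigne{\bfX_{n\gamma}^{\ell,N}-X_n^{\ell,N}}}\leq C_T\,\gamma.
\end{equation}
The main obstacle is bookkeeping: carefully matching the regularity conditions of \cite{bao2020first} (which are stated in terms of linear functional derivatives and joint $(x,\nu)$-smoothness) to the assumptions available here, and tracking that the resulting constant $C_T$ depends only on $T$, $\Mtt$, $\Ltt$, $\Ltt_2$, $\alpha$ and $\eta$ but not on $N$, $\ell$ or $\gamma$. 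The deterioration of $C_T$ as $\eta\to 0$ is expected (the Lipschitz constant blows up like $\eta^{-2}$) but is immaterial since $\eta>0$ is fixed.
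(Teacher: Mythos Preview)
Your approach is correct and essentially coincides with the paper's: both verify the hypotheses of \cite[Theorem 2.5]{bao2020first} for the drift $b$ (the constant diffusion $\sqrt{2\alpha}\Id$ makes the $\sigma$-conditions trivial and collapses the tamed Milstein scheme to the tamed Euler scheme~\eqref{eq:tamed}), and then invoke that theorem. The only minor discrepancy is that the conditions $(\mathbf{A}_b^1)$--$(\mathbf{A}_b^4)$ in \cite{bao2020first} are phrased in terms of \emph{Lions} derivatives $\rmD^L$ rather than linear functional derivatives, so your expression for $\delta \bmketa/\delta\nu$ needs one further spatial gradient to yield $\rmD^L b(x,\nu)(z)=\int_{\rset^p}\nabla_1\kker(x,y)\otimes\nabla_1\kker(z,y)/(\nu[\kker(\cdot,y)]+\eta)\,\rmd\mu(y)$, after which the boundedness and Lipschitz checks go through exactly as you describe.
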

\begin{proof}
See \Cref{app:euler}.
\end{proof}
A similar result holds for a continuous-time interpolation of $(X_{n}^{\ell,N})_{n \in \nset}$, see
\cite[Theorem 2.5]{bao2020first} for more details. For small values of
$\gamma >0$ and large values of $n\gamma$ and $N\in \nset$ we get that
$(1/N) \sum_{k=1}^N \updelta_{X_n^{k,N}}$ is an approximation of
$\pi_{\alpha, \eta}^\star$.

\subsection{Stability of the particle system w.r.t the observed measure}
\label{sec:approximation-mu}
\Cref{prop:Gun_approx_mu} shows that if $\mu$ is approximated
by the empirical measure $\mu^M$ obtained from a sample $y^{1:M} = \{y^{k,M}\}_{k=1}^M$
then the minimizer of $\Gun_{\alpha}^{\eta}$ obtained using $\mu^M$ instead of
$\mu$ converges to the minimizer of $\Gun_{\alpha}^{\eta}$ obtained using $\mu$
as $M\rightarrow +\infty$. We now consider the discrete-time alternative particle system
$\{(X_n^{k,N, M})_{n\in\nset}\}_{k=1}^N$ which evolves according to~\eqref{eq:tamed} with $b$ replaced by its empirical approximation obtained by approximating $\mu$ with the empirical measure of the $M$ samples $y_1,\dots, y_M$:
\begin{equation}
  \label{eq:particle_M}
  \textstyle{
    X_{n+1}^{k,N,M} = X_n^{k,N,M} + \gamma b^M( X_n^{k,N, M}, \lambda^{N, M}_n)/(1 + \gamma \normLigne{b^M( X_n^{k,N, M}, \lambda^{N, M}_n)}) + \sqrt{2 \alpha} Z_{n+1}^k
    }
\end{equation}
where $\{Z_n^k\}_{k, n \in \nset}$ is a family of independent Gaussian random
variables, $\gamma > 0$ is a stepsize, $\lambda^{N, M}_n = (1/N) \sum_{k=1}^N \updelta_{X_n^{k,N, M}}$ for any $n \in \nset$ and for any $x \in \rset^d$ and $\nu \in \Pens(\rset^d)$ we have
\begin{equation}
  \label{eq:bM_def}
  \textstyle{b^M(x, \nu) = \int_{\rset^p}\bmketa(x, \nu, y) \rmd \mu^M(y) - \alpha \nabla U(x) = (1/M)\sum_{j=1}^M\bmketa(x, \nu, y_{j})  - \alpha \nabla U(x).}
\end{equation}
In the next proposition, we show that the two particle systems~\eqref{eq:tamed} and~\eqref{eq:particle_M} are close when $M$ is large.

\begin{proposition}
\label{prop:approx_mu}
Assume \tup{\rref{assum:general_kker}}, \tup{\rref{assum:lip_U}-\ref{item:lip}}
and \tup{\rref{assum:lip_U}-\ref{item:dissi}}. Let $\eta, \alpha > 0$. For any
$N,M \in \nsets$, let $(X_n^{1,N})_{t \geq 0}$ and $(X_n^{1,N,M})_{t \geq 0}$ be
obtained with~\eqref{eq:tamed} and~\eqref{eq:particle_M} respectively, driven by
the same underlying family of independent Gaussian random variables
$\{Z_n^k\}_{k, n \in \nset}$, with initial condition $X_0^{1:N}$ such that
$\mathcal{L}(X_0^{1:N}) \in \Pens_1((\rset^d)^N)$, $\{X_0^{k,N}\}_{k=1}^N$ is
exchangeable and $\{X_0^{k,N}\}_{k=1}^N = \{X_0^{k,N,M}\}_{k=1}^N$.  Then, for
any $T \geq 0$ there exists $C_T \geq 0$ such that for any
$\ell \in \{1, \dots, N\}$ and $\gamma>0$
  \begin{equation}
    \textstyle{
      \expeLigne{\sup_{n\in \{0, \dots, n_T\}} \normLigne{X_n^{\ell,N}- X_n^{\ell,N, M}}} \leq C_T\gamma M^{-1/2}.
      }
  \end{equation}
\end{proposition}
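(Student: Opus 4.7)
The plan is to couple both discrete particle systems through the shared Gaussian increments $\{Z_n^k\}$ and identical initial condition, and to control the pointwise discrepancies $\Delta_n^k := X_n^{k,N} - X_n^{k,N,M}$ via a discrete Gronwall argument. Writing $T_\gamma(u) := u/(1+\gamma\normLigne{u})$ for the taming map, whose Jacobian has operator norm at most $1$ so that $T_\gamma$ is $1$-Lipschitz uniformly in $\gamma$, the coupling gives $\Delta_0^k = 0$ and
\begin{equation*}
\Delta_{n+1}^k - \Delta_n^k = \gamma \bigl[ T_\gamma(b(X_n^{k,N}, \lambda_n^N)) - T_\gamma(b^M(X_n^{k,N,M}, \lambda_n^{N,M})) \bigr].
\end{equation*}

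I decompose the drift mismatch as $(A) + (B)$, where $(A) := b(X_n^{k,N}, \lambda_n^N) - b(X_n^{k,N,M}, \lambda_n^{N,M})$ is a Lipschitz discrepancy and $(B) := b(X_n^{k,N,M}, \lambda_n^{N,M}) - b^M(X_n^{k,N,M}, \lambda_n^{N,M})$ is a Monte Carlo fluctuation. For $(A)$, under \tup{\Cref{assum:general_kker}} and \tup{\Cref{assum:lip_U}-\ref{item:lip}} the drift $b$ is Lipschitz in its state argument and in its measure argument for $\wassersteinD[1]$, with constants of order $\Mtt/\eta + \alpha\Ltt$ and $\Mtt^2/\eta^2$ respectively---these come from boundedness of $\kker$ and $\nabla_1\kker$ together with the lower bound $\nu[\kker(\cdot,y)] + \eta \geq \eta$, exactly as used in the proof of \Cref{prop:existence_uniqueness}---so that $\normLigne{(A)} \leq L_1\normLigne{\Delta_n^k} + (L_2/N)\sum_{j=1}^N \normLigne{\Delta_n^j}$. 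For $(B)$, boundedness of $\bmketa$ by $\Mtt/\eta$ together with its Lipschitzness in $y$ (since $\kker$ and $\nabla_1\kker$ have bounded derivatives in $y$ under \Cref{assum:general_kker}) yields $\normLigne{(B)} \leq L_y\,\wassersteinD[1](\mu^M,\mu)$, whence $\expeLigne{\normLigne{(B)}} \leq C M^{-1/2}$ via the classical rate $\expeLigne{\wassersteinD[1](\mu^M,\mu)} = O(M^{-1/2})$.

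Combining the two bounds, averaging over $k$ and using that the exchangeability of $\{X_0^{k,N}\}$ together with the symmetry of the dynamics makes $\expeLigne{\normLigne{\Delta_n^\ell}}$ independent of $\ell$, I arrive at the per-step recursion $\expeLigne{\normLigne{\Delta_{n+1}^\ell}} \leq (1+C_1\gamma)\expeLigne{\normLigne{\Delta_n^\ell}} + C_2\gamma M^{-1/2}$. Iterating this for $n \leq n_T = \floor{T/\gamma}$ with $\Delta_0^\ell = 0$ via discrete Gronwall yields the stated bound; extension to the supremum in $n$ inside the expectation is direct since the cumulative increment $\sum_{k \leq n}\gamma\bigl(\normLigne{(A)_k} + \normLigne{(B)_k}\bigr)$ dominates $\sup_n \normLigne{\Delta_n^\ell}$ and satisfies the same recursion. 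The most delicate step, and the one I expect to require the most care, is the control of $(B)$: since $(X_n^{k,N,M}, \lambda_n^{N,M})$ is itself a function of the samples $y^{1:M}$, a naive independence-based Monte Carlo argument is unavailable. Routing through $\wassersteinD[1](\mu^M,\mu)$ as above circumvents the dependence at the cost of a possible dimensional penalty in $p$; an alternative is a symmetrization or decoupling argument exploiting the exchangeability of $\{y^{k,M}\}$ to restore independence.
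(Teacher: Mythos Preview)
Your scheme---coupling via shared Gaussians, the $(A)+(B)$ split, discrete Gronwall, and the use of exchangeability to reduce to a single particle---matches the paper's structure almost exactly, including the handling of the taming map (the paper does it by a direct add-and-subtract giving a factor $2$ rather than via $1$-Lipschitzness of $T_\gamma$, but either is fine). The substantive difference is in how you treat the Monte Carlo term $(B)$, and here your proposed route has a genuine gap.

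Routing through $\wassersteinD[1](\mu^M,\mu)$ does not deliver the stated bound in general: the rate $\expeLigne{\wassersteinD[1](\mu^M,\mu)}=O(M^{-1/2})$ is specific to $p\leq 2$; for $p>2$ the Fournier--Guillin rate is $O(M^{-1/p})$, so you lose the dimension-free $M^{-1/2}$ that the proposition asserts. You flag this penalty yourself, but it is not cosmetic---it breaks the statement for $p>2$.

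The paper avoids this by never using Lipschitzness of $\bmketa$ in $y$ at all. It uses only the uniform bound $\normLigne{\bmketa}\leq\Mtt/\eta$: writing $(B)=(1/M)\sum_k A_k$ with $A_k=\bmketa(\,\cdot\,,\,\cdot\,,y^{k,M})-\int\bmketa\,\rmd\mu$, bounded second moment together with $\expeLigne{\langle A_i,A_j\rangle}=0$ for $i\neq j$ gives $\expeLigne{\normLigne{(B)}}\leq(C/M)^{1/2}$ directly, independent of $p$. The dependence issue you raise is real---if the $A_k$ are evaluated at $(X_n^{1,N,M},\lambda_n^{N,M})$, which itself depends on all the $y^{j,M}$, the zero-correlation claim is not immediate. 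The clean way around it is to reverse the order of the decomposition: write
\[
b(X_n^{1,N},\lambda_n^N)-b^M(X_n^{1,N,M},\lambda_n^{N,M})
=\bigl[b-b^M\bigr](X_n^{1,N},\lambda_n^N)
+\bigl[b^M(X_n^{1,N},\lambda_n^N)-b^M(X_n^{1,N,M},\lambda_n^{N,M})\bigr],
\]
so that the Monte Carlo residual is evaluated at the \emph{reference} system $(X_n^{1,N},\lambda_n^N)$, which is independent of $y^{1:M}$; conditioning on it, the $A_k$ are genuinely centred and uncorrelated. The second piece is a Lipschitz discrepancy for $b^M$ (same constants as $b$) and is absorbed into the Gronwall recursion. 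This yields the dimension-free $M^{-1/2}$ without Wasserstein or decoupling machinery.
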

\begin{proof}
See \Cref{app:approx_mu}.
\end{proof}
An equivalent result showing convergence of the continuous process $$\bfX_t^{k,N} = \bfX_0^{k,N} + \int_0^t b^M(\bfX_{s}^{k,N}, \lambdabf_s^{N}(\bfX_0^{1:N})) \rmd s + \sqrt{2 \alpha} \bfB_t^{k},$$ with $b^M$ as in~\eqref{eq:bM_def}, to~\eqref{eq:particle} could also be obtained. The argument follows the same line of that used to prove \Cref{prop:propagation_chaos} and exploits the convergence of $b^M$ to $b$ established in the proof of \Cref{prop:approx_mu}. The resulting rate is $M^{-1/2}$ (see \Cref{app:prop10_continuous_time}).

As we discuss in the next section, in some cases it might be beneficial to resample from the empirical measure $\mu^M$ at each time step $n$. In the following proposition we show that this resampling operation does not change the error bound obtained in \Cref{prop:approx_mu}.
\begin{proposition}
\label{prop:resampling}
Assume \tup{\rref{assum:general_kker}}, \tup{\rref{assum:lip_U}-\ref{item:lip}}
and \tup{\rref{assum:lip_U}-\ref{item:dissi}}. Let $\eta, \alpha > 0$. For any
$N,M \in \nsets$, let $(X_n^{1,N})_{t \geq 0}$ be obtained with~\eqref{eq:tamed}
and $(X_n^{1,N,m})_{t \geq 0}$ with~\eqref{eq:particle_M} where at each
time $n$ an i.i.d. sample of size $m$ is drawn from $\mu^M$.  Assume that
$(X_n^{1,N})_{t \geq 0}$ and $(X_n^{1,N,m})_{t \geq 0}$ are driven by
the same underlying family of independent Gaussian random variables
$\{Z_n^k\}_{k, n \in \nset}$, with initial condition $X_0^{1:N}$ such that
$\mathcal{L}(X_0^{1:N}) \in \Pens_1((\rset^d)^N)$,  $\{X_0^{k,N}\}_{k=1}^N$ is
exchangeable and $\{X_0^{k,N}\}_{k=1}^N = \{X_0^{k,N,m}\}_{k=1}^N$.
Then, for any $T \geq 0$ there exists $C_T \geq 0$ such that for any
$\ell \in \{1, \dots, N\}$ and $\gamma>0$
  \begin{equation}
    \textstyle{
      \expeLigne{\sup_{n\in \{0, \dots, n_T\}} \normLigne{X_n^{\ell,N}- X_n^{\ell,N, m}}} \leq C\gamma m^{-1/2}.
      }
  \end{equation}
\end{proposition}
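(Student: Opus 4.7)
The plan is to mirror the proof of \Cref{prop:approx_mu} almost verbatim, inserting one additional ingredient to control the fresh resampling at each step. Set $e_n^{\ell} = \|X_n^{\ell,N} - X_n^{\ell,N,m}\|$. Since both chains are driven by the same Gaussian increments $\{Z_n^k\}$ and the map $v \mapsto v/(1+\gamma\|v\|)$ is $1$-Lipschitz, the Brownian contributions cancel in the one-step update and I obtain
\begin{equation*}
  e_{n+1}^{\ell} \leq e_n^{\ell} + \gamma \bigl\| b(X_n^{\ell,N}, \lambda_n^{N}) - b^{m,n}(X_n^{\ell,N,m}, \lambda_n^{N,m}) \bigr\|,
\end{equation*}
where $b^{m,n}(x,\nu) = (1/m)\sum_{j=1}^{m} \bmketa(x,\nu,\tilde y_j^{(n)}) - \alpha \nabla U(x)$ and $\tilde y_{1:m}^{(n)}$ is the fresh i.i.d.\ sample from $\mu^M$ drawn at step $n$.

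Next, I would split the one-step drift error as
\begin{equation*}
  b(X_n^{\ell,N},\lambda_n^{N}) - b^{m,n}(X_n^{\ell,N,m},\lambda_n^{N,m}) = \bigl[b(X_n^{\ell,N},\lambda_n^{N}) - b(X_n^{\ell,N,m},\lambda_n^{N,m})\bigr] + \bigl[b(X_n^{\ell,N,m},\lambda_n^{N,m}) - b^{m,n}(X_n^{\ell,N,m},\lambda_n^{N,m})\bigr].
\end{equation*}
Under \Cref{assum:general_kker} and \Cref{assum:lip_U}-\ref{item:lip}, the drift $b$ is Lipschitz in $(x,\nu)$ (w.r.t.\ Euclidean distance and $\wassersteinD[1]$), and since $\wassersteinD[1](\lambda_n^{N},\lambda_n^{N,m}) \leq (1/N)\sum_{j=1}^{N} e_n^{j}$, the first bracket is bounded by $C(e_n^{\ell} + (1/N)\sum_{j=1}^{N} e_n^{j})$ exactly as in \Cref{prop:approx_mu}.

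The new ingredient is the Monte Carlo term. Let $\mathcal{F}_n$ be the $\sigma$-field generated by $\mu^M$, $X_0^{1:N}$, the Gaussian increments up to time $n$, and the resamples from steps $0,\dots,n-1$. Conditional on $\mathcal{F}_n$, the state $X_n^{\ell,N,m}$ is deterministic, and $\tilde y_{1:m}^{(n)}$ is an i.i.d.\ sample from $\mu^M$. Hence $\mathbb{E}\bigl[b^{m,n}(x,\nu)\mid \mathcal{F}_n\bigr] = b^M(x,\nu)$, and because $\bmketa$ is uniformly bounded by $\Mtt/\eta$ (from \Cref{assum:general_kker} and $\eta>0$), the elementary variance estimate gives
\begin{equation*}
  \mathbb{E}\bigl[\|b^{m,n}(x,\nu) - b^M(x,\nu)\|\,\big|\,\mathcal{F}_n\bigr] \leq C\,m^{-1/2}.
\end{equation*}
Combining with the bound $\mathbb{E}\|b^M(x,\nu) - b(x,\nu)\| \leq C M^{-1/2}$ established in the proof of \Cref{prop:approx_mu}, the total Monte Carlo error at step $n$ is $\leq C(m^{-1/2}+M^{-1/2}) \leq C\,m^{-1/2}$, the second inequality absorbing $M$-dependence into the constant as in the statement.

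Finally, I would take expectations, use exchangeability of $\{X_0^{k,N}\}$ and of $\{X_0^{k,N,m}\}$ to bound the average $(1/N)\sum_{j} \mathbb{E}[e_n^{j}]$ by a common function of $n$, and apply the discrete Gronwall argument of \Cref{prop:approx_mu} (which is responsible for the $\gamma$ prefactor in the final bound). Passing to the supremum over $n \in \{0,\dots,n_T\}$ via the same martingale/maximal argument used there yields the claim. The main obstacle is the two-level sampling ($\mu \rightsquigarrow \mu^M \rightsquigarrow$ i.i.d.\ resample), but the key simplification is that the resampling is \emph{independent across time steps} conditional on $\mu^M$, so no accumulation of variance beyond the per-step $m^{-1/2}$ scale occurs, and the derivation reduces cleanly to the one in \Cref{prop:approx_mu}.
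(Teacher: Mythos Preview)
Your proposal is correct and follows essentially the same route as the paper: split the one-step drift error into a Lipschitz part and a Monte Carlo part, decompose the latter as (resample vs.\ $\mu^M$) plus ($\mu^M$ vs.\ $\mu$), bound these by $Cm^{-1/2}$ (conditionally) and $CM^{-1/2}$ respectively, then run the discrete Gronwall recursion of \Cref{prop:approx_mu}. Two minor remarks: the paper combines the two Monte Carlo terms via the implicit hypothesis $m\leq M$ (giving $m^{-1/2}+M^{-1/2}\leq 2m^{-1/2}$) rather than absorbing $M$ into the constant; and no martingale/maximal argument is needed for the supremum, since the pathwise one-step inequality $e_{n+1}^\ell\leq e_n^\ell+\gamma\|\cdot\|$ already yields $\sup_{n\leq n_T}e_n^\ell\leq \gamma\sum_{k<n_T}\|\cdot\|_k$.
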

\begin{proof}
See \Cref{app:resampling}.
\end{proof}

The empirical measure $\lambda_n^{N, M}$ provides an approximation of $\pi_{\alpha, \eta}^\star$. In low dimensional settings, for the purpose of visualization we derive a smooth approximation of
$\pi_{\alpha, \eta}^\star$ using standard kernel density estimation tools and define
$\hat{\pi}_{n}^{N,M}: \ \rset^d \to \rset$ such that for any $x \in \rset^d$
\begin{align}
  \label{eq:kde}
  \textstyle{
\hat{\pi}_n^{N,M}(x) = (1/N) \sum_{k=1}^N \det(\rmH)^{-1/2} \upvarphi(\rmH^{-1/2}(x-X_{n}^{k, N, M})),}
\end{align}
where $\upvarphi$ is the density of a $d$-dimensional Gaussian distribution with zero mean
and identity covariance matrix and $\rmH$ is a positive definite bandwidth matrix, see, e.g, \cite{silverman1986density}.
Combining the results in \Cref{prop:propagation_chaos} and \Cref{prop:euler} with standard arguments from the kernel density estimation literature, the estimator~\eqref{eq:kde} can be shown to converge to $\pi_{\alpha, \eta}^\star$ as $N\to\infty$ and $\gamma\to0$ (e.g., \cite[Theorem 3.1]{antonelli2002rate} and \cite[Theorem 2.2]{bossy1997stochastic}).
Our final algorithm is summarized in \Cref{alg:wgf_algoo}.

\begin{algorithm}
\caption{Solving Fredholm integral equations with Wasserstein gradient flows (FE-WGF)}\label{alg:wgf_algoo}
\begin{algorithmic}
\STATE{\textbf{Require:} $N,M,n_T \in \nset$, $\alpha, \eta, \gamma > 0$,  $\rmH \in \rset^{d\times d}$, $\mu, \pi_0, \pi_{\mathrm{init}} \in \Pens(\rset^d)$.}
\STATE{Draw $\{X_0^{k,N,M}\}_{k=1}^N$ from $\pi_{\mathrm{init}}^{\otimes N}$}
\FOR{$n=0:n_T$}
\STATE{Draw $\{y^{k,M}\}_{k=1}^M$ from $\mu^{\otimes M}$}
\STATE{Compute $b_M(X_n^{k,N,M}, \lambda^{N,M}_n)$ as in~\eqref{eq:bM_def}}
\STATE{Update $X_{n+1}^{k,N,M}  = X_n^{k,N,M} + \gamma b^M(X_n^{k,N,M}, \lambda^{N,M}_n)/(1 + \gamma \normLigne{b^M(X_n^{k,N,M}, \lambda^{N,M}_n)}) + \sqrt{2 \alpha} Z_{n+1} $ as in~\eqref{eq:tamed}}
\ENDFOR
\RETURN $ \hat{\pi}_n^{N,M}(x) $ as in~\eqref{eq:kde}
\end{algorithmic}
\end{algorithm}

\subsection{Implementation guidelines}
\label{sec:choice}

Obtaining the estimator~\eqref{eq:kde} of the minimizer of
$\Gun_{\alpha}^{\eta}$ requires specification of a number of parameters: the
reference measure $\pi_0$ and the regularization parameter $\alpha$ control the
properties of the measure $\pi$ we want to reconstruct (e.g. its smoothness),
while the parameter $\eta$, the number of particles $N, M$ and the time
discretization step $\gamma$ control accuracy and stability of the numerical
implementation.

\paragraph{Choice of $\pi_0$} There are of course, many possible choices for the
reference measure $\pi_0$ to which we want the regularized solution to be close
depending on the constraints we want to impose on the reconstructed solution.
The choice of an improper reference measure $\pi_0\propto C$ with $C>0$ results
in a McKean-Vlasov SDE in which there is no dependence on $\pi_0$
\begin{equation}
  \textstyle{
  \rmd \bfX_t^\star = \defEnsLigne{ \int_{\rset^p}\bmketa(\bfX_t^\star, \lambdabf_t^\star, y) \rmd \mu(y)} \rmd t + \sqrt{2\alpha} \rmd \bfB_t,  \qquad \bfX_0^\star \in \rset^d,}
\end{equation}
where $(\bfB_t)_{t \geq 0}$ is a $d$-dimensional Brownian motion and
$\lambdabf_t^\star$ is the distribution of $(\bfX_t^\star)_{t \geq 0}$.  This
scheme corresponds to what we would obtain by applying the gradient flow
procedure to the functional~\eqref{eq:funct_entropy}.
Under additional assumptions on the tail behaviour of the kernel $\kker$, it is possible to show that this functional is proper, lower semi-continuous and coercive in an appropriate sense; \cite[Chapter 7]{crucinio2021some} shows empirically that the corresponding particle system is stable.
Nevertheless, in many scenarios some knowledge of the target will be embedded in the problem at hand. In deconvolution problems, $\mu$ is the distribution of noisy samples from
the target distribution $\pi$; in this case it is sensible to expect that $\pi$
itself will be close to $\mu$ and therefore choose $\pi_0$ to be reasonably
close to $\mu$.  More generally, for problems in which
$\kker(x, y) = \kker(y-x)$ is a deconvolution kernel, the distribution $\mu$
will be the convolution of $\pi$ and $\kker$ and therefore encapsulates
information on $\pi$. In these problems we choose $\pi_0$ to be a Gaussian
distribution with mean and variance given by the empirical mean and variance of
the sample from $\mu$. For the epidemiology examples in~\Cref{sec:epidem}, we set $\pi_0$ to be $\mu$ shifted
back by a number of days equal to the mean \cite{goldstein2009reconstructing} or
the mode \cite{wang2020bayesian} of the infection-to-confirmed delay
distribution $\kker$.  For image reconstruction, $\pi_0$ could be selected to
enforce particular characteristics in the reconstructed image, e.g. smoothness
\cite{molina1993using} or sparsity \cite{zhang2014multi, liu2019image}.
Alternatively, if a large data set of previously reconstructed images is
available one could use score matching priors \cite{kingma2010regularized} to
obtain a reference measure $\pi_0$ encapsulating all the relevant features of
the image to reconstruct. We study the influence of the reference measure
$\pi_0$ on a toy example in \Cref{app:pi0} and find that $\pi_0$
influences the shape of the minimizer of $\Gun_{\alpha}^\eta$ and its minimum
but not the speed at which convergence to the minimum occurs.

\paragraph{Choice of $\alpha$} As the parameter $\alpha>0$ controls the amount
of regularization introduced by the cross-entropy penalty, its value should be
chosen to give a good trade-off between the distance from the data distribution
$\mu$, $\KL{\mu}{\pi \Kker}$, and that from $\pi_0$, $\KL{\pi}{\pi_0}$.  A common
approach for selecting the regularization parameter in Tikhonov regularization
is cross-validation \cite{wahba1977practical, amato1991maximum}.  Since the case
in which $\mu$ is not known but a sample drawn from it is available is the most
likely in applications, we propose the following approach to cross-validation:
to estimate the value of $\alpha$ we divide the sample from $\mu$ into $L$
subsets and find the value of $\alpha$ which minimizes
\begin{align}
  \label{eq:cv}
  \textstyle{
\mathrm{CV}(\alpha) = (1/L)\sum_{j=1}^L \hat{\Gun}_{\alpha}^{\eta}(\hat{\pi}^{j}),}
\end{align}
where $\hat{\pi}^{j}$ is obtained using \Cref{alg:wgf_algoo} with the samples
from $\mu$ which are not in group $j \in \{1, \dots, L\}$ and $\widehat{\Gun}_{\alpha}^{\eta}$ is an approximation of $\Gun_\alpha^\eta$
\begin{align}
\label{eq:Gun_numerical}
\hat{\Gun}_{\alpha}^{\eta}(\hat{\pi}^{j}) := -\frac{1}{M}\sum_{\ell=1}^M\log\left(\frac{1}{N}\sum_{k=1}^N\kker(X_n^{k, N, M}, y_j)\right) +\frac{\alpha}{N}\sum_{k=1}^N \log\left(\frac{\hat{\pi}^{j}(X_n^{k, N, M})}{\pi_0(X_n^{k, N, M})}\right).
\end{align}
If some prior information on the smoothness of the solution $\pi$ is known (e.g. its variance), one could chose $\alpha$ so that the smoothness of~\eqref{eq:kde}, matches the expected smoothness of $\pi$.
We show how the value of $\alpha$ influences the reconstructions of $\pi$
on simple problems in \Cref{app:alpha}.

\paragraph{Choice of $\eta$} The parameter $\eta$ has been introduced
in~\eqref{eq:G_eta} to deal with the possible instability of the functional
$\Gun_{\alpha}^{\eta}$; we did not find performances to be significantly
influenced by this parameter as long as its value is sufficiently small.
In practice, in the
experiments in Section~\ref{sec:ex} we set $\eta\equiv 0$.
This choice might seem counter-intuitive, since most of our theoretical results are obtained with $\eta>0$, 
On one hand, this suggests that for regular enough $\kker$, the functional $\Gun_\alpha$ is well-behaved; on the other hand, we observe that the introduction of the tamed Euler--Maruyama scheme~\eqref{eq:tamed} prevents most of the numerical instability issues arising when discretizing~\eqref{eq:particle}.

\paragraph{Choice of $N, \gamma$ and $m$} 
The values of the number of particles
$N$, the time discretization step $\gamma$ and the number of samples $m$ from
$\mu$ to use at each iteration control the quality of the numerical approximation of~\eqref{eq:nonlinearSDE}, 
their choice is therefore largely application dependent.
However, the
results in \Cref{prop:propagation_chaos}, \ref{prop:euler} and \ref{prop:resampling} give the following global error estimate: for any $T \geq 0$ we have
\begin{align}
\label{eq:guideline}
  \textstyle{\expeLigne{\sup_{n \in \{0, \dots, n_T\}} \normLigne{\bfX_{n \gamma}^{\star}- X_n^{\ell,N,m}}} \leq C_T (N^{-1/2} +\gamma + \gamma m^{-1/2}).}
\end{align}

Choosing $N$ amounts to the classical task of selecting an appropriate sample size for Monte Carlo approximations, while the choice of $\gamma$ corresponds to the specification of a timescale on which to discretize a continuous time (stochastic) process; hence, one can exploit the vast literature on Monte Carlo methods and discrete time approximations of SDEs to select these values \cite{kloeden1992stochastic}.
In practice, we found that the choice of $N$ largely depends on the dimensionality of the problem; for
one-dimensional examples values between $N=200$ and $N=1000$
achieve high enough accuracy to compete with specialized algorithms (see
Sections~\ref{sec:dd}, \ref{sec:epidem}); as the dimension $d$ increases larger
values of $N$ should be considered (see Section \ref{sec:ct}). 
Similar
considerations apply to the choice of the time discretization step $\gamma$. In
particular, this parameter should be chosen taking into account the order of
magnitude of the gradient, to give a good trade-off between the Monte Carlo
error and the time discretization error. In practice, we observed good results
with $\gamma$ between $10^{-1}$ and $10^{-3}$.

The number of samples $m$ from $\mu$ to employ at each iteration is again problem dependent. In many cases, we are given a number $M$ of observations from $\mu$, if $M$ is very large, the resulting algorithm might become computationally too expensive.
In this setting, we propose the following batch strategy to approximate $\mu$: fix $N$ and at each time step approximate the integral with respect to $\mu$
using $m$ samples where $m$ is the smallest between the number of particles $N$
chosen for the particle system~\eqref{eq:particle} and the total number of
samples available from $\mu$; if $N$ is larger than the total number of observed samples from $\mu$ then the whole
sample is used at each iteration, if $N$ is smaller we resample without replacement $m$ times from the empirical measure of the sample.
This amounts to a maximum cost of $\mathcal{O}(N^2)$ per time step.

In cases in which we have access to a sampling mechanism from $\mu$, and we can therefore draw $m$ samples at each iteration, we simply set $m=N$.

\paragraph{Choice of $n_T$} It is straightforward to choose the number of time steps $n_T$, adaptively. The number of steps necessary to give
convergence of~\eqref{eq:particle} to its stationary distribution is estimated
by approximating the value of $\Gun_{\alpha}^\eta$ through numerical
integration as in~\eqref{eq:Gun_numerical} and once the value of $\Gun_{\alpha}^\eta$ stops decreasing, a
minimizer has apparently been reached and the iteration in \Cref{alg:wgf_algoo} can be stopped.

\section{Experiments}
\label{sec:ex}

We test FE-WGF on a number of examples of Fredholm integral
equations~\eqref{eq:fe} and compare our density estimator~\eqref{eq:kde} with the
reconstructions given by state-of-the-art methods for each experiment.

First, in \Cref{sec:dd} we consider a classical problem in statistics, density
deconvolution.  In this case, $\kker(x, y)= \kker(y-x)$ and~\eqref{eq:fe} models
the task of reconstructing the density of a random variable $X$ from noisy
measurements $Y$. In \Cref{sec:epidem} we
test FE-WGF on a problem drawn from epidemiology. More precisely, we aim at
reconstructing the incidence profile of a disease from the observed number of
cases. This is a particular instance of the deconvolution problem in which a
smoothness constraint on the solution $\pi$ of~\eqref{eq:fe} is necessary to
make the estimator robust to noise in the observations $Y$
\cite{miller2020statistical}.  In \Cref{sec:ct}, we consider an application to
computed tomography (CT), in which a cross-sectional image of the lungs needs to
be recovered from the radial measurements provided by the CT scanner.  Finally,
in \Cref{sec:hd}, we study how the performances of the proposed method scale
with the dimensionality of the support of $\pi$ on a toy model.

In most examples, $\mu$ is known through a sample and at each iteration
we resample $m$ times from its empirical distribution. For image restoration
problems we consider the observed distorted image as the empirical distribution
of a sample from $\mu$ and use it to draw $m$ samples at each iteration in \Cref{alg:wgf_algoo}. In order to implement FE-WGF we follow the guidelines
provided in \Cref{sec:choice}.
Julia code to reproduce all examples is available online at \url{https://github.com/FrancescaCrucinio/FE_WGF}.

\subsection{Density Deconvolution}
\label{sec:dd}

The focus of this section is on deconvolution problems, \ie \ those in which
$\mu$ and $\pi$ satisfy~\eqref{eq:fe} with $\kker(x, y) = \kker(y - x)$. In
particular, we consider the case in which we want to recover the density of a
random variable $X$ from observations of $Y=X+Z$ where $Z$ is a zero mean error variable independent of~$X$.
The deconvolution problem is widely studied, and recently a number of approaches based on normalizing flows \cite{dockhorn2020density}, deep learning \cite{andreassen2021scaffolding}, and generative adversarial networks \cite{datta2018unfolding} have been explored. 

We consider the Gaussian mixture model of \cite{ma2011indirect} (see also \cite{crucinio2020particle}) where, with a slight abuse of notation, we denote both a measure and its density w.r.t. the Lebesgue measure with the same symbol:
\begin{align}
& \pi(x)  =(1/3)\mathcal{N}(x;0.3,0.015^{2})+(2/3)\mathcal{N}(x;0.5,0.043^{2}),\\
& \kker(x, y)  =\mathcal{N}(y; x,0.045^{2}),\\
& \mu(y)  =(1/3)\mathcal{N}(y;0.3,0.045^{2}+0.015^{2})+(2/3)\mathcal{N}(y;0.5,0.045^{2}+0.043^{2}),
\end{align}
and compare our method with the deconvolution kernel density estimators with plug-in
bandwidth (DKDEpi) \cite{delaigle2004practical}, a class of estimators for deconvolution problems which given a sample from $\mu$ produce a kernel
density estimator for $\pi$, and SMC-EMS, a particle method to solve~\eqref{eq:fe} which achieves state-of-the-art performance.

We check the accuracy of the reconstructions through the integrated square error
\begin{align}
  \label{eq:ise}
  \textstyle{
  \ise(\hat{\pi})=\int_{\rset} \{ \pi(x) - \hat{\pi}(x)\}^2 \rmd x,
  }
\end{align}
with $\hat{\pi}$ an estimator of $\pi$. Even if the analytic
form of $\mu$ is known, for SMC-EMS and~\eqref{eq:kde} we draw a sample of size
$M=10^3$ and draw without replacement $m=\min(N, M)$ samples at each iteration;
for DKDE we draw a sample of size $N$ for each $N$.

\begin{figure}[t]
\centering
\begin{tikzpicture}[every node/.append style={font=\normalsize}]
\node (img1) {\includegraphics[width=0.7\textwidth]{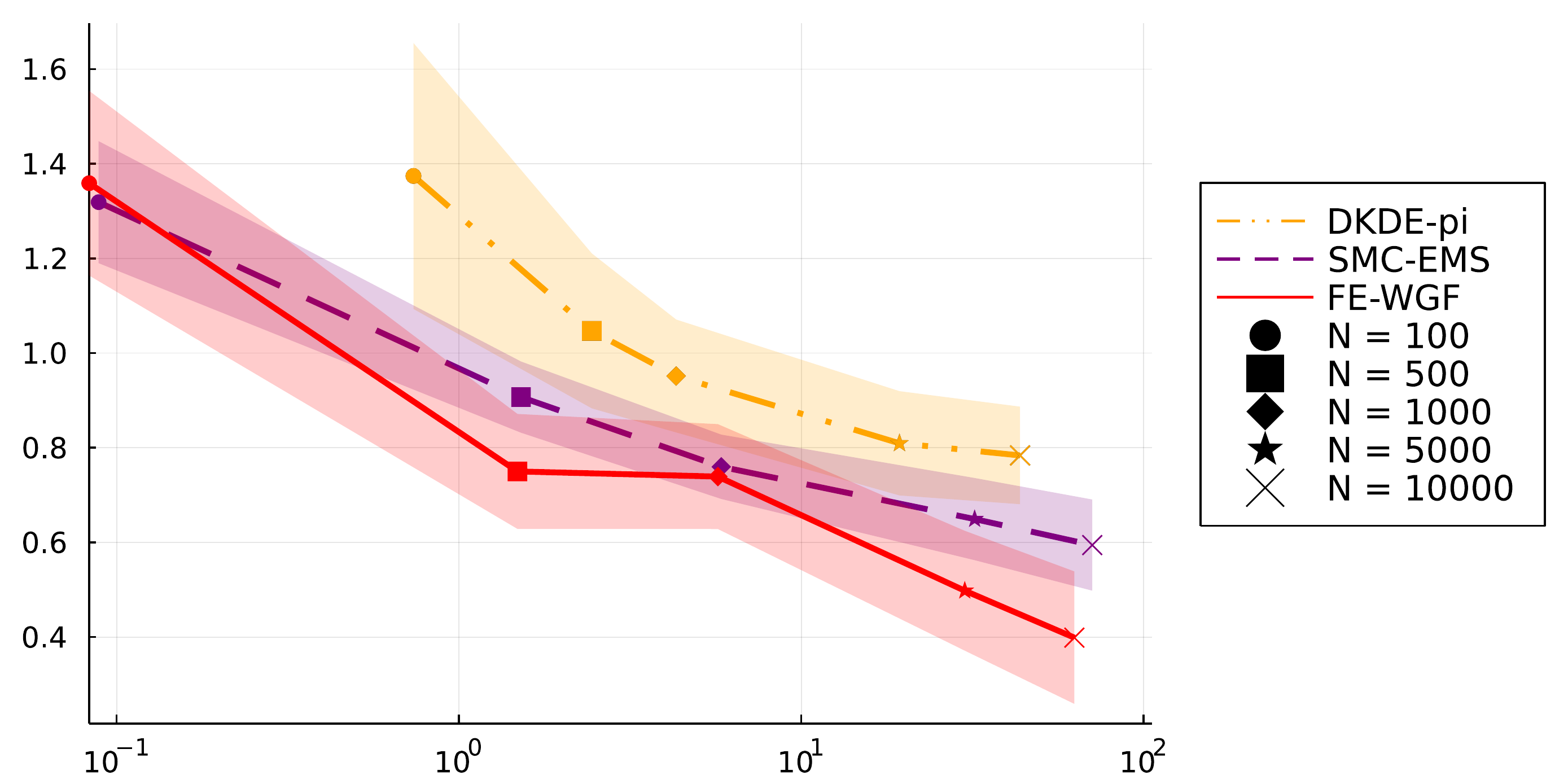}};
\node[below=of img1, node distance = 0, yshift = 1cm] (label1) {Runtime / s};
\node[left=of img1, node distance = 0, rotate=90, anchor = center, yshift = -0.8cm] {$\ise(\hat{\pi})$};
  \end{tikzpicture}
  \caption{Average accuracy and runtime for FE-WGF, SMC-EMS and DKDE with number of particles $N$
    ranging between $10^2$ and $10^4$. The shaded regions represent a interval of two standard deviations over 100 repetitions centred at the average $\ise$.}
\label{fig:mixture_smcems_vs_wgf}
\end{figure}
Both SMC-EMS and the estimator~\eqref{eq:kde} require specification of a number of parameters: we fix the number of time steps for both algorithms to $n_T = 100$ as we observed that convergence occurs in less than 100 iterations, the initial distribution is that of the samples from $Y$, since we expect the distribution of $Y$ to be close to that of $X$.  The regularization parameter $\alpha$ for FE-WGF and the smoothing parameter for SMC-EMS are chosen through cross validation, using~\eqref{eq:G_eta} as target functional for the former and the Kullback--Leibler divergence~\eqref{eq:kl} for the latter.
To implement FE-WGF we also need to specify the reference measure $\pi_0$, a Gaussian with mean and variance given by the empirical mean and variance of $Y$, and the time step $\gamma=10^{-3}$.

We consider different particle sizes (from $10^2$ to $10^4$) and compare reconstruction accuracy and total runtime of SMC-EMS, FE-WGF and DKDE
(Figure~\ref{fig:mixture_smcems_vs_wgf}). In particular, we compare the cost per
iteration since we run both algorithms for a fixed number of steps, however, we found that SMC-EMS and FE-WGF converge in a similar number of steps for this example. The
computational cost could be reduced by considering stopping criteria using
approximations of the functional $\Gun_{\alpha}^{\eta}$.  The performances of
FE-WGF and SMC-EMS are similar both in terms of runtime and in terms of accuracy,
with FE-WGF generally giving more accurate reconstructions for the same computational cost.

The DKDE-pi estimator has runtime comparable to that of SMC-EMS and FE-WGF, but poorer accuracy. Despite being better than SMC-EMS on average, the accuracy of FE-WGF has a slightly higher variance (about twice that of SMC-EMS), while DKDE-pi has still higher variance (about three times that of SMC-EMS).

As both SMC-EMS
and FE-WGF are regularized versions of the inconsistent maximum likelihood estimator for $\pi$, it is natural to compare the smoothness of the reconstructions. To characterize smoothness, we take 100 points $x_c$ in the support of $\pi$ and approximate (with 100 replicates) the mean squared error
\begin{equation}
  \textstyle{
    \mse(x_c) = \expe{\left( \pi(x_c) - \hat{\pi}(x_c)\right)^2}.
    }
\end{equation}
The $\mse$ measures locally the fit of the reconstructions to the solution $\pi$, which is known to be smooth, providing information on the (relative) smoothness of the reconstructions.

The distribution of the $\mse$ over the 100 points
(Figure~\ref{fig:mixture_smcems_vs_wgf_smoothness}) shows that
SMC-EMS and FE-WGF can achieve considerably lower $\mse$ than DKDE-pi, and therefore that the smoothness of the reconstructions provided by these two methods is closer to that of the true
density $\pi$.

\begin{figure}[t]
\centering
\resizebox{0.8\textwidth}{!}{%
\begin{tikzpicture}[every node/.append style={font=\normalsize}]
\node (img1) {\includegraphics[width=0.8\textwidth]{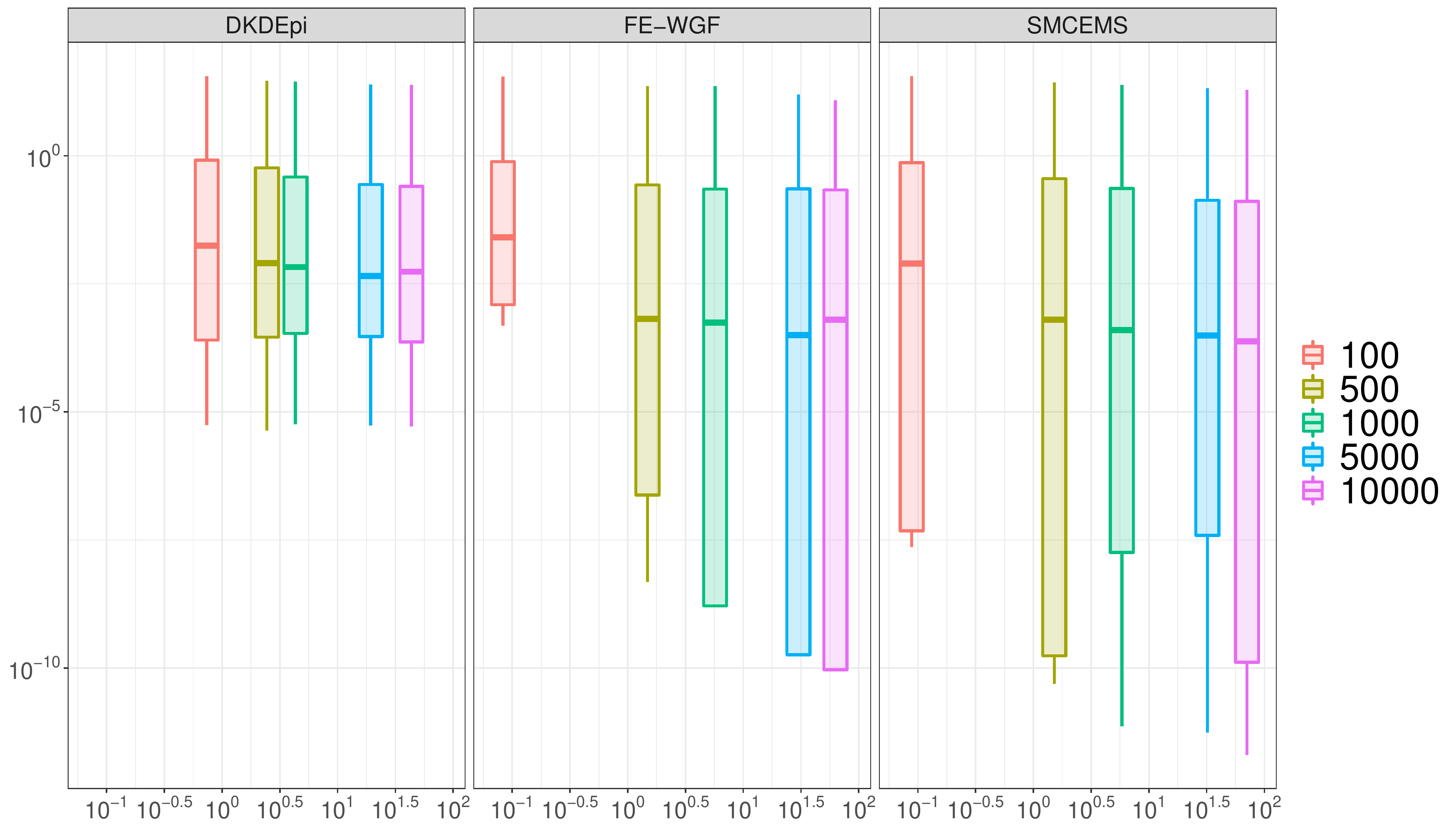}};
\node[left=of img1, node distance = 0, rotate=90, anchor = center, yshift = -1cm] {$\widehat{\mse}(x_c)$};
\node[below=of img1, node distance = 0, yshift = 1cm] (label1) {Runtime / s};
\end{tikzpicture}
}
\caption{
Distribution of $\mse$ as a function of runtime (in $\log$ seconds) for FE-WGF, SMC-EMS and DKDE. The number of particles $N$ ranges between 100 and 10,000.}
\label{fig:mixture_smcems_vs_wgf_smoothness}
\end{figure}

\subsection{Epidemiology}
\label{sec:epidem}
In epidemiology, Fredholm integral equations link the unknown infection
incidence curve $\pi$ to the number of reported cases, deaths or
hospitalizations. This is a particular instance of the deconvolution problem, in
which the kernel $\kker(x, y)= \kker(y-x)$ describes the delay distribution
between time of infection and time of death or hospitalization.  Deconvolution
techniques have been used to infer $\pi$ in the case of HIV
\cite{becker1991method} and influenza \cite{goldstein2009reconstructing} and
have been recently applied to estimate the incidence curve of COVID-19
\cite{marschner2020back, wang2020bayesian, chau2020construction,
  miller2020statistical}.

As an example, we take the spread of the pandemic influenza in the city of Philadelphia between the end of September and the beginning of October 1918 \cite{goldstein2009reconstructing}. The count of daily deaths and the distribution of delay between infection and death are available through the \texttt{R} package \texttt{incidental} \cite{miller2020statistical}.

To obtain a parametric form for $\kker$ we fit a mixture of Gaussians to the delay data using the expectation maximization algorithm (\texttt{normalmixEM} function in \texttt{R}; \cite{mixtools})
\begin{align}
\label{eq:k_epidem}
\kker(x, y) = 0.595\mathcal{N}(y - x; 8.63, 2.56^2) + 0.405\mathcal{N}(y - x; 15.24, 5.39^2).
\end{align}
Although this choice assigns $\approx 10^{-3}$ mass to the negative reals, we
found that a mixture of Gaussians fits the observed delay distribution better
than other commonly used distributions (\eg \  Gamma, log-normal, see
\cite{obadia2012r0}) and also benefits from a bounded
derivative and therefore satisfies~\Cref{assum:general_kker} and \Cref{assum:pi0}.

We compare the reconstructions obtained using FE-WGF with the
robust incidence deconvolution estimator (RIDE) of \cite{miller2020statistical}, the Richardson-Lucy (RL) deconvolution described in
\cite{goldstein2009reconstructing}, an iterative algorithm popular in the image
processing literature which aims at minimising the Kullback--Leibler
divergence~\eqref{eq:kl}, and SMC-EMS. To benchmark the performances of the four methods we consider synthetic data obtained simulating from the slow decay incidence curve in \cite{miller2020statistical}
\begin{align}
\label{eq:incidence}
\pi(x)\propto
\left\{
\begin{array}{ll}
\exp(-0.05(8-x)^2) & 0\leq x\leq 8, \\
\exp(-0.001(x-8)^2)&  8\leq x\leq 100 \\
\end{array},\right.
\end{align}
whose shape is similar to that of the incidence of the 1918 pandemic influenza, rescaled to have 5,000 infections distributed over 100 time steps.

We consider both the case in which
the model is well-specified, and each case on the incidence curve is randomly
propagated forward $s$ days according to $\kker$, and the case in which data are
noisy and the delay model is misspecified: every sixth and seventh day a uniform
random number between $0.3$ and $0.5$ is drawn and that proportion of cases are
recorded two days later (e.g. cases from day six move to day eight).  This approximates reporting delays for testing and death records
\cite{miller2020statistical}.

\begin{figure}
\centering
\begin{tikzpicture}[every node/.append style={font=\normalsize}]
\node (img1) {\includegraphics[width=0.4\textwidth]{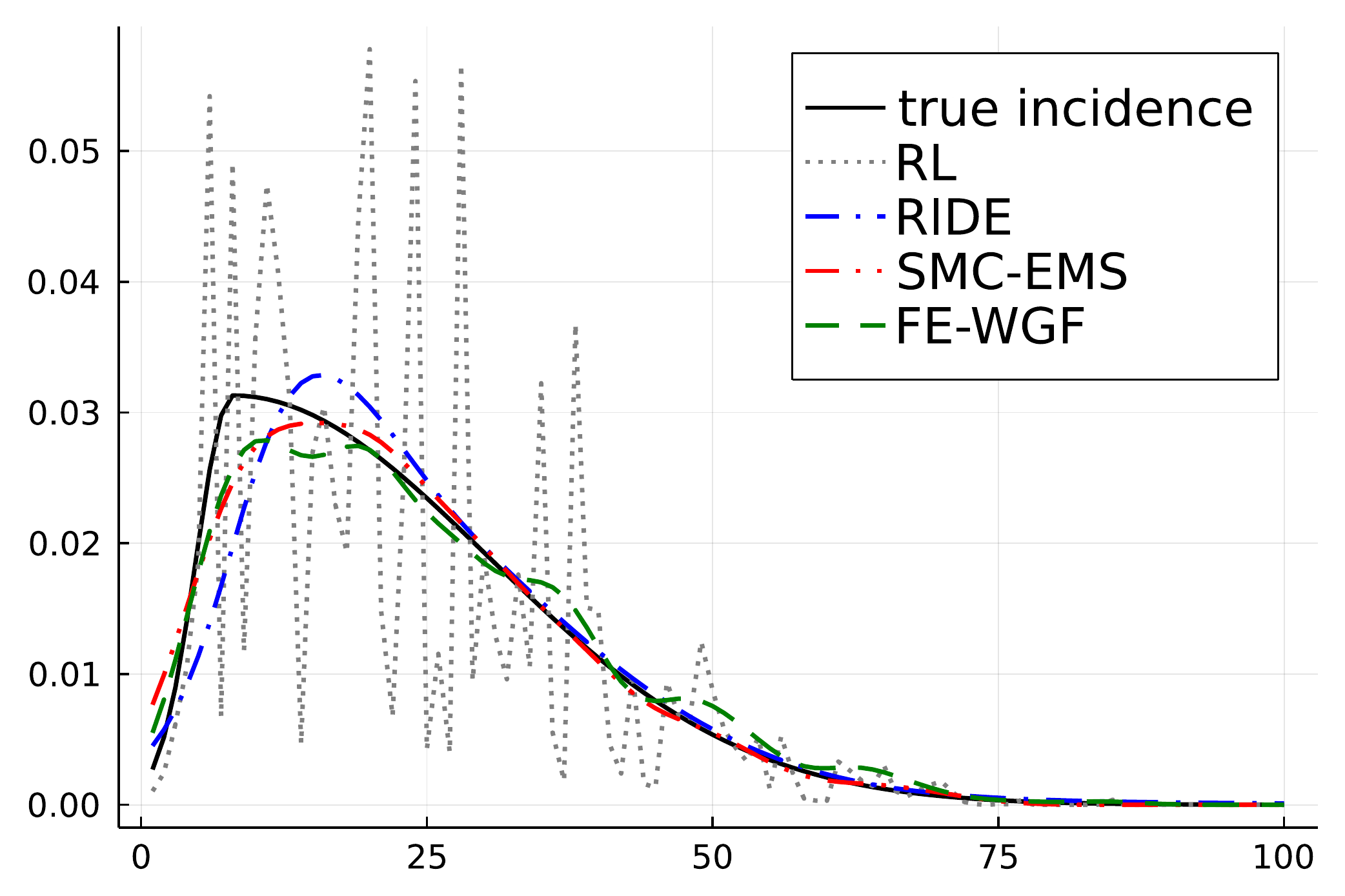}};
\node[below=of img1, node distance = 0, yshift = 1cm] (label1) {Time / days};
  \node[left=of img1, node distance = 0, rotate=90, anchor = center, yshift = -0.8cm] {Incidence};
\node[right=of img1, node distance = 0, xshift = -0.5cm] (img2) {\includegraphics[width=0.4\textwidth]{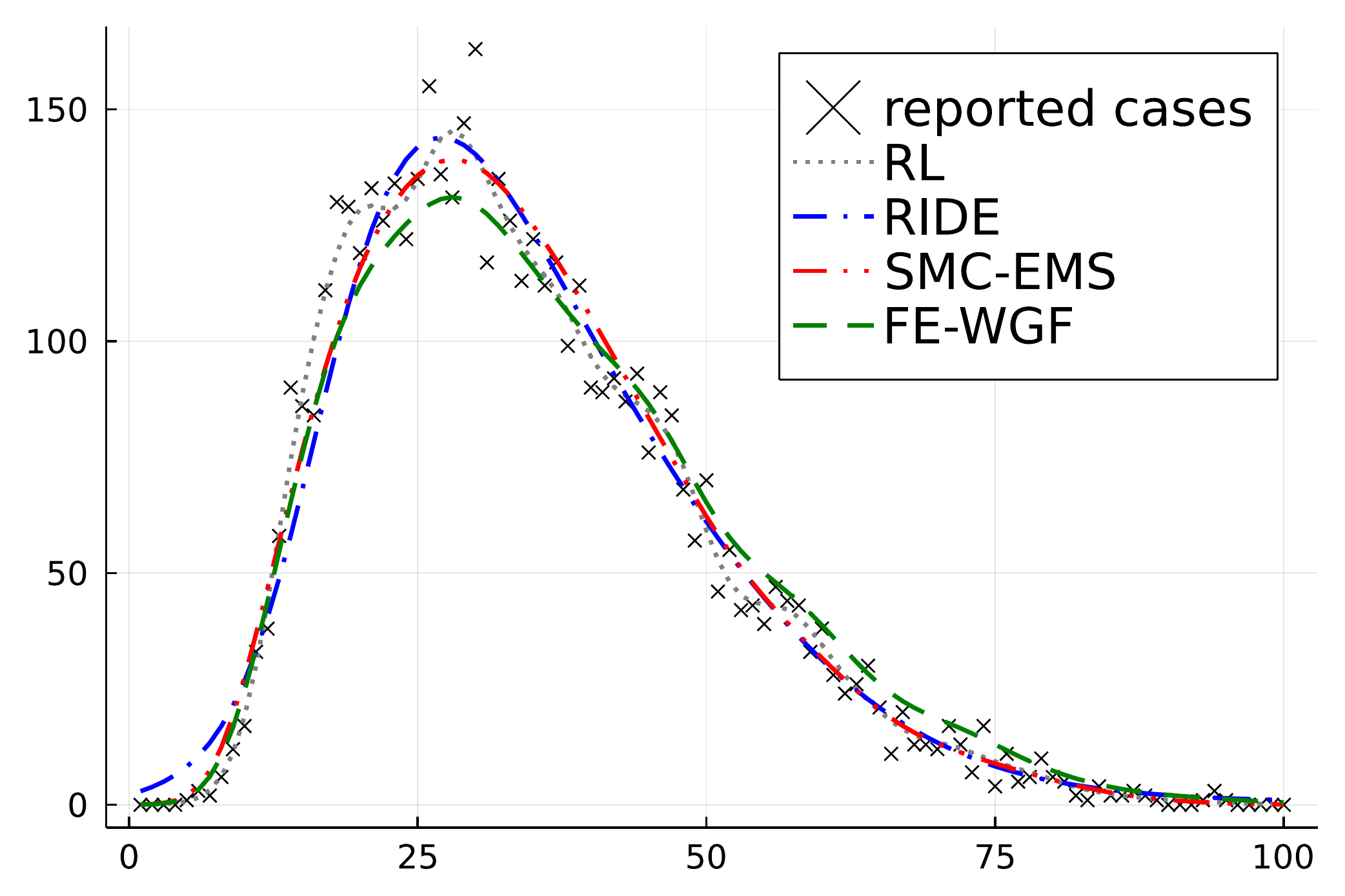}};
\node[below=of img2, node distance = 0, yshift = 1cm] {Time / days};
  \node[left=of img2, node distance = 0, rotate=90, anchor = center, yshift = -0.8cm] {Cases};
  \end{tikzpicture}
\caption{Example fit of the reconstructions of the synthetic incidence curve~\eqref{eq:incidence} and of the corresponding reconstruction of the number of cases. 
}
\label{fig:epidemics}
\end{figure}

\begin{table}
\centering
\footnotesize{
\begin{tabular}{l|ccc|ccc}
 & \multicolumn{3}{c}{Well-specified} & \multicolumn{3}{c}{Misspecified}\\
\hline\noalign{\smallskip}
Method &$\ise(\hat{\pi})$ & $\ise(\mu^{rec})$ & runtime (s) & $\ise(\hat{\pi})$ & $\ise(\mu^{rec})$ & runtime (s)\\
\hline\noalign{\smallskip}
RL & $7.7\cdot10^{-3}$ & $\mathbf{1.5\cdot10^{-4}}$&$\mathbf{<1}$ & $7.9\cdot10^{-3}$ & $\mathbf{1.5\cdot10^{-4}}$&$\mathbf{<1}$\\
RIDE & $9.0\cdot10^{-4}$ & $3.4\cdot10^{-4}$&58& $1.0\cdot10^{-3}$ &$3.4\cdot10^{-4}$ &58\\
SMC-EMS & $3.3\cdot10^{-4}$ & $2.5\cdot10^{-4}$&3& $3.7\cdot10^{-4}$ & $2.5\cdot10^{-4}$ &3\\
  FE-WGF & $\mathbf{2.7\cdot10^{-4}}$ & $2.5\cdot10^{-4}$&96& $\mathbf{3.1\cdot10^{-4}}$ & $2.5\cdot10^{-4}$ &95\\
\end{tabular}
}
\caption{Comparison of reconstructions of the synthetic incidence curve~\eqref{eq:incidence}, the average reconstruction accuracy and fit to data over 100 repetitions are reported.}
\label{tab:sim_epidem}
\end{table}

Since the average delay between infection and death is
9 days \cite{goldstein2009reconstructing}, we set the reference measure $\pi_0$
to be a Gaussian distribution with mean equal to the mean of $\mu$ ($\approx 34.6$ days) shifted
backwards by 9 and variance equal to the variance of $\mu$ ($\approx 228$ days). 
Estimates of the unknown delay distribution are usually obtained by previous studies or from similar diseases (see, e.g. \cite{becker1991method, chau2020construction} and references therein), in this case, influenza-like infections.
Following similar
considerations, we set the initial distribution to be the death curve shifted
back by 9 days for both the RL algorithm, SMC-EMS and FE-WGF.
For FE-WGF we take $N=500$,
$\gamma=10^{-1}$ and iterate for $3000$ time steps (necessary to achieve convergence).

As discussed in Section~\ref{sec:choice} we set $\eta=0$
precision, we pick $\alpha=1\times 10^{-3}$ and the parameter $\varepsilon=2\times 10^{-4}$ for SMC-EMS by cross-validation over $L=5$
dataset for which the delay distribution $\kker$ is misspecified and use this
value for both the well-specified and the misspecified case. To approximate
$\mu$ we draw $m=N=500$ samples without repetition from the $M=5000$ observed
cases at each iteration in \Cref{alg:wgf_algoo}. The RIDE estimator does not require specification of any parameter while
for RL and SMC-EMS we set the maximum number of iterations to 100.

The estimators' quality is evaluated using the integrated squared
error~\eqref{eq:ise} and the fit to the observed number of cases measured
through the $\ise$ between the observed cases and the reconstructed
cases given by reconvolution of the estimates of $\pi$ with $\kker$
\begin{align}
  \textstyle{
\mu^{rec}(y) = \int_{\rset} \kker(y-x)\rmd\hat{\pi}(x).}
\end{align}

The RL algorithm gives the best fit the the observed death counts and has the lowest runtime ($<1$ second), however, the reconstruction of the incidence curve is not smooth and sensitive to noise \cite{miller2020statistical}. 
RIDE, SMC-EMS and FE-WGF address the lack of smoothness
introducing regularization and provide smoother reconstructions (Figure~\ref{fig:epidemics}) which are considerably closer to the incidence curve both in the well-specified and in the misspecified setting (Table~\ref{tab:sim_epidem}) with FE-WGF achieving the best fit to $\pi$. The runtime of RIDE
and FE-WGF are comparable, while SMC-EMS is cheaper as convergence is achieved in only 100 steps, and the reconstructions do not change considerably after that.
This is not unexpected,
as the reconstructions provided by RIDE, SMC-EMS and FE-WGF are a regularized version of the measure $\pi$ minimizing the Kullback--Leibler divergence~\eqref{eq:kl}, while that given by RL minimizes~\eqref{eq:kl} with no regularization and inherits the well-known inconsistency of the maximum
likelihood estimator in the infinite dimensional setting
\cite{laird1978nonparametric}.

Comparing SMC-EMS and FE-WGF, we find that the latter always achieves better accuracy, but the increase in accuracy is more pronounced for the misspecified model. In fact, even if SMC-EMS and FE-WGF both aim at minimizing a regularized Kullback--Leibler divergence~\eqref{eq:kl},
the latter minimizes~\eqref{eq:minimisation} which allows us to impose that the reconstructions are close (in the Kullback--Leibler sense) to the reference measure $\pi_0$, which for this example is particularly informative, since the corresponding term in the drift~\eqref{eq:b} pushes the particles towards the mode of the distribution of the number of cases shifted back by 9 days. On the contrary, SMC-EMS does not allow us to exploit this information, and $\pi_0$ is only used to initialize the particle system for SMC-EMS.

\subsection{Computed Tomography}
\label{sec:ct}

\begin{figure}
\centering
\begin{tikzpicture}[every node/.append style={font=\small}]
\node (img1) {\includegraphics[width=0.25\textwidth]{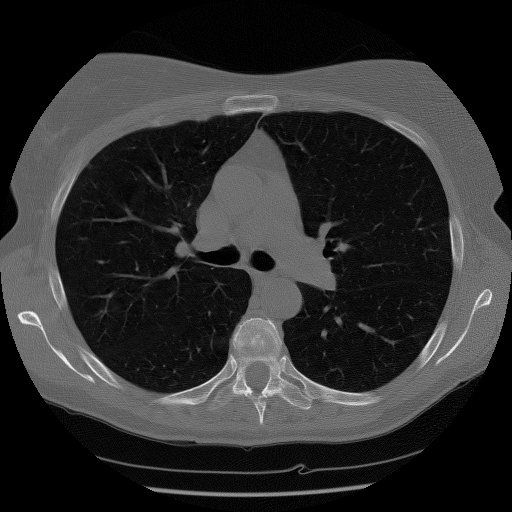}};
\node[below=of img1, yshift=1cm] {CT scan};
\node[right=of img1, xshift=-1cm] (img2) {\includegraphics[width=0.25\textwidth]{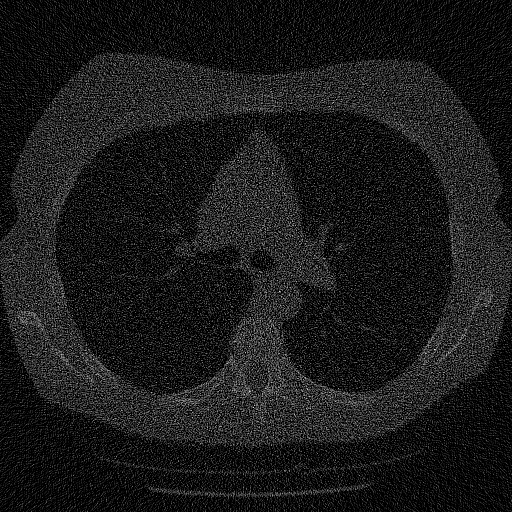}};
\node[below=of img2, yshift=1cm] {FBP, $\ise = 0.047$};
\node[right=of img2, xshift=-1cm] (img3) {\includegraphics[width=0.25\textwidth]{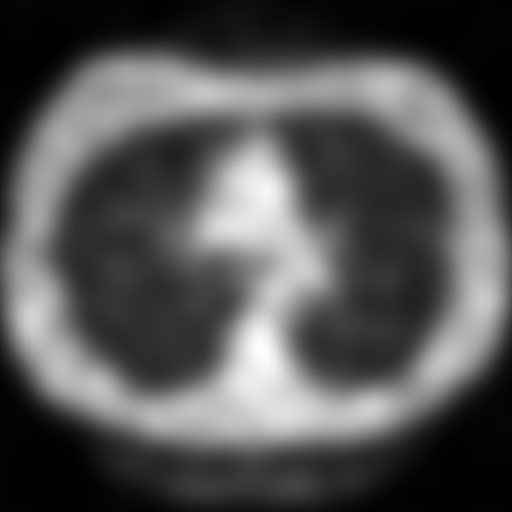}};
\node[below=of img3, yshift=1cm] {FE-WGF, $\ise = 0.040$};
\end{tikzpicture}
\caption{Reconstruction of a lung CT scan via FE-WGF and FBP. FBP provides reconstructions which preserve sharp edges but present speckle noise, while the reconstructions obtained with FE-WGF are smooth but with blurry edges.} 
\label{fig:ct}
\end{figure}

Fredholm integral equations of the first kind find wide application in medical
imaging \cite{webb2017introduction}; in this context they model the
reconstruction of cross-section images of the organ of interest from the noisy
measurements provided by positron emission tomography (PET) and CT scanners.

CT scanners provide noisy measurements by mapping each point of the organ's
cross-section $\pi$ onto its radial
projection $\mu$ defined for
angles $\phi$ between $0$ and $2\uppi$ and depths $\xi\in\rset$ with 0
corresponding to the centre of the scanner.  The distribution of the radial
projections $\mu$ is linked to the cross-section image of the organ
of interest $\pi$ through the Radon transform
\cite{radon1986determination}, \ie \ for any $\phi \in \ccintLigne{0, 2\uppi}$ and $\xi \in \rset$
\begin{align}
  \label{eq:radon}
  \textstyle{
  \mu(\phi, \xi) = \int_{\rset} \pi(\xi\cos \phi-t\sin\phi, \xi\sin \phi+t\cos\phi)\rmd t,
  }
\end{align}
(with a slight abuse of notation we denote both a measure and its density w.r.t. the Lebesgue measure with the same symbol) where the right hand side is the line integral along the line with equation $x\cos\phi +y \sin\phi =\xi$.
We model the alignment between the projections onto $(\phi, \xi)$ and the corresponding location $(x, y)$ in the reference image in~\eqref{eq:radon} using a Gaussian distribution with small variance, renormalized to integrate to 1 for fixed $(x,y)\in\rset^2$ and obtain the following Fredholm integral equation~\eqref{eq:fe}  
\begin{align}
  \textstyle{
\mu(\phi, \xi)
  = \int_{\rset^2}(2\uppi \sigma^2)^{-1} \exp[(x\cos\phi +y \sin\phi -\xi)^2/ (2\sigma^2)] \pi(x, y)\rmd x\rmd y,
  }
\end{align}
We test FE-WGF on a $512\times 512$ pixels lung CT scan from
the LIDC-IDRI database \cite{armato2011lung} (in particular, scan LIDC\_IDRI\_0683\_1\_048). The data image is obtained using the
ASTRA toolbox \cite{van2015astra} by projecting the CT scan at 512 equally
spaced angles $\phi$ in $[0, 2\uppi]$ and at 729 depths, and then corrupting the
obtained projections with Poisson noise.  We compare
the reconstructions given by~FE-WGF with those obtained by
filtered back projection (FBP), one of the most common methods for analytic image reconstruction \cite{tong2010image}, available in the ASTRA toolbox \cite{van2015astra}.

The reference measure is a Gaussian distribution with mean corresponding
to the centre of the image and variance $s^2\Id$ with $s >0$ large enough to
approach a uniform distribution over the image (in our case $s^2=0.35^2$), this
guarantees that~\Cref{assum:lip_U} is
satisfied. The number of particles $N = 10^4$ and the time step
$\gamma=10^{-3}$ are chosen to achieve high enough resolution, the
regularization parameter $\alpha=7\times 10^{-3}$ is selected by cross validation
over $L=5$ replicates.  Convergence is measured by numerically approximating
$\Gun_\alpha^\eta$ and occurs in $200$ steps.

The reconstructions provided by FBP are less robust to noise in the data, which
reflects in a higher $\ise$, on the contrary FE-WGF is more robust to noise and
achieves lower $\ise$. However, the reconstructions obtained with FE-WGF rarely result in sharp edges due to the inherent features of the
estimator~\eqref{eq:kde} (\ie\ the diffusive behaviour of~\eqref{eq:particle}
and the kernel density estimator).
\subsection{Scaling with dimension}
\label{sec:hd}

To explore the scaling with the dimension $d$ of the support of $\pi$ we take the generalization of the Gaussian mixture model in \Cref{sec:dd} described in \cite{crucinio2020particle} (with a slight abuse of notation we denote both a measure and its density with the same symbol)
\begin{align}
\label{eq:mixture_hd}
&\pi(x) = (1/3)\mathcal{N}(x;0.3\cdot\mathbf{1}_{d}, 0.07^2 \Id_d) + (2/3)\mathcal{N}(x; 0.7\cdot\mathbf{1}_{d}, 0.1^2\Id_d),\\
&\kker(x, y)  = \mathcal{N}(y; x, 0.15^2 \Id_d),\\
&\mu(y)  =  (1/3)\mathcal{N}(y;0.3 \cdot\mathbf{1}_{p}, (0.07^2+0.15^2)\Id_p) \\
&\qquad\qquad+ (2/3)\mathcal{N}(y; 0.7\cdot\mathbf{1}_{p}, (0.1^2+0.15^2) \Id_p),
\end{align}
where $p=d$ and
$\mathbf{1}_{d}, \Id_{d}$ denote the unit function in $\rset^{d}$ and the
$d \times d$ identity matrix, respectively.

We compare the reconstructions given by FE-WGF with those obtained
with the one-step-late expectation maximization (OSL-EM) algorithm
\cite{green1990use}, an iterative algorithm for maximum penalized likelihood
estimation, and SMC-EMS \cite{crucinio2020particle}. 
First, we focus on the
comparison with OSL-EM since this algorithm can be implemented to minimize $\Gun_\alpha^{\eta}$ in~\eqref{eq:G_eta} as does FE-WGF. In particular, FE-WGF is a stochastic approach to the problem of
minimizing~\eqref{eq:minimisation}, while OSL-EM relies on a deterministic
discretization of $\pi$.  Secondly, we compare the proposed approach with
SMC-EMS \cite{crucinio2020particle}.

\subsubsection{Comparison with OSL-EM}

\begin{figure}[t]
\centering
\resizebox{0.6\textwidth}{!}{%
\begin{tikzpicture}[every node/.append style={font=\normalsize}]
\node (img1) {\includegraphics[width=0.6\textwidth]{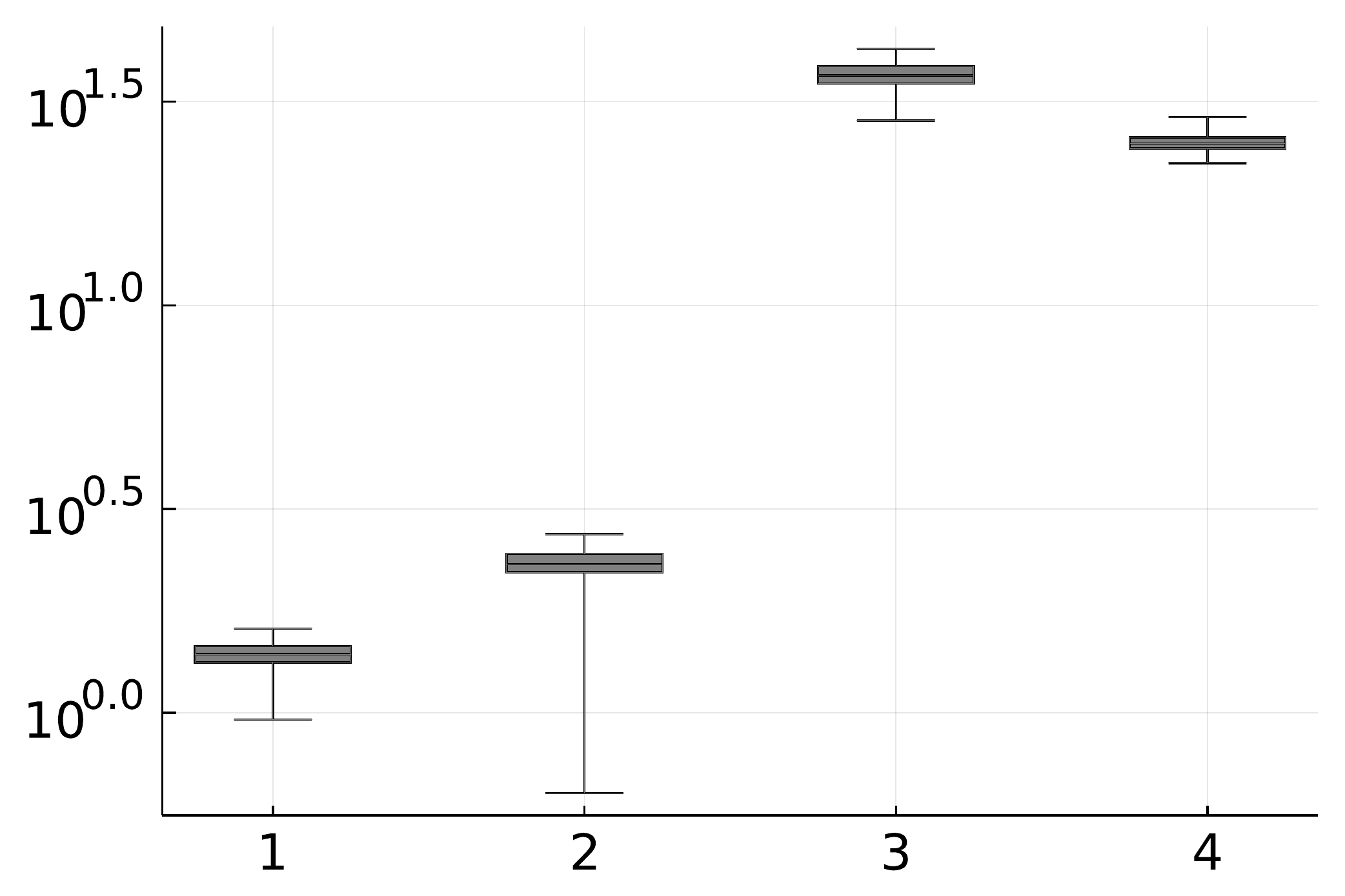}};
\node[below=of img1, node distance = 0, yshift = 1cm] {$d$};
  \node[left=of img1, node distance = 0, rotate=90, anchor = center, yshift = -0.5cm] {Gain in $\ise(\hat{\pi}_1)$};
\end{tikzpicture}
}
\caption{Distribution of $\ise$ ratios ($\ise$ of OSL-EM divided by $\ise$ of FE-WGF) over 100 repetitions. The runtime of FE-WGF is 1.3 times that of OSL-EM on average, while the average gain ranges from $1.3$ for $d=1$ to $\approx 30$ for $d=3, 4$.}
\label{fig:wgf_vs_em}
 \end{figure}
As briefly discussed in \Cref{sec:variants}, the first term in the definition of
the functional $\Gun_{\alpha}$ in~\eqref{eq:G} (with $\mu$ replaced by $\mu^M$)
is the likelihood associated with the incomplete-data $\{y^{k,M}\}_{k=1}^M$. Minimizing
$\Gun_{\alpha}$ is equivalent to maximizing a penalised likelihood with penalty
$\KL{\pi}{\pi_0}$. The one-step-late expectation maximization (OSL-EM) algorithm
\cite{green1990use} is a variant of the well-known EM algorithm
\cite{dempster1977maximum} to maximize any penalized likelihood.

In order to implement OSL-EM we need to discretize the support of $\pi$, which
essentially requires $\pi$ to have known compact support. This is not the case
for the measure $\pi$ considered in this example, however,
$\vert \int_{[0,1]^d}\pi(x)\rmd x-1\vert<10^{-2}$ for $d\leq 10$; therefore we
define a discretization grid for OSL-EM over the $d$-dimensional hypercube
$[0, 1]^d$ (similar considerations apply to $\mu$).  We consider $B \in \nset$
equally spaced bins for each dimension and obtain the discretizations of $\pi$
and $\mu$, $\{\pi_b\}_{b=1}^{B^d}, \{\mu_b\}_{b=1}^{B^d}$,  setting
$\pi_b = \pi(x_b)$, where $x_b$ is the centre of the $b$-th
bin. The same discretization mechanism is used for $\kker$ to obtain
$\{\kker_{b, c}\}_{b,c=1}^{B^d}$, where $\kker_{b, c}=\kker(x_b, x_c)$.  The
one-dimensional OSL-EM iteration is given for any $n \in \nset$ by
\begin{equation}
  \textstyle{
    \pi_{b}^{(n+1)} = \pi_{b}^{(n)}/( 1 + \alpha(1+\log \pi_{b}^{(n)} - \log\pi_{0,b})) \sum_{c=1}^B \{\mu_c \kker_{b,c}/ \sum_{d=1}^B \pi_d^{(n)}\kker_{d,c}\},
    }
  \end{equation}
  where $\pi_{0,b}=\pi_0(x_b)$, for $b\in \{1, \dots,
  B^d\}$. Contrary to the estimator provided by FE-WGF, the
  reconstructions provided by OSL-EM are deterministic and might not converge to
  the unique minimizer of $\Gun^\eta_{\alpha}$. 
  For the mixture model in~\eqref{eq:mixture_hd}, we observe
  empirical convergence to a unique fixed point for reasonable values of
  $\alpha$, \eg\ $\alpha<1$.

To compare OSL-EM and FE-WGF we fix $\alpha=0.01$  (as both algorithms aim at minimizing $\Gun_{\alpha}^\eta$ the value of $\alpha=0.01$ is chosen to compare them in a regime of practical interest, with the aim of obtaining good approximations of $\pi$), set the reference measure $\pi_0$ to be a Gaussian distribution with mean $(0.5, \dots, 0.5)$ and
covariance matrix $\sigma_0^2 \Id_d$ with $\sigma_0^2=0.25^2$ and use
$\pi_0$ as initial condition for~FE-WGF. With this choice of
parameters, convergence (assessed empirically by approximating the value of $\Gun_{\alpha}^\eta$) occurs within 50 steps for both algorithms and $d\in \{1, \dots, 4\}$.  The number of particles is set to $N=10^4$, and we obtain the number of bins per dimension, $B$, so that $B^d\approx N$, and the two algorithms have similar runtime. This corresponds to coarser discretizations as $d$
  increases which give low resolution reconstructions, for instance, for $d=4$
  each dimension is discretized into $B=10$ bins while for $d=1$ we have
  $B=N=10^4$ bins; however, such fine discretization schemes are impractical for
  $d\gg 1$ since the number of points at which $\pi$ needs to be approximated
  grows exponentially with $d$.

  To assess the quality of the reconstructions we use the integrated squared
  error~\eqref{eq:ise} of the first marginal, $\ise(\hat{\pi}_1)$, where
  $\hat{\pi}$ is the estimated density. Since OSL-EM is a deterministic algorithm
  we take its $\ise$ as reference and investigate the gains obtained by using
  FE-WGF for dimension $d=1, 2, 3, 4$, see \Cref{fig:wgf_vs_em}. For $d=1, 2$ the
  gain with respect to OSL-EM is moderate, but for $d=3, 4$ the reconstructions obtained with   FE-WGF are at least 10 times more accurate that those obtained by
  OSL-EM with an average runtime which is only 1.3 times that of OSL-EM.

\subsubsection{Comparison with SMC-EMS}
We now turn to comparing the reconstructions obtained with~FE-WGF with those
given by SMC-EMS \cite{crucinio2020particle}. Both methods approximate $\pi$
through a family of interacting particles from which smooth estimates can be
obtained via standard kernel density estimation as in~\eqref{eq:kde}.
Since typical applications of Fredholm
integral equations are low-dimensional, mostly one to three dimensional with
some notable exceptions \cite{signoroni2019deep}, we focus on $d$ up to 10.

\begin{figure}
\centering
\resizebox{0.8\textwidth}{!}{%
\begin{tikzpicture}[every node/.append style={font=\normalsize}]
\node (img1) {\includegraphics[width=0.4\textwidth]{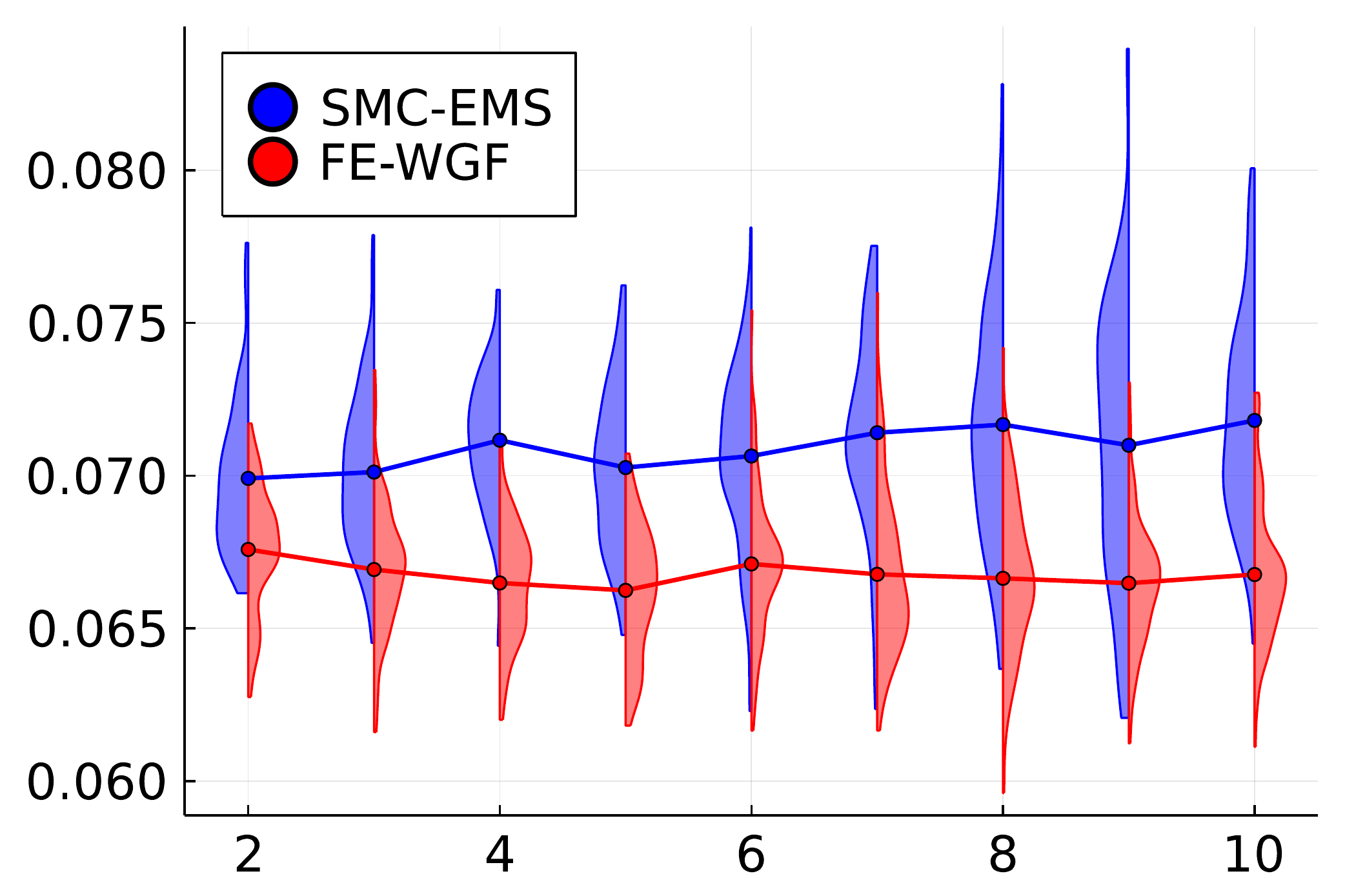}};
\node[below=of img1, node distance = 0, yshift = 1cm] {$d$};
  \node[left=of img1, node distance = 0, rotate=90, anchor = center, yshift = -0.8cm] {$\wassersteinD[1](\pi_1, \hat{\pi}_1)$};
 \node[right=of img1] (img2) {\includegraphics[width=0.4\textwidth]{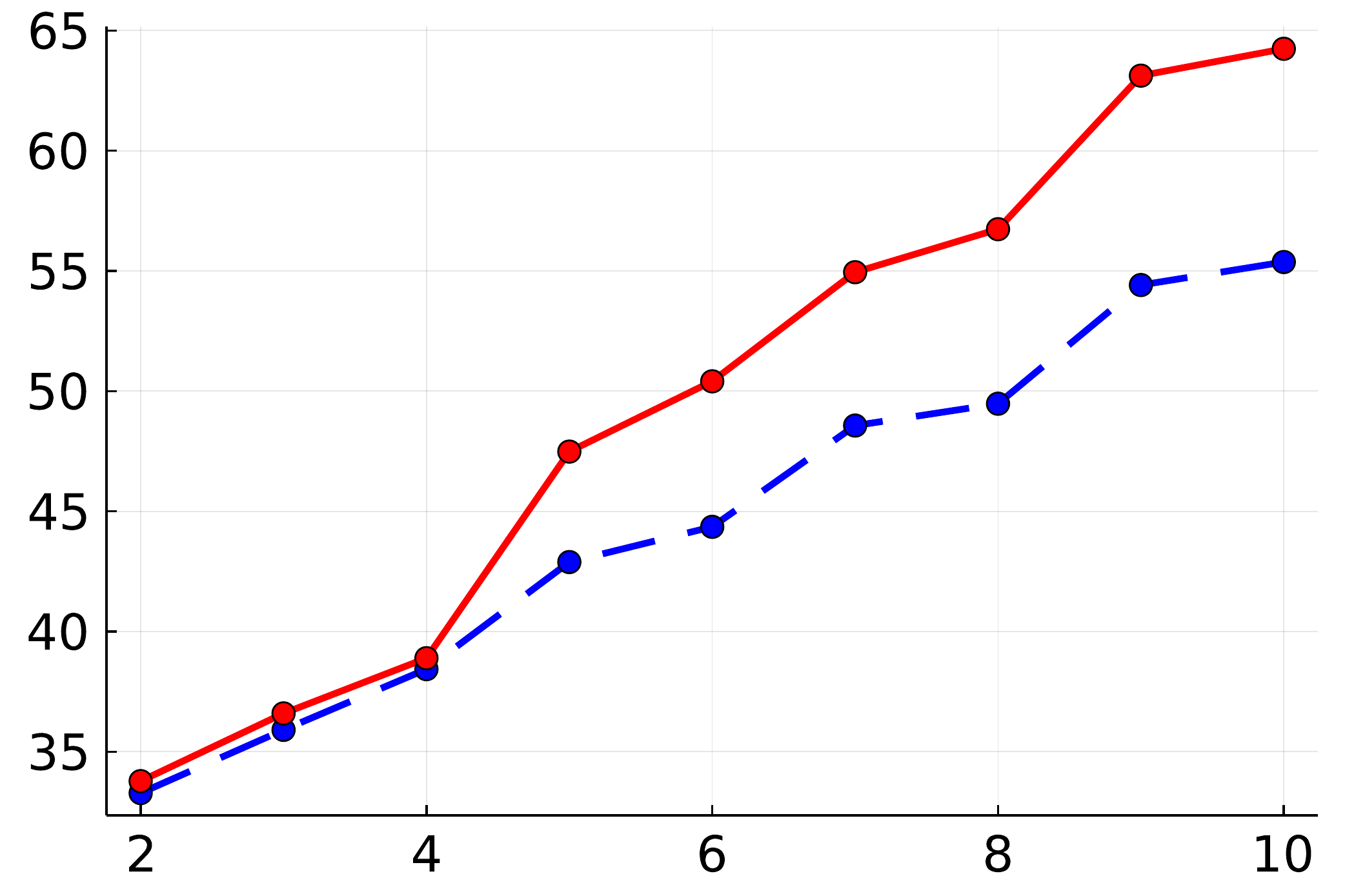}};
\node[below=of img2, node distance = 0, yshift = 1cm] {$d$};
  \node[left=of img2, node distance = 0, rotate=90, anchor = center, yshift = -0.8cm] {Runtime (s)};
\end{tikzpicture}
}
\caption{Reconstruction accuracy over 100 of SMC-EMS and FE-WGF for the Gaussian mixture model in~\eqref{eq:mixture_hd}. The left panel shows the distribution of the Wasserstein-1 distance between the first marginal of $\pi$ and its particle approximations for $d=2, \dots, 10$, the solid lines highlight the average accuracy. The right panel reports average runtime.}
\label{fig:wgf_vs_smc}
\end{figure}

To compare the two algorithms we consider $N=10^3$ and we assume that $\mu$ is
not known but a sample of size $M=10^6$ is available and resample without
replacement $m=N=10^3$ times from it at each iteration of both SMC-EMS and FE-WGF.
SMC-EMS converges quickly to an approximation of $\pi$
\cite{crucinio2020particle}, for the model in~\eqref{eq:mixture_hd} we observe
convergence in less than 50 steps (measured by numerically approximating the
Kullback--Leibler divergence~\eqref{eq:kl} using the samples from $\mu$ and $\hat{\pi}$, the estimated distribution). 
For FE-WGF we set $\gamma = 10^{-2}$ and select the
reference measure $\pi_0$ to be a Gaussian distribution with mean
$(0.5, \dots, 0.5)$ and covariance matrix $\sigma_0^2 \Id_d$ with $\sigma_0^2=0.25^2$; 
with this choice of parameters convergence occurs in less than 50 iterations also for FE-WGF.
To specify the smoothing parameter for SMC-EMS and $\alpha$ for FE-WGF we use the cross-validation approach in \Cref{sec:choice} with $L=10$.

Since kernel density estimators do not perform well for $d\gg 1$ \cite{silverman1986density}, we do not build the kernel density estimator~\eqref{eq:kde} 
and compare the reconstructions obtained by SMC-EMS and FE-WGF using the particle population given by each method as an approximation for $\pi$ (as justified by \Cref{prop:particles_min}), and compute the Wasserstein-1 distance between the first marginal $\pi_1$ and its particle approximation (Figure~\ref{fig:wgf_vs_smc}).

SMC-EMS and FE-WGF give similar results for all $d$, with the average runtime of
SMC-EMS being slightly lower than that of FE-WGF for $d=2, 3, 4$ and about
$15\%$ lower for $d\geq 5$. 
The increase in runtime for $d\geq 5$ is caused by the computation of the norm of the drift for the tamed Euler scheme~\eqref{eq:particle_M} and is likely due to the increased algebraic complexity of this operation with dimension.
In terms of accuracy, FE-WGF performs slightly
better than SMC-EMS with $\wassersteinD[1]$ distance $5\%$ smaller on average
for FE-WGF.

\section{Discussion}

We consider a probabilistic approach to the solution of Fredholm integral equations of the first kind~\eqref{eq:fe} by introducing the minimization problem~\eqref{eq:minimisation} and its surrogate~\eqref{eq:G_eta}.
Under mild smoothness assumptions on the kernel $\Kker$ and the reference measure $\pi_0$, we show that the surrogate functional admits a unique minimizer which is stable w.r.t. $\mu$.
Leveraging a Wasserstein gradient flow construction, we introduce an interacting particle system~\eqref{eq:particle} which approximates the minimizer of the surrogate functional and is ergodic for any finite $N$.
Combining standard propagation of chaos results and strong convergence of tamed Euler--Maruyama schemes we obtain the bound~\eqref{eq:guideline} which provides a practical guideline for the selection of the time discretization step $\gamma$, the number of particles $N$ and the number of samples $m$ from $\mu$ to use at each iteration.

In Section~\ref{sec:ex} we compare FE-WGF with both problem specific and state-of-the-art methods to solve~\eqref{eq:fe}; specifically, we compare our algorithm with SMC-EMS, a particle method to solve~\eqref{eq:fe} which also aims at minimizing a regularized Kullback--Leibler divergence. This particle method has been shown to achieve state-of-the-art performances in a number of settings and to scale with the dimension of the state space $d$ better than standard discretization based algorithms \cite{crucinio2020particle}.
Our results show that FE-WGF has always comparable or superior performances to problem specific algorithms and outperforms SMC-EMS in high dimensional setting.
Moreover, FE-WGF allows the introduction of a reference measure $\pi_0$, which can be particularly beneficial when some features of the measure $\pi$ to be reconstructed are known (as, for instance, in epidemiology applications as that in Section~\ref{sec:epidem}).

From a theoretical perspective, \Cref{prop:convergence_minimum} and the convergence properties in \Cref{sec:part-appr-mcke} guarantee that the particle system~\eqref{eq:particle} approximates the unique minimizer of~\eqref{eq:G_eta}; an equivalent uniqueness property has not yet been established for SMC-EMS.
However, the results presented in Section~\ref{sec:ex} also provide some additional
confidence in the use of SMC-EMS, demonstrating empirically that it recovers
comparable solutions to the provably convergent method developed in this paper
in a range of settings.

Moreover, while both algorithms minimize a regularized Kullback--Leibler divergence, and for small values of the regularization parameters the reconstruction they provide are similar, the methods through which they approximate the minimizer present some notable differences. FE-WGF is based on a numerical approximation of the MKVSDE~\eqref{eq:nonlinearSDE} and therefore the associated particle system has a diffusive behaviour, while SMC-EMS uses a resampling mechanism to eliminate the particles which are not in the support of $\pi$ and replicate those close to high-probability regions. The effect of this different algorithmic approach can be observed by considering different initial distributions for $\{\bfX_0^{k,N}\}_{k=1}^N$: SMC-EMS benefits from overly diffused initial distributions, since the resampling mechanism allows to quickly move the particles towards high probability regions, for concentrated distributions (e.g. Dirac delta) convergence still occurs but in a larger number of iterations \cite[Appendix E.1]{crucinio2020particle}. The convergence speed of FE-WGF is less influenced by the initial state $\{\bfX_0^{k,N}\}_{k=1}^N$ as shown in \Cref{app:ex}.
In contrast with SMC-EMS, FE-WGF requires specification of a reference measure $\pi_0$. While this might seem a limitation of our method, we found that in practice we often have sufficient knowledge about the problem at hand to build $\pi_0$. For instance, in image processing applications usually a large number of images needs to be processed and it is possible to build reference measures using previously processed images (e.g. \cite{kingma2010regularized}), while in epidemiology, one can exploit previous studies on similar diseases, since they often present incidence curves with the same characteristics (e.g. \cite{goldstein2009reconstructing}).
In fact, it is in this latter setting that we find that FE-WGF outperforms SMC-EMS in terms of accuracy (Section~\ref{sec:epidem}).

FE-WGF describes one possible numerical approximation of the particle system~\eqref{eq:particle}, however, we anticipate that more sophisticated time discretization schemes will prove beneficial in some settings. For instance, in our experiments we observed that in the first few iterations the drift component in~\eqref{eq:particle} has larger values, and pushes the particles towards high $\pi$-probability regions; after the first iterations, the magnitude of the drift component decreases and the shape of the approximated solution is refined. In terms of the functional $\Gun_\alpha^\eta$, the first few iterations correspond to a steepest decrease in $\Gun_{\alpha}^\eta$, which then stabilizes around the minimum (see e.g., \Cref{app:ex}). To speed up convergence to the minimizer, one could consider having a larger time step $\gamma$ for earlier iterations, and then gradually reduce $\gamma$ to refine the approximation. More sophisticated adaptive strategies for MKVSDE have been recently studied in \cite{reisinger2022adaptive}.

In conclusion, FE-WGF offers a general recipe to obtain regularized solutions to Fredholm integral equations of the first kind, which can be refined exploiting specific knowledge of the problem at hand; for example, choosing informative reference measures $\pi_0$ or adapting the time discretization step $\Delta t$ to the magnitude of the drift~\eqref{eq:b}.

\section*{Acknowledgments}
FRC, VDB, AD and AMJ acknowledge support from the EPSRC (grant \#  EP/R034710/1). AMJ acknowledges further support from the  EPSRC (grant \# EP/T004134/1) and the Lloyd's Register Foundation Programme on Data-Centric Engineering at the Alan Turing Institute. 
For the purpose of open access, the authors have applied a Creative Commons Attribution (CC BY) licence to any Author Accepted Manuscript version arising from this submission.

\medskip\noindent\textbf{Data access statement:} No new data was created during this research. 

\bibliographystyle{abbrv}  
\bibliography{wgf_biblio}
\cleardoublepage
\appendix
\section{Proofs of \Cref{sec:vari-point-view}}

We start this section by recalling some basic facts on $\Gamma$-convergence which will be used in the proof of  \Cref{prop:convergence_minimum} and \Cref{prop:Gun_approx_mu} below. We refer to \cite{dalmaso1993introduction} for a more complete introduction to $\Gamma$-convergence.

\subsection{Basics on $\Gamma$-convergence}
First, we recall that a function $f: \ \msx \to \rset$ (where $\msx$ is a metric
space) is coercive if for any $t \in \rset$, $f^{-1}(\ocint{-\infty, t})$ is
relatively compact. This definition can be extended to the case where $\msx$ is
only a topological space, see \cite[Definition 1.12]{dalmaso1993introduction}.

Let $\msf = \ensembleLigne{f_\alpha}{\alpha \in \msa}$
where $\msa$ is a topological space and for any $\alpha \in \msa$,
$f_\alpha: \msx \to \rset \cup \{+\infty\}$ with $\msx$ a metric space.  We say
that $\msf$ is a $\Gamma$-system if the following hold:
\begin{enumerate}[wide, labelindent=0pt, label=(\alph*)]
\item For any $x \in \msx$, $\alpha^\star \in \msa$, 
  $(\alpha_n)_{n \in \nset} \in \msa^\nset$ with
  $\lim_{n \to +\infty} \alpha_n = \alpha^\star$ and 
  $(x_n)_{n \in \nset} \in \msx^\nset$ with $\lim_{n \to +\infty} x_n =x$
  we have $\liminf_{n \to +\infty} f_{\alpha_n}(x_n) \geq f_{\alpha^\star}(x)$.
\item For any $x \in \msx$, $\alpha^\star \in \msa$,
  $(\alpha_n)_{n \in \nset} \in \msa^\nset$ with
  $\lim_{n \to+\infty} \alpha_n = \alpha^\star$ there exists
  $(x_n)_{n \in \nset} \in \msx^\nset$ with $\lim_{n \to +\infty} x_n =x$ such
  that $\lim_{n \to +\infty} f_{\alpha_n}(x_n) = f_{\alpha^\star}(x)$.
\end{enumerate}

Under some additional coercivity assumption (see \Cref{sec:notation} for a definition), we have the following result, see \cite[Corollary 7.24]{dalmaso1993introduction} for a proof.

\begin{proposition}
  \label{prop:gamma_cv}
  Let $\msf = \ensembleLigne{f_\alpha}{\alpha \in \msa}$ where $\msa$ is a
  topological space and for any $\alpha \in \msa$,
  $f_\alpha: \msx \to \rset \cup \{+\infty\}$ with $\msx$ a metric space. Assume
  that $\msf$ is a $\Gamma$-system, is equicoercive and is such that for any
  $\alpha \in \msa$, $f_\alpha$ admits a unique minimizer. Then for any
  $(\alpha_n)_{n \in \nset} \in \msa^\nset$ and
  $(x_n)_{n \in \nset} \in \msx^\nset$ such that for any $n \in \nset$, $x_n$ is
  the minimizer of $f_{\alpha_n}$ and
  $\lim_{n \to +\infty} \alpha_n =\alpha^\star$, we have that
  $\lim_{n \to +\infty} x_n =x^\star$ the minimizer of $f_{\alpha^\star}$.
\end{proposition}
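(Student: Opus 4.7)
The plan is to follow the classical fundamental theorem of $\Gamma$-convergence: equicoercivity provides compactness for any sequence of minimizers, while the two conditions defining a $\Gamma$-system give, respectively, the liminf inequality and the existence of recovery sequences, and these two facts combined force any limit point of $(x_n)$ to be a minimizer of $f_{\alpha^\star}$. Uniqueness of this minimizer then upgrades subsequential convergence to convergence of the full sequence via the standard Urysohn-type argument.

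More concretely, fix any $(\alpha_n)_{n \in \nset}$ with $\alpha_n \to \alpha^\star$ and for each $n$ let $x_n$ be the minimizer of $f_{\alpha_n}$, and let $x^\star$ be the unique minimizer of $f_{\alpha^\star}$. The first step is to show that $(x_n)_{n \in \nset}$ is relatively compact in $\msx$. Apply condition (b) of the $\Gamma$-system at the point $x^\star$ to obtain a recovery sequence $(y_n)_{n \in \nset}$ with $y_n \to x^\star$ and $f_{\alpha_n}(y_n) \to f_{\alpha^\star}(x^\star) < +\infty$. Since $x_n$ minimizes $f_{\alpha_n}$, we have $f_{\alpha_n}(x_n) \leq f_{\alpha_n}(y_n)$, so $\limsup_{n \to \infty} f_{\alpha_n}(x_n) \leq f_{\alpha^\star}(x^\star) < +\infty$. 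In particular there exists $t \in \rset$ such that $f_{\alpha_n}(x_n) \leq t$ for all $n$ sufficiently large; by equicoercivity, the common coercive lower bound $g$ satisfies $g(x_n) \leq t$, so $(x_n)$ lies (eventually) in the relatively compact set $g^{-1}(\ocint{-\infty,t})$.

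Now take any subsequence $(x_{n_k})_k$ converging to some $\bar x \in \msx$. Condition (a) of the $\Gamma$-system applied along the pair $(\alpha_{n_k}, x_{n_k}) \to (\alpha^\star, \bar x)$ yields
\begin{equation}
f_{\alpha^\star}(\bar x) \leq \liminf_{k \to \infty} f_{\alpha_{n_k}}(x_{n_k}).
\end{equation}
Combining with the upper bound from the previous paragraph, we get $f_{\alpha^\star}(\bar x) \leq \liminf_k f_{\alpha_{n_k}}(x_{n_k}) \leq \limsup_k f_{\alpha_{n_k}}(x_{n_k}) \leq f_{\alpha^\star}(x^\star)$, so $\bar x$ is a minimizer of $f_{\alpha^\star}$; uniqueness then forces $\bar x = x^\star$.

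Finally, to conclude that the full sequence $(x_n)$ converges to $x^\star$, argue by contradiction: if $x_n \not\to x^\star$, there exist $\varepsilon > 0$ and a subsequence $(x_{n_k})_k$ with $d(x_{n_k}, x^\star) \geq \varepsilon$ for all $k$. By the relative compactness established above, we can extract a further subsequence converging to some $\bar x$, which by the argument just given must equal $x^\star$, contradicting $d(x_{n_k}, x^\star) \geq \varepsilon$. Hence $x_n \to x^\star$ as claimed. The main subtlety is making sure that condition (b) produces a recovery sequence with finite limiting value at a point we can use — this is why evaluating it at $x^\star$ (where $f_{\alpha^\star}$ is finite, since the minimum is attained) is essential for bootstrapping the compactness argument.
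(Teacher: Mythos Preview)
Your proof is correct and follows the standard fundamental-theorem-of-$\Gamma$-convergence argument. The paper does not actually give its own proof of this proposition; it simply cites \cite[Corollary 7.24]{dalmaso1993introduction}, and your argument is precisely the classical one that underlies that result (recovery sequence at $x^\star$ to bound the minimum values, equicoercivity for compactness, liminf inequality to identify limit points as minimizers, uniqueness plus a subsequence argument to conclude).
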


The equicoercivity condition can be removed if we assume that the sequence
$(x_n)_{n \in \nset}$ converges, see \cite[Corollary
7.20]{dalmaso1993introduction}.

\begin{proposition}
  \label{prop:gamma_cv_without}
  Let $\msf = \ensembleLigne{f_\alpha}{\alpha \in \msa}$ where $\msa$ is a
  topological space and for any $\alpha \in \msa$,
  $f_\alpha: \msx \to \rset \cup \{+\infty\}$ with $\msx$ a metric space. Assume
  that $\msf$ is a $\Gamma$-system  and is such that for any
  $\alpha \in \msa$, $f_\alpha$ admits a unique minimizer. Then
  for any $(\alpha_n)_{n \in \nset} \in \msa^\nset$ and
  $(x_n)_{n \in \nset} \in \msx^\nset$ such that for any $n \in \nset$, $x_n$ is
  the minimizer of $f_{\alpha_n}$ and
  $\lim_{n \to +\infty} \alpha_n =\alpha^\star$, if there exists
  $x^\star \in \msx$ such that $\lim_{n \to +\infty} x_n =x^\star$ then
  $x^\star$ is a minimizer of $f_{\alpha^\star}$.
\end{proposition}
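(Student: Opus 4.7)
The plan is to apply the two defining properties of a $\Gamma$-system directly, without needing any compactness input, since the convergence $x_n \to x^\star$ is now part of the hypothesis. There are essentially three ingredients: the $\liminf$ inequality applied along the sequence $(\alpha_n, x_n)$, the existence of a recovery sequence $(y_n)$ for an arbitrary competitor $y \in \msx$, and the minimality of $x_n$ for $f_{\alpha_n}$.

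I would first fix an arbitrary $y \in \msx$ and use property (b) of the $\Gamma$-system (applied at the limit point $\alpha^\star$ along the sequence $(\alpha_n)_{n \in \nset}$) to produce a recovery sequence $(y_n)_{n \in \nset} \in \msx^\nset$ with $y_n \to y$ and $\lim_{n \to +\infty} f_{\alpha_n}(y_n) = f_{\alpha^\star}(y)$. Then, since $x_n$ is a minimizer of $f_{\alpha_n}$, I have $f_{\alpha_n}(x_n) \leq f_{\alpha_n}(y_n)$ for every $n \in \nset$, and passing to the limit inferior on both sides gives
\begin{equation*}
\liminf_{n \to +\infty} f_{\alpha_n}(x_n) \leq \liminf_{n \to +\infty} f_{\alpha_n}(y_n) = f_{\alpha^\star}(y).
\end{equation*}

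Next, applying property (a) of the $\Gamma$-system to the convergent sequences $\alpha_n \to \alpha^\star$ and $x_n \to x^\star$, I obtain
\begin{equation*}
f_{\alpha^\star}(x^\star) \leq \liminf_{n \to +\infty} f_{\alpha_n}(x_n).
\end{equation*}
Combining these two inequalities yields $f_{\alpha^\star}(x^\star) \leq f_{\alpha^\star}(y)$, and since $y \in \msx$ was arbitrary, $x^\star$ is a minimizer of $f_{\alpha^\star}$; by the uniqueness assumption it is in fact the unique minimizer, though the statement only requires the weaker conclusion.

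There is no real obstacle here: the entire argument is a two-line chain of inequalities, and the removal of the equicoercivity hypothesis (compared with \Cref{prop:gamma_cv}) is precisely compensated by the \emph{a priori} assumption $x_n \to x^\star$, which in the equicoercive setting would instead be extracted as a cluster point of $(x_n)_{n \in \nset}$. The only thing to be careful about is the direction of the $\liminf$: property (a) supplies a lower bound on the liminf of $f_{\alpha_n}(x_n)$, while property (b) supplies an (honest) limit for $f_{\alpha_n}(y_n)$, and these must be chained in the correct order through the minimality of $x_n$ at each finite $n$.
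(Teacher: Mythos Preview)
Your proof is correct and is exactly the standard $\Gamma$-convergence argument. The paper does not give its own proof of this proposition but simply cites \cite[Corollary 7.20]{dalmaso1993introduction}, so you have effectively supplied the details behind that reference.
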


\subsection{Proof of \Cref{prop:g_prop}}
\label{proof:prop:g_prop}

We recall that for any $\eta \geq 0$ and $\pi \in \Pens(\rset^d)$ 
\begin{equation}
  \textstyle{
  \Gun^{\eta}(\pi) = -\int_{\rset^p} \log(\pi[\kker(\cdot, y)] + \eta) \rmd \mu(y) .
}
\end{equation}

  We divide the proof into two parts.
  \begin{enumerate}[wide, labelindent=0pt, label=(\alph*)]
  \item Let $\eta \geq 0$. We first show that $\Gun^\eta$ is lower bounded. For
    any $\pi \in \Pens(\rset^d)$ we have using \Cref{assum:general_kker}
    \begin{equation}
      \textstyle{
        \int_{\rset^p} \1_{\coint{1, +\infty}}(\pi[\kker(\cdot, y)] + \eta) \log(\pi[\kker(\cdot, y)] + \eta) \rmd \mu(y) \leq \max(\log(\Mtt + \eta),0).
        }
  \end{equation}
  Therefore,
  $\Gun^\eta: \ \Pens(\rset^d) \to \ccint{-\max(\log(\Mtt + \eta), 0),
    +\infty}$. $\Gun^\eta$ is convex since $t \mapsto -\log(t + \eta)$ is convex and $\pi\mapsto \pi[\kker(\cdot, y)]$ is a linear function of $\pi$ (and therefore convex too).
We now show that $\Gun^\eta$ is
  lower semi-continuous. Let $(\pi_n)_{n \in \nset}$ be such that
  $\lim_{n \to +\infty} \pi_n = \pi \in \Pens(\rset^d)$. Then, since for any
  $(x, y) \in \rset^d\times \rset^p$, $0 \leq \kker(x,y) \leq \Mtt$ and
  $\kker \in \rmc(\rset^d \times \rset^p, \rset)$ we have for any
  $y \in \rset^p$,
  $\lim_{n \to +\infty} \pi_n[\kker(\cdot, y)] = \pi[\kker(\cdot, y)]$. For any
  $n \in \nset$ and $y \in \rset^d$ we have
  \begin{equation}
    \abs{\1_{\coint{1, +\infty}}(\pi_n[\kker(\cdot, y)] + \eta) \log(\pi_n[\kker(\cdot, y)] + \eta)} \leq \max(\log(\Mtt + \eta),0), 
  \end{equation}
  Therefore, using the reverse Fatou's lemma and the fact that
  $t \mapsto \1_{\coint{1,+\infty}}(t)$ is upper semi-continuous on
  $\coint{0, +\infty}$ we obtain that
  \begin{align}
    \label{eq:limsup}
  \textstyle{\limsup_{n \to +\infty} \int_{\rset^p} \1_{\coint{1,
      +\infty}}(\pi_n[\kker(\cdot, y)] + \eta) \log(\pi_n[\kker(\cdot, y)] + \eta) \rmd \mu(y)}
  \\ \textstyle{\leq \int_{\rset^p} \1_{\coint{1, +\infty}}(\pi[\kker(\cdot, y)] + \eta)
  \log(\pi[\kker(\cdot, y)] + \eta) \rmd \mu(y).}
\end{align}
Using Fatou's lemma and the fact that $t \mapsto -\1_{\ccint{0,1}}(t)$ is lower
semi-continuous on $\coint{0, +\infty}$, we have 
\begin{align}
  &\label{eq:liminf}
  \textstyle{\liminf_{n \to +\infty} -\int_{\rset^p} \1_{\ccint{0,1}}(\pi_n[\kker(\cdot, y)]) \log(\pi_n[\kker(\cdot, y)]) \rmd \mu(y) \geq} \\
  &\qquad\qquad\textstyle{- \int_{\rset^p} \1_{\ccint{0,1}}(\pi[\kker(\cdot, y)]) \log(\pi[\kker(\cdot, y)]) \rmd \mu(y). }
\end{align}
Hence, combining~\eqref{eq:limsup},~\eqref{eq:liminf}, we get that
$\underset{n \to +\infty}{\liminf}\ \Gun^\eta(\pi_n) \geq \Gun^\eta(\pi)$ and $\Gun^\eta$
is lower semi-continuous.\\
We now show that $\Gun^\eta$ is not coercive. Let
$\beta = \log(2) - \int_{\rset^p} \log(\kker(0, y)) \rmd \mu(y)$ and
$\Pi = \ensembleLigne{\pi_R = (\updelta_0 + \updelta_R)/2 \in \Pens(\rset^d)}{R
  > 0}$. Then, for any $R \geq 0$, $\pi_R(\cball{0}{R/2}^\complementary) = 1/2$,
where $\cball{0}{R/2}$ denotes the ball of radius $R/2$ centred at $0$. Hence
$\Pi$ is not relatively compact in $\Pens(\rset^d)$. However, for any
$R \geq 0$, we have
\begin{align}
 & \textstyle{
  \Gun^{\eta}(\pi_R) = - \int_{\rset^p} \log((\kker(0,  y) + \kker(R, y))/2 + \eta ) \rmd \mu(y) }\\
  &\qquad\qquad\textstyle{\leq \log(2) - \int_{\rset^p} \log(\kker(0,  y)) \rmd \mu(y) \leq \beta.}
\end{align}
Hence, $\Pi \subset (\Gun^\eta)^{-1}(\ocint{-\infty, \beta})$ and
$(\Gun^\eta)^{-1}(\ocint{-\infty, \beta})$ is not relatively compact which
implies that $\Gun^\eta$ is not coercive.
\item Second, let $\eta > 0$. Then, we have for any $y \in \rset^p$ and $\pi \in \Pens(\rset^d)$
  \begin{equation}
    \label{eq:dominated}
    \abs{\log(\pi[\kker(\cdot, y)] + \eta )} \leq \max(\abs{\log(\eta)}, \abs{\log(\Mtt + \eta)}).
  \end{equation}
  Hence, $\Gun^\eta$ is proper. Let $(\pi_n) \in (\Pens(\rset^d))^\nset$ be such
  that $\lim_{n \to +\infty} \pi_n = \pi \in \Pens(\rset^d)$. Since for any
  $(x, y )\in \rset^d\times \rset^p$, $\abs{\kker(x,y)} \leq \Mtt$ and
  $\kker \in \rmc(\rset^d \times \rset^p, \rset)$, we have that
  $\lim_{n \to +\infty} \pi_n[\kker(\cdot, y)] = \pi[\kker(\cdot,
  y)]$. Combining this result,~\eqref{eq:dominated} and the dominated convergence theorem we get that
  $\lim_{n \to +\infty} \Gun^\eta(\pi_n) = \Gun^\eta(\pi)$. Therefore, we have
  that $\Gun^\eta \in \rmc(\Pens(\rset^d), \rset)$.
\end{enumerate}

\subsection{Proof of \Cref{prop:convergence_minimum}}
\label{proof:prop:convergence_minimum}

We recall that for any $\alpha, \eta \geq 0$ and $\pi \in \Pens(\rset^d)$ 
\begin{equation}
  \textstyle{
  \Gun_{\alpha}^{\eta}(\pi) = -\int_{\rset^p} \log(\pi[\kker(\cdot, y)] + \eta) \rmd \mu(y) + \alpha \KLLigne{\pi}{\pi_0}.
}
\end{equation}

Since, for $\eta>0$, $\Gun^\eta$ is proper as shown in \Cref{prop:g_prop}--(b) and $\KL{\pi}{\pi_0}\geq 0$ with equality for $\pi=\pi_0$, $\Gun_\alpha^\eta$ is proper too.
\cite[Lemma
1.4.3-(b)]{dupuis1997weak} guarantees that $\KL{\pi}{\pi_0}$ is strictly convex and lower semi-continuous.
Combining this with the results on $\Gun^\eta$ in \Cref{prop:g_prop} we find that $\Gun_\alpha^\eta$ is strictly convex and lower semi-continuous.

To see that $\Gun^\eta_\alpha$ is coercive observe that $\Gun^\eta_\alpha$ is the sum of the lower bounded lower semi-continuous functional $\Gun^\eta$ and of the coercive functional \cite[Lemma
 1.4.3-(c)]{dupuis1997weak} $\KL{\pi}{\pi_0}$, then for any $\beta\in\rset$
\begin{align}
    S&:=\left\lbrace\pi\in \Pens(\rset^d): \Gun^\eta(\pi)+\alpha\KL{\pi}{\pi_0}\leq \beta\right\rbrace \\
    &\subseteq \left\lbrace\pi\in \Pens(\rset^d): \alpha\KL{\pi}{\pi_0}\leq \beta-\max(\abs{\log(\eta)}, \abs{\log(\Mtt + \eta)})\right\rbrace:=\tilde{S},
\end{align}
since $\vert \Gun^\eta(\pi)\vert \leq \max(\abs{\log(\eta)}, \abs{\log(\Mtt + \eta)})$.
$\tilde{S}$ is relatively compact since $\KL{\pi}{\pi_0}$ is coercive and thus $S$ is also relatively compact, showing that $\Gun^\eta_\alpha$ is coercive.
Hence, for any $\alpha, \eta > 0$, $\Gun_{\alpha}^\eta$ admits a unique minimizer denoted $\pi_{\alpha, \eta}^\star$.

  For any $\tilde{\alpha}, \tilde{\eta} >0$, $\pi \in \Pens(\rset^d)$, let
  $(\alpha_n)_{n \in \nset} \in \ooint{0, +\infty}^\nset$ be such that
  $\lim_{n \to +\infty} \alpha_n = 	\tilde{\alpha}$ and $(\eta_n) \in \ooint{0, +\infty}^\nset$
  such that $\lim_{n \to +\infty} \eta_n = \tilde{\eta}$, then we have
  $\lim_{n \to +\infty} \Gun_{\alpha_n}^{\eta_n}(\pi) =
  \Gun_{\tilde{\alpha}}^{\tilde{\eta}}(\pi)$ using the dominated convergence
  theorem. In addition, using the dominated convergence theorem and \cite[Lemma
  1.4.3]{dupuis1997weak} we get that for any
  $(\pi_n)_{n \in \nset} \in \Pens(\rset^d)^\nset$ such that
  $\lim_{n \to +\infty} \pi_n = \pi \in \Pens(\rset^d)$ we have
  $\liminf_{n \to +\infty} \Gun_{\alpha_n}^{\eta_n}(\pi_n) \geq
  \Gun_{\tilde{\alpha}}^{\tilde{\eta}}(\pi)$.  Therefore,
  $\ensembleLigne{\Gun_{\alpha}^\eta}{\alpha, \eta > 0}$ is a
  $\Gamma$-system. In addition, for any $\tilde{\alpha} > 0$ and $\eta > 0$ we
  have that for any $\alpha \in \ooint{\tilde{\alpha}/2, +\infty}$,
  $\Gun_{\alpha}^\eta \geq \Gun_{\tilde{\alpha} / 2}^\eta$. Hence the
  $\Gamma$-system
  $\ensembleLigne{\Gun_{\alpha}^\eta}{\alpha > \tilde{\alpha} /2, \eta > 0}$ is
  equicoercive and using \Cref{prop:gamma_cv} we have that $\mtt$ and $\dtt$ are
  continuous at $(\alpha, \eta)$ for any $\alpha, \eta > 0$.

  \subsection{Proof of \Cref{prop:Gun_approx_mu}}
\label{proof:prop:Gun_approx_mu}

  Let $\alpha, \eta > 0$, let $(\mu_n)_{n \in \nset} \in (\Pens_1(\rset^d))^\nset$
  be such that $\lim_{n \to +\infty} \wassersteinD[1](\mu_n,\mu) = 0$ with $\mu \in \Pens_1(\rset^d)$, and $(\pi_n)_{n \in \nset} \in (\Pens_1(\rset^d))^\nset$ such that
  $\lim_{n \to +\infty} \wassersteinD[1](\pi_n, \pi) = 0$ with
  $\pi \in \Pens_1(\rset^d)$.  For any $\pi \in \Pens(\rset^d)$ denote
  $f_\pi(y) = \log(\pi[\kker(\cdot, y)] +\eta)$. For any
  $\nu \in \Pens(\rset^p)$ define $\Gun_\alpha^\eta(\cdot, \nu)$ as
  \begin{equation}
    \textstyle{\Gun_\alpha^\eta(\pi, \nu) := -\int_{\rset^p} \log(\pi[\kker(\cdot, y)] + \eta) \rmd \nu(x) + \alpha \KLLigne{\pi}{\pi_0}}
  \end{equation}
for any $\pi \in \Pens(\rset^d)$.
  Using
  the dominated convergence theorem, for any $\pi \in \Pens(\rset^d)$,
  $f_\pi \in \rmc^1(\rset^p, \rset)$ is differentiable and we have that for any
  $y \in \rset^p$
  \begin{equation}
    \norm{\nabla f_\pi(y)} \leq \pi[ \norm{\nabla_2 \kker(\cdot, y)}] / (\pi[\kker(\cdot, y)] + \eta) \leq \Mtt / \eta. 
  \end{equation}
  Therefore, for any $\pi \in \Pens(\rset^d)$ we have for any $y_1, y_2 \in \rset^p$
  \begin{equation}
    \abs{\log(\pi[\kker(\cdot, y_1)] + \eta) - \log(\pi[\kker(\cdot, y_2)] + \eta)} \leq (\Mtt / \eta) \norm{y_1 - y_2}.  
  \end{equation}
  Using this result we have that
  \begin{align}
    \abs{\Gun^\eta_\alpha(\pi_n, \mu_n) - \Gun^\eta_\alpha(\pi, \mu)} &\leq \abs{\Gun^\eta_\alpha(\pi_n, \mu_n) - \Gun^\eta_\alpha(\pi_n, \mu)} + \abs{\Gun^\eta_\alpha(\pi_n, \mu) - \Gun^\eta_\alpha(\pi, \mu)} \\
    &\textstyle{\leq (\Mtt / \eta) \wassersteinD[1](\mu_n, \mu) }\\
    &\textstyle{+ \int_{\rset^p} \abs{\log(\pi_n[\kker(\cdot, y)] +\eta) - \log(\pi[\kker(\cdot, y)] +\eta)} \rmd \mu(y)}\\
    &\textstyle{+ \alpha\vert \KL{\pi_n}{\pi_0}-\KL{\pi}{\pi_0}\vert,}
  \end{align}
  where the second inequality follows using the dual representation of $\wassersteinD[1]$.
  
  Combining the result above, the dominated convergence theorem, the lower semi-continuity of the KL \cite[Lemma
1.4.3-(b)]{dupuis1997weak} and the fact that
  $\underset{n \to +\infty}{\lim} \wassersteinD[1](\mu_n,\mu) = 0$, we get that
  $\underset{n \to +\infty}{\lim\inf}\ \Gun^\eta_\alpha(\pi_n, \mu_n) \geq \Gun^\eta_\alpha(\pi,
  \mu)$. Thus, $\ensembleLigne{\Gun^{\eta}_\alpha(\cdot, \mu_n)}{n \in \nset}$
  is a $\Gamma$-system. Since we have that for any $\pi \in \Pens(\rset^d)$ and
  $n \in \nset$, $\Gun^{\eta}_\alpha(\pi, \mu_n) \geq -\max(\abs{\log(\eta)}, \abs{\log(\Mtt + \eta)})+\alpha \KLLigne{\pi}{\pi_0}$, we have that
  $\ensembleLigne{\Gun^{\eta}_\alpha(\cdot, \mu_n)}{n \in \nset}$ is equicoercive. We conclude using
  \Cref{prop:gamma_cv}.

\subsection{Proof of \Cref{prop:tikhonov_reg}}
\label{app:proof_tikhonov}

  Take $\alpha > 0$ and  $\eta \geq 0$. Since by \Cref{prop:convergence_minimum},
  $\Gun_{\alpha}^\eta$ is proper,
  $\Gun_{\alpha}^\eta(\pi_{\alpha, \eta}^\star) < +\infty$ and therefore,
  $\KLLigne{\pi_{\alpha, \eta}^{\star}}{\pi_0} < +\infty$. Hence,
  $\pi_{\alpha, \eta}^\star$ admits a density w.r.t to $\pi_0$. Since $\pi_0$ is
  equivalent to the Lebesgue measure, we get that $\pi_{\alpha, \eta}^\star$
  admits a density with respect to the Lebesgue measure. In addition, let
  $\pi_1 \in \Pens(\rset^d)$ be such that for any $x \in \rset^d$ we have for some $\tau > 0$
  \begin{equation}
    (\rmd \pi_1 / \rmd \pi_0)(x) = f(x) / \pi_0[f], \qquad f(x) = \exp[(\tau / 2) \norm{x}^2]. 
  \end{equation}
  Since $\KLLigne{\pi_{\alpha, \eta}^\star}{\pi_0} < +\infty$ we have
  $\int_{\rset^d} \abs{\log((\rmd \pi_{\alpha, \eta}^\star / \rmd \pi_0)(x))} \rmd
  \pi_{\alpha, \eta}^\star(x) < +\infty$ and
  \begin{align}
    \label{eq:kl1_max}
    &\textstyle{\int_{\rset^d} \log((\rmd \pi_{\alpha, \eta}^\star / \rmd \pi_1)(x)) \1_{\coint{1,+\infty}}((\rmd \pi_{\alpha, \eta}^\star / \rmd \pi_1)(x)) \rmd
    \pi_{\alpha, \eta}^\star(x) }\\
    & \qquad \quad \textstyle{= \int_{\rset^d} \defEns{\log((\rmd \pi_{\alpha, \eta}^\star / \rmd \pi_0)(x)) - (\tau/2)\norm{x}^2 +\log(\pi_0[f])} }\\
    &\qquad\qquad\times \textstyle{\1_{\coint{1,+\infty}}((\rmd \pi_{\alpha, \eta}^\star / \rmd \pi_1)(x)) \rmd
      \pi_{\alpha, \eta}^\star(x)} \\
        & \qquad \quad \textstyle{\leq \int_{\rset^d} \defEns{\abs{\log((\rmd \pi_{\alpha, \eta}^\star / \rmd \pi_0)(x))}  +\log(\pi_0[f])} }\\
        &\qquad\qquad\times \textstyle{\1_{\coint{1,+\infty}}((\rmd \pi_{\alpha, \eta}^\star / \rmd \pi_1)(x)) \rmd
  \pi_{\alpha, \eta}^\star(x) < +\infty. }
  \end{align}
  Since $t \mapsto t \log(t)$ is bounded on $\ccint{0,1}$ we have that
  \begin{equation}
    \label{eq:kl1_min}
    - \textstyle{\int_{\rset^d} \log((\rmd \pi_{\alpha, \eta}^\star / \rmd \pi_1)(x)) \1_{\ccint{0,1}}((\rmd \pi_{\alpha, \eta}^\star / \rmd \pi_1)(x)) \rmd
    \pi_{\alpha, \eta}^\star(x) < +\infty. }
  \end{equation}
  Combining~\eqref{eq:kl1_min} and~\eqref{eq:kl1_max} we get that
  $\KLLigne{\pi_{\alpha, \eta}^\star}{\pi_1} < +\infty$. Therefore, using \cite[Equation
  2.6]{csiszar1975divergence}, we get that
  \begin{align}
    &\textstyle{(\tau/2) \int_{\rset^d} \norm{x}^2 \rmd \pi_{\alpha, \eta}^\star(x) - \log(\pi_0[f])} \\ & \qquad \qquad \qquad \textstyle{= \int_{\rset^d} \log((\rmd \pi_1 / \rmd \pi_0)(x)) \rmd \pi_{\alpha, \eta}^\star(x) = \KLLigne{\pi_{\alpha, \eta}^\star}{\pi_0} - \KLLigne{\pi_{\alpha, \eta}^\star}{\pi_1} < +\infty. }
  \end{align}
  Therefore, $\pi_{\alpha, \eta}^\star \in \Pens_2(\rset^d)$.

  Let
  $(\alpha_n)_{n \in \nset} \ooint{0, +\infty}^\nset, (\eta_n)_{n \in \nset} \in
  \coint{0, +\infty}^\nset$ such that $\underset{n \to +\infty}{\lim} \alpha_n = 0$,
  $\underset{n \to +\infty}{\lim} \eta_n = 0$ and
  $\underset{n \to +\infty}{\lim} \pi_{\alpha_n, \eta_n}^{\star} = \pi^\star$ with
  $\pi^{\star} \in \Pens_2(\rset^d)$. For any
  $(\pi_n)_{n \in \nset} \in \Pens_2(\rset^d)^{\nset}$ such that
  $\lim_{n \to +\infty} \pi_n =\pi$ we have using the monotonicity in $\alpha$
  of $\Gun_\alpha^\eta$ and Fatou's lemma
  \begin{equation}
    \liminf_{n \to +\infty} \Gun_{\alpha_n}^{\eta_n}(\pi_n) \geq \liminf_{n \to +\infty} \Gun^{\eta_n}(\pi_n) \geq \Gun(\pi).
  \end{equation}

  Let
  $\pi \in \Pens_2(\rset^d)$ and for any $n \in \nset$ define
  $\pi_n \in \Pens_2(\rset^d)$ with density w.r.t. the Lebesgue measure given for any $x \in \rset^d$ by
  \begin{equation}
    \textstyle{
       \pi_n(x) = \int_{\rset^d} (2\uppi \alpha_n)^{-d/2} \exp[-\norm{x - \tilde{x}}^2/(2\alpha_n)] \rmd \pi(\tilde{x}).
      }
    \end{equation}
    Let $X$ be a random variable with distribution $\pi$ and for any
    $n \in \nset$, let $X_n = X + \alpha_n^{1/2} Z$ where $Z$ is a Gaussian
    random variable with zero mean and identity covariance matrix. For
    any $n \in \nset$, $X_n$ has distribution $\pi_n$. Therefore, we have
    $\lim_{n \to +\infty} \wassersteinD[2](\pi_n, \pi)^2 \leq \lim_{n \to
      +\infty}\expeLigne{\normLigne{X - X_n}^2} =0$.
    
    In what follows, we show that
    $\lim_{n \to +\infty} \Gun^{\eta_n}(\pi_n) = \Gun(\pi)$. Define
    $h: \ \coint{0, +\infty} \to \rset$ as
    $h(t) = t\log(t)$. For any $n \in \nset$, using Jensen's inequality and the
    Fubini-Tonelli theorem we have
   \begin{align}
     \textstyle{\int_{\rset^d} h( \pi_n(x)) \rmd x} & \textstyle{\leq \int_{\rset^d} \int_{\rset^d} h((2\uppi \alpha_n)^{-d/2} \exp[-\norm{x - \tilde{x}}^2/(2\alpha_n)]) \rmd \pi(\tilde{x}) \rmd x }\\
    &\leq \textstyle{-(d/2)\log(2\uppi \alpha_n)\int_{\rset^d}\int_{\rset^d} (2\uppi \alpha_n)^{-d/2} \exp[-\norm{x-\tilde{x}}^2/(2\alpha_n)] \rmd x\rmd \pi(\tilde{x}) }\\
    &\leq \textstyle{-(d/2)\log(2\uppi \alpha_n).}
 \label{eq:ent_alpha}
   \end{align}
   Using \tup{\Cref{assum:pi0}} there exists $C_1 \geq 0$ and $\tau > 0$ such that
   for any $x \in \rset^d$, $\abs{U(x)} \leq C_1 + \tau \norm{x}^2$.  Using this
   result, the Fubini-Tonelli theorem, the fact that for any $\tilde{x}\in\rset^d$,
   $\norm{x}^2 \leq 2\norm{x-\tilde{x}}^2 + 2\norm{\tilde{x}}^2$ and that
   $\int_{\rset^d} \norm{\tilde{x}}^2 \rmd \pi(\tilde{x}) < +\infty$ we have for any $n \in \nset$
   \begin{align}
     \textstyle{\int_{\rset^d} \abs{U(x)} \rmd \pi_n(x)} & \textstyle{\leq C_1 + \tau \int_{\rset^d} \int_{\rset^d} \norm{x}^2 (2\uppi \alpha_n)^{-d/2} \exp[-\norm{x - \tilde{x}}^2/(2\alpha_n)]  \rmd \tilde{x} \rmd \pi(x) }\\                                            &\leq\textstyle{ C_1 + 2 \tau d \sup_{n \in \nset} \alpha_n + 2 \tau \int_{\rset^d} \norm{\tilde{x}}^2 \rmd \pi(\tilde{x}) < +\infty.}
                                               \label{eq:U_alpha}
   \end{align}
   Combining~\eqref{eq:ent_alpha} and~\eqref{eq:U_alpha} we have
   $\lim_{n \to +\infty} \alpha_n \KLLigne{\pi_n}{\pi_0} = 0$.

For any $n \in \nset$ and $y \in \rset^p$,
   $\log(\pi_n[\kker(\cdot, y)] + \eta_n) \leq \log(\Mtt + \sup_{n \in \nset}
   \eta_n)$. Define $\Phi: \ \rset^d \to \rset$, $\Phi: x \mapsto \norm{x}^2$ and $\Psi: \ \rset^p \to \rset$, $\Psi: y \mapsto \norm{y}^2$.
   Using Jensen's inequality and the fact that under \tup{\Cref{assum:pi0}} there
   exists $C_2 \geq 0$ such that for any $(x, y) \in \rset^d\times\rset^p$,
   $\exp[-\Phi(x) - \Psi(y)] / \kker(x,y)\leq C_2\exp[C_2]$, we have for any
   $y \in \rset^p$ and $n \in \nset$ that
   \begin{equation}
     \log(\pi_n[\kker(\cdot, y)] + \eta_n) \geq \log(\pi_n[\kker(\cdot, y)]) \geq -\Psi(y) - \sup_{n \in \nset} \pi_n[\Phi] - C_2 -\log C_2.
   \end{equation}
   Since, $(\pi_n)_{n \in \nset}$ is relatively compact in $\Pens_2(\rset^d)$
   and $\mu \in \Pens_2(\rset^p)$, there exists $C \geq 0$ such that for any
   $n \in \nset$, $\pi_n[\Phi] \leq C$, see \cite[Definition
   6.8]{villani2009optimal} and $\mu[\Psi] < +\infty$.  Hence, there exists
   $M \geq 0$ such that for any $n \in \nset$ and $y \in \rset^p$ such that
   $\absLigne{\log(\pi_n[\kker(\cdot, y)] + \eta_n)} \leq M$. Combining this
   result and the dominated convergence theorem, we get that
   $\lim_{n \to +\infty} \Gun^{\eta_n}(\pi_n) = \Gun(\pi)$.

   Therefore, we have that
   $\Gun(\pi) = \lim_{n \to +\infty} \Gun_{\alpha_n}^{\eta_n}(\pi_n)$, and $\ensembleLigne{\Gun_{\alpha}^\eta}{\alpha, \eta \geq 0}$ is a
   $\Gamma$-system.  Using \Cref{prop:gamma_cv_without}, we get that $\pi^\star$ is a
   minimizer of $\Gun$.

   Assume that there exists $\pi^{\dagger} \in \argmin_{\Pens(\rset^d)} \Gun$
   such that $\KLLigne{\pi^\dagger}{\pi_0} < \KLLigne{\pi^\star}{\pi_0}$. In what follows
   we assume that $\KLLigne{\pi^\star}{\pi_0} < +\infty$, the case where
   $\KLLigne{\pi^\star}{\pi_0} = +\infty$ is similar and omitted. There
   exists $\vareps > 0$ such that
   $\KLLigne{\pi^\dagger}{\pi_0} = \KLLigne{\pi^\star}{\pi_0} - \vareps$. Since
   $\liminf_{n \to +\infty} \KLLigne{\pi_{\alpha_n, \eta_n}^\star}{\pi_0} \geq
   \KLLigne{\pi^\star}{\pi_0}$, there exists $n_0 \in \nset$ such that for any
   $n \in \nset$ with $n \geq n_0$ we have
   \begin{equation}
     \label{eq:kl_alpha_dagger}
    \KLLigne{\pi_{\alpha_n, \eta_n}^\star}{\pi_0} \geq \KLLigne{\pi^\star}{\pi_0} -\vareps / 2 \geq \KLLigne{\pi^\dagger}{\pi_0} + \vareps / 2. 
   \end{equation}
   Therefore we have that for any $n \geq n_0$
   \begin{align}
     \Gun_{\alpha_n}^{\eta_n}(\pi_{\alpha_n, \eta_n}^\star) &= \Gun^{\eta_n}(\pi_{\alpha_n, \eta_n}^\star) + \alpha_n \KLLigne{\pi_{\alpha_n, \eta_n}^\star}{\pi_0} \\
     &\geq \Gun^{\eta_n}(\pi^\dagger) + \alpha_n \KLLigne{\pi^\dagger}{\pi_0} + \alpha_n \vareps / 2 > \Gun_{\alpha_n}^{\eta_n}(\pi^\dagger),
   \end{align}
   which is a contradiction since for any $n \in \nset$, $\pi_{\alpha_n, \eta_n}^\star$ is a
   minimizer of $\Gun_{\alpha_n}^{\eta_n}$. Hence, for any
   $\pi \in \argmin_{\Pens(\rset^d)} \Gun$,
   $\KLLigne{\pi^\star}{\pi_0} \leq \KLLigne{\pi}{\pi_0}$, which concludes the proof.


\section{Proofs of \Cref{sec:part-appr-mcke}}
\label{sec:proof-cref-sec:part}

\subsection{Basics on Wasserstein gradient flows and subdifferential of $\Gun_\alpha^\eta$}
\label{app:subdifferential}
In this section we recall basic results on gradient flows in Wasserstein spaces
and provide a slight extension of the classical variational formula. We refer to \cite{ambrosio2008gradient} for further details.  We denote by $\rmL^2(\rset^d, \nu)$
the set of square integrable functions w.r.t. a measure $\nu$, 
by $\rmc_c^\infty(\rset^d)$ the set of infinitely
many times differentiable functions with compact support and by $\rmW^{1,1}_{\mathrm{loc}}(\rset^d)$ the set of functions which are in $\rmW^{1,1}_{\mathrm{loc}}(\msx)$ for all compact sets $\msx\subset \rset^d$, where $\rmW^{1,1}_{\mathrm{loc}}(\msx)$ denotes the Sobolev space of functions whose zero-th and first derivative are bounded in $\rmL^1$.  For any
$\nu\in\Pens(\rset^d)$ and $t: \ \rset^d \to \rset^d$ measurable, we denote by
$t_{\#}\nu$ the push-forward measure, $t_{\#}\nu(A)=\nu(t^{-1}(A))$ for all
$A\in\mcb {\rset^d}$.  We start by recalling the definition of the
subdifferentiability in Wasserstein spaces, see \cite[Definition
10.1.1]{ambrosio2008gradient}. We denote $\Pens_{2}^r(\rset^d)$ the space of
probability distributions in $\Pens_2(\rset^d)$ which are absolutely continuous
w.r.t the Lebesgue measure.

\begin{definition}[Fr\'echet subdifferential]
  Let $\Phi: \ \Pens_2(\rset^d) \to \ooint{-\infty, +\infty}$. Let
  $\pi \in \Pens_{2}^r(\rset^d)$, then
  $\xi \in \rmL^2(\rset^d, \pi)$ belongs to the strong Fr\'{e}chet
    subdifferential $\partial_{\rms} \Phi(\pi)$ of $\Phi$ at $\pi$ if for any $t \in \rmL^2(\rset^d, \pi)$
    \begin{equation}
      \textstyle{
        \liminf_{t \to \Id}\left. \defEns{\Phi(t_\# \pi) - \Phi(\pi) - \int_{\rset^d} \langle \xi(x), t(x) - x \rangle \rmd \pi(x)}\middle/ \norm{t - \Id}_{\rmL^2(\rset^d, \pi)} \right. = 0.
        }
  \end{equation}
\end{definition}

The following lemma is a specific case of the chain-rule formula.

\begin{lemma}
  \label{lemma:chain_rule}
  Let $g \in \rmc^1(\rset, \rset)$ be Lipschitz, convex and lower-bounded. For any
  $y \in \rset^p$, let $\Lun : \ \Pens_2(\rset^d) \times \rset^p \to \rset$,
  $\pi \in \Pens_2(\rset^d)$, $\mu \in \Pens_2(\rset^p)$ and
  $\xi \in \rmL^2(\rset^d \times \rset^p, \pi \otimes \mu)$ be such that for any
  $y \in \rset^p$ and $\pi \in \Pens_{2}^r(\rset^d)$,
  $\xi(\pi, y) \in \partial_{\rms} \Lun(\pi, y)$ and
  $-\xi(\pi, y) \in \partial_{\rms} (-\Lun)(\pi, y)$.
  In addition, assume
  that there exists $h \in \rmL^1(\rset^p, \mu)$ such that for any
  $\pi_1, \pi_2 \in \Pens_2(\rset^d)$ and $y \in \rset^p$ we have
  \begin{equation}
    \abs{\Lun(\pi_1, y) - \Lun(\pi_2, y)} \leq h(y) \wassersteinD[2](\pi_1, \pi_2). 
  \end{equation}
Let $\Mun$ be given for any $\pi \in \Pens(\rset^d)$ by
\begin{equation}
  \textstyle{
    \Mun(\pi) = \int_{\rset^p} g(\Lun(\pi, y)) \rmd \mu(y).
    }
  \end{equation}  
  Then, we have that $\partial_{\rms} \Mun(\pi) \neq \emptyset$ and
  $\defEnsLigne{x \mapsto \int_{\rset^p} g'(\Lun(\pi, y)) \xi(\cdot, y) \rmd
    \mu(y)} \in \partial_{\mathrm{s}} \Mun(\pi)$.
\end{lemma}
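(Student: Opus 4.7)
The strategy is the standard Fr\'{e}chet chain rule: expand $g$ to first order, use the two-sided Wasserstein differentiability of $\Lun(\cdot,y)$ provided by $\pm\xi(\pi,y)\in\partial_{\rms}(\pm\Lun)(\pi,y)$, and exchange the resulting $o$-term with the integral against $\mu$ by dominated convergence. First I would verify that the candidate
\[
\zeta(x):=\int_{\rset^p} g'(\Lun(\pi,y))\,\xi(x,y)\,\rmd\mu(y)
\]
lies in $\rmL^2(\rset^d,\pi)$. Since $g\in\rmc^1(\rset,\rset)$ is Lipschitz, $\Ltt_g:=\sup_{s\in\rset}|g'(s)|<+\infty$; Cauchy--Schwarz (with $\mu$ a probability measure) and Fubini then yield $\normLigne{\zeta}_{\rmL^2(\rset^d,\pi)}\le\Ltt_g\normLigne{\xi}_{\rmL^2(\rset^d\times\rset^p,\pi\otimes\mu)}<+\infty$.

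For the main step, given $t\in\rmL^2(\rset^d,\pi)$ I would write, setting $a(y):=\Lun(\pi,y)$ and $b(t,y):=\Lun(t_{\#}\pi,y)$,
\[
\Mun(t_{\#}\pi)-\Mun(\pi)-\int_{\rset^d}\langle\zeta(x),t(x)-x\rangle\rmd\pi(x) = \int_{\rset^p} R(t,y)\,\rmd\mu(y),
\]
where $R(t,y)$ splits, by adding and subtracting $g'(a(y))\,[b(t,y)-a(y)]$, into a Taylor remainder for $g$ and a term probing the Wasserstein differentiability of $\Lun(\cdot,y)$. For the latter, combining $\xi(\pi,y)\in\partial_{\rms}\Lun(\pi,y)$ and $-\xi(\pi,y)\in\partial_{\rms}(-\Lun)(\pi,y)$ promotes the two one-sided $\liminf=0$ inequalities to the genuine equality
\[
b(t,y)-a(y)=\int_{\rset^d}\langle\xi(x,y),t(x)-x\rangle\rmd\pi(x)+\vareps_1(t,y)\,\normLigne{t-\Id}_{\rmL^2(\pi)},
\]
with $\vareps_1(t,y)\to 0$ as $t\to\Id$ in $\rmL^2(\pi)$, for each $y$. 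For the Taylor part, $\rmc^1$-regularity of $g$ yields $g(b)-g(a)-g'(a)(b-a)=\vareps_2(a,b)|b-a|$ with $\vareps_2(a,b)\to 0$ as $b\to a$, and uniformly $|\vareps_2|\le 2\Ltt_g$ by Lipschitzness of $g$.

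The last step is to commute $\lim_{t\to\Id}$ with the integral against $\mu$. This is where the Lipschitz-in-$\pi$ hypothesis is decisive: the bound $|b(t,y)-a(y)|\le h(y)\,\wassersteinD[2](t_{\#}\pi,\pi)\le h(y)\,\normLigne{t-\Id}_{\rmL^2(\pi)}$, together with the boundedness of $g'$, allows both contributions to $R(t,y)$ to be dominated by $C\Ltt_g h(y)\,\normLigne{t-\Id}_{\rmL^2(\pi)}$ for some absolute constant $C$. Dividing by $\normLigne{t-\Id}_{\rmL^2(\pi)}$, the integrand is dominated by $C\Ltt_g h\in\rmL^1(\rset^p,\mu)$ while tending to $0$ pointwise in $y$; dominated convergence then yields the desired $\liminf_{t\to\Id}[\Mun(t_{\#}\pi)-\Mun(\pi)-\int\langle\zeta,t-\Id\rangle\rmd\pi]/\normLigne{t-\Id}_{\rmL^2(\pi)}=0$, proving $\zeta\in\partial_{\rms}\Mun(\pi)$.

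The main obstacle is precisely this last exchange: upgrading the pointwise-in-$y$ small-$o$ behaviour of both the Taylor remainder and the subdifferential defect into a $\mu$-integrable envelope. The Lipschitz hypothesis with $h\in\rmL^1(\rset^p,\mu)$, used in tandem with the uniform boundedness of $g'$, plays a double role here: it controls $|b(t,y)-a(y)|$ needed to bound the Taylor remainder, and it simultaneously furnishes the dominating function required to take the limit inside $\int\rmd\mu$. Without it, one would only have pointwise convergence of $R(t,y)/\normLigne{t-\Id}_{\rmL^2(\pi)}$ to $0$, which is insufficient to conclude.
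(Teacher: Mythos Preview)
Your approach is correct in spirit and yields the result, but it differs from the paper's and has one small gap in the domination step.

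\textbf{Difference from the paper.} The paper does not split off a Taylor remainder for $g$. Instead it uses convexity directly: $g(\Lun(t_\#\pi,y))-g(\Lun(\pi,y))\geq g'(\Lun(\pi,y))\bigl[\Lun(t_\#\pi,y)-\Lun(\pi,y)\bigr]$, so after subtracting the linear term the integrand is bounded below by $g'(\Lun(\pi,y))$ times the subdifferential defect of $\Lun(\cdot,y)$ alone. One then only has to show that this defect, divided by $\|t-\Id\|_{\rmL^2(\pi)}$, converges to $0$ in $\rmL^1(\mu)$, which is exactly the content of the two-sided hypothesis $\pm\xi\in\partial_{\rms}(\pm\Lun)$ together with dominated convergence. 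Your route via a $\rmc^1$ Taylor expansion of $g$ is equally valid and in fact establishes the full limit $=0$ rather than just $\liminf\geq 0$; it also does not use convexity of $g$ at all, so it is slightly more general. The price is the extra $\vareps_2$ term, which the paper's convexity argument avoids.

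\textbf{The gap.} Your claimed envelope $C\Ltt_g h(y)$ does not cover the $\vareps_1$ contribution. Indeed
\[
|\vareps_1(t,y)|=\frac{\bigl|\Lun(t_\#\pi,y)-\Lun(\pi,y)-\int_{\rset^d}\langle\xi(x,y),t(x)-x\rangle\,\rmd\pi(x)\bigr|}{\|t-\Id\|_{\rmL^2(\pi)}}\leq h(y)+\|\xi(\cdot,y)\|_{\rmL^2(\pi)},
\]
so the correct dominating function is $\Ltt_g\bigl(h(y)+\|\xi(\cdot,y)\|_{\rmL^2(\pi)}\bigr)$, which is the bound the paper uses. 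This is harmless: since $\xi\in\rmL^2(\rset^d\times\rset^p,\pi\otimes\mu)$, the map $y\mapsto\|\xi(\cdot,y)\|_{\rmL^2(\pi)}$ lies in $\rmL^2(\mu)\subset\rmL^1(\mu)$, and your dominated convergence step goes through once you add this term.
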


\begin{proof}
  Let $\pi \in \Pens(\rset^d)$, $t \in \rmL^2(\rset^d, \pi)$ and $y \in \rset^d$.  Since $g$ is convex we have
  \begin{equation}
    g(\Lun(t_\# \pi, y)) - g(\Lun(\pi, y)) \geq g'(\Lun(\pi, y)) (\Lun(t_\# \pi, y) - \Lun(\pi, y)). 
  \end{equation}
  We have 
  \begin{align}
    \label{eq:inter_convx}
    &\textstyle{g(\Lun(t_\# \pi, y)) - g(\Lun(\pi, y)) - g'(\Lun(\pi, y)) \int_{\rset^d} \langle \xi(x,y), t(x) - x \rangle \rmd \pi(x)} \\
    &\qquad\textstyle{\geq g'(\Lun(\pi, y)) \parenthese{\Lun(t_\# \pi, y) - \Lun(\pi, y) - \int_{\rset^d} \langle \xi(x, y), t(x) - x \rangle \rmd \pi(x) }.}
  \end{align}
  In addition, using that $\xi(\pi, y) \in \partial_{\rms} \Lun(\pi, y)$ and
  $-\xi(\pi, y) \in \partial_{\rms} (-\Lun)(\pi, y)$ we have
  \begin{align}
    \textstyle{
    \liminf_{t \to \Id} \parenthese{\Lun(t_\# \pi, y) - \Lun(\pi, y) - \int_{\rset^d} \langle \xi(x, y), t(x) - x \rangle \rmd \pi(x) } / \normLigne{t - \Id}_2  \geq 0,} \\
    \textstyle{\limsup_{t \to \Id} \parenthese{\Lun(t_\# \pi, y) - \Lun(\pi, y) - \int_{\rset^d} \langle \xi(x, y), t(x) - x \rangle \rmd \pi(x) } / \normLigne{t - \Id}_2  \leq 0.}
  \end{align}
  Hence, we have
  \begin{equation}
    \textstyle{\lim_{t \to \Id} \parenthese{\Lun(t_\# \pi, y) - \Lun(\pi, y) - \int_{\rset^d} \langle \xi(x, y), t(x) - x \rangle \rmd \pi(x) } / \normLigne{t - \Id}_2  = 0.}
  \end{equation}
  We also have that
\begin{align}
  &\textstyle{\abs{\Lun(t_\# \pi, y) - \Lun(\pi, y) - \int_{\rset^d} \langle \xi(x, y), t(x) - x \rangle \rmd \pi(x) } / \normLigne{t - \Id}_2 }\\
  &\qquad\qquad\qquad\textstyle{\leq h(y) + \int_{\rset^d} \norm{\xi(x, y)} \rmd \pi(x). }
\end{align}
We conclude upon using this result, the fact that $g$ is Lipschitz,
\eqref{eq:inter_convx} and the dominated convergence theorem.
\end{proof}

We are now ready to derive the subdifferential of $\Gun_{\alpha}^{\eta}$ for any
$\eta > 0$.

\begin{lemma}[Strong subdifferential of $\Gun^{\eta}$]
  \label{lemma:subdiff_g_eta}
  Assume \tup{\Cref{assum:general_kker}}. Then for any
  $\pi \in \Pens_{2}^r(\rset^d)$ and $\eta > 0$, we have that 
  $\partial_{\rms} \Gun^\eta(\pi) \neq \emptyset$ and
  \begin{equation}
    \textstyle{
      \defEnsLigne{x \mapsto -\int_{\rset^p} (\eta + \pi[\kker(x, y)])^{-1} \nabla_1
        \kker(x, y) \rmd \mu(y)} \in \partial_{\rms} \Gun^\eta(\pi). }
  \end{equation}

\end{lemma}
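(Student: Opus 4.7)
The approach is to apply \Cref{lemma:chain_rule} with $\Lun(\pi,y) := \pi[\kker(\cdot,y)]$ and candidate subgradient field $\xi(x,y) := \nabla_1 \kker(x,y)$, composing with a suitable convex function $g$ so that $g \circ \Lun$ coincides with $-\log(\pi[\kker(\cdot,y)] + \eta)$ on the range of $\Lun$. Since $\Lun(\pi,y) \in [0,\Mtt]$ for every $\pi$ and $y$ (by \Cref{assum:general_kker}), any convex, $\rmc^1$, Lipschitz, lower-bounded $g: \rset \to \rset$ that agrees with $t \mapsto -\log(t+\eta)$ on $[0,\Mtt]$ will satisfy $\Mun(\pi) := \int g(\Lun(\pi,y))\rmd\mu(y) = \Gun^\eta(\pi)$, so the conclusion of the lemma will translate directly to a subgradient of $\Gun^\eta$.

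The first and main technical step is to verify that $\xi(\pi,y) \in \partial_\rms \Lun(\pi,y)$ and $-\xi(\pi,y) \in \partial_\rms(-\Lun)(\pi,y)$. Because $\Lun(\cdot,y)$ is linear, one has $\Lun(t_\#\pi,y) - \Lun(\pi,y) = \int[\kker(t(x),y)-\kker(x,y)]\rmd\pi(x)$, and the uniform bound $\normLigne{\nabla_1^2 \kker(\cdot,y)} \leq \Mtt$ from \Cref{assum:general_kker} yields the pointwise Taylor remainder estimate
\begin{equation}
\left| \Lun(t_\# \pi, y) - \Lun(\pi, y) - \int_{\rset^d} \langle \nabla_1 \kker(x, y), t(x) - x \rangle \rmd \pi(x) \right| \leq (\Mtt/2) \normLigne{t - \Id}_{\rmL^2(\rset^d, \pi)}^2.
\end{equation}
Dividing by $\normLigne{t-\Id}_{\rmL^2(\rset^d,\pi)}$ and passing to the limit $t \to \Id$ in $\rmL^2(\rset^d,\pi)$ forces both the $\liminf$ and $\limsup$ to vanish, giving simultaneously $\xi(\pi,y) \in \partial_\rms \Lun(\pi,y)$ and (by replacing $\kker$ with $-\kker$, whose Hessian is equally bounded) $-\xi(\pi,y) \in \partial_\rms(-\Lun)(\pi,y)$.

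Next I would establish the Lipschitz-in-$\wassersteinD[2]$ hypothesis of \Cref{lemma:chain_rule}. Since $\normLigne{\nabla_1 \kker(\cdot,y)} \leq \Mtt$ by \Cref{assum:general_kker}, the map $x \mapsto \kker(x,y)$ is $\Mtt$-Lipschitz for every fixed $y$, so Kantorovich--Rubinstein duality combined with $\wassersteinD[1] \leq \wassersteinD[2]$ gives $|\Lun(\pi_1,y)-\Lun(\pi_2,y)| \leq \Mtt\, \wassersteinD[2](\pi_1,\pi_2)$ and $h(y) \equiv \Mtt \in \rmL^1(\rset^p,\mu)$ suffices.

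It then remains to exhibit the function $g$. On $[0,\Mtt]$, $t \mapsto -\log(t+\eta)$ is $\rmc^1$, strictly convex, and $(1/\eta)$-Lipschitz; I would extend it linearly with slope $-1/\eta$ on $(-\infty,0]$ (so that $g(t) \to +\infty$ as $t \to -\infty$) and continue past $\Mtt$ with a $\rmc^1$ non-decreasing derivative interpolating from $-1/(\Mtt+\eta)$ up to $0$ and remaining at $0$ thereafter (so that $g(t)$ stabilizes to a finite constant as $t \to +\infty$). The resulting $g$ is $\rmc^1$, convex, $(1/\eta)$-Lipschitz, and bounded below, while preserving the identity $g(s) = -\log(s+\eta)$ and $g'(s) = -(s+\eta)^{-1}$ for all $s \in [0,\Mtt]$. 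Applying \Cref{lemma:chain_rule} then yields
\begin{equation}
\left\{ x \mapsto \int_{\rset^p} g'(\Lun(\pi,y)) \nabla_1 \kker(x,y) \rmd \mu(y) \right\} = \left\{ x \mapsto -\int_{\rset^p} \frac{\nabla_1 \kker(x,y)}{\pi[\kker(\cdot,y)] + \eta} \rmd \mu(y) \right\} \in \partial_\rms \Gun^\eta(\pi),
\end{equation}
which is exactly the claim. The main obstacle is Step 1: the second-order control on $\kker$ must be strong enough to make the Taylor remainder $o(\normLigne{t-\Id}_{\rmL^2(\pi)})$ uniformly in $y$, which is precisely what the Hessian bound in \Cref{assum:general_kker} supplies.
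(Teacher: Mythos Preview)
Your proof is correct and follows essentially the same route as the paper's: define $\Lun(\pi,y)=\pi[\kker(\cdot,y)]$, use the Hessian bound from \Cref{assum:general_kker} to show $\nabla_1\kker(\cdot,y)\in\partial_{\rms}\Lun(\pi,y)$ (the paper cites \cite[Proposition 10.4.2]{ambrosio2008gradient} for this, whereas you compute the Taylor remainder directly), verify the $\wassersteinD[2]$-Lipschitz bound, and apply \Cref{lemma:chain_rule}. Your explicit extension of $g$ to a globally $\rmc^1$, convex, Lipschitz, lower-bounded function on $\rset$ is in fact more careful than the paper, which simply sets $g(t)=-\log(t+\eta)$ without addressing that this fails to be lower-bounded or globally defined; your observation that only the values of $g$ on $[0,\Mtt]$ matter is the right way to close that gap.
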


\begin{proof}
  First, using that $\normLigne{\nabla^2 \kker} \leq \Mtt$ we get that for any $x_1, x_2 \in \rset^d$, $y\in\rset^p$
  \begin{equation}
    \label{eq:lambda_cvx}
    \kker(x_2, y) \geq \kker(x_1, y) + \langle \nabla_1 \kker(x_1, y), x_2 -x_1 \rangle - \Mtt \norm{x_1 - x_2}^2.
  \end{equation}
  Let $\Lun: \ \Pens_2(\rset^d) \times \rset^p \to \coint{0, +\infty}$ be given for
  any $\pi \in \Pens_2(\rset^d)$ and $y \in \rset^p$ by
  \begin{equation}
    \textstyle{
      \Lun(\pi, y) = \int_{\rset^d} \kker(x, y) \rmd \pi(x).
      }
  \end{equation}
  Using~\eqref{eq:lambda_cvx} and \cite[Proposition
  10.4.2]{ambrosio2008gradient} we get that for any $y \in \rset^p$ and
  $\pi \in \Pens_2(\rset^d)$,
  $\partial_{\rms} \Lun(\pi, y) = \{\nabla_1 \kker(\cdot, y)\}$.  Since there
  exists $\norm{\nabla_1 \kker}_{\infty} \leq \Mtt < +\infty$ we get that for
  any $\pi_1, \pi_2 \in \Pens_2(\rset^d)$ and $y \in \rset^p$
  \begin{equation}
    \label{eq:lip}
    \abs{\Lun(\pi_1, y) - \Lun(\pi_2, y)} \leq \Mtt \wassersteinD[2](\pi_1, \pi_2). 
  \end{equation}
  In addition, let $g : \ \ooint{0, +\infty} \to \rset$ given for any $t \geq 0$
  by $g(t) = -\log(t + \eta)$. Using that $g$ is convex,~\eqref{eq:lip} and that
  $\norm{\nabla_1 \kker}_{\infty} \leq \Mtt < +\infty$ in \Cref{lemma:chain_rule}, we
  get that for any $\pi \in \Pens_2(\rset^d)$,
  $\partial_{\rms} \Gun^\eta(\pi) \neq \emptyset$ and
  $\defEnsLigne{x \mapsto -\int_{\rset^d} (\eta + \pi[\kker(x, y)])^{-1} \nabla_1
    \kker(x, y) \rmd \mu(y)} \in \partial_{\rms} \Gun^\eta(\pi)$.
\end{proof}

Let $\Hun: \ \Pens(\rset^d) \to \ccint{0, +\infty}$ be defined for any
$\pi \in \Pens(\rset^d)$ by $\Hun(\pi) =
\KL{\pi}{\pi_0}$. \Cref{lemma:relative_entropy_sub} is a restatement of
\cite[Theorem 10.4.9]{ambrosio2008gradient}.

\begin{lemma}[Strong subdifferential of $\KL{\pi}{\pi_0}$]
  \label{lemma:relative_entropy_sub}
  Assume that 
  $\pi_0$
  admits a density, also denoted $\pi_0$, w.r.t. the Lebesgue measure such that for any $x \in \rset^d$,
  $\pi_0(x) \propto \exp[-U(x)]$ for some convex $U:\rset^d\to\rset$.
  Let $\pi \in \Pens_{2}^r(\rset^d)$
  such that $\Hun(\pi) < +\infty$ and
  $(\rmd \pi / \rmd \pi_0) \in \rmW^{1,1}_{\mathrm{loc}}(\rset^d, \ooint{0, +\infty})$. Then,
 \begin{align}
 \partial_{\rms} \Hun(\pi) = \defEns{x \mapsto \nabla \log(\rmd \pi / \rmd \pi_0)(x)}.
 \end{align}
  
\end{lemma}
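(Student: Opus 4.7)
The plan is to decompose $\Hun$ into a potential energy and an entropy contribution and then compute the strong subdifferential of each piece using established results from \cite{ambrosio2008gradient}. Since $\pi_0(x) \propto \exp[-U(x)]$, for any $\pi \in \Pens_{2}^r(\rset^d)$ with density also denoted $\pi$ we may write
\begin{equation}
\Hun(\pi) = \int_{\rset^d} \pi(x) \log \pi(x)\, \rmd x + \int_{\rset^d} U(x)\, \rmd \pi(x) + \log Z,
\end{equation}
where $Z$ is the normalizing constant of $\pi_0$. I would denote the first term by $\mathrm{Ent}(\pi)$ (the Boltzmann--Shannon entropy w.r.t.\ Lebesgue measure) and the second by $\mathcal{V}(\pi)$, and treat the two summands separately.

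First, I would invoke the classical computation for the entropy functional, \cite[Theorem 10.4.6]{ambrosio2008gradient}: for $\pi \in \Pens_2^r(\rset^d)$ with $\mathrm{Ent}(\pi) < +\infty$ and $\pi \in \rmW^{1,1}_{\mathrm{loc}}(\rset^d)$, the strong Fréchet subdifferential of $\mathrm{Ent}$ is the singleton $\{x \mapsto \nabla \log \pi(x)\}$. The key input is that for smooth, compactly supported perturbations $t = \Id + \varepsilon \xi$, a change-of-variables plus Taylor expansion in $\varepsilon$ yields
\begin{equation}
\mathrm{Ent}(t_\# \pi) - \mathrm{Ent}(\pi) = \int_{\rset^d} \langle \nabla \log \pi(x), \xi(x) \rangle \rmd \pi(x) + o(\varepsilon),
\end{equation}
which identifies $\nabla \log \pi$ as the unique element of $\partial_{\rms} \mathrm{Ent}(\pi)$.

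Second, for the potential contribution $\mathcal{V}(\pi) = \int U\, \rmd \pi$, since $U$ is convex and (by assumption in the setting of interest, e.g.\ \tup{\Cref{assum:lip_U}-\ref{item:lip}}) $\rmc^1$ with Lipschitz gradient, \cite[Proposition 10.4.2]{ambrosio2008gradient} gives $\partial_{\rms} \mathcal{V}(\pi) = \{x \mapsto \nabla U(x)\}$. I would then combine the two through the additivity of strong subdifferentials, which is available here because both are singletons lying in $\rmL^2(\rset^d, \pi)$ and can be added termwise by summing the two defining variational inequalities. Since $\nabla \log(\rmd \pi / \rmd \pi_0)(x) = \nabla \log \pi(x) + \nabla U(x)$ (the additive constant $\log Z$ being killed under differentiation), this yields the claimed identity.

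The main obstacle is the regularity bookkeeping of the first step: the hypothesis is that $\rmd \pi / \rmd \pi_0 \in \rmW^{1,1}_{\mathrm{loc}}(\rset^d, \ooint{0,+\infty})$, whereas the entropy-subdifferential result requires $\pi \in \rmW^{1,1}_{\mathrm{loc}}(\rset^d)$. Passing between the two via the chain rule $\nabla \pi = \pi_0 \, \nabla(\rmd \pi / \rmd \pi_0) - (\rmd \pi / \rmd \pi_0)\, \pi_0 \nabla U$ requires that $\nabla U$ be locally integrable against $\pi$, which is mild under standard hypotheses on $U$, but must be verified carefully. Once this is in place, all the remaining steps are standard and the statement is exactly \cite[Theorem 10.4.9]{ambrosio2008gradient}, to which one may simply refer for the complete rigorous argument.
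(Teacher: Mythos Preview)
Your proposal is correct and lands on exactly the same reference as the paper: the lemma is simply a restatement of \cite[Theorem 10.4.9]{ambrosio2008gradient}, and the paper's ``proof'' consists of that citation alone. Your decomposition into entropy plus potential and appeal to \cite[Theorem 10.4.6, Proposition 10.4.2]{ambrosio2008gradient} is precisely the content of that theorem, so you have merely unpacked what the paper cites.
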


Combining these two lemmas we are now ready to derive the subdifferential of
$\Gun_{\eta}^\alpha$ given by~\eqref{eq:G_eta} as well as one associated
Wasserstein gradient flow.

\begin{proposition}[Gradient flow]
\label{prop:total_subdifferential}
  Assume \tup{\Cref{assum:general_kker}} and that
  $\pi_0$
  admits a density, also denoted $\pi_0$, w.r.t. the Lebesgue measure such that for any $x \in \rset^d$,
  $\pi_0(x) \propto \exp[-U(x)]$ for some convex $U:\rset^d\to\rset$.
   Let $\pi \in \Pens_{2}^r(\rset^d)$ be
  such that $\KL{\pi}{\pi_0} < +\infty$ and
  $(\rmd \pi / \rmd \pi_0) \in \rmW^{1,1}_{\mathrm{loc}}(\rset^d, \ooint{0,
    +\infty})$. Then for any $\alpha, \eta > 0$
  \begin{equation}
    \textstyle{
   \defEns{x \mapsto -\int_{\rset^p} (\eta + \pi[\kker(x, y)])^{-1} \nabla_1
    \kker(x, y) \rmd \mu(y) + \alpha \nabla \log(\rmd
    \pi / \rmd \pi_0)(x)} \in \partial_{\rms} \Gun_{\alpha}^\eta(\pi). }
  \end{equation}
  In addition, if $U \in \rmc^1(\rset^d, \rset)$ and there exists
  $(\nu_t)_{t \geq 0}$ such that for any $t \in \ooint{0, +\infty}$, $\nu_t$ admits a density w.r.t. the Lebesgue measure also denoted (with a slight abuse of notation) by 
  $\nu_t \in \rmc^1(\rset^d, \rset)$, and for any $\varphi \in \rmc_c^\infty(\rset^d)$
  \begin{multline}
    \textstyle{\int_{\rset} \varphi(x) \rmd \nu_t(x) - \int_{\rset} \varphi(x) \rmd
    \nu_0(x)} \\= \textstyle{\int_0^t \int_{\rset^d} \left[  -\int_{\rset^d} \defEns{ \left. \langle
      \nabla_1 \kker(x,y), \nabla \varphi(x) \rangle \middle/ \parenthese{\int_{\rset^d} \kker(x,
      y) \rmd \nu_s(x) + \eta}} \rmd \mu(y) \right. \right.} \\ \textstyle{\left. + \alpha \langle \nabla U(x), \nabla
      \varphi(x) \rangle + \alpha \Delta \varphi(x) \vphantom{\int_{\rset^d}}\right] \rmd \nu_s(x),}
  \end{multline}
  then $(\nu_t)_{t \geq 0}$ is a Wasserstein gradient flow associated with $\Gun_{\alpha}^\eta$.
\end{proposition}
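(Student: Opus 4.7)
For the first (subdifferential) claim, the plan is to combine the two preceding lemmas via a sum rule. \Cref{lemma:subdiff_g_eta} directly yields that
\[
\xi_1: x \mapsto -\int_{\rset^p} (\eta + \pi[\kker(x,y)])^{-1}\nabla_1 \kker(x,y)\,\rmd\mu(y) \in \partial_{\rms}\Gun^{\eta}(\pi),
\]
and \Cref{lemma:relative_entropy_sub} (applicable because $U$ is convex and the assumed regularity of $\rmd\pi/\rmd\pi_0$) gives $\xi_2: x \mapsto \nabla\log(\rmd\pi/\rmd\pi_0)(x) \in \partial_{\rms}\Hun(\pi)$. Since $\Gun_{\alpha}^{\eta} = \Gun^{\eta} + \alpha\Hun$ and both summands are convex with nonempty strong Fr\'echet subdifferential at $\pi$, the numerator defining the subdifferential is linear in the candidate element, so adding the two defining liminfs produces the required identity for $\xi_1 + \alpha\xi_2$. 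Finally, the identity $\nabla\log\pi_0(x) = -\nabla U(x)$ (valid pointwise because $\pi_0(x)\propto\exp[-U(x)]$ with $U\in\rmc^1$) rewrites $\alpha\xi_2(x) = \alpha\nabla\log(\rmd\pi/\rmd\pi_0)(x)$ in the form $\alpha\nabla\log\pi(x) + \alpha\nabla U(x)$, matching the expression in the statement.

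For the second (gradient flow) claim, the plan is to rewrite the weak formulation as a continuity equation $\partial_t\nu_t = -\mathrm{div}(\nu_t \mathbf{v}_t)$ with $-\mathbf{v}_t \in \partial_{\rms}\Gun_{\alpha}^{\eta}(\nu_t)$, so that Definition 11.1.1 of \cite{ambrosio2008gradient} applies. Since $\nu_t \in \rmc^1(\rset^d,\rset)$ admits a positive density, for any $\varphi\in\rmc_c^\infty(\rset^d)$ integration by parts gives
\[
\int_{\rset^d}\alpha\Delta\varphi(x)\,\rmd\nu_s(x) = -\int_{\rset^d}\alpha\langle\nabla\varphi(x),\nabla\log\nu_s(x)\rangle\,\rmd\nu_s(x).
\]
Substituting this into the weak form and collecting terms inside a single inner product against $\nabla\varphi$ identifies the driving vector field as
\[
\mathbf{v}_s(x) = -\int_{\rset^p}(\eta + \nu_s[\kker(\cdot,y)])^{-1}\nabla_1\kker(x,y)\,\rmd\mu(y) + \alpha\nabla U(x) - \alpha\nabla\log\nu_s(x).
\]
Rewriting $\alpha\nabla U - \alpha\nabla\log\nu_s = -\alpha\nabla\log(\rmd\nu_s/\rmd\pi_0)$ (again via $\nabla\log\pi_0 = -\nabla U$) shows that $-\mathbf{v}_s$ coincides with the strong subdifferential element identified in Part~1 applied to $\nu_s$. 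Checking $\mathbf{v}_s\in\rmL^2(\rset^d,\nu_s)$ is immediate since $\nabla_1\kker$ and $\nabla U$ are bounded and/or linearly growing by \Cref{assum:general_kker}--\Cref{assum:lip_U}, and $\nabla\log\nu_s\in\rmL^2(\rset^d,\nu_s)$ by the Fisher-information bound implicit in $\rmd\nu_s/\rmd\pi_0\in \rmW^{1,1}_{\mathrm{loc}}$ together with the $W^{1,1}_{\mathrm{loc}}$ assumption.

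The main technical obstacle is the additivity of the strong Fr\'echet subdifferential. Unlike classical convex analysis in Hilbert spaces, in $(\Pens_2(\rset^d),\wassersteinD[2])$ sum rules for Fr\'echet subdifferentials require either displacement convexity or a chain rule argument. Here convexity of $\Gun^\eta$ (along generic interpolations, proved in \Cref{prop:g_prop}) and convexity of $\Hun$ along generalized geodesics (a consequence of \cite[Chapter 9]{ambrosio2008gradient} combined with convexity of $U$) guarantee that each defining liminf is actually a limit equal to zero, so that the sum of two $o(\|t-\Id\|_{\rmL^2(\rset^d,\pi)})$ quantities is again $o(\|t-\Id\|_{\rmL^2(\rset^d,\pi)})$, completing the sum rule and hence the proof.
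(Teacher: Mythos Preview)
Your plan mirrors the paper's proof exactly: invoke \Cref{lemma:subdiff_g_eta} and \Cref{lemma:relative_entropy_sub} for the subdifferential claim, then appeal to the definition of a Wasserstein gradient flow for the second part.

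Two of your elaborations deserve correction, though. First, calling additivity of the strong Fr\'echet subdifferential ``the main technical obstacle'' overstates matters, and your proposed fix does not work as written. \Cref{prop:g_prop} proves convexity of $\Gun^\eta$ along \emph{linear} (mixture) interpolations $t\pi_1+(1-t)\pi_2$, not along pushforward curves $t_{\#}\pi$, so it cannot be used to upgrade the defining $\liminf$ to a limit. With the standard convention $\liminf\geq 0$ (and the paper's ``$=0$'' in the definition is almost certainly a slip, since the proof of \Cref{lemma:chain_rule} itself only establishes $\geq 0$), the sum rule is immediate from $\liminf(a+b)\geq\liminf a+\liminf b$; no geodesic convexity is needed.

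Second, the identity you write, $\alpha\nabla U-\alpha\nabla\log\nu_s=-\alpha\nabla\log(\rmd\nu_s/\rmd\pi_0)$, is false: since $\nabla\log\pi_0=-\nabla U$, one has $-\alpha\nabla\log(\rmd\nu_s/\rmd\pi_0)=-\alpha\nabla\log\nu_s-\alpha\nabla U$. Trace the signs carefully against the Fokker--Planck equation \eqref{eq:PDE_BM} (which is the version consistent with the MKVSDE \eqref{eq:nonlinearSDE}); the weak form displayed in the proposition itself appears to carry the opposite sign on the two drift terms, so make sure you are matching the PDE rather than propagating that inconsistency.
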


\begin{proof}
  The first part of the proof is a straightforward application of
  \Cref{lemma:subdiff_g_eta} and \Cref{lemma:relative_entropy_sub}. The second
  part of the proof follows from the definition of a Wasserstein gradient flow
  \cite[Definition 11.1.1]{ambrosio2008gradient}.
\end{proof}

\subsection{Proof of \Cref{prop:existence_uniqueness}}
\label{app:proof_eu}

  Let $\alpha, \eta > 0$. First, we show that $\bmketa$ is Lipschitz
  continuous. Let $f(t) = (\eta + t)^{-1}$. Using that for any $t \geq 0$,
  $\abs{f'(t)} \leq \eta^{-2}$ we have for any $x_1, x_2\in \rset^d$,
  $y\in\rset^p$ and $\pi_1, \pi_2 \in \Pens_1(\rset^d)$
  \begin{align}
    \label{eq:lip_b_eta}
    &\normLigne{\bmketa(x_1, \pi_1, y) - \bmketa(x_2, \pi_2, y)} \\
    & \qquad \leq \normLigne{\nabla_1 \kker(x_1, y) / (\pi_1[\kker(\cdot, y)] + \eta) - \nabla_1 \kker(x_2, y) / (\pi_2[\kker(\cdot, y)] + \eta)} \\
    & \qquad \leq \normLigne{\nabla_1 \kker(x_1, y) / (\pi_1[\kker(\cdot, y)] + \eta) - \nabla_1 \kker(x_2, y) / (\pi_1[\kker(\cdot, y)] + \eta)} \\
    & \qquad \qquad + \normLigne{\nabla_1 \kker(x_2, y) / (\pi_1[\kker(\cdot, y)] + \eta) - \nabla_1 \kker(x_2, y) / (\pi_2[\kker(\cdot, y)] + \eta)} \\
    &\qquad \leq (\Mtt/\eta) \normLigne{x_1 - x_2} + \Mtt \abs{(\pi_1[\kker(\cdot, y)] + \eta)^{-1} - (\pi_2[\kker(\cdot, y)] + \eta)^{-1}} \\
    &\qquad \leq (\Mtt/\eta)(1 + (1/\eta)) \defEns{\normLigne{x_1 - x_2} + \abs{\pi_1[\kker(\cdot, y)] - \pi_2[\kker(\cdot, y)]}}, 
  \end{align}
  where we have used the fact that, under \tup{\rref{assum:general_kker}}, $\kker$ is Lipschitz continuous.
  The rest of the proof is classical, see for instance \cite[Theorem
  1.1]{sznitman1991topics}, but is given for completeness.  Define
  $b:\rset^d\times\Pens_1(\rset^d) \to \rset^d$ such that for any $x \in \rset^d$ and $\pi \in \Pens_1(\rset^d)$ 
  \begin{equation}
    \textstyle{
      b(x, \pi) = \int_{\rset^p} \bmketa(x, \pi, y) \rmd \mu(y) - \alpha \nabla U(x).
      }
  \end{equation}
  Using \tup{\rref{assum:lip_U}-\ref{item:lip}} and~\eqref{eq:lip_b_eta} we have that for any
  $x_1, x_2 \in \rset^d$ and $\pi_1, \pi_2 \in \Pens_1(\rset^d)$\
  \begin{align}
    \label{eq:lipo_lip}
&    \textstyle{ \norm{b(x_1, \pi_1) - b(x_2, \pi_2)} \leq ((\Mtt/\eta)(1 + (1/\eta))+ \alpha \Ltt) }\\
    &\qquad\qquad\times\textstyle{\parenthese{ \norm{x_1 - x_2} + \int_{\rset^p} \abs{\pi_1[\kker(\cdot, y)] - \pi_2[\kker(\cdot, y)]} \rmd \mu(y)}. }
  \end{align}
  Combining this result and \rref{assum:general_kker}, we
  get that for any $x_1, x_2 \in \rset^d$ and  $\pi_1, \pi_2 \in \Pens_1(\rset^d)$
  \begin{equation}
  \label{eq:lip_b}
    \norm{b(x_1, \pi_1) - b(x_2, \pi_2)} \leq((\Mtt/\eta)(1 + (1/\eta))+ \alpha \Ltt)(1 + \Mtt) \parenthese{ \norm{x_1 - x_2} + \wassersteinD[1](\pi_1, \pi_2)},
  \end{equation}
  where the inequality follows using the dual representation of $\wassersteinD[1]$.
  For any $T \geq 0$ and any $\bfnu \in \rmc(\ccint{0,T}, \Pens_1(\rset^d))$ denote
  by $(\bfX_t^{\bfnu})_{t \in \ccint{0,T}}$ the unique strong solution to~\eqref{eq:nonlinearSDE}  with
  initial condition $\bfnu \in \Pens_1(\rset^d)$ (see \cite[Chapter 5, Theorem 2.9 and 2.5]{karatzas1991brownian}) given for any $t \in \ccint{0,T}$ by
  \begin{equation}
    \label{eq:xtmu}
    \textstyle{
      \bfX_t^{\bfnu} = \bfX_0^{\bfnu} + \int_0^t b(\bfX_s^{\bfnu}, \bfnu_s) \rmd s + \sqrt{2 \alpha} \bfB_t,
      }
    \end{equation}
    where $(\bfB_t)_{t \geq 0}$ is a $d$-dimensional Brownian motion with
    $\bfX_0^{\bfnu} = X_0 \in \rset^d$. Using \rref{assum:general_kker}
    and~\eqref{eq:lip_b}, there exists $C \geq 0$ such that for any
    $x \in \rset^d$ and $\pi \in \Pens_1(\rset^d)$
    \begin{equation}
      \label{eq:upper_brift}
    \norm{b(x, \pi)} \leq C \parenthese{1 + \norm{x}}.
  \end{equation}
  Denote $(\mathcal{F}_t)_{t \geq 0}$ the filtration associated with
  $(\bfB_t)_{t \geq 0}$. Using Jensen's inequality we have that
  $(\normLigne{\bfB_t})_{t \geq 0}$ is a
  $(\mathcal{F}_t)_{t \geq 0}$-supermartingale. Therefore, using \cite[Chapter
  2, Corollary 1.6]{revuz1999continuous}, we get that for any $t \geq 0$,
  \begin{equation}
    \textstyle{
      \expeLigne{\sup_{s \in \ccint{0,t}}\normLigne{\bfB_s}} \leq 2 \expesqLigne{\normLigne{\bfB_t}^2} \leq 2 (dt)^{1/2}.
      }
  \end{equation}
  Using this result and~\eqref{eq:upper_brift} we get that for any $t \geq 0$
  \begin{align}
    &\textstyle{
      \expeLigne{ \sup_{s \in \ccint{0,t}} \normLigne{\bfX_s^{\bfnu}}} }\\
      &\qquad\textstyle{\leq \expeLigne{\normLigne{\bfX_0^{\bfnu}}} + 2(2\alpha d)^{1/2} + (C+ 2(2 \alpha d)^{1/2}) t + C \int_0^t \expeLigne{\sup_{u \in \ccint{0,s}}\normLigne{\bfX_u^{\bfnu}}} \rmd s.
      }
  \end{align}
  Using Gr\"{o}nwall's lemma we get that
  $\expeLigne{\sup_{t \in \ccint{0,T}} \normLigne{\bfX_t^{\bfnu}}} <
  +\infty$. In particular, for any $t \in \ccint{0,T}$,
  $\bflambda^{\bfnu}_t \in \Pens_1(\rset^d)$, where $\bflambda^{\bfnu}$ is the
  distribution of $(\bfX_t^{\bfnu})_{t \in \ccint{0,T}}$. Similarly there exists
  $C \geq 0$ such that for any $t, s \in \ccint{0,T}$ we have
  \begin{equation}
    \wassersteinD[1](\bflambda^{\bfnu}_t, \bflambda^{\bfnu}_s) \leq \expe{\norm{\bfX_t^{\bfnu} - \bfX_s^{\bfnu}}} \leq C (t - s). 
  \end{equation}
  Therefore, $\bflambda^{\bfnu} \in \rmc(\ccint{0,T}, \Pens_1(\rset^d))$.  In
  addition, using~\eqref{eq:lip_b} we get that for any
  $\bfnu_1, \bfnu_2 \in \rmc(\ccint{0,T}, \Pens_1(\rset^d))$ and
  $t \in \ccint{0,T}$
  \begin{align}
    &\textstyle{
      \expeLigne{\normLigne{\bfX_t^{\bfnu_1} - \bfX_t^{\bfnu_2}}} \leq((\Mtt/\eta)(1 + (1/\eta))+ \alpha \Ltt)(1 + \Mtt)}\\
      &\qquad\qquad\times\textstyle{ \int_0^t \expeLigne{ \defEns{\normLigne{\bfX_s^{\bfnu_1}- \bfX_s^{\bfnu_2}} + \wassersteinD[1](\bfnu_{1,s}, \bfnu_{2,s})} \rmd s}.
      }
  \end{align}
  By Gr\"{o}nwall's lemma, there exists $C \geq 0$ such that for any
  $\bfnu_1, \bfnu_2 \in \rmc(\ccint{0,T}, \Pens_1(\rset^d))$ and $t \geq 0$
  \begin{multline}
    \label{eq:contrac}
    \expe{\norm{\bfX_t^{\bfnu_1} - \bfX_t^{\bfnu_2}}} \leq T ((\Mtt/\eta)(1 + (1/\eta))+ \alpha \Ltt)(1 + \Mtt) \\ \times \exp[T ((\Mtt/\eta)(1 + (1/\eta))+ \alpha \Ltt)(1 + \Mtt)] \sup_{s \in \ccint{0,T}}\wassersteinD[1](\bfnu_{1,s}, \bfnu_{2,s}). 
  \end{multline}
  Let
  $F_T: \ \rmc(\ccint{0,T}, \Pens_1(\rset^d)) \to \rmc(\ccint{0,T},
  \Pens_1(\rset^d))$ denote the map which associates to $\bfnu$ to law of $(\bfX_t^{\bfnu})_{t \in \ccint{0,T}}$, $\bflambda^{\bfnu}_t \in \Pens_1(\rset^d)$. Then there exists $T_0 \geq 0$ such that for any
  $T_0 \in \ccint{0,T}$ and
  $\bfnu_1, \bfnu_2 \in \rmc(\ccint{0,T}, \Pens_1(\rset^d))$
  \begin{equation}
    \sup_{t \in \ccint{0,T_0}} \wassersteinD[1](F_T(\bfnu_1)_t, F_T(\bfnu_2)_t) \leq (1/2) \sup_{t \in \ccint{0,T_0}} \wassersteinD[1](\bfnu_{1,t}, \bfnu_{2,t}). 
  \end{equation}
  Since $\Pens_1(\rset^d)$ is complete, see \cite[Theorem
  6.18]{villani2009optimal}, $F_T$ admits a unique fixed point using Picard's
  theorem (see, e.g., \cite[Theorem 1.7]{Carmona2016}). Denote $\bflambda^\star$ this fixed point and
  $(\bfX_t^\star)_{t \in \ccint{0,T}}$ the solution of~\eqref{eq:xtmu} with
  $\bfnu \leftarrow \bflambda^\star$. Then $(\bfX_t^\star)_{t \in \ccint{0,T}}$
  is a solution of~\eqref{eq:nonlinearSDE} up to time $T \geq 0$. By recursion,
  we obtain a solution $(\bfX_t^\star)_{t \geq 0}$.

  Assume that there exist two solutions $(\bfX_{1,t}^\star)_{t \geq 0}$ and
  $(\bfX_{2,t}^\star)_{t \geq 0}$ and denote $\bflambda_1$ and $\bflambda_2$
  their associated distribution. Since there exists a unique fixed point to
  $F_T$ we get that $\bflambda_1 = \bflambda_2$. Since there exists a unique
  strong solution to~\eqref{eq:xtmu}, we get that
  $(\bfX_{1,t}^\star)_{t \geq 0} = (\bfX_{2,t}^\star)_{t \geq 0}$, which
  concludes the proof.

\subsection{Proof of \Cref{prop:la_convergence_star}}
\label{app:proof_invariant}
Under \tup{\rref{assum:general_kker}}, Proposition~\ref{prop:g_prop} ensures that the functional $\Gun^\eta$ is convex and lower bounded.
In addition, \tup{\rref{assum:pi0}} guarantees that $U$ is sufficiently regular to satisfy \cite[Assumption 2.2]{hu2019mean}.
To see this, we can use \tup{\rref{assum:lip_U}-\ref{item:dissi}} and the Cauchy-Schwarz inequality to obtain
\begin{align}
    \langle \nabla U(x) , x \rangle & = \langle \nabla U(x) -\nabla U(0), x \rangle + \langle \nabla U(0) , x \rangle\geq \mtt\norm{x}^2-\ctt - \norm{x}\norm{\nabla U(0)}.
\end{align}
Then Young's inequality yields, $\alpha\beta \leq \alpha^2/(2\epsilon)+\beta^2\epsilon/2$ for all $\epsilon>0$, and thus that:
\begin{align}
    -\norm{x}\norm{\nabla U(0)} \geq -\norm{x}^2/(2\epsilon)-\epsilon \norm{\nabla U(0)}^2/2.
\end{align}
It follows that for all $\epsilon > 1/(2\mtt)$ we have, for any $x \in \rset^d$, $\langle \nabla U(x) , x \rangle \geq \mathtt{a} \norm{x}^2 + \mathtt{b}$, where $\mathtt{a}:= \mtt-1/(2\epsilon) >0, \mathtt{b}=-\ctt-\epsilon \norm{\nabla U(0)}^2/2$.

As shown in~\eqref{eq:lip_b}, the drift of the MKVSDE is Lipschitz continuous.

In addition, using the fact that, under \tup{\rref{assum:general_kker}}, $\kker\in\rmc^\infty(\rset^d \times \rset^d, [0, +\infty))$ and Leibniz integral rule for differentiation under the
integral sign (e.g. \cite[Theorem 16.8]{billingsley1995measure}), we have that $b(x, \nu)$ in~\eqref{eq:b} is $\rmc^\infty(\rset^d , \rset^d)$ for all fixed $\nu\in\Pens(\rset^d)$.
Finally, we need to show that $\nabla b(x, \nu)$ is jointly continuous in $(x, \nu)$.
Similarly to~\eqref{eq:lipo_lip}, we have
  for any $x_1,x_2 \in \rset^d$ and $\nu_1, \nu_2 \in \Pens_2(\rset^d)$
  \begin{align}
  \label{eq:a3_part_one}
    &\normLigne{\nabla b(x_1, \nu_1) - \nabla b(x_2, \nu_2)} \\
    &\quad \textstyle{ \leq \normLigne{\int_{\rset^p} \nabla_1^2 \kker(x_1,y) / (\nu_1[\kker(\cdot, y)] + \eta)  - \int_{\rset^p} \nabla_1^2 \kker(x_2,y) / (\nu_2[\kker(\cdot, y)] + \eta)}} \\
    & \qquad +\alpha \textstyle{ \normLigne{\nabla_1^2 U(x_1) - \nabla_1^2 U(x_2) }}\\
    &\quad \leq (\Mtt/\eta)(1+(1/\eta))(1+ \alpha\Ltt_2)(1+\Mtt) \{\normLigne{x_1 - x_2} + \wassersteinD[2](\nu_1, \nu_2)\},  
  \end{align}
  which gives continuity. Then, the result follows directly from \cite[Theorem
2.11]{hu2019mean}.
  \subsection{Proof of \Cref{prop:propagation_chaos}}
  \label{sec:proof-prop-chaos}

  Let $N \in \nsets$.  The existence and strong uniqueness of a solution to~\eqref{eq:particle} is a straightforward consequence of~\eqref{eq:lipo_lip},
  \cite[Chapter 5, Theorem 2.9 and 2.5]{karatzas1991brownian} and the fact that for any
  $\{x_1^{k,N}\}_{k=1}^N, \{x_2^{k,N}\}_{k=1}^N \in (\rset^d)^N$ and
  $\ell \in \{1, \dots, N\}$ we have
  \begin{align}
  \label{eq:lipschitz_bmketa}
    &\textstyle{\normLigne{\int_{\rset^p} \defEnsLigne{\bmketa\parentheseLigne{x_1^{\ell,N}, (1/N) \sum_{k=1}^N \updelta_{x_1^{k,N}}, y} - \bmketa\parentheseLigne{x_2^{\ell,N}, (1/N) \sum_{k=1}^N \updelta_{x_2^{k,N}}, y} } \rmd \mu(y)}} \\ 
    &\quad\textstyle{\leq ((\Mtt/\eta)(1 + (1/\eta))+ \alpha \Ltt)}\\
    &\quad\quad\textstyle{ \times\parentheseLigne{ \normLigne{x_1^{\ell, N} - x_2^{\ell, N}} + (1/N) \sum_{k=1}^N \int_{\rset^p} \absLigne{\kker(x_1^{k,N}, y) -\kker(x_2^{k,N}, y)} \rmd \mu(y)}} \\
    &\quad\textstyle{\leq 2 ((\Mtt/\eta)(1 + (1/\eta))+ \alpha \Ltt)  \normLigne{x_1^{1:N} - x_2^{1:N}}. }
  \end{align}
  We now turn to the quantitative functional propagation of chaos result.  For
  any $x \in \rset^d$ and $\pi \in \Pens_1(\rset^d)$ denote
  $b(x, \pi) = \int_{\rset^p} \bmketa(x, \pi, y) \rmd \mu(y) - \alpha \nabla
  U(x)$. We recall that using~\eqref{eq:lipo_lip} there exists $C \geq 0$
  such that for any $x_1, x_2 \in \rset^d$ and
  $\pi_1, \pi_2 \in \Pens_1(\rset^d)$ we have
  \begin{equation}
    \textstyle{
    \norm{b(x_1, \pi_1) - b(x_2, \pi_2)} \leq C \defEns{\norm{x_1 - x_2} +\int_{\rset^p} \abs{\pi_1[\kker(\cdot, y)] - \pi_2[\kker(\cdot, y)]} \rmd \mu(y)}. }
  \end{equation}
Using this result,  we have for any $t \geq 0 $
  \begin{align}
  \label{eq:poc_proof}
    &\textstyle{\expeLigne{\sup_{s \in \ccint{0,t}}\normLigne{\bfX_s^\star - \bfX_s^{1,N}}} \leq \int_{0}^t \expeLigne{\normLigne{b(\bfX_s^\star, \bflambda_s^\star) - b(\bfX_s^{1, N}, \bflambda^N_s)}} \rmd s}\\
    &\quad \textstyle{ \leq C \int_{0}^t \expeLigne{\sup_{u \in \ccint{0,s}}\normLigne{\bfX_u^\star - \bfX_u^{1,N}}}\rmd s }\\
    &\quad\textstyle{+  C\int_{0}^t \int_{\rset^p} \expeLigne{\absLigne{(1/N)\sum_{k=1}^N \kker(\bfX_s^{k,N}, y) - \lambdabf_s^\star[\kker(\cdot, y)]}} \rmd \mu(y) \rmd s} .
  \end{align}
  Now, consider $N$ independent copies of the nonlinear process $\bfX_s^{\star}$, $\{(\bfX_s^{k,\star})_{t \geq 0}\}_{k=1}^N$. We can bound the second term in the above with
    \begin{align}
    & \expeLigne{\absLigne{(1/N)\sum_{k=1}^N \kker(\bfX_s^{k,N}, y) - \lambdabf_s^\star[\kker(\cdot, y)]}} \leq (1/N)  \expeLigne{\absLigne{ \sum_{k=1}^N \defEns{\kker(\bfX_s^{k,N}, y) - \kker(\bfX_s^{k,\star}, y) }}} \\
    &\qquad\qquad+(1/N)  \expeLigne{\absLigne{ \sum_{k=1}^N \defEns{\kker(\bfX_s^{k,\star}, y) - \lambdabf_s^\star[\kker(\cdot, y)]}}} \\
     &\qquad\qquad\leq \Mtt\expeLigne{\sup_{u \in \ccint{0,s}}\normLigne{\bfX_u^\star - \bfX_u^{1,N}}} +(1/N)  \expeLigne{\absLigne{ \sum_{k=1}^N \defEns{\kker(\bfX_s^{k,\star}, y) - \lambdabf_s^\star[\kker(\cdot, y)]}}} ,
  \end{align}
  where we used the Lipschitz continuity of $\kker$ and the fact that $\{(\bfX_t^{k,N})_{t \geq 0}\}_{k=1}^N$ is exchangeable to obtain the last inequality.
  Plugging the above into~\eqref{eq:poc_proof} and using Jensen's inequality we obtain
    \begin{align}
    &\textstyle{\expeLigne{\sup_{s \in \ccint{0,t}}\normLigne{\bfX_s^\star - \bfX_s^{1,N}}} \leq }\\
    &\quad \textstyle{ \leq C (1 + \Mtt) \int_{0}^t \expeLigne{\sup_{u \in \ccint{0,s}}\normLigne{\bfX_u^\star - \bfX_u^{1,N}}}\rmd s }\\
    &\quad \textstyle{ + C (1 + \Mtt)(1/N) \int_0^t \int_{\rset^p} \expesq{ \sum_{k=1}^N \parenthese{\kker(\bfX_s^{k,\star}, y) - \lambdabf_s^\star[\kker(\cdot, y)]}^2} \rmd\mu(y)\rmd s} \\
    &\quad \textstyle{ \leq C (1 + \Mtt) \{\int_{0}^t \expeLigne{\sup_{u \in \ccint{0,s}}\normLigne{\bfX_u^\star - \bfX_u^{1,N}}}\rmd s + \sqrt{2} \Mtt t N^{-1/2} \}. }
  \end{align}
  
  Using Gr\"{o}nwall's lemma we get that for any $T \geq 0$ there exists
  $C_T \geq 0$ such that for any $N \in \nset$
\begin{equation}
  \textstyle{\expeLigne{\sup_{t \in \ccint{0,T}} \normLigne{\bfX_t^\star - \bfX_t^{1,N}}} \leq C_T N^{-1/2}, }
\end{equation}
which concludes the proof.

\subsection{Proof of \Cref{prop:particle_ergodic}}
\label{proof:prop:particle_ergodic}

First, we recall that $b$ and $\tilde{b}$ are given, for any $x \in \rset^d$, 
$\nu \in \Pens(\rset^d)$ and $y \in \rset^p$, by
\begin{equation}
  \textstyle{
    b(x, \nu) = \int_{\rset^p}\bmketa(x, \nu, y) \rmd \mu(y) - \alpha \nabla U(x), \quad \bmketa(x, \nu, y) = -\nabla_1 \kker(x, y) / (\nu[\kker(\cdot, y)] + \eta).
    }
\end{equation}
Using \tup{\rref{assum:general_kker}} and~\eqref{eq:lipo_lip}, there exist $C_0, C_1 \geq 0$ such that for any
$x_1, x_2 \in \rset^d$, $\pi_1, \pi_2 \in \Pens(\rset^d)$ and $y \in \rset^p$,
\begin{equation}
  \label{eq:big}
 \textstyle{ \normLigne{\bmketa(x_1, \pi_1, y)}  \leq C_0, \qquad   
     \norm{b(x_1, \pi_1) - b(x_2, \pi_2)} \leq C_1 \parenthese{ \norm{x_1 - x_2} + \wassersteinD[1](\pi_1, \pi_2)}. }
 \end{equation}
In addition, note that for any $x_1^{1:N},x_2^{1:N} \in (\rset^d)^N$ we have for any $i \in \{1,2\}$
 \begin{equation}
   \label{eq:w1_empi}
   \textstyle{\wassersteinD[1](\mu_1^N, \mu_2^N) \leq (1/N) \sum_{k=1}^N \normLigne{x_1^{k,N} - x_2^{k,N}}, \qquad \mu_i^N = (1/N) \sum_{k=1}^N \updelta_{x_i^{k,N}}. }
 \end{equation}
Let $N \in \nset$ and denote
$B_N: (\rset^d)^N \to (\rset^d)^N$ given for any $x^{1:N} \in (\rset^d)^N$ by
\begin{equation}
  \textstyle{B_N(x^{1:N}) = \left(b(x^{k,n}, \mu^N)\right)_{k =1}^{N}, \qquad \mu^N = (1/N) \sum_{k=1}^N \updelta_{x^{k,N}}. }
\end{equation}
Using~\eqref{eq:big},~\eqref{eq:w1_empi} and the Cauchy-Schwarz
inequality we have for any $x^{1:N}_1, x_2^{1:N} \in (\rset^d)^N$ 
\begin{align}
  \label{eq:lip_N}
 \norm{B_N(x^{1:N}_1) - B_N(x^{1:N}_2)}
&\leq C_1\left(\sum_{k=1}^N\norm{x_1^{k,N}-x_2^{k,N}}^2+N\wassersteinD[1]\left(\frac{1}{N} \sum_{k=1}^N \updelta_{x_1^{k,N}}, \frac{1}{N} \sum_{k=1}^N \updelta_{x_2^{k,N}}\right)^2\right.\\
&\qquad\qquad\left.+2\sum_{k=1}^N\norm{x_1^{k,N}-x_2^{k,N}}\wassersteinD[1]\left(\frac{1}{N} \sum_{k=1}^N \updelta_{x_1^{k,N}}, \frac{1}{N} \sum_{k=1}^N \updelta_{x_2^{k,N}}\right)\right)^{1/2}\\
& \leq C_1\left(\sum_{k=1}^N\norm{x_1^{k,N}-x_2^{k,N}}^2+ \frac{1}{N}\left(\sum_{k=1}^N \norm{x_1^{k,N} - x_2^{k,N}}\right)^2\right.\notag\\
&\qquad\qquad\left.+\frac{2}{N}\sum_{k=1}^N\norm{x_1^{k,N}-x_2^{k,N}}\sum_{k=1}^N \norm{x_1^{k,N} - x_2^{k,N}}\right)^{1/2}\\
&\leq 2C_1\sum_{k=1}^N\norm{x_1^{k,N}-x_2^{k,N}}\\
 & \leq 2C_1N^{1/2} \norm{x_1^{1:N} - x_2^{1:N}}.
\end{align}

Let $R = \max(4C_0N^{1/2}/(\alpha \mtt), (2 \ctt N/ \mtt)^{1/2})$ with $\mtt$ and $\ctt$ as in \tup{\rref{assum:lip_U}-\ref{item:dissi}}. 
Using~\eqref{eq:big} and the Cauchy-Schwarz inequality we have for
any $x^{1:N}_1, x_2^{1:N} \in (\rset^d)^N$ 
\begin{align}
  \langle B_N(x^{1:N}_1) - B_N(x^{1:N}_2), x_1^{1:N} - x_2^{1:N} \rangle 
  &\leq -\alpha \mtt \norm{x_1^{1:N} - x_2^{1:N}}^2 + \alpha \ctt N + 2C_0 \sum_{k=1}^N \norm{x_1^{k,N} - x_2^{k,N}}\\
 &\leq -\alpha \mtt \norm{x_1^{1:N} - x_2^{1:N}}^2 + \alpha \ctt N + 2C_0 N^{1/2} \norm{x_1^{1:N} - x_2^{1:N}}.
\end{align}
Then for any $x^{1:N}_1, x_2^{1:N} \in (\rset^d)^N$ with $\normLigne{x_1^{1:N} - x_2^{1:N}} \geq R$ we further have
\begin{align}
  \label{eq:diss_N}
  \langle B_N(x^{1:N}_1) - B_N(x^{1:N}_2), x_1^{1:N} - x_2^{1:N} \rangle 
  &\leq  -(\alpha \mtt/2)  \norm{x_1^{1:N} - x_2^{1:N}}^2.
\end{align}
We conclude upon combining~\eqref{eq:lip_N},~\eqref{eq:diss_N} and 
\cite[Corollary 2]{debortoli2019convergence}.

\subsection{Proof of \Cref{prop:particles_min}}
\label{proof:prop:particles_minapp:ps}

First, we start with the following lemma.

\begin{lemma}
  \label{lemma:stabi_init}
  Assume \tup{\rref{assum:general_kker}}, \tup{\rref{assum:lip_U}-\ref{item:lip}} and \tup{\rref{assum:lip_U}-\ref{item:dissi}}. Let $\nu_1, \nu_2 \in \Pens_1(\rset^d)$. Denote
  $(\bfX_{1, t}^\star, \bfX_{2, t}^\star)_{t \geq 0}$ the stochastic process  such that $(\bfX_{1, 0}^\star, \bfX_{2, 0}^\star)$ is the optimal coupling
  between $\nu_1$ and $\nu_2$ w.r.t. to $\wassersteinD[1]$. Then for any
  $T \geq 0$, there exists $C_T \geq 0$ such that for any $t \in \ccint{0,T}$
  \begin{equation}
   \wassersteinD[1](\bflambda_t^\star(\nu_1), \bflambda_t^\star(\nu_2)) \leq C_T \wassersteinD[1](\nu_1, \nu_2),
  \end{equation}
  where for any $\nu \in \Pens_1(\rset^d)$, $\bflambda^\star(\nu)$ is the
  distribution of $(\bfX_t^\star)_{t \geq 0}$ with initial distribution $\nu$.
\end{lemma}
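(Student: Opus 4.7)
The plan is to use the standard synchronous coupling argument for McKean--Vlasov SDEs combined with Grönwall's lemma, leveraging the Lipschitz continuity of the drift established in~\eqref{eq:lip_b}. Let $(\bfB_t)_{t \geq 0}$ be a fixed $d$-dimensional Brownian motion, and let $(\bfX_{i,t}^\star)_{t \geq 0}$ for $i \in \{1,2\}$ be the unique strong solutions of~\eqref{eq:nonlinearSDE} driven by this same Brownian motion, with initial conditions $(\bfX_{1,0}^\star, \bfX_{2,0}^\star)$ forming the optimal $\wassersteinD[1]$ coupling of $(\nu_1, \nu_2)$. Existence and strong uniqueness are guaranteed by \Cref{prop:existence_uniqueness}. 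Denoting $\bflambda_{i,t}^\star = \bflambda_t^\star(\nu_i)$ the distribution of $\bfX_{i,t}^\star$, the key observation is that the diffusion coefficient is constant, so the noise cancels in the difference:
\begin{equation}
\bfX_{1,t}^\star - \bfX_{2,t}^\star = \bfX_{1,0}^\star - \bfX_{2,0}^\star + \int_0^t \defEnsLigne{b(\bfX_{1,s}^\star, \bflambda_{1,s}^\star) - b(\bfX_{2,s}^\star, \bflambda_{2,s}^\star)} \rmd s.
\end{equation}

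Next, I would take norms and expectations, applying the Lipschitz bound~\eqref{eq:lip_b} with Lipschitz constant $L := ((\Mtt/\eta)(1 + (1/\eta))+ \alpha \Ltt)(1 + \Mtt)$. Using the fact that for any coupling $(X,Y)$ with $X \sim \bflambda_{1,s}^\star$ and $Y \sim \bflambda_{2,s}^\star$ we have $\wassersteinD[1](\bflambda_{1,s}^\star, \bflambda_{2,s}^\star) \leq \expeLigne{\normLigne{X - Y}}$, and that $(\bfX_{1,s}^\star, \bfX_{2,s}^\star)$ is such a coupling, we obtain
\begin{equation}
\expeLigne{\normLigne{\bfX_{1,t}^\star - \bfX_{2,t}^\star}} \leq \wassersteinD[1](\nu_1, \nu_2) + 2L \int_0^t \expeLigne{\normLigne{\bfX_{1,s}^\star - \bfX_{2,s}^\star}} \rmd s,
\end{equation}
where we used $\expeLigne{\normLigne{\bfX_{1,0}^\star - \bfX_{2,0}^\star}} = \wassersteinD[1](\nu_1, \nu_2)$ by optimality of the initial coupling.

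Grönwall's lemma then yields $\expeLigne{\normLigne{\bfX_{1,t}^\star - \bfX_{2,t}^\star}} \leq \mathrm{e}^{2Lt} \wassersteinD[1](\nu_1, \nu_2)$ for all $t \geq 0$. Since the joint law of $(\bfX_{1,t}^\star, \bfX_{2,t}^\star)$ is a (not necessarily optimal) coupling between $\bflambda_{1,t}^\star$ and $\bflambda_{2,t}^\star$, the dual characterization of $\wassersteinD[1]$ gives $\wassersteinD[1](\bflambda_{1,t}^\star, \bflambda_{2,t}^\star) \leq \expeLigne{\normLigne{\bfX_{1,t}^\star - \bfX_{2,t}^\star}}$, and we conclude with $C_T = \mathrm{e}^{2LT}$.

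No part of the argument is particularly delicate: the cancellation of the noise under synchronous coupling is what makes this straightforward, and the dissipativity assumption \tup{\rref{assum:lip_U}-\ref{item:dissi}} is not actually needed for a finite-horizon estimate (it would only be relevant to obtain a contraction in $t$, yielding a constant independent of $T$). The only mild care required is to verify that $\bflambda_{i,t}^\star \in \Pens_1(\rset^d)$ so that $\wassersteinD[1]$ is well defined, which follows from the moment bound derived in the proof of \Cref{prop:existence_uniqueness}, and to note that the drift Lipschitz constant $L$ is finite precisely because $\eta > 0$.
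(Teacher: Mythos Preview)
Your proof is correct and follows essentially the same synchronous-coupling-plus-Gr\"onwall argument as the paper. The only difference is cosmetic: the paper keeps the term $\wassersteinD[1](\bflambda_s^\star(\nu_1),\bflambda_s^\star(\nu_2))$ separate and applies Gr\"onwall twice, whereas you immediately bound it by $\expeLigne{\normLigne{\bfX_{1,s}^\star-\bfX_{2,s}^\star}}$ and apply Gr\"onwall once, which is slightly cleaner and yields a better constant; your remark that \tup{\rref{assum:lip_U}-\ref{item:dissi}} is not actually used here is also accurate.
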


\begin{proof}
  Let $T \geq 0$. Using~\eqref{eq:lipo_lip}, we have for any $t \in \ccint{0,T}$
  \begin{equation}
    \textstyle{
    \expeLigne{\normLigne{\bfX_{1,t}^\star - \bfX_{2,t}^\star}}  \leq \wassersteinD[1](\nu_1, \nu_2)  + C \int_0^t \defEnsLigne{\expeLigne{\normLigne{\bfX_{1,s}^\star - \bfX_{2,s}^\star}} + \wassersteinD[1](\bflambda_s^\star(\nu_1), \bflambda_s^\star(\nu_2))}  \rmd s,}
  \end{equation}
  where $C = (\Mtt/\eta)(1 + (1/\eta))(1 + \alpha \Ltt)(1 + \Mtt)$. Using
  Gr\"{o}nwall's lemma we have for any $t \in \ccint{0,T}$
  \begin{equation}
    \textstyle{
      \expeLigne{\normLigne{\bfX_{1,t}^\star - \bfX_{2,t}^\star}} \leq C(1+T) e^{C(1+T)} \defEnsLigne{\wassersteinD[1](\nu_1, \nu_2) + C \int_0^t \wassersteinD[1](\bflambda_s^\star(\nu_1), \bflambda_s^\star(\nu_2)) \rmd s}.
      }
\end{equation}
Using that for any $t \geq 0$,
$\wassersteinD[1](\bflambda_t^\star(\nu_1), \bflambda_t^\star(\nu_2)) \leq \expe{\norm{\bfX_{1,t}^\star - \bfX_{2,t}^\star}}$ and
Gr\"{o}nwall's lemma we get that there exists $C_T \geq 0$ such that for any $t \in \ccint{0,T}$
  \begin{equation}
   \wassersteinD[1](\bflambda_t^\star(\nu_1), \bflambda_t^\star(\nu_2)) \leq C_T \wassersteinD[1](\nu_1, \nu_2),
 \end{equation}
 which concludes the proof.
\end{proof}

\begin{proof}[Proof of Proposition \ref{prop:particles_min}]
  Let $\alpha, \eta >0$. First we show that
  $\ensembleLigne{\pi^{1, N}}{N \in \nset}$ is relatively compact in
  $\Pens_1(\rset^d)$.  Let $N \in \nset$ and assume that $\bfX_0^{1:N} = 0$.
Let us define for any $t \geq 0$  
    \begin{equation}
    \textstyle{
    \bfM_t^N = \frac{1}{2}\normLigne{\bfX_t^{1,N}}^2 -\int_0^t \defEnsLigne{\langle \bfX_s^{1,N}, b(\bfX_s^{1,N}, \bflambda^N_s) \rangle + \alpha d} \rmd s,  }
  \end{equation}
where $\{(\bfX_t^{k,N})_{t \geq 0}\}_{k=1}^N$ is given in \eqref{eq:particle}.
Using It\^{o}'s formula we have that 
\begin{align}
    \frac{1}{2}\norm{\bm{X}_t^{1,N}}^2 = \frac{1}{2}\norm{\bm{X}_s^{1,N}}^2 +\int_s^t \langle \bm{X}_u^{1,N}, b(\bm{X}_u^{1,N}, \pi^N_u) \rangle \rmd u +d\alpha (t-s).
\end{align}  
Let us denote by $(\mathcal{F}_t^N)_{t \geq 0}$ the filtration associated with
  $\{(\bfB_t^k)_{t \geq 0}\}_{k=1}^N$.
Then, $\mathbb{E}\left[\bfM_t^N\mid \mathcal{F}_s^N\right] = \bfM_s^N$, showing that $\bfM_t^N$ is a $\mathcal{F}_t^N$-martingale.
Since $\mathbb{E}\left[\bfM_t^N -\bfM_s^N \mid \mathcal{F}_s^N\right]=0$, we also have that 
\begin{align}
\frac{1}{2}\mathbb{E}\left[\norm{\bm{X}_t^{1,N}}^2\right] - \frac{1}{2}\mathbb{E}\left[\norm{\bm{X}_s^{1,N}}^2\right] =&\mathbb{E}\left[\int_s^t \left\lbrace\langle \bm{X}_u^{1,N}, b(\bm{X}_u^{1,N}, \bflambda^N_u) \rangle + \alpha d\right\rbrace\rmd u\right]\\
=&\mathbb{E}\left[\int_s^t \left\langle \bm{X}_u^{1,N}, \int_{\rset^d}\bmketa(\bm{X}_u^{1,N}, \bflambda^N_u, y)) \rmd \mu(y)\right\rangle \rmd u\right]\\
&-\mathbb{E}\left[\int_s^t \left\lbrace\langle \bm{X}_u^{1,N}, \alpha\nabla U(\bm{X}_u^{1,N}) \rangle - \alpha d\right\rbrace\rmd u\right],
\end{align}
where we used the definition of $b$ in~\eqref{eq:b}.
 Note that using
  \tup{\rref{assum:lip_U}-\ref{item:dissi}} with $x_1 = x$ and $x_2 =0$, we have
  for any $x \in \rset^d$
  \begin{equation}
    \langle \nabla U(x), x \rangle \geq \langle \nabla U(0), x \rangle + \mtt \normLigne{x}^2 - \ctt \eqsp . 
  \end{equation}
  Therefore, using this result, \eqref{eq:b} and the Cauchy-Schwarz inequality
  we obtain that for any $t \geq 0$
  \begin{align}
\textstyle{\frac{1}{2}\expeLigne{\normLigne{\bfX_t^{1,N}}^2} - \frac{1}{2}\expeLigne{\normLigne{\bfX_s^{1,N}}^2}} \leq& \int_s^t \int_{\rset^d} \expeLigne{\normLigne{\bmketa(\bfX_u^{1,N}, \bflambda^N_u, y)}\normLigne{\bfX_u^{1,N}}} \rmd \mu(y)\rmd u\\
&+\int_s^t \defEnsLigne{
 -\alpha \mtt \expeLigne{\normLigne{\bfX_u^{1,N}}^2} + \alpha\normLigne{\nabla U(0)}\expeLigne{\normLigne{\bfX_u^{1,N}}} + \alpha \ctt + \alpha d} \rmd u, 
  \end{align}
where the last line follows from Tonelli's Theorem (e.g. \cite[Theorem 18.3]{billingsley1995measure}) since all integrated functions are positive (or always negative, in which case we can consider minus the integral itself).

Let $\mathscr{V}_t^N =\expeLigne{\normLigne{\bfX_t^{1,N}}^2}$. Appealing to the fundamental theorem of calculus  we get that
\begin{align}
\frac{1}{2}\rmd \mathscr{V}_t^N / \rmd t \leq& \mathbb{E}\left[ \int_{\rset^d} \norm{\bmketa(\bm{X}_t^{1,N}, \bflambda^N_u, y)}\norm{\bm{X}_t^{1,N}} \rmd \mu(y) \right]\\
  & -\alpha \mtt \mathbb{E}\left[\norm{\bm{X}_t^{1,N}}^2\right] + \alpha\norm{\nabla U(0)}\mathbb{E}\left[\norm{\bm{X}_t^{1,N}}\right] + \alpha \ctt + \alpha d.
\end{align}
Using that for any
$x \in \rset^d$ and $\pi \in \Pens(\rset^d)$ we have
$\normLigne{\bmketa(x, \pi, y)} \leq \Mtt / \eta$, we get that
\begin{align}
\frac{1}{2}\rmd \mathscr{V}_t^N / \rmd t \leq& \Mtt/\eta (\mathscr{V}_t^N)^{1/2}
    -\alpha \mtt \mathscr{V}_t^N + \alpha\norm{\nabla U(0)}  (\mathscr{V}_t^N)^{1/2}  + \alpha \ctt + \alpha d\\
      =& (\Mtt/\eta + \alpha \norm{\nabla U(0)}) (\mathscr{V}_t^N)^{1/2} + \alpha \ctt + \alpha d - \alpha \mtt \mathscr{V}_t^N  
\end{align}
%
Noting that for any $\mathsf{a}, \mathsf{b}$ such that $\mathsf{ab} \geq 1/2$, for any $x \geq 0$ we have $\sqrt{x} \leq \mathsf{a} + \mathsf{b} x$, and setting $\mathsf{a} = (\Mtt/\eta  + \alpha \norm{\nabla U(0)})/ \alpha\mtt$ and $\mathsf{b} = 1/2\mathsf{a}$ we have:
\begin{align}
  \frac{1}{2}\rmd \mathscr{V}_t^N / \rmd t 
     \leq& \frac{(\Mtt/\eta  + \alpha \norm{\nabla U(0)})^2}{\alpha \mtt} + \alpha \ctt + \alpha d - \frac12 \alpha \mtt \mathscr{V}_t^N 
\end{align}
Hence, for any $t \geq 0$ and any $N \in \nset$ we get that 
$\mathscr{V}_t^N \leq C$ with
\begin{equation}
  C = 2 \left[ \frac{(\Mtt/\eta  + \alpha \norm{\nabla U(0)})^2}{\alpha \mtt} + \alpha \ctt + \alpha d\right] / \alpha\mtt
\end{equation}
Therefore, letting $t \to +\infty$ we get that for any $N \in \nset$,
$\int_{\rset^d} \norm{\tilde{x}}^2 \rmd \pi^{1,N}(\tilde{x}) \leq C$. Hence,
$\ensembleLigne{\pi^{1, N}}{N \in \nset}$ is relatively compact in
$\Pens_1(\rset^d)$ using \cite[Proposition 7.1.5]{ambrosio2008gradient}.

Let $\pi^\star$ be a cluster point of $\ensembleLigne{\pi^{1, N}}{N \in
  \nset}$. Let $(N_k)_{k \in \nset}$ be an increasing sequence such that
$\lim_{k \to +\infty} \wassersteinD[1](\pi^{1,N_k}, \pi^\star) = 0$. We have that for any $t \geq 0$
\begin{align}
  \label{eq:la_grosse_borne}
  &\wassersteinD[1](\pi^\star, \pi_{\alpha, \eta}^\star) \leq \wassersteinD[1](\pi^\star, \pi^{1, N_k})   \\ 
  &\qquad\qquad + \wassersteinD[1](\pi^{1, N_k}, \bflambda_t^{1,N_k}(\pi^{N_k}))+ \wassersteinD[1](\bflambda_t^{1,N_k}(\pi^{N_k}), \bflambda_t^{\star}(\pi^{1, N_k})) \\
  &\qquad\qquad+ \wassersteinD[1](\bflambda_t^{\star}(\pi^{1, N_k}), \bflambda_t^{\star}(\pi^{\star})) + \wassersteinD[1](\bflambda_t^{\star}(\pi^{\star}), \pi_{\alpha, \eta}^\star). 
\end{align}
We now control each of these terms. Let $\vareps >0$ and set $t \geq 0$ such
that, using \Cref{prop:la_convergence_star}, 
$\wassersteinD[1](\bflambda_t^{\star}(\pi^{\star}), \pi_{\alpha, \eta}^\star)
\leq \vareps$. Using
\Cref{lemma:stabi_init}, there exists $k_0 \in \nset$ such that for any
$k \geq k_0$ we have
$\wassersteinD[1](\bflambda_t^{\star}(\pi^{1, N_k}),
\bflambda_t^{\star}(\pi^{\star})) \leq \vareps$. Using
\Cref{prop:propagation_chaos}, there exists $k_1 \in \nset$ such that for any
$k \geq k_1$ we have that
$\wassersteinD[1](\bflambda_t^{1,N_k}(\pi^{N_k}), \bflambda_t^{\star}(\pi^{1,
  N_k})) \leq \vareps$. Since $\pi^{N_k}$ is invariant for
$(\bfX_t^{1:N})_{t \geq 0}$ we get that
$\wassersteinD[1](\pi^{1, N_k}, \bflambda_t^{1,N_k}(\pi^{N_k})) = 0$. Finally,
there exists $k_2 \in \nset$ such that for any $k \geq k_2$ we have
$\wassersteinD[1](\pi^\star, \pi^{1, N_k}) \leq \vareps$.  Combining these
results in~\eqref{eq:la_grosse_borne}, we get that
$\wassersteinD[1](\pi^\star, \pi_{\alpha, \eta}^\star) \leq 4 \vareps$. Therefore, since $\vareps > 0$ is arbitrary, we have that $\pi^\star = \pi_{\alpha, \eta}^\star$, which concludes the proof.
\end{proof}


\section{Proofs of \Cref{sec:implementation}}

\subsection{Basics on Lions derivatives}
We briefly recall here the definition of Lions derivative of functionals defined
over $\Pens_2(\rset^d)$.  In this section, we fix an underlying probability
space $(\Omega, \mathcal{F}, \mathbb{P})$ and denote by $\mathrm{L}^2(\Omega, \mathcal{F}, \mathbb{P};\rset^d)$ the space of $\rset^d$-valued, $\mathcal{F}$-measurable random variables with finite second moment. We refer to
\cite{carmona2018probabilistic, cardaliaguet2010notes,hammersley2021mckean,buckdahn2017mean,bao2020first}
for a thorough exposition of Lions derivatives with applications to mean-field
games. The following definitions can be found in
\cite{bao2020first, hammersley2021mckean}.  We start by introducing the
notion of $L$-differentiability.

\begin{definition}
  Let $\Mun: \ \Pens_2(\rset^d) \to \rset$ and
  $\bar{\Mun}: \mathrm{L}^2(\Omega, \mathcal{F}, \mathbb{P};\rset^d)\to \rset$ such that for any
  $X \in \mathrm{L}^2(\Omega, \mathcal{F}, \mathbb{P};\rset^d)$,
  $\bar{\Mun}(X) = \Mun(\mathcal{L}(X))$, where $\mathcal{L}(X)$ is the
  distribution of $X$. $\bar{\Mun}$ is called a lifted version of $\Mun$. We
  say that $\Mun$ is $L$-differentiable at $\nu_0\in\Pens_2(\rset^d)$ if
  there exists a random variable $X_0$ such that $\mathcal{L}(X_0) = \nu_0$ and
  $\bar{\Mun}$ is Fr{\'e}chet differentiable at $X_0$, \ie \ there exists
  $Y \in \mathrm{L}^2(\Omega, \mathcal{F}, \mathbb{P},\rset^d)$ such that
  \begin{equation}
    \lim_{\normLigne{X - X_0}_2 \to 0} \norm{\bar{\Mun}(X) - \bar{\Mun}(X_0) - \langle Y, X- X_0 \rangle} / \normLigne{X - X_0}_2 = 0. 
  \end{equation}
  If $\Mun$ is $L$-differentiable at $\nu_0$ for all
  $\nu_0 \in \Pens_2(\rset^d)$ then we say that $\Mun$ is $L$-differentiable.
\end{definition}

If $\bar{\Mun}$ is differentiable at $X_0$ then there exists
$\xi: \ \rset^d \to \rset^d$ measurable and uniquely defined almost everywhere such that $Y = \xi(X_0)$. Note that
$\xi$ does not depend on $X_0$ except via $\mathcal{L}(X_0)$. Hence $\xi$ does not depend on the choice of $X_0$.

\begin{definition}
  \label{def:basics-lions-deriv}
  Let $\Mun: \ \Pens_2(\rset^d) \to \rset$ be $L$-differentiable at $\mu$. Then we write $\rmD^L\Mun(\mu) = \xi$. Moreover we have $\rmD^L \Mun: \ \Pens_2(\rset^d) \times \rset^d\to \rset^d$ given by $\rmD^L\Mun(\mu, z) := \rmD^L\Mun(\mu)(z)$.
\end{definition}

We emphasize that $\rmD^L$ is well-defined since $\xi$ only depends on
$\mathcal{L}(X_0)$.  A mapping $\Mun: \ \Pens(\rset^d) \to \rset^p$ is said to
be $L$-differentiable if for any $i \in \{1, \dots, p\}$, $\Mun_i$ is
$L$-differentiable.  
We present here a simple example of $L$-derivative which is relevant for the proof of \Cref{lemma:derivatives} below.

\begin{example}
\label{ex:l_derivative}
Consider $\Mun:\ \Pens(\rset^d) \times \rset^p\to \rset$ given by $\Mun(\nu, y) = \frac{1}{\nu\left[\kker(, y)\right]+\eta}$ with $\kker$ satisfying \rref{assum:general_kker} and $\eta>0$. We have $\bar{\Mun}(X, y)= \frac{1}{\mathbb{E}\left[\kker(X, y)\right]+\eta}$ with $X\sim \nu$ and we have for any $H \in \rmL^2(\Omega, \rset^d)$
\begin{align}
   & \bar{\Mun}(X+H) - \bar{\Mun}(X) - \left\langle \frac{\nabla_1 \kker(X, y)}{\mathbb{E}\left[\kker(X, y)\right]+\eta}, H \right\rangle \\
    &\qquad\qquad=\frac{1}{\mathbb{E}\left[\kker(X+H, y)\right]+\eta}-\frac{1}{\mathbb{E}\left[\kker(X, y)\right]+\eta}- \left\langle \frac{\nabla_1 \kker(X, y)}{\mathbb{E}\left[\kker(X, y)\right]+\eta}, H \right\rangle \\
    &\qquad\qquad=\frac{\mathbb{E}\left[\kker(X, y)\right]-\mathbb{E}\left[\kker(X+H, y)\right]- \langle \nabla_1 \kker(X, y), H \rangle }{\left(\mathbb{E}\left[\kker(X, y)\right]+\eta\right)\left(\mathbb{E}\left[\kker(X+H, y)\right]+\eta\right)}.
\end{align}
Now, since $\mathbb{E}\left[\kker(X+H, y)\right]+\eta\geq \eta>0$ and $\mathbb{E}\left[\kker(X, y)\right]+\eta\geq\eta>0$ and, as shown in \cite[Example 2]{bao2020first} the $L$-derivative of $\mathcal{Q}(\nu, y) = \nu\left[\kker(, y)\right]$ is $\rmD^L\mathcal{Q}(\nu, y)(x)=\nabla_1 \kker(\cdot, y)(x)$, we have
  \begin{equation}
    \rmD^L\Mun(\nu, y)(x) = -  \frac{\nabla_1 \kker(x, y)}{\nu\left[\kker(\cdot, y)\right]+\eta}.
  \end{equation}
\end{example}

Similarly to \Cref{def:basics-lions-deriv}, we introduce
the higher-order Lions derivatives (see, e.g., \cite[Appendix A]{bao2020first} or \cite[Appendix A.2]{hammersley2021mckean}).
\begin{definition}
  Let $\Mun: \ \Pens_2(\rset^d) \to \rset$ be $L$-differentiable. For any
  $x \in \rset^d$ and $i \in \{1, \dots, d\}$, define
  $g_x^i: \ \Pens(\rset^d) \to \rset$ such that for any $x \in \rset^d$,
  $i \in \{1, \dots, d\}$ and $\mu \in \Pens_2(\rset^d)$ we have
  $g_x^i(\mu) = \rmD^L \Mun(\mu)(x)_i$. If for any $x \in \rset^d$ and
  $i \in \{1, \dots, d\}$, $g_x^i$ is $L$-differentiable then we say that $\Mun$
  is twice $L$-differentiable and we define
  $(\rmD^L)^2 \Mun: \ \Pens_2(\rset^d)\times \rset^d\times \rset^d \to \rset^d \times \rset^d$ given for any $\mu \in \Pens_2(\rset^d)$ and $x_0, x_1 \in \rset^d$ by
  \begin{equation}
    (\rmD^L )^2 \Mun(\mu)(x_0,x_1) = (\rmD^L(g_{x_0}^i)(\mu)(x_1))_{i \in \{1, \dots, d\}}. 
  \end{equation}
\end{definition}

Finally, we define the class
$\rmc^{2, (2,1)}(\rset^d \times \Pens_2(\rset^d), \rset^p)$ used to establish
the main strong approximation results in \cite{bao2020first}.
An equivalent definition for $\rset$-valued functionals $\Mun$ is given in \cite[Definition A.2]{bao2020first}.

\begin{definition}
  Let $\Mun: \ \rset^d \times \Pens_2(\rset^d) \to \rset^p$. We say that
  $\Mun \in \rmc^{2, (2,1)}(\rset^d \times \Pens_2(\rset^d), \rset^p)$ if the following hold:
  \begin{enumerate}[wide, labelindent=0pt, label=(\alph*)]
  \item For any $\mu \in \Pens_2(\rset^d)$, $\Mun(\cdot, \mu)$ is twice differentiable.
  \item For any $x \in \rset^d$, $\Mun(x, \cdot)$ is $L$-differentiable and
    $(x,\mu,y) \mapsto \rmD^L \Mun(x, \mu)(y) \in \rmc(\rset^d \times
    \Pens_2(\rset^d) \times \rset^d, \rset^{d \times p})$.
  \item For any $x \in \rset^d$, $\Mun(x, \cdot)$ is twice $L$-differentiable and we have that 
    $(x,\mu,y_0,y_1) \mapsto (\rmD^L)^2 \Mun(x, \mu)(y_0,y_1) \in \rmc(\rset^d \times
    \Pens_2(\rset^d) \times \rset^d \times \rset^d, \rset^{d \times d \times p})$.
  \item For any $\mu \in \Pens_2(\rset^d)$ and $y \in \rset^d$,
    $x \mapsto \rmD^L \Mun(x, \mu)(y)$ is differentiable and $(x,\mu,y) \mapsto \rmd_x \rmD^L \Mun(x, \mu)(y) \in \rmc(\rset^d \times
    \Pens_2(\rset^d) \times \rset^d, \rset^{d \times d \times p})$.
  \end{enumerate}

\end{definition}

\subsection{Proof of \Cref{prop:euler}}
\label{app:euler}

In this section, we let $\alpha, \eta >0$ and  verify that the drift in~\eqref{eq:nonlinearSDE} satisfies the
conditions of \cite[Theorem 2.2]{bao2020first}.
For completeness we report the conditions of \cite[Theorem 2.2]{bao2020first} below.
We denote by $b(x, \nu):\rset^d\times \Pens_2(\rset^d)\to \rset^d$ and $\sigma(x, \nu):\rset^d\times \Pens_2(\rset^d)\to \rset^{d\times d}$ the drift and diffusion coefficient of the MKVSDE, respectively.
In the following, $\norm{s}$ denotes the standard Euclidean norm if $s$ is a vector and the Hilbert-Schmidt norm $\norm{s}=(\sum_{ij} s_{ij}^2)^{1/2}$ if $s$ is a matrix.

\emph{Conditions on the drift coefficient}
\begin{description}
\item[$(\mathbf{A}_b^1)$] There exists constants $L_b^1, \alpha_1>0$ such that
\begin{align}
\norm{b(x_1, \nu)-b(x_2, \nu)} &\leq L_b^1\left(1+\norm{x_1}^{\alpha_1}+\norm{x_2}^{\alpha_1}\right)\norm{x_1-x_2}\\
\langle x_1-x_2, b(x_1, \nu)-b(x_2, \nu)\rangle &\leq L_b^1 \norm{x_1-x_2}^2\\
\norm{b(x, \nu_1)-b(x, \nu_2)} &\leq L_b^1\wassersteinD[2](\nu_1, \nu_2)
\end{align}
\item[$(\mathbf{A}_b^2)$] Let $b_i\in\rmc^{2, (2,1)}(\rset^d \times \Pens_2(\rset^d), \rset^d)$, where $b_{i}$ denotes the $i$-th component of $b$. There exists constants $L_b^2,\alpha_2>0$ such that for any $i = 1, \dots, d$
\begin{align}
\norm{\nabla^2b_i(x, \nu)(z)}\vee&\norm{\nabla\{\rmD^L b_i(x, \nu)(\cdot)\}(z)}\vee\norm{(\rmD^L)^2b_i(x, \nu)(z, z)}\\ & \leq  L_b^2
\left(1+\norm{x}^{1+\alpha_2}+\norm{z}^{1+\alpha_2}+\nu(\vert\cdot\vert^2)^{(1+\alpha_2)/2}\right)
\end{align}
\item[$(\mathbf{A}_b^3)$] There exists constants $L_b^3, \alpha_3>0$ such that
\begin{align}
\norm{\nabla b(x_1, \nu_1)-\nabla b(x_2, \nu_2)} \leq& L_b^3\left(\norm{x_1-x_2}+\wassersteinD[2](\nu_1, \nu_2)\right)\\\times\left(1+\norm{x_1}^{\alpha_3}+\norm{x_2}^{\alpha_3}+\right.&\left.\nu_1(\vert\cdot\vert^2)^{\alpha_3/2}+\nu_2(\vert\cdot\vert^2)^{\alpha_3/2}\right)\\
\norm{\rmD^L b(x_1, \nu_1)(z_1)-\rmD^L b(x_2, \nu_2)(z_2)} \leq&\\ L_b^3\left(\norm{x_1-z_1}+\norm{x_2-z_2}+\right.&\left.\wassersteinD[2](\nu_1, \nu_2)\right)\\
\times\left(1+\norm{x_1}^{\alpha_3}+\norm{x_2}^{\alpha_3}+\norm{z_1}^{\alpha_3}\right.&\left.+\norm{z_2}^{\alpha_3}+\nu_1(\vert\cdot\vert^2)^{\alpha_3/2}+\nu_2(\vert\cdot\vert^2)^{\alpha_3/2}\right.\left.\right)
\end{align}
\item[$(\mathbf{A}_b^4)$] There exists a constant $L_b^4>0$ such that
\begin{align}
\norm{b(0, \nu)}\leq L_b^4.
\end{align}
\end{description}

\emph{Conditions on the diffusion coefficient}
\begin{description}
\item[$(\mathbf{A}_\sigma^1)$] There exists a constant $L_\sigma^1>0$ such that
\begin{align}
\norm{\sigma(x_1, \nu_1)-\sigma(x_2, \nu_2)} \leq L_\sigma^1\left(\norm{x_1-x_2}+\wassersteinD[2](\nu_1, \nu_2)\right)
\end{align}
\item[$(\mathbf{A}_\sigma^2)$] Let $\sigma_{ij}\in\rmc^{2, (2,1)}(\rset^d \times \Pens_2(\rset^d), \rset)$, where $\sigma_{ij}$ denotes the $(i, j)$-th component of $\sigma$. There exists a constant $L_\sigma^2>0$ such that for any $i, j = 1, \dots, d$
\begin{align}
\norm{\rmD^L\sigma_{ij}(x, \nu)(z)}+\norm{\nabla\{\rmD^L\sigma_{ij}(x, \nu)(\cdot)\}(z)}+\norm{(\rmD^L)^2\sigma_{ij}(x, \nu)(z, z)} \leq L_\sigma^2
\end{align}
\item[$(\mathbf{A}_\sigma^3)$] There exists a constant $L_\sigma^3>0$ such that
\begin{align}
&\norm{\nabla\sigma(x_1, \nu_1)-\nabla\sigma(x_2, \nu_2)} \leq L_\sigma^3\left(\norm{x_1-x_2}+\wassersteinD[2](\nu_1, \nu_2)\right)\\
&\norm{\rmD^L\sigma(x_1, \nu_1)(z_1)-\rmD^L\sigma(x_2, \nu_2)(z_2)} \\
&\qquad\leq L_\sigma^3\left(\norm{x_1-z_1}+\norm{x_2-z_2}+\wassersteinD[2](\nu_1, \nu_2)\right)
\end{align}
\item[$(\mathbf{A}_\sigma^4)$] There exists a constant $L_\sigma^4>0$ such that
\begin{align}
\norm{\sigma(0, \nu)}+\norm{\rmD^L \sigma(x_1, \nu)(x_2)\sigma(x_2, \nu)} \leq L_\sigma^4.
\end{align}
\end{description}

 We start by computing the
$L$-derivatives of $b$.  We recall that $b: \ \rset^d \times \Pens(\rset^d)\to\rset^d$ is
given for any $x \in \rset^d$ and $\nu \in \Pens(\rset^d)$ by
\begin{equation}
  \textstyle{
    b(x, \nu) = \int_{\rset^p} \tilde{b}_\eta(x, \nu, y) \rmd \mu(y) - \alpha \nabla U(x), \quad \tilde{b}_\eta(x, \nu, y) = \nabla_1 \kker(x,y) / (\nu[\kker(\cdot, y)] + \eta).
    }
\end{equation}
In all of our results, $\mu$ can be replaced by $\mu^M$.

\begin{lemma}
  \label{lemma:derivatives}
  Let $\alpha, \eta >0$ and assume \rref{assum:general_kker} and \rref{assum:lip_U}. Then
  $b \in \rmc^{2,(2,1)}(\rset^d \times \Pens_2(\rset^d), \rset^d)$. In addition, we have 
  for any $\nu \in \Pens_2(\rset^d)$, $x_0, x_1, x_2 \in \rset^d$
  \begin{enumerate}
      \item $\textstyle{ \rmd_x^2 b(x_0, \nu) = \int_{\rset^p} \rmd^3_x \kker(x_0,y) / (\nu[\kker(\cdot, y)] + \eta)\rmd \mu(y) - \alpha \rmd^3_x U(x_0), } $
      \item $\textstyle{\rmD^L b(x_0, \nu)(x_1) = \int_{\rset^p} \nabla_1 \kker(x_0,y) \otimes \nabla_1 \kker(x_1,y) / (\nu[\kker(\cdot, y)] + \eta) \rmd \mu(y),}$
      \item $\textstyle{\rmd_x \rmD^L b(x_0, \nu)(x_1) = \int_{\rset^p} \rmd_x^2 \kker(x_0,y) \otimes \nabla_1 \kker(x_1,y) / (\nu[\kker(\cdot, y)] + \eta) \rmd \mu(y),} $
      \item $\textstyle{(\rmD^L)^2 b(x_0, \nu)(x_1,x_2)= \int_{\rset^p} \nabla_1 \kker(x_0,y) \otimes \nabla_1 \kker(x_1,y) \otimes \nabla_1 \kker(x_2,y) / (\nu[\kker(\cdot, y)] + \eta) \rmd \mu(y),}$
  \end{enumerate}
  where $\otimes$ denotes the outer product.
\end{lemma}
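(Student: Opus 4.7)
The proof plan breaks naturally into three stages: differentiability in $x$, the two Lions derivatives, and the joint regularity needed for membership in $\rmc^{2,(2,1)}$.

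First, I would handle the derivatives in $x$. Since $\nu[\kker(\cdot,y)]+\eta \geq \eta > 0$ uniformly, and \rref{assum:general_kker} provides uniform bounds on $\kker(x,y)$, $\nabla_1\kker(x,y)$, $\nabla_1^2\kker(x,y)$ and $\nabla_1^3\kker(x,y)$, Leibniz's rule for differentiation under the integral sign (\cite[Theorem 16.8]{billingsley1995measure}) applies twice to $\int_{\rset^p}\nabla_1\kker(x,y)/(\nu[\kker(\cdot,y)]+\eta)\rmd\mu(y)$. Combined with the contribution $-\alpha\rmd_x^3 U(x_0)$ (bounded by \rref{assum:lip_U}-\ref{item:lip_gro}), this yields item (1).

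Second, I would compute the Lions derivatives by following Example \ref{ex:l_derivative}. Setting $\mathcal{Q}(\nu,y)=\nu[\kker(\cdot,y)]$, \cite[Example 2]{bao2020first} gives $\rmD^L\mathcal{Q}(\nu,y)(z)=\nabla_1\kker(z,y)$. Applying the chain rule for $L$-derivatives (e.g.\ \cite[Proposition 5.44]{carmona2018probabilistic}) to the scalar map $\nu\mapsto 1/(\mathcal{Q}(\nu,y)+\eta)$, and then differentiating termwise inside $\int_{\rset^p}\cdot\rmd\mu(y)$ (using dominated convergence together with the uniform bounds from \rref{assum:general_kker}), produces $\rmD^L b(x_0,\nu)(x_1)$ as an integral of outer products of the form $\nabla_1\kker(x_0,y)\otimes\nabla_1\kker(x_1,y)$ weighted by a power of $(\nu[\kker(\cdot,y)]+\eta)^{-1}$, giving item (2); note that $\alpha\nabla U(x_0)$ is independent of $\nu$ and drops out. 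Item (3) follows by a further differentiation in $x_0$ under the integral, and item (4) by a second application of the same chain rule to the $\nu$-dependence appearing in the formula for $\rmD^L b$.

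Third, to conclude $b\in\rmc^{2,(2,1)}(\rset^d\times\Pens_2(\rset^d),\rset^d)$, I would verify joint continuity of the four derivatives obtained above as functions on the relevant product spaces (with $\Pens_2(\rset^d)$ carrying the $\wassersteinD[2]$ topology). In each formula the integrand is bounded uniformly in $(x_0,x_1,x_2,\nu,y)$ by a quantity depending only on $\Mtt$ and $\eta$, and is continuous in its arguments by the smoothness of $\kker$ and of the map $\nu\mapsto\nu[\kker(\cdot,y)]$ (continuous on $\Pens_2(\rset^d)$ since $\kker(\cdot,y)$ is bounded and continuous). The dominated convergence theorem then promotes pointwise continuity of the integrands to continuity of the integrals.

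The main obstacle I anticipate is twofold: rigorously applying the Lions chain rule to the nonlinear functional $\nu\mapsto(\nu[\kker(\cdot,y)]+\eta)^{-1}$ (where one must pass through a lifted version on $\mathrm{L}^2(\Omega,\mathcal{F},\mathbb{P};\rset^d)$ and check Fr\'echet differentiability with the right remainder), and establishing the uniform integrability required to commute the $L$-derivative with the integral over $\mu$. Both rely crucially on the fact that $\eta>0$ keeps the denominator bounded away from zero, which is exactly why \rref{prop:euler} and the surrounding time-discretization analysis are restricted to the regularized case.
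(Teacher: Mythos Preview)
Your proposal is correct and follows essentially the same route as the paper: differentiate under the $\mu$-integral using the uniform bounds from \rref{assum:general_kker} and $\eta>0$, compute the Lions derivatives via the lifted functional as in Example~\ref{ex:l_derivative} (the paper does the Fr\'echet expansion directly rather than citing an abstract chain rule, but this is the same computation), and obtain joint continuity by dominated convergence. The two obstacles you flag are precisely the points the paper addresses by working with the lift $\bar b(x,X)=\int_{\rset^p}\nabla_1\kker(x,y)/(\expeLigne{\kker(X,y)}+\eta)\,\rmd\mu(y)-\alpha\nabla U(x)$ and invoking the boundedness of $\rmD^L\Mun$ from the example.
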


\begin{proof}
We prove each result separately.
\begin{enumerate}
    \item Recall that $b_i(x, \nu) = \int_{\rset^p} \tilde{b}_{\eta, i}(x, \nu, y) \rmd \mu(y) - \alpha \partial_i U(x)$ where $\partial_i$ denotes the derivative w.r.t. the $i$-th component and $\tilde{b}_{\eta, i}(x, \nu, y)  = \partial_{1, i} \kker(x,y) / (\nu[\kker(\cdot, y)] + \eta)$.
  We have that for any $x \in \rset^d$, $\nu \in \Pens_2(\rset^d)$ and $y \in \rset^p$
  \begin{align}
    &\partial_{1, j} \tilde{b}_{\eta, i}(x, \eta, y) = \partial_{1,j}\partial_{1, i} \kker(x,y) / (\nu[\kker(\cdot, y)] + \eta),\\
    & \partial_{1, j}^2 \tilde{b}_\eta(x, \eta, y) = \partial_{1,j}^2\partial_{1, i} \kker(x,y) / (\nu[\kker(\cdot, y)] + \eta) .
  \end{align}
  We have that for any $x \in \rset^d$, $\nu \in \Pens_2(\rset^d)$
  and $y \in \rset^p$,
  $\normLigne{\partial_{1, j} \tilde{b}_{\eta, i}(x, \eta, y)} \leq \Mtt/\eta$ and
  $\normLigne{\partial_{1, j}^2 \tilde{b}_\eta(x, \eta, y)} \leq \Mtt/\eta$. Hence, using
  the dominated convergence theorem, $b \in \rmc^2(\rset^d, \rset^d)$ and we
  have that for any $x_0 \in \rset^d$ and $\nu \in \Pens_2(\rset^d)$
  \begin{equation}
    \textstyle{ \partial_{j}^2 b_i(x_0, \nu) = \int_{\rset^p} \partial_{1,j}^2\partial_{1, i} \kker(x_0,y) / (\nu[\kker(\cdot, y)] + \eta)\rmd \mu(y) - \alpha \partial_{1,j}^2\partial_{1, i} U(x_0). }
  \end{equation}
  \item 
  Define $\bar{b}: \ \rset^d \times \rmL^2(\Omega, \rset^d)\to \rset^d$, for any
  $x \in \rset^d$ and $X \in \rmL^2(\Omega, \rset^d)$, via
  \begin{equation}
    \textstyle{\bar{b}(x, X) = \int_{\rset^p} \nabla_1 \kker(x,y) / (\expeLigne{\kker(X, y)} + \eta) \rmd \mu(y) - \alpha \nabla U(x),}
  \end{equation}
  so that for any $X \in \rmL^2(\Omega, \rset^d)$,
  $\bar{b}(\cdot, X) = b(\cdot, \mathcal{L}(X))$. In addition, using Example~\ref{ex:l_derivative}, the fact that $\rmD^L\Mun$ therein is bounded under \rref{assum:general_kker} and the
  dominated convergence theorem we have that for any
  $H \in \rmL^2(\Omega, \rset^d)$ and $x \in \rset^d$
  \begin{equation}
    \textstyle{\underset{\normLigne{H} \to 0}{\lim} \normLigne{\bar{b}(x,X+H) - \bar{b}(x,X) - \int_{\rset^p}  \frac{\nabla_1 \kker(x,y)\langle \nabla_1 \kker(X, y), H \rangle }{ \expeLigne{\kker(X, y)} + \eta} \rmd \mu(y) }/\normLigne{H} = 0.} 
  \end{equation}
  Therefore, for any $x_0, x_1 \in \rset^d$ and $\nu \in \Pens_2(\rset^d)$
  \begin{equation}
    \textstyle{\rmD^L b(x_0, \nu)(x_1) = \int_{\rset^p} \nabla_1 \kker(x_0,y) \otimes \nabla_1 \kker(x_1,y) / (\nu[\kker(\cdot, y)] + \eta) \rmd \mu(y).}
  \end{equation}
  Using the boundedness of $\nabla_1\kker$ ensured by \rref{assum:general_kker} and the dominated convergence theorem, we have   $(x_0, \nu, x_1) \mapsto \rmD^L b(x_0, \nu)(x_1) \in \rmc(\rset^d \times
  \Pens_2(\rset^d) \times \rset^d, \rset^{d\times d})$.
  \item For any $x_1 \in \rset^d$ and
  $\nu \in \Pens_2(\rset^d)$,
  $x_0 \mapsto \rmD^L b(x_0, \nu)(x_1) \in \rmc^1(\rset^d, \rset^{d\times d})$
  and we have that for any $x_0 \in \rset^d$
  \begin{equation}
    \textstyle{\partial_j \rmD^L b_i(x_0, \nu)(x_1) = \int_{\rset^p} \partial_{1,i}\partial_{1,j} \kker(x_0,y) \otimes \nabla_1 \kker(x_1,y) / (\nu[\kker(\cdot, y)] + \eta) \rmd \mu(y).}
  \end{equation}
  \item 
  Finally,
  we define
  $\bar{g}: \ \rset^d \times \rmL^2(\Omega, \rset^{d}) \times \rset^d \to
  \rset^{d \times d}$ such that for any $x_0, x_1 \in \rset^d$ and
  $X \in \rmL^2(\Omega, \rset^{d})$
  \begin{equation}
    \textstyle{\bar{g}(x_0, X, x_1) = \int_{\rset^p} \nabla_1 \kker(x_0,y) \otimes \nabla_1 \kker(x_1,y) / (\expeLigne{\kker(X, y)} + \eta) \rmd \mu(y).}    
  \end{equation}
For any $X \in \rmL^2(\Omega, \rset^d)$,
  $\bar{g}(\cdot, X) = \rmD^L(\cdot, \mathcal{L}(X), \cdot)$. In addition, using the boundedness ensured by \rref{assum:general_kker} and
  dominated convergence theorem we have that for any
  $H \in \rmL^2(\Omega, \rset^d)$ and $x_0, x_1 \in \rset^d$
  \begin{align}
    & \frac{1}{\normLigne{H}} \norm{\bar{g}(x_0, X+H, x_1) - \bar{g}(x_0, X, x_1)  - \int_{\rset^p} \frac{\nabla_1 \kker(x_0,y) \otimes \nabla_1 \kker(x_1, y) \langle \nabla_1 \kker(X, y), H \rangle}{\expeLigne{\kker(X, y)} + \eta} \rmd \mu(y)} \to 0.
  \end{align}
  as $\normLigne{H} \to 0$.
  Therefore, for any $x_0,x_1, x_2 \in \rset^d$ and $\nu \in \Pens_2(\rset^d)$
  \begin{align}
   & \textstyle{(\rmD^L)^2 b(x_0, \nu)(x_1,x_2) }\\
    &\qquad\textstyle{= \int_{\rset^p} \nabla_1 \kker(x_0,y) \otimes \nabla_1 \kker(x_1,y) \otimes \nabla_1 \kker(x_2,y) / (\nu[\kker(\cdot, y)] + \eta) \rmd \mu(y).}
  \end{align}
  We conclude using again the boundedness of $\nabla_1\kker$ ensured by \rref{assum:general_kker} and the dominated convergence theorem to show that
  $(x_0,\mu,x_1,x_2) \mapsto (\rmD^L)^2 b(x_0, \nu)(x_1,x_2) \in \rmc(\rset^d
  \times \Pens_2(\rset^d) \times \rset^d \times \rset^d, \rset^{d \times d
    \times d})$.
    \end{enumerate}
\end{proof}

\begin{proof}[Proof of \Cref{prop:euler}]
  Using \Cref{lemma:derivatives} there exists $\Ktt \geq 0$ such that for any
  $\nu \in \Pens_2(\rset^d)$ and $x_0, x_1, x_2 \in \rset^d$ we have
  \begin{align}
    \label{eq:bound}
   & \textstyle{\normLigne{b(0, \nu)} + \normLigne{\nabla^2 b(x_0, \nu)} + \normLigne{\rmD^L b(x_0, \nu)(x_1)} }\\
   &\qquad\qquad\textstyle{+ \normLigne{\nabla \rmD^L b_i(x_0, \nu)(x_1)} + \normLigne{(\rmD^L)^2 b(x_0, \nu)(x_1,x_2)} \leq \Ktt. }
  \end{align}
  In addition, using~\eqref{eq:lipo_lip}, for any $x_1,x_2 \in \rset^d$ and $\nu_1, \nu_2 \in \Pens_2(\rset^d)$ we have
  \begin{equation}
    \norm{b(x_1, \nu_1) - b(x_2, \nu_2)} \leq (\Mtt/\eta)(1 + (1/\eta))(1 + \alpha\Ltt)(1+\Mtt) \parenthese{ \norm{x_1 - x_2} + \wassersteinD[2](\nu_1, \nu_2)}. 
  \end{equation}
  Hence, $(\mathbf{A}_b^1)$ in \cite{bao2020first} is satisfied. Using~\eqref{eq:bound} we have that $(\mathbf{A}_b^2)$ and $(\mathbf{A}_b^4)$ in
  \cite{bao2020first} are satisfied. 
Similarly to~\eqref{eq:lipo_lip}, we have
for any $x_1,x_2,x_3,x_4 \in \rset^d$ and $\nu_1, \nu_2 \in \Pens_2(\rset^d)$
\begin{align}
\label{eq:a3_part_two}
 & \textstyle{\normLigne{\rmD^L b(x_1,\nu_1)(x_2) - \rmD^L b(x_2,\nu_2)(x_3)}}\\
 &\qquad\leq \textstyle{\|\int_{\rset^p} \nabla_1 \kker(x_1,y) \otimes \nabla_1 \kker(x_2,y) / (\nu_1[\kker(\cdot, y)] + \eta) \rmd \mu(y)} \\
 &\qquad \textstyle{\qquad  - \int_{\rset^p} \nabla_1 \kker(x_3,y) \otimes \nabla_1 \kker(x_4,y) / (\nu_2[\kker(\cdot, y)] + \eta) \rmd \mu(y)\|} \\
&\qquad\leq (\Mtt^2/\eta)(2+(1/\eta))(1+\Mtt) \{\normLigne{x_1 - x_3} + \normLigne{x_2 - x_4} + \wassersteinD[2](\nu_1, \nu_2)\}.
\end{align}
Combining~\eqref{eq:a3_part_one} and~\eqref{eq:a3_part_two}, we get that
$(\mathbf{A}_b^3)$ in \cite{bao2020first} is satisfied. In addition, note that
$(\mathbf{A}_\sigma^1)$, $(\mathbf{A}_\sigma^2)$, $(\mathbf{A}_\sigma^3)$ and
$(\mathbf{A}_\sigma^4)$ are immediately satisfied since
$\sigma = \sqrt{2 \alpha}$ in our case. We conclude the proof upon using
\cite[Theorem 2.5]{bao2020first}.
\end{proof}

\subsection{Proof of \Cref{prop:approx_mu}}
\label{app:approx_mu}

Let $M, N \in \nsets$ and $\alpha, \eta > 0$.
Using~\eqref{eq:lipo_lip}, we have for any $x_1, x_2 \in \rset^d$ and $\nu_1, \nu_2 \in \Pens_1(\rset^d)$
\begin{align}
  \label{eq:intermezzo}
  &\textstyle{\normLigne{b(x_1, \nu_1) - b^M(x_2,\nu_2)}}\\
   &\textstyle{\leq ((\Mtt/\eta)(1 + (1/\eta)) + \alpha \Ltt) \parentheseLigne{ \norm{x_1 - x_2} + \int_{\rset^p} \abs{\nu_1[\kker(\cdot, y)] - \nu_2[\kker(\cdot, y)]} \rmd \mu(y)}} \\
  & \textstyle{\phantom{\leq} + \normLigne{(1/M) \sum_{j=1}^M \{ \tilde{b}_\eta(x_2, \nu_2, y^{j,M}) - \int_{\rset^p} \tilde{b}_\eta(x_2, \nu_2, y) \rmd \mu(y)\}}}. 
\end{align}
For any $i \in \{1,2\}$ and $x_i^{1:N} \in (\rset^d)^N$, let $\nu_i^N = (1/N) \sum_{k=1}^N \updelta_{x_i^{k,N}}$. Using~\eqref{eq:intermezzo}, we have that for any $x_1^{1:N}, x_2^{1:N} \in (\rset^d)^N$
\begin{align}
  \label{eq:intermezzo_duo}                     
&  \textstyle{\normLigne{b(x_1^{1,N}, \nu_1^N) - b^M(x_2^{1,N},\nu_2^N)}} \\
&\textstyle{\leq ((\Mtt/\eta)(1 + (1/\eta)) + \Ltt)(1 + \Mtt)}\\
&\qquad\times \textstyle{\parentheseLigne{ \normLigne{x_1^{1,N} - x_2^{1,N}} + (1/N) \sum_{k=1}^N \normLigne{x_1^{k,N} - x_2^{k,N}}}} \\
 &\textstyle{\phantom{\leq}+ \normLigne{(1/M) \sum_{j=1}^M \{ \tilde{b}_\eta(x_2^{1,N}, \nu_2^N, y^{j,M}) - \int_{\rset^p} \tilde{b}_\eta(x_2^{1,N}, \nu_2^N, y) \rmd \mu(y)\}}}.
\end{align}
Let $C \geq 0$ and $\{A_k\}_{k=1}^M$ be a family of $d$-dimensional random
variables such that  for any $i,j \in \{1,\dots, M\}$ with $i\neq j$
\begin{equation}
  \label{eq:cond_AK}
  \textstyle{
    \expeLigne{\normLigne{A_i}^2} \leq C, \qquad \expeLigne{\langle A_i, A_j \rangle} = 0.
    }
\end{equation}
Then, we have that
\begin{equation}
  \label{eq:bound_bound}
  \textstyle{
  \expeLigne{\normLigne{(1/M) \sum_{k=1}^N A_k}} \leq (C /M)^{1/2}. }
\end{equation}
Let
$A_k := \tilde{b}_\eta(X_n^{1,N,M}, \lambda_n^{N,M}, y^{k,M}) - \int_{\rset^p} \tilde{b}_\eta(X_n^{1,N,M}, \lambda_n^{N,M}, y) \rmd \mu(y)$ for any $k \in \{1, \dots, M\}$. Then $\{A_k\}_{k=1}^M$ satisfies~\eqref{eq:cond_AK} with
$C = \Mtt/\eta$. Combining this result,~\eqref{eq:intermezzo_duo}, \eqref{eq:bound_bound} and that for any $n\in\nset$,
$\{X_n^{k,N} -  X_n^{k,N,M}\}_{k=1}^N$ is exchangeable, we have \begin{align}
\label{eq:nonconditional}
  &\expeLigne{\textstyle{\normLigne{b(X_n^{1,N}, \lambda_n^{N}) - b^M(X_n^{1,N,M},\lambda_n^{N,M})}}} \\
  &\qquad \textstyle{\leq ((\Mtt/\eta)(1 + (1/\eta)) + \Ltt)(1 + \Mtt)}\\
  &\qquad\qquad\times\textstyle{ \parentheseLigne{ \expeLigne{\normLigne{X_n^{1,N} - X_n^{1,N,M}}} + (1/N) \sum_{k=1}^N \expeLigne{\normLigne{X_n^{k,N} - X_n^{k,N,M}}}}} \\
  &\qquad\quad \textstyle{+ \expeLigne{\normLigne{(1/M) \sum_{k=1}^M A_k}}} \\
  &\qquad \textstyle{\leq 2((\Mtt/\eta)(1 + (1/\eta)) + \Ltt)(1 + \Mtt) \expeLigne{\normLigne{X_n^{1,N} - X_n^{1,N,M}}}} + (\Mtt/(M\eta))^{1/2}.
\end{align}
Using this result, we have for any $n\in\nset$
\begin{align}
\label{eq:resampling_bound1}
   \expeLigne{\normLigne{X_{n+1}^{1,N} - X_{n+1}^{1,N,M}}} \leq& \expeLigne{ \normLigne{X_n^{1,N} - X_n^{1,N,M}}}\\
   \qquad &+ \gamma\expe{\frac{\gamma\norm{b(X_n^{1, N}, \lambda_n^N)}}{(1+\gamma \norm{b(X_n^{1, N}, \lambda_n^N)})}\frac{\norm{b(X_n^{1, N}, \lambda_n^N)-b^M(X_n^{1, N, M}, \lambda_n^{N,M})}}{(1+\gamma \norm{b^M(X_n^{1, N, M}, \lambda_n^{N,M})})}}\\
   \qquad&+\gamma\expe{\frac{\norm{b(X_n^{1, N}, \lambda_n^N)-b^M(X_n^{1, N, M}, \lambda_n^{N,M})}}{(1+\gamma \norm{b^M(X_n^{1, N, M}, \lambda_n^{N,M})})}} \\
   \quad\leq& \expeLigne{ \normLigne{X_n^{1,N} - X_n^{1,N,M}}} + 2\gamma\expe{\norm{b(X_n^{1, N}, \lambda_n^N)-b^M(X_n^{1, N, M}, \lambda_n^{N,M})}}\\
\quad\leq& C\left((1+2\gamma)\expeLigne{ \normLigne{X_n^{1,N} - X_n^{1,N,M}}} + 2\gamma M^{-1/2}\right).
\end{align}
Proceeding recursively and recalling that $X_0^{1, N}=X_0^{1, N, M}$ the result follows.

\subsection{Proof of \Cref{prop:resampling}}
\label{app:resampling}

The proof follows the same structure of that of \Cref{prop:approx_mu}. Let $M, N, m \in \nsets$ with $m\leq M$ and $\alpha, \eta > 0$.
Proceeding as above we can write a result equivalent to~\eqref{eq:nonconditional} 
\begin{align}
\label{eq:conditional}
  &\expeLigne{\textstyle{\normLigne{b(X_n^{1,N}, \lambda_n^{N}) - b^M(X_n^{1,N,m},\lambda_n^{N,m})}}} \\
  & \textstyle{\leq ((\Mtt/\eta)(1 + (1/\eta)) + \Ltt)(1 + \Mtt)}\\
  &\quad\times\textstyle{ \parentheseLigne{ \expeLigne{\normLigne{X_n^{1,N} - X_n^{1,N,m}}} + (1/N) \sum_{k=1}^N \expeLigne{\normLigne{X_n^{k,N} - X_n^{k,N,m}}}}} \\
  &  \quad \textstyle{+ \expeLigne{\normLigne{(1/m) \sum_{k=1}^m A_k}}},
\end{align}
where $A_k:= \tilde{b}_\eta(X_n^{1,N,m}, \lambda_n^{N,m}, y^{j,m}) - \int_{\rset^p} \tilde{b}_\eta(X_n^{1,N,m}, \lambda_n^{N,m}, y) \rmd \mu(y)$ and $\{y_1, \dots, y_m\}$ is a sample from $\mu^M$.
Consider the last expectation and decompose
\begin{align}
    &\expe{\norm{(1/m) \sum_{j=1}^m \{ \tilde{b}_\eta(X_n^{1,N,m}, \lambda_n^{N,m}, y^{j,m}) - \int_{\rset^p} \tilde{b}_\eta(X_n^{1,N,m}, \lambda_n^{N,m}, y) \rmd \mu(y)\}}}\\
    &\qquad\leq\expe{\norm{(1/m) \sum_{j=1}^m \{ \tilde{b}_\eta(X_n^{1,N,m}, \lambda_n^{N,m}, y^{j,m}) - (1/M)\sum_{k=1}^M\tilde{b}_\eta(X_n^{1,N,m}, \lambda_n^{N,m}, y^{j,M})\}}}\\
    &\qquad\quad+\expe{\norm{ (1/M)\sum_{k=1}^M\tilde{b}_\eta(X_n^{1,N,m}, \lambda_n^{N,m}, y^{j,M})- \int_{\rset^p} \tilde{b}_\eta(X_n^{1,N,m}, \lambda_n^{N,m}, y) \rmd \mu(y)\}}}.
\end{align}
As shown in the proof of \Cref{prop:approx_mu}, the second term satisfies
\begin{align}
&\expeLigne{\normLigne{ (1/M)\sum_{k=1}^M\tilde{b}_\eta(X_n^{1,N,m}, \lambda_n^{N,m}, y^{j,M})- \int_{\rset^p} \tilde{b}_\eta(X_n^{1,N,m}, \lambda_n^{N,m}, y) \rmd \mu(y)\}}}\\
&\qquad\qquad\leq (\Mtt/(M\eta))^{1/2}.
\end{align}
Let us denote the $\sigma$-field generated by $\mu^M$ by $\mathcal{S}^M:=\sigma\left(y_j : j=1, \ldots, M\right)$.
For any $k \in \{1, \dots, m\}$, $x_1 \in \rset^d$ let $A_k = \tilde{b}_\eta(x_1, \nu_1, y^{k,m}) - (1/M)\sum_{k=1}^M\tilde{b}_\eta(X_n^{1,N,m}, \lambda_n^{N,m}, y^{j,M}) $. 
Conditionally on $\mathcal{S}^M$, $\{A_k\}_{k=1}^m$ satisfies~\eqref{eq:cond_AK} with
$C = \Mtt/\eta$.
Hence, 
\begin{align}
&\expeLigne{\normLigne{(1/m) \sum_{j=1}^m \{ \tilde{b}_\eta(X_n^{1,N,m}, \lambda_n^{N,m}, y^{j,m}) - (1/M)\sum_{k=1}^M\tilde{b}_\eta(X_n^{1,N,m}, \lambda_n^{N,m}, y^{j,M})\}}}\\
&\qquad\qquad\leq (\Mtt/(m\eta))^{1/2}.
\end{align}
Combining the two expectations we obtain the following bound for~\eqref{eq:conditional}
\begin{align}
    &\expeLigne{\textstyle{\normLigne{b(X_n^{1,N}, \lambda_n^{N}) - b^M(X_n^{1,N,m},\lambda_n^{N,m})}}} \\
  &\qquad \textstyle{\leq ((\Mtt/\eta)(1 + (1/\eta)) + \Ltt)(1 + \Mtt)}\\
  &\qquad\qquad\times\textstyle{ \parentheseLigne{ \expeLigne{\normLigne{X_n^{1,N} - X_n^{1,N,m}}} + (1/N) \sum_{k=1}^N \expeLigne{\normLigne{X_n^{k,N} - X_n^{k,N,m}}}}} \\
  & \qquad \quad  \textstyle{+(\Mtt/(m\eta))^{1/2}+(\Mtt/(M\eta))^{1/2} }\\
  &\qquad \textstyle{\leq ((\Mtt/\eta)(1 + (1/\eta)) + \Ltt)(1 + \Mtt)}\\
  &\qquad\qquad\times\textstyle{ \parentheseLigne{ \expeLigne{\normLigne{X_n^{1,N} - X_n^{1,N,m}}} + (1/N) \sum_{k=1}^N \expeLigne{\normLigne{X_n^{k,N} - X_n^{k,N,m}}}}} \\
  & \qquad \quad  \textstyle{+2(\Mtt/(m\eta))^{1/2} },
\end{align}
where we used the fact that $m\leq M$.
The above allows us to obtain an equivalent result to~\eqref{eq:resampling_bound1}.
Proceeding recursively and recalling that $X_0^{1, N}=X_0^{1, N, m}$ we have the result.

\subsection{Stability of continuous time process}
\label{app:prop10_continuous_time}
\begin{proposition}
\label{prop:approx_mu_continuous_time}
Assume \tup{\rref{assum:general_kker}}, \rref{assum:lip_U}. Let
$\eta, \alpha > 0$. For any $N,M \in \nsets$, let $(\bfX_t^{1,N})_{t \geq 0}$
and $(\bfX_t^{1,N,M})_{t \geq 0}$ be the solution of \eqref{eq:particle} and
\eqref{eq:particle_M} respectively, with initial condition
$\bfX_0^{1:N} \in \Pens_1((\rset^d)^N)$ such that $\{\bfX_0^{k,N}\}_{k=1}^N$ is
exchangeable and $\{\bfX_0^{k,N}\}_{k=1}^N = \{\bfX_0^{k,N,M}\}_{k=1}^N$. Then
for any $T \geq 0$ there exists $C_T \geq 0$  such that for any $M, N \in \nsets$ and $\ell \in \{1, \dots, N\}$
  \begin{equation}
    \textstyle{
      \expeLigne{\sup_{t \in \ccint{0,T}} \normLigne{\bfX_t^{\ell,N}- \bfX_t^{\ell,N, M}}} \leq C_T M^{-1/2} \eqsp .
      }
  \end{equation}
\end{proposition}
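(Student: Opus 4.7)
The plan is to mirror the strategy used in the proof of \Cref{prop:approx_mu}, transposed from the tamed Euler discretisation to the continuous-time SDE. Since the two particle systems are driven by the same Brownian motions $\{(\bfB_t^k)_{t \geq 0}\}_{k=1}^N$ and share the same (exchangeable) initial condition, the Brownian terms cancel in the difference, leaving the deterministic identity
\begin{equation}
\bfX_t^{\ell,N} - \bfX_t^{\ell,N,M} = \int_0^t \left\{ b(\bfX_s^{\ell,N}, \bflambda_s^N) - b^M(\bfX_s^{\ell,N,M}, \bflambda_s^{N,M}) \right\} \rmd s.
\end{equation}
I decompose the integrand via the triangle inequality into a Lipschitz-stability term $\|b(\bfX_s^{\ell,N}, \bflambda_s^N) - b(\bfX_s^{\ell,N,M}, \bflambda_s^{N,M})\|$ and a sampling-fluctuation term $\|b(\bfX_s^{\ell,N,M}, \bflambda_s^{N,M}) - b^M(\bfX_s^{\ell,N,M}, \bflambda_s^{N,M})\|$, exactly as in the discrete case.

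For the first term I invoke the Lipschitz estimate~\eqref{eq:lipschitz_bmketa} established in the proof of \Cref{prop:propagation_chaos}, together with the elementary bound $\wassersteinD[1](\bflambda_s^N, \bflambda_s^{N,M}) \leq (1/N) \sum_{k=1}^N \|\bfX_s^{k,N} - \bfX_s^{k,N,M}\|$. Taking expectations and exploiting the exchangeability of $\{\bfX_s^{k,N} - \bfX_s^{k,N,M}\}_{k=1}^N$ (which is preserved by the symmetric dynamics of~\eqref{eq:particle} and~\eqref{eq:particle_M}) reduces this contribution to a constant multiple of $\expe{\|\bfX_s^{\ell,N} - \bfX_s^{\ell,N,M}\|}$. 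For the second term I set $A_{j,s} := \bmketa(\bfX_s^{\ell,N,M}, \bflambda_s^{N,M}, y^{j,M}) - \int_{\rset^p} \bmketa(\bfX_s^{\ell,N,M}, \bflambda_s^{N,M}, y) \rmd \mu(y)$ and apply the same second-moment argument as in~\eqref{eq:cond_AK}-\eqref{eq:bound_bound}: the $A_{j,s}$ are centred, pairwise orthogonal by exchangeability of the sample $y^{1:M}$, and uniformly bounded by $\Mtt/\eta$ under \Cref{assum:general_kker}, yielding $\expe{\|(1/M)\sum_{j=1}^M A_{j,s}\|} \leq (\Mtt/(\eta M))^{1/2}$ uniformly in $s \in [0,T]$.

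Combining these two estimates and moving the supremum over $u \in \ccint{0,t}$ inside the time integral (permissible because the right-hand side controls $\|\bfX_u^{\ell,N} - \bfX_u^{\ell,N,M}\|$ for every $u \leq t$ and is monotone in $u$), I arrive at an integral inequality of the form
\begin{equation}
\expe{\sup_{u \in \ccint{0,t}} \|\bfX_u^{\ell,N} - \bfX_u^{\ell,N,M}\|} \leq C_1 \int_0^t \expe{\sup_{u \in \ccint{0,s}} \|\bfX_u^{\ell,N} - \bfX_u^{\ell,N,M}\|} \rmd s + C_2 T M^{-1/2},
\end{equation}
and Gr\"onwall's lemma delivers the claimed rate $C_T M^{-1/2}$ with $C_T$ growing exponentially in $T$. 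The main obstacle, as in \Cref{prop:approx_mu}, is to ensure that the sampling-fluctuation term enjoys an $M^{-1/2}$ bound uniformly in $s$ despite the fact that $(\bfX_s^{\ell,N,M}, \bflambda_s^{N,M})$ depends on the entire sample $y^{1:M}$; the exchangeability of $y^{1:M}$ together with the global boundedness of $\bmketa$ circumvents the need for any sophisticated maximal inequality and reduces the analysis to a direct variance computation.
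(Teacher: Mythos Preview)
Your proposal is correct and follows essentially the same route as the paper's proof: the paper likewise cancels the Brownian terms, splits the drift difference via~\eqref{eq:intermezzo_duo} into a Lipschitz part (controlled using exchangeability of $\{\bfX_s^{k,N}-\bfX_s^{k,N,M}\}_{k=1}^N$) and a sampling-fluctuation part (bounded by $(\Mtt/(M\eta))^{1/2}$ through the variance computation~\eqref{eq:cond_AK}--\eqref{eq:bound_bound}), and then closes with Gr\"onwall's lemma. Your explicit remark about the dependence of $(\bfX_s^{\ell,N,M},\bflambda_s^{N,M})$ on the whole sample $y^{1:M}$ is a point the paper also handles only at the same level of detail.
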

\begin{proof}
The proof is identical to that of \Cref{prop:approx_mu} with the discrete recursion replaced by an application of Gr\"{o}nwall's lemma. Let $M, N \in \nsets$ and $\alpha, \eta > 0$. 
For any $k \in \{1, \dots, M\}$, $x_1 \in \rset^d$ and
$\nu_1 \in \Pens_1(\rset^d)$ let
$A_k = \tilde{b}_\eta(x_1, \nu_1, y^{k,M}) - \int_{\rset^p} \tilde{b}_\eta(x_1,
\nu_1, y) \rmd \mu(y)$. Then $\{A_k\}_{k=1}^M$ satisfies \eqref{eq:cond_AK} with
$C = \Mtt/\eta$. 

Combining this result, \eqref{eq:bound_bound},
\eqref{eq:intermezzo_duo} and that for any $t \geq 0$,
$\{\bfX_t^{k,N} - \bfX_t^{k,N,M}\}_{k=1}^N$ is exchangeable, we have that for
any $t \geq 0$,
\begin{align}
  &\expeLigne{\textstyle{\normLigne{b(\bfX_t^{1,N}, \bflambda_t^{N}) - b^M(\bfX_t^{1,N,M},\bflambda_t^{N,M})}}} \\
  &\qquad \textstyle{\leq ((\Mtt/\eta)(1 + (1/\eta)) + \Ltt)(1 + \Mtt) \parentheseLigne{ \expeLigne{\normLigne{\bfX_t^{1,N} - \bfX_t^{1,N,M}}} + (1/N) \sum_{k=1}^N \expeLigne{\normLigne{\bfX_t^{k,N} - \bfX_t^{k,N,M}}}}} \\
  & \qquad \qquad  \textstyle{+ \expeLigne{\normLigne{(1/M) \sum_{j=1}^M \{ \tilde{b}_\eta(\bfX_t^{1,N,M}, \bflambda_t^{N,M}, y^{j,M}) - \int_{\rset^p} \tilde{b}_\eta(\bfX_t^{1,N,M}, \bflambda_t^{N,M}, y) \rmd \mu(y)\}}}} \\
  &\qquad \textstyle{\leq 2((\Mtt/\eta)(1 + (1/\eta)) + \Ltt)(1 + \Mtt) \expeLigne{\normLigne{\bfX_t^{1,N} - \bfX_t^{1,N,M}}}} + (\Mtt/(M\eta))^{1/2}     \eqsp .
\end{align}
Using this result, we have for any $t \geq 0$
\begin{align}
  &\textstyle{ \expeLigne{\sup_{s \in \ccint{0,t}} \normLigne{\bfX_s^{1,N} - \bfX_s^{1,N,M}}} }\\
  & \qquad \qquad \qquad\textstyle{\leq 2((\Mtt/\eta)(1 + (1/\eta)) + \Ltt)(1 + \Mtt) \int_0^t \expeLigne{\sup_{u \in \ccint{0,s}} \normLigne{\bfX_u^{1,N} - \bfX_u^{1,N,M}}} \rmd s + t (\Mtt/(M\eta))^{1/2} \eqsp . }
\end{align}
We conclude upon using Gr\"{o}nwall's lemmma.
\end{proof}

\section{Additional Experiments}
\label{app:ex}

\subsection{Choice of initial distribution and reference measure}
\label{app:pi0}

To study the influence of the reference measure $\pi_0$ on $\Gun_{\alpha}^{\eta}$ and of the initial condition $X_0^{1:N}$ on the speed of convergence we consider the following toy Fredholm integral equation 
\[
  \textstyle{
    \mathcal{N}(y;0,\sigma_{\kker}^{2}+\sigma_{\pi}^{2})=\int_{\rset}\mathcal{N}(x;0,\sigma_{\pi}^{2})\mathcal{N}(y;x,\sigma_{\kker}^{2})\rmd
    x,}
\]
where $\mathcal{N}(x; m, \sigma^2)$ is a Gaussian distribution with mean $m$ and
variance $\sigma^2$, with $\sigma_\pi^2=0.43^2$ and
$\sigma_{\kker}^2=0.45^2$. We assume that $\mu$ is known through a sample of size $M=10^4$, we consider four reference measures: a uniform
distribution, a zero-mean Gaussian distribution with variance given by the
empirical variance of the sample from $\mu$, a Student's t-distribution with 100
degrees of freedom and a zero-mean Laplace distribution with variance given by the
empirical variance of the sample from $\mu$; and three initial conditions: the
solution $\pi$, a uniform over the interval $[-2, 2]$ which contains more than
$99\%$ of the mass and a highly peaked zero-mean Gaussian distribution.  As an
example we set $N=m=500$, $\gamma = 10^{-2}$, $\alpha = 0.02$ and iterate for
300 time steps. The behaviour for other values of $N, m$ is similar.

\begin{figure}
\centering
\resizebox{0.75\textwidth}{!}{%
\begin{tikzpicture}[every node/.append style={font=\normalsize}]
\node (img1) {\includegraphics[width=0.4\textwidth]{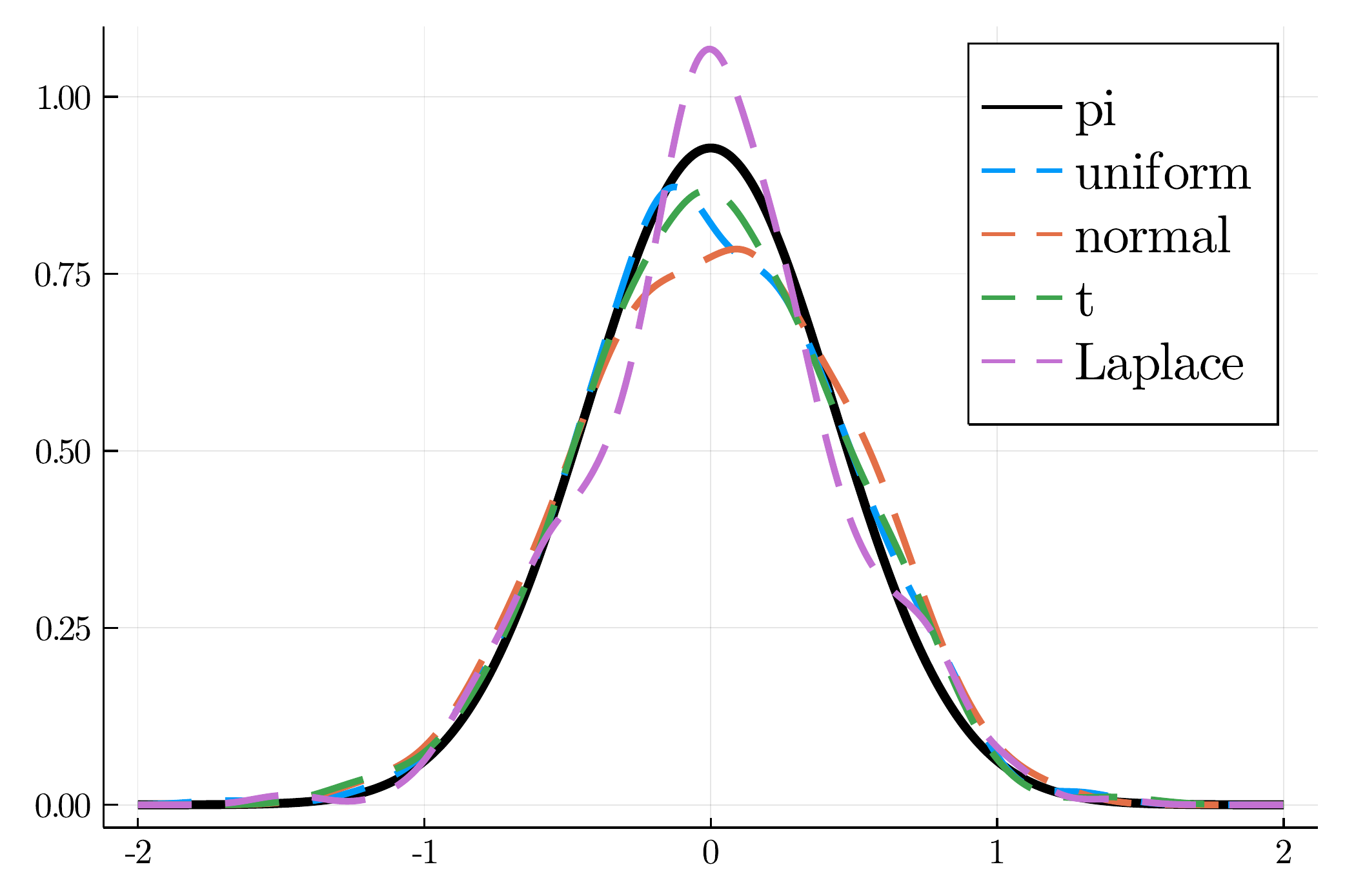}};
\node[left=of img1, node distance = 0, rotate=90, anchor = center, yshift = -1cm] {$X_0^{1:N}\sim \pi$};
\node[right=of img1, node distance = 0, xshift = -0.5cm] (img2) {\includegraphics[width=0.4\textwidth]{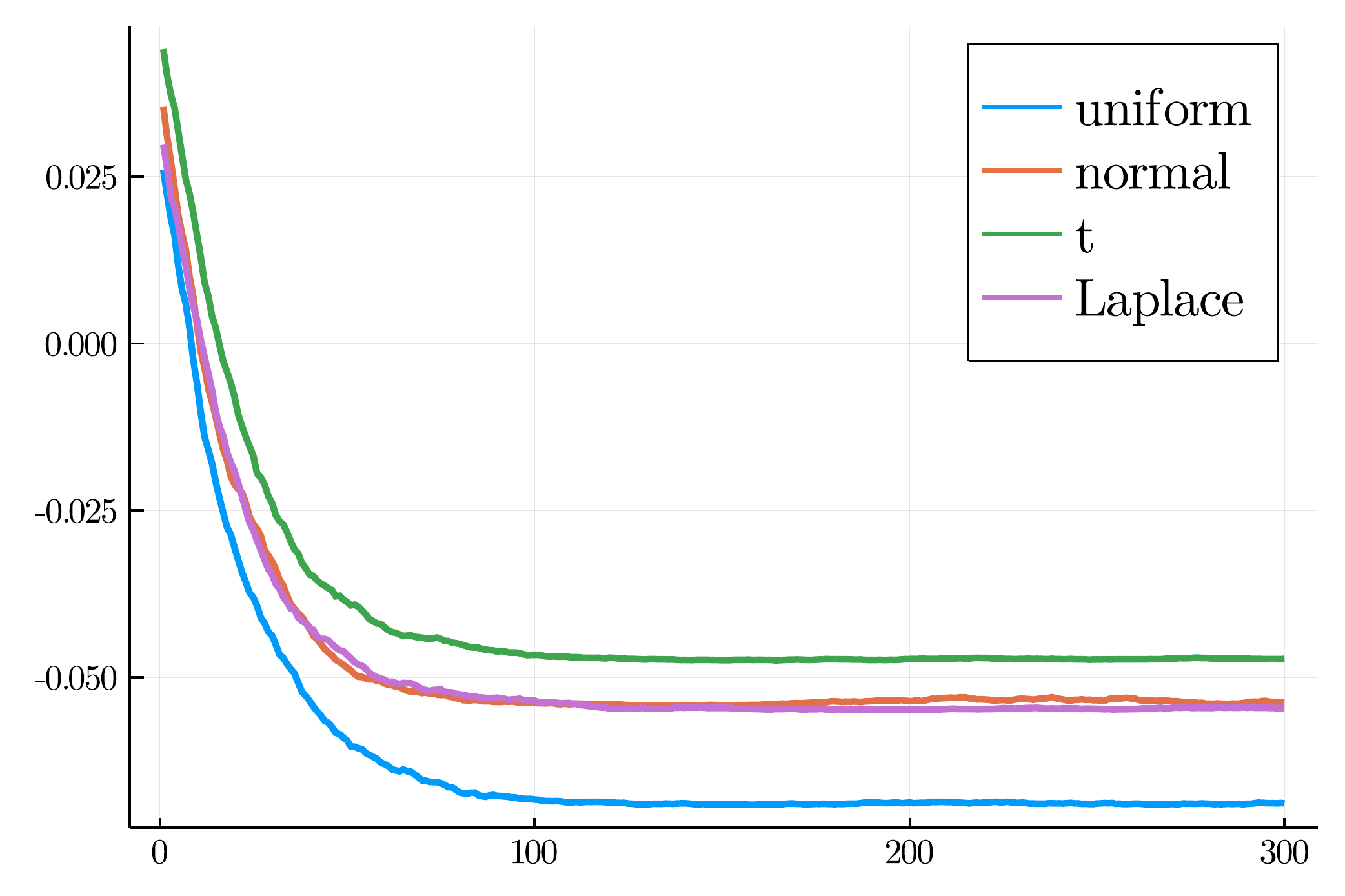}};
\node[above=of img1, node distance = 0, yshift = -1cm] {$\hat{\pi}$};
\node[above=of img2, node distance = 0, yshift = -1cm] {$\log\Gun_{\alpha}^{\eta}$};
\node[below=of img1] (img3) {\includegraphics[width=0.4\textwidth]{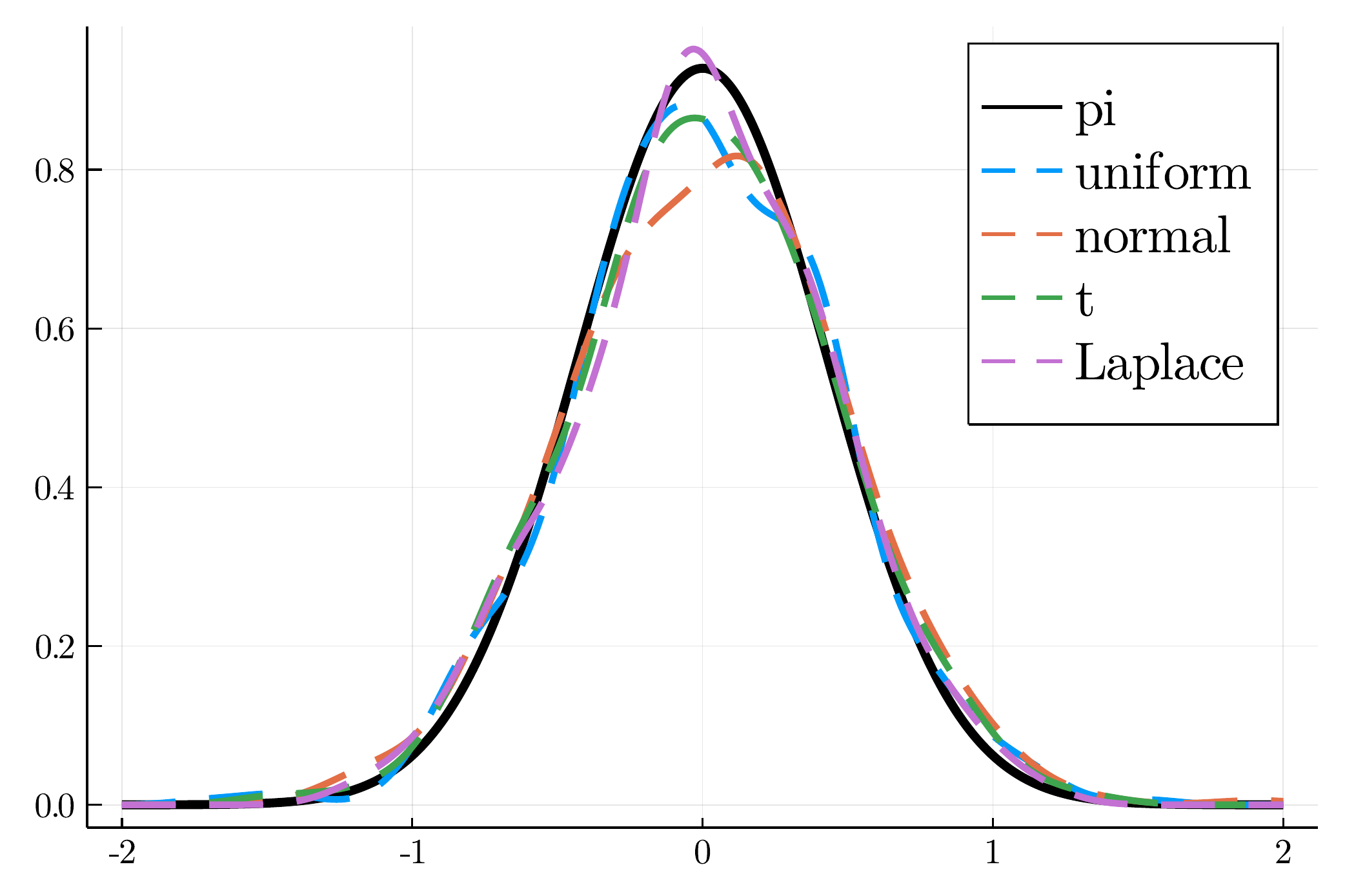}};
\node[left=of img3, node distance = 0, rotate=90, anchor = center, yshift = -1cm] {$X_0^{1:N}\sim U(-2, 2)$};
\node[right=of img3, node distance = 0, xshift = -0.5cm] (img4) {\includegraphics[width=0.4\textwidth]{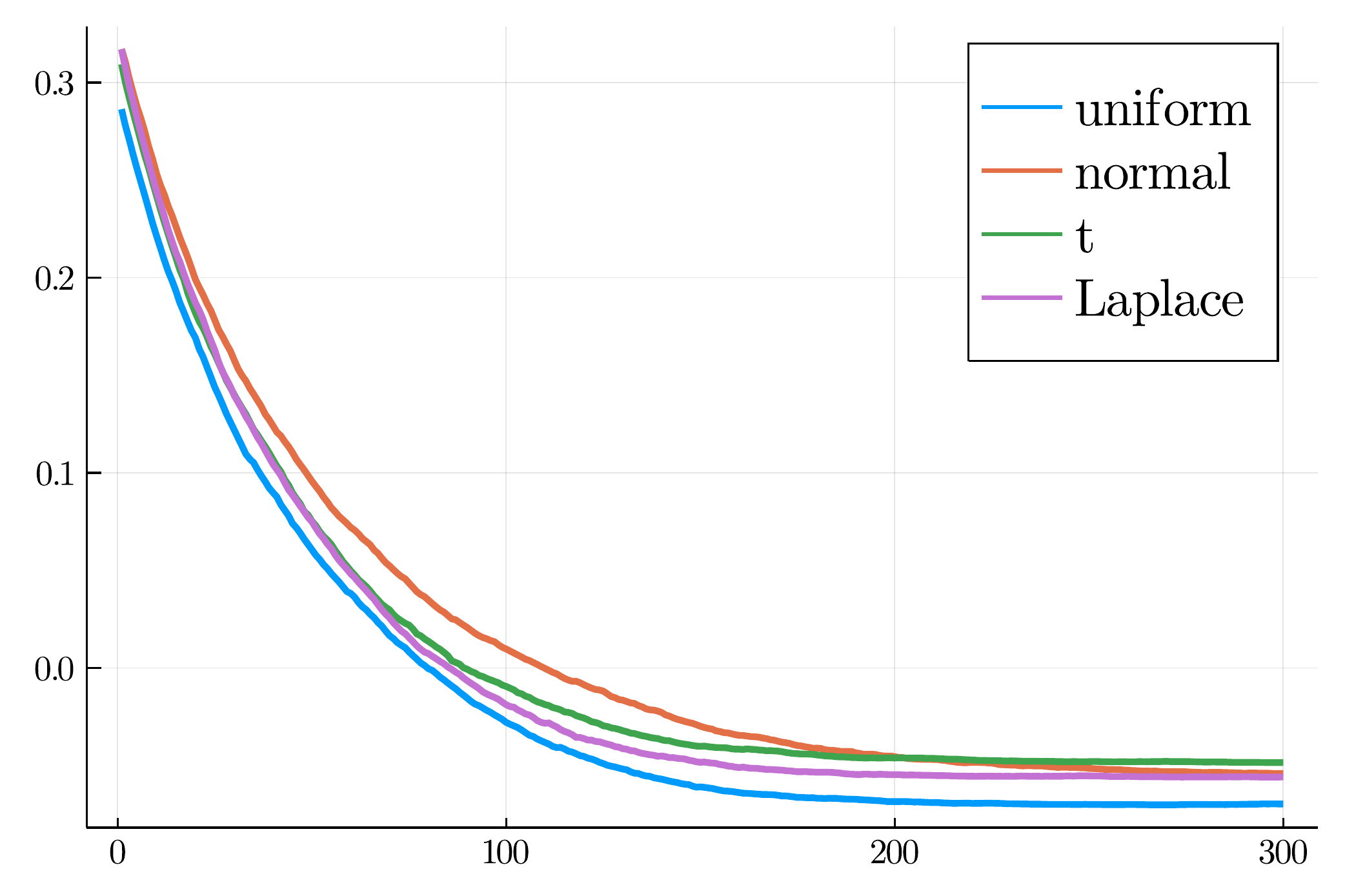}};
\node[below=of img3] (img5) {\includegraphics[width=0.4\textwidth]{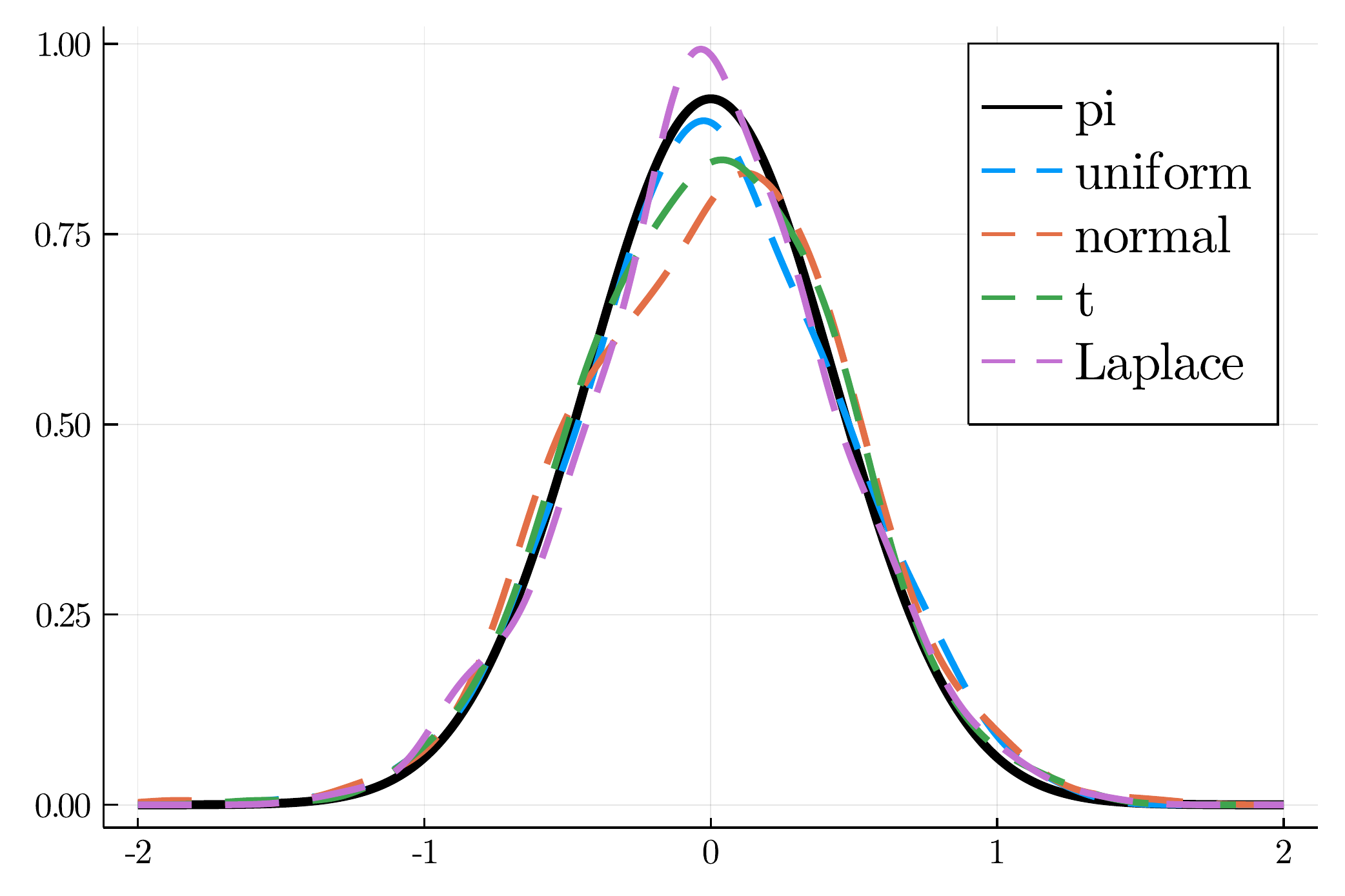}};
\node[left=of img5, node distance = 0, rotate=90, anchor = center, yshift = -1cm] {$X_0^{1:N}\sim \mathcal{N}(0, 10^{-8})$};
\node[right=of img5, node distance = 0, xshift = -0.5cm] (img6) {\includegraphics[width=0.4\textwidth]{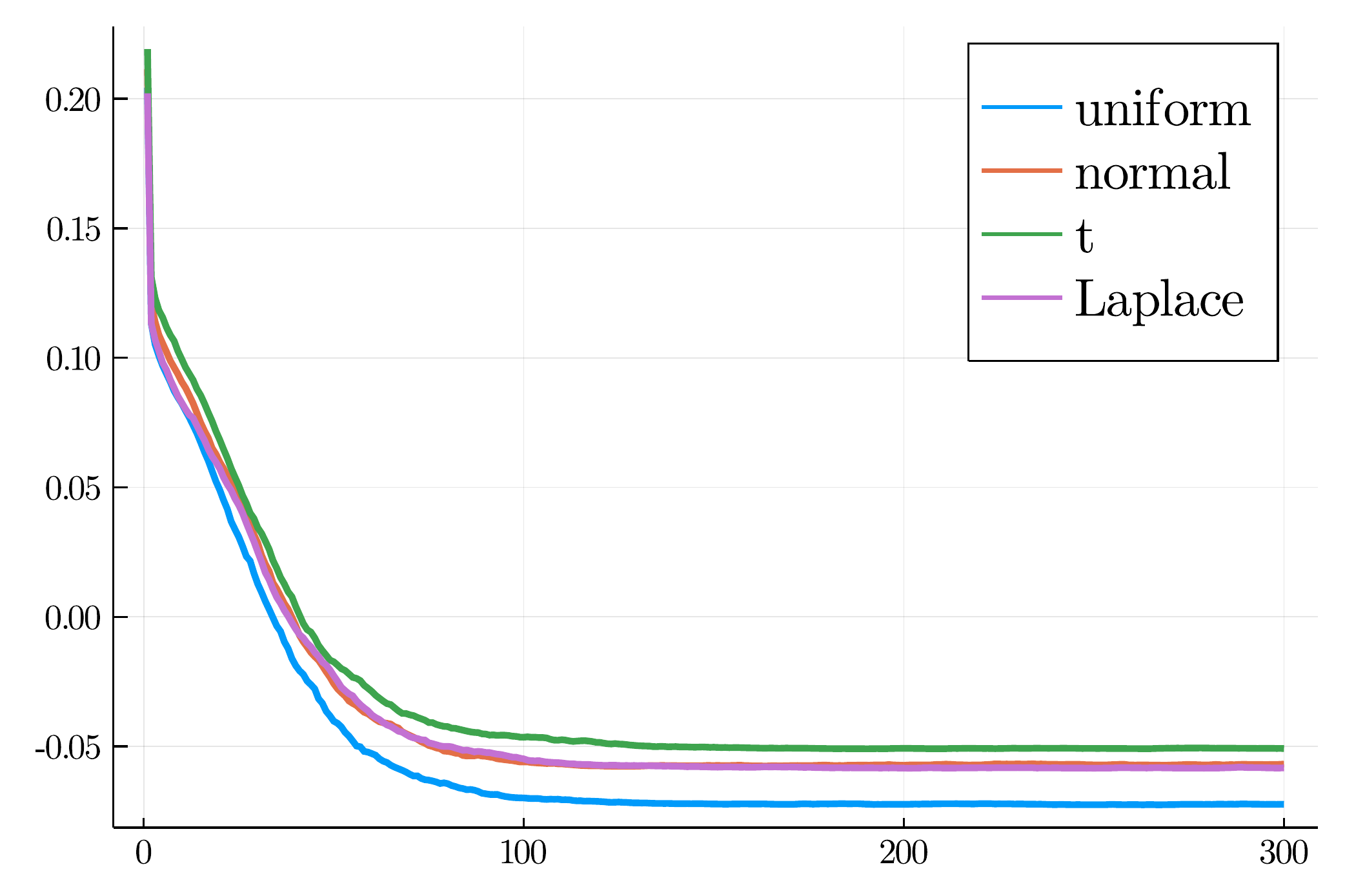}};
  \end{tikzpicture}
  }
\caption{Influence of reference measure and initial condition, reconstructions (left panel) and approximate value of $\Gun_{\alpha}^{\eta}$ (right panel).}
\label{fig:pi0}
\end{figure}

Figure~\ref{fig:pi0} shows that the reference measure does not influence the
speed of convergence but influences the shape of the reconstructions, which are
more peaked when the reference measure is Laplace and flatter when the reference
measure is a Gaussian distribution; this corresponds to slightly different
minima for $\Gun_{\alpha}^{\eta}$.  In particular, the order of the minima is
not influenced by the initial condition, with the uniform reference measure always
giving the smallest minima.  Indeed, if we compare the values of
$\Gun_{\alpha}^{\eta}$ at the solution $\pi$ we find that when the reference
measure is Uniform, $\KL{\pi}{\pi_0}$ is not defined and
\begin{align}
\Gun_{\alpha}^{\eta}(\textrm{t})\geq \Gun_{\alpha}^{\eta}(\textrm{Gaussian})\simeq \Gun_{\alpha}^{\eta}(\textrm{Laplace}). 
\end{align}

The initial condition $X_0^{1:N}$ affects the speed of convergence with light
tail distributions giving faster convergence as already observed in
\cite{bossy1997stochastic,antonelli2002rate}, but does not affect the shape of
the reconstructions. This is not surprising as \Cref{prop:propagation_chaos}
guarantees the existence of a unique solution for any initial condition
$X_0^{1:N}$.

\subsection{Choice of $\alpha$}
\label{app:alpha}
\subsubsection{Toy problem}

Consider again the toy Fredholm integral equation in \Cref{app:pi0} and consider
$\Gun_\alpha$ with the reference measure $\pi_0$ given by a Gaussian
distribution with mean 0 and variance $\sigma_0^2$. If we restrict the class of
distributions on which we look for a minimizer to that of Gaussian distributions
with mean 0 and variance $\beta$, \ie\
$\pi(x)=\mathcal{N}(x; 0, \beta)$, the functional $\Gun_\alpha$
can be computed exactly
\begin{align}
\Gun_\alpha(\pi) &\textstyle{= -\int_{\rset^p} \log(\pi[\kker(\cdot, y)]) \rmd \mu(y) + \alpha \KL{\pi}{\pi_0}}\\
&\textstyle{ =(1/2)\log (2\pi(\beta+\sigma_\kker^2))+\sigma_\mu^2/(2(\beta+\sigma_\kker^2))+\alpha/2\left( \log \sigma_0^2/\beta+\beta/\sigma_0^2-1\right).}
\end{align}
Finding the optimal $\beta$ for given $\alpha$ amounts to solving the following cubic equation
\begin{align}
\alpha\beta^3 +\beta^2(2\alpha\sigma_{\kker}^2+(1-\alpha)\sigma_0^2)+\beta(\alpha\sigma_\kker^4-\sigma_\mu^2\sigma_0^2+\sigma_0^2\sigma_\kker^2-2\alpha\sigma_\kker^2)-\alpha\sigma_0^2\sigma_\kker^4=0,
\end{align}
clearly when $\alpha=0$ (no cross-entropy constraint), $\beta=\sigma_{\pi}^{2}$, while for all $\alpha>0$ (at least) one optimal $\beta$ exists. However, as $\alpha$ increases the value of $\beta$ stops increasing and stabilizes around a fixed value which depends on $\sigma_0^2$, Figure~\ref{fig:alpha_toy} shows the result for $\alpha\in[0,1]$ and $\sigma_0^2 = 0.1^2$.

\begin{figure}
\centering
\resizebox{0.9\textwidth}{!}{%
\begin{tikzpicture}[every node/.append style={font=\normalsize}]
\node (img1) {\includegraphics[width=0.40\textwidth]{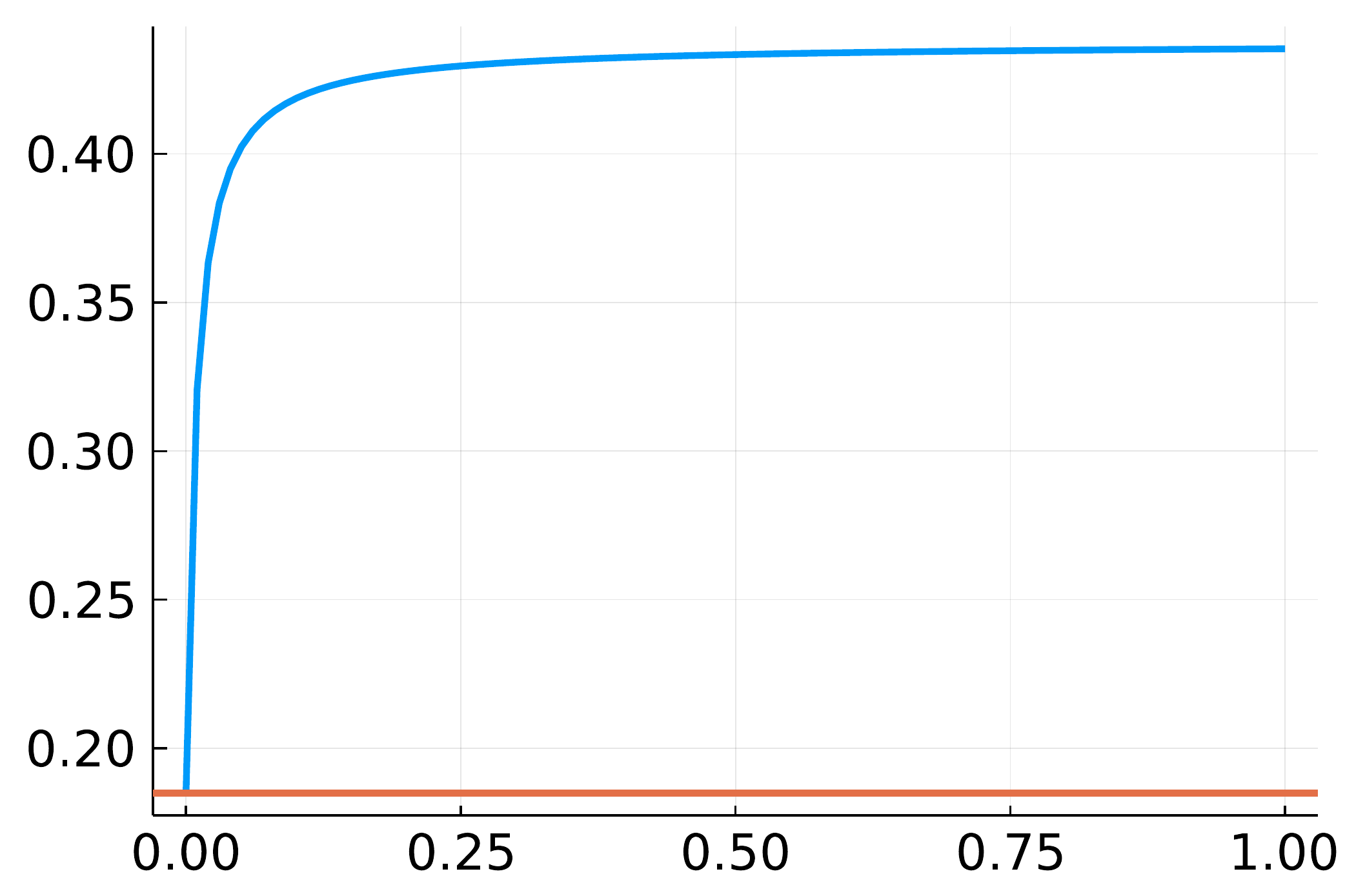}};
\node[below=of img1, node distance = 0, yshift = 1cm] { $\alpha$};
  \node[left=of img1, node distance = 0, rotate=90, anchor = center, yshift = -0.7cm] {$\beta(\alpha)$};
\node[right= of img1, yshift = 0.1cm] (img3){\includegraphics[width=0.40\textwidth]{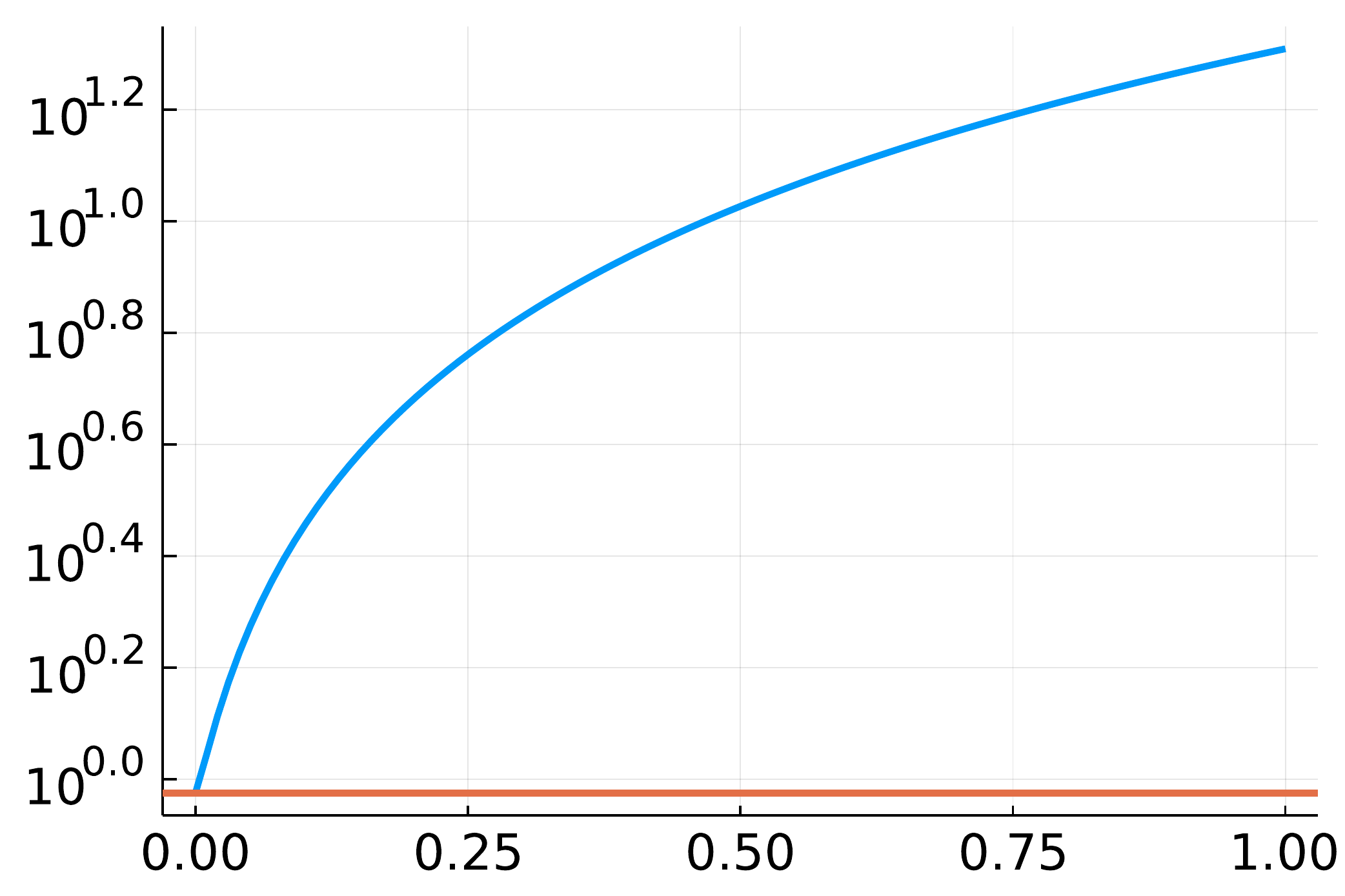}};
\node[below=of img3, node distance = 0, yshift = 1cm] {$\alpha$};
  \node[left=of img3, node distance = 0, rotate=90, anchor = center, yshift = -0.7cm] {$\Gun_\alpha$};
\end{tikzpicture}
}
\caption{Example of dependence of the variance of the minimizer $\pi^\star_{\alpha}$ (left) and the functional $\Gun_{\alpha}$ (right) on the parameter $\alpha$. The red lines denote the values obtained with $\alpha=0$, \ie\ no regularization.}
\label{fig:alpha_toy}
\end{figure}

As discussed in \Cref{sec:variants}, taking $\alpha=1$ amounts to Bayesian inference for a model with prior $\pi_0$ and data given by the available sample from $\mu$. Using the cubic equation above we obtain that the corresponding optimal $\beta$ is $0.44$; comparing the minimizer obtained with $\alpha=1$ with the solution $\pi$ and the minimizer corresponding to the value of $\alpha=4\cdot 10^{-5}$ obtained by cross-validation we find that $\alpha=1$ results in significantly over-smoothed reconstructions (Figure~\ref{fig:alpha_toy_prior}) while the value chosen by cross-validation provides reconstructions which are indistinguishable from the solution $\pi$.

\begin{figure}
\centering
\resizebox{0.6\textwidth}{!}{%
\begin{tikzpicture}[every node/.append style={font=\normalsize}]\node (img1) {\includegraphics[width=0.40\textwidth]{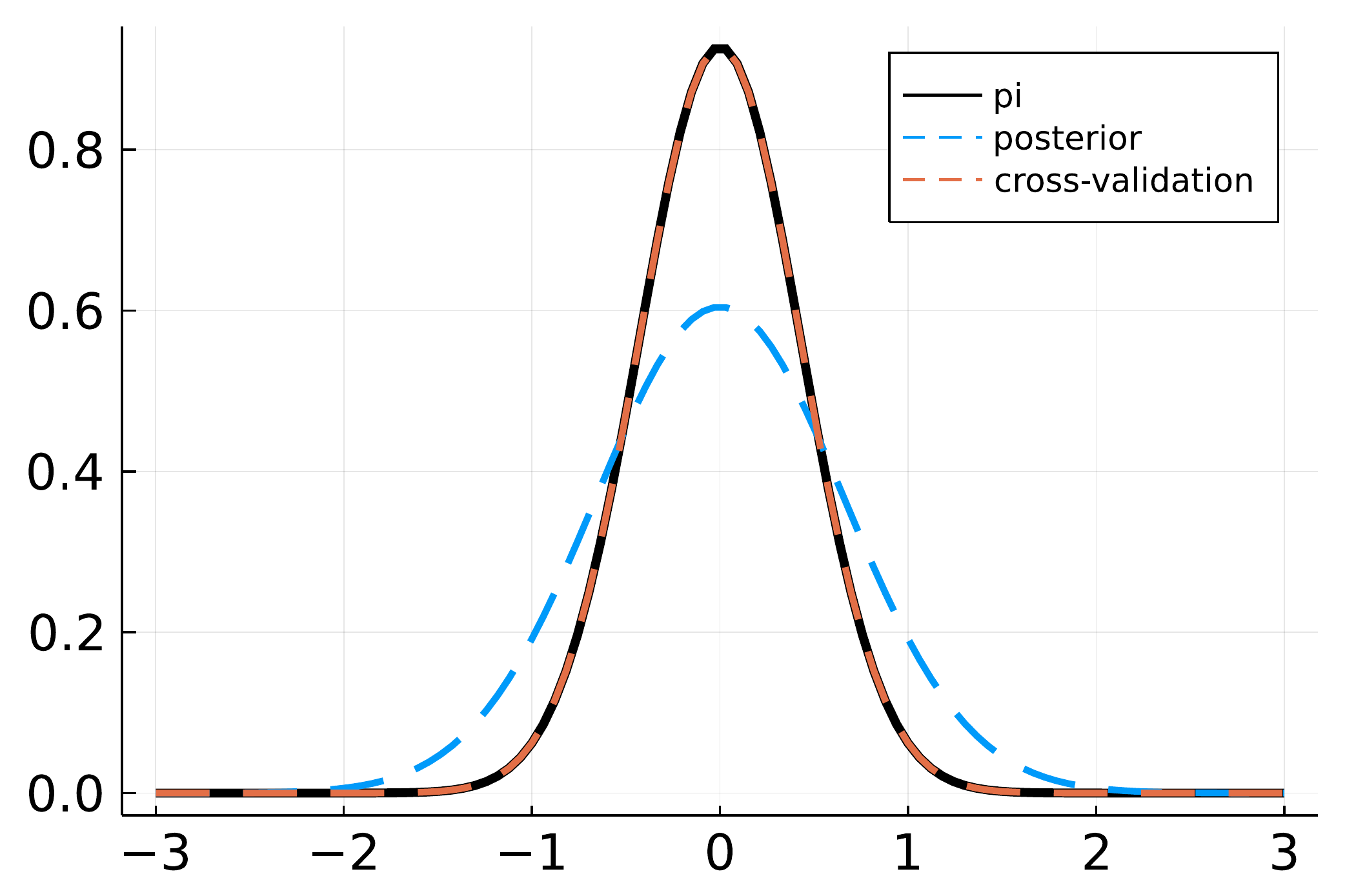}};
\end{tikzpicture}
}
\caption{Effect of regularizing parameter $\alpha$ on reconstructions. We compare $\alpha=1$ which corresponds to Bayesian inference for the incomplete data model with prior $\pi_0$ and $\alpha=$ selected by cross-validation.}
\label{fig:alpha_toy_prior}
\end{figure}


\end{document}